\numberwithin{equation}{subsection}
\newtheorem{thm}{Theorem}[subsubsection]
\newtheorem*{thm*}{Theorem}
\newtheorem{cor}[thm]{Corollary}
\newtheorem*{cor*}{Corollary}
\newtheorem{lem}[thm]{Lemma}
\newtheorem{prop}[thm]{Proposition}
\newtheorem{prop-const}[thm]{Proposition-Construction}
\newtheorem*{conjecture*}{Conjecture}
\newtheorem*{princ*}{Principle}
\theoremstyle{remark}
\newtheorem{rem}[thm]{Remark}
\newtheorem{example}[thm]{Example}
\newtheorem{defin}[thm]{Definition}
\newcommand{\into}{\hookrightarrow}\raggedbottom
\newcommand{\onto}{\twoheadrightarrow}
\newcommand{\bA}{{\mathbb A}}
\newcommand{\bB}{{\mathbb B}}
\newcommand{\bC}{{\mathbb C}}
\newcommand{\bD}{{\mathbb D}}
\newcommand{\bF}{{\mathbb F}}
\newcommand{\bG}{{\mathbb G}}
\newcommand{\bN}{{\mathbb N}}
\newcommand{\bO}{{\mathbb O}}
\newcommand{\bP}{{\mathbb P}}
\newcommand{\bR}{{\mathbb R}}
\newcommand{\bS}{{\mathbb S}}
\newcommand{\bZ}{{\mathbb Z}}
\newcommand{\cB}{{\mathcal B}}
\newcommand{\cF}{{\mathcal F}}
\newcommand{\cG}{{\mathcal G}}
\newcommand{\cN}{{\mathcal N}}
\newcommand{\cS}{{\mathcal S}}
\newcommand{\cU}{{\mathcal U}}
\newcommand{\cW}{{\mathcal W}}
\newcommand{\sA}{{\EuScript A}}
\newcommand{\sC}{{\EuScript C}}
\newcommand{\sD}{{\EuScript D}}
\newcommand{\sG}{{\EuScript G}}
\newcommand{\sH}{{\EuScript H}}
\newcommand{\sL}{{\EuScript L}}
\newcommand{\sM}{{\EuScript M}}
\newcommand{\sN}{{\EuScript N}}
\newcommand{\sP}{{\EuScript P}}
\newcommand{\sR}{{\EuScript R}}
\newcommand{\sW}{{\EuScript W}}
\newcommand{\sX}{{\EuScript X}}
\newcommand{\sY}{{\EuScript Y}}
\newcommand{\fX}{{\mathfrak X}}
\newcommand{\fb}{{\mathfrak b}}
\newcommand{\fg}{{\mathfrak g}}
\newcommand{\fh}{{\mathfrak h}}
\newcommand{\fk}{{\mathfrak k}}
\newcommand{\fm}{{\mathfrak m}}
\newcommand{\fn}{{\mathfrak n}}
\newcommand{\fp}{{\mathfrak p}}
\newcommand{\ft}{{\mathfrak t}}
\newcommand{\fu}{{\mathfrak u}}
\newcommand{\fz}{{\mathfrak z}}
\newcommand{\on}{\operatorname}
\newcommand{\Spec}{\on{Spec}}
\newcommand{\ind}{\on{ind}}
\renewcommand{\lim}{\on{lim}}
\renewcommand{\subset}{\subseteq}
\newcommand{\dR}{\on{dR}}
\newcommand{\biggg}{\bBigg@{4}}
\newcommand{\Biggg}{\bBigg@{5}}
\newcommand{\sss}{\subsubsection{}}
\begin{document}

\frenchspacing

\setlength{\epigraphwidth}{0.4\textwidth}
\renewcommand{\epigraphsize}{\footnotesize}


\title{Singular support for $G$-categories}

\author{Gurbir Dhillon and Joakim F\ae rgeman}

\address{University of California, Los Angeles, Department of Mathematics, 520 Portola Plaza, Los Angeles, CA 90095}

\email{gsd@math.ucla.edu}

\address{Yale University, 
Department of Mathematics, 
219 Prospect St, 
New Haven, CT 06511}

\email{joakim.faergeman@yale.edu}

\maketitle

\begin{abstract}
For a reductive group $G$, we introduce a notion of singular support for cocomplete dualizable DG-categories equipped with a strong $G$-action. This is done by considering the singular support of the sheaves of matrix coefficients arising from the action. We focus particularly on dualizable $G$-categories whose singular support lies in the nilpotent cone of $\fg^*$ and refer to these as nilpotent $G$-categories. For such categories, we give a characterization of the singular support in terms of the vanishing of its generalized Whittaker models.

We study parabolic induction and restriction functors of nilpotent $G$-categories and show that they interact with singular support in a desired way.

We prove that if an orbit is maximal in the singular support of a nilpotent $G$-category $\sC$, the Hochschild homology of the generalized Whittaker model of $\sC$ coincides with the microstalk of the character sheaf of $\sC$ at that orbit. This should be considered a categorified analogue of the result of \cite{moeglin1987modeles} that the dimension of the generalized Whittaker model of a smooth admissible representation of a reductive group over a non-Archimedean local field of characteristic zero coincides with the Fourier coefficient in the wave-front set of that orbit.

As a consequence, we give another proof of a theorem of Bezrukavnikov-Losev, classifying finite-dimensional modules for $\sW$-algebras with fixed regular central character. More precisely, we realize the (rationalized) Grothendieck group of this category as a certain subrepresentation of the Springer representation. Along the way, we show that the Springer action of the Weyl group on the twisted Grothendieck--Springer sheaves is the categorical trace of the wall crossing functors, extending an observation of Zhu for integral central characters. 

\end{abstract}

\tableofcontents

\section{Introduction}

The basic goal of the present paper is to introduce some tools for the analysis of categorical representations via their matrix coefficients, and in particular develop a theory of singular support, i.e., wave front sets, in this setting. Since as far as we know, this may be the first paper devoted to categorical matrix coefficients, we begin by explaining why one would want such a theory, and what one might expect from it; the reader wishing to skip such a discussion may proceed directly to Section \ref{s:mc}.

\subsection{Some context} To motivate the present paper, it is helpful to recall two standard facts of life in representation theory.

\sss{} On the one hand, given an action of a group $G$ on a vector space $V$, one obtains a collection of special (generalized) functions on $G$, namely the matrix coefficients of $V$. Many basic properties of $V$ may then be read off from {\em geometric} properties of these special functions, for example their microsupport or asymptotic growth at infinity in $G$. 

This theory is particularly rich and successful in the study of finite groups of Lie type and reductive groups over local fields, and as such plays a basic role in the local Langlands correspondence. As we will recall in more detail below, a basic invariant one attaches in these cases to an irreducible representation $V$ is the union of the closures of some nilpotent orbits, the {\em wave front set} of $V$, which may be read off from a single matrix coefficient, namely the character of $V$. 

\sss{} On the other hand, a similarly basic lesson from geometric representation theory is that  answers to concrete questions, e.g. character formulae for representations of groups and Lie algebras, are often encoded in actions of groups on categories, or the induced actions of Hecke categories on categories of equivariant objects. 

Relatedly, it was realized  by Frenkel--Gaitsgory, building on earlier work of Beilinson--Drinfeld, that the notion of a categorical representation of a loop group is necessary to even formulate the local geometric Langlands conjecture; we recall the latter implies many concrete results about characters for affine Lie algebras and related vertex algebras.  

\sss{} By combining the above two lessons, it is therefore natural to anticipate a useful theory of matrix coefficients for categorical representations. That is, given an action of a group $G$ on a category $\sC$, one should obtain a collection of special sheaves on $G$,   thought of as the matrix coefficients of $\sC$. Moreover, geometric invariants of these sheaves should again encode properties of the representation $\sC$, and such a theory should play a basic role in the local geometric Langlands correspondence. 

\sss{} In the present paper, we supply some of the basic ingredients for such a theory. We focus on the case of a finite dimensional reductive group $G$ over a field $k$ of characteristic zero, which should be regarded as the analog of a finite group of Lie type. We will return to the theory for loop groups, i.e., the analogues of $p$-adic groups, elsewhere. 

\sss{} We now provide a brief non-technical overview of the results of this paper.  

\begin{itemize}
    \item First,  given a categorical representation $\sC$ of $G$, we call $\sC$ {\em nilpotent} if all its suitably defined matrix coefficients have  microsupport in 
    $$G \times \mathcal{N} \hookrightarrow G \times \fg^* \simeq T^*G, $$
    where $\mathcal{N} \hookrightarrow \fg^*$ denotes the nilpotent cone. While for the Lie theoretic applications we have in mind it is essential to work in the de Rham setting, i.e., with D-modules, our first main Theorem \ref{t:nilp} shows that the abstract 2-category of nilpotent $G$-categories  is independent of our chosen sheaf theory in a strong sense - it is equivalent to modules over all the twisted Hecke categories of $G$.  
    
    \item Second, for a nilpotent $G$-category $\sC$, we introduce a definition for its singular support, which is a closed union of nilpotent orbits. We show that it interacts with the operations of taking Whittaker invariants and parabolic induction and restriction in desirable ways. While these results concerning singular support closely parallel similar assertions for finite groups of Lie type, our definition does not; this suggests we have nevertheless identified the correct formulation of singular support in the present setting. 
    
    \item Finally, as an application of the above techniques, we give a new proof of a fundamental theorem of Losev-Ostrik and Bezrukavnikov-Losev classifying finite-dimensional representations of finite $\sW$-algebras. Namely, we first characterize the category of finite-dimensional representations of a $\sW$-algebra in terms of categorical singular support, and then apply categorical trace methods.     
    In particular,  the techniques introduced in this paper have bearing on  concrete representation theoretic problems.\footnote{In fact, a principal motivation for us to revisit the classification of Bezrukavnikov--Losev--Ostrik was  ongoing work with Arakawa on the simple highest weight characters for finite and affine $\sW$-algebras.}   

\end{itemize}

Before stating our results precisely, particularly those alluded to in the second bullet point above, it will be useful to recall some aspects of the usual theory of matrix coefficients.

\subsection{What do matrix coefficients control?}\label{S:WHATDOMCCONTROL}

Suppose for concreteness that $G$ is defined over a non-archimedean local field $\bF$. Roughly, interesting properties about a representation $V$ are encoded in the support of its matrix coefficients. Here, the word support has dual meanings:

\sss{} First, we may consider the support in $G$, which we think of as position space. A basic property here is that supercuspidality of $V$ is equivalent to its matrix coefficients having compact support modulo the center of $G$.

\sss{} Second, we may consider the microsupport in $\fg^*$, which we think of as momentum space, as follows. By restricting the matrix coefficients to a neighbourhood of the identity  $1\in G$, taking the logarithm and applying the Fourier transform, one obtains generalized functions on $\fg^*$. The support of these generalized functions encodes a surprising amount of information about $V$. 

    The main property that is relevant for us is the connection between the microsupport of matrix coefficients in the above sense and the Whittaker models of $V$. Namely, to $V$, we may associate nilpotent invariants of two kinds:\footnote{We remark that the idea of assigning nilpotent orbits to irreducible representations occured earlier in representation theory as well: Kirillov's orbit method, the Springer correspondence, etc.}

    \begin{itemize}
\item For any nilpotent orbit, we may consider its generalized Whittaker model:
    \[
    \sW_{\bO}(V):=(\sW_{\bO}\otimes V)_G.
    \]

    \noindent Here, $\sW_{\bO}$ is the generalized Whittaker, i.e. Gelfand--Graev, representation associated to $\bO$, cf. \cite[§5.1]{gomez2017generalized}.

    \item On the other hand, let $\chi_{V}$ be the character of $V$, which is a generalized function on $G$. As above, if we restrict to a neighbourhood of $1\in G$ and apply the logarithm, we obtain a generalized function on $\fg$, which turns out to be the Fourier transform of a linear combination of $G$-invariant measures on coadjoint nilpotent orbits on $\fg^*$. Let $c_{\bO,V}$ be the multiplicity of the $G$-measure of $\bO$ in the Fourier transform of $\chi_{V}$.
    \end{itemize}

    \noindent Denote by $\on{WS}(V)$ the set of maximal nilpotent orbits for which $\sW_{\bO}(V)$ is nonzero. Similarly, denote by $\on{WF}(V)$ the set of maximal nilpotent orbits for which $c_{\bO,V}$ is nonzero. In \cite{moeglin1987modeles},\footnote{In \emph{loc.cit}, they work under the assumption that the characteristic of the residue field of $\bF$ is odd. This assumption was removed in \cite{varma2014result}.} it is proven that:
\begin{enumerate}

    \item $\on{WS}(V)=\on{WF}(V)$.

    \item For $\bO\in \on{WS}(V)$, we have an equality:
    \[
    \on{dim}\sW_{\bO}(V)=c_{\bO,V}.
    \]

\end{enumerate}
That is, the maximal orbits in the microsupport of $\chi_V$ agree with the maximal orbits for which the Whittaker models of $V$ are non-vanishing, and the dimensions of the latter are encoded in the coefficients of the local character expansion of the former. See also \cite{gomez2017generalized} and \cite{lusztig1992unipotent} for similar results when $\bF$ is archimedean and finite, respectively.

\begin{rem}As another representative, if less immediately relevant, example, it is shown in \cite{bezrukavnikov2016depth} that the depth $r$ Bernstein projector admits a beautiful presentation in terms of microsupport in the above sense.   
\end{rem}

\sss{} For loop groups, all of the above points have analogues for categorical matrix coefficients, as will be explained elsewhere. By contrast, for reductive groups, every categorical representation is principal series \cite{ben2020highest}, so we will be primarily interested in momentum space, i.e., with the microsupport of matrix coefficients and their relation with Whittaker models. With this motivating discussion in mind, let us now turn to a precise formulation of our results.

\subsection{Sheaves of matrix coefficients}\label{s:mc} Let $\sC$ be a dualizable DG-category equippped with a strong action of $G$. We remind that this means we have an action
\[
D(G)\curvearrowright \sC,
\]

\noindent where $D(G)$ is the category of D-modules on $G$ equipped with its monoidal structure via convolution. From the action map
\[
D(G)\otimes \sC\to \sC,
\]

\noindent and the self-duality of $D(G)$, we get a functor
\[
\on{MC}_{\sC}: \on{End}_{\on{DGCat}_{\on{cont}}}(\sC)\simeq \sC\otimes \sC^{\vee}\to D(G),
\]

\noindent which associates to a (continuous) endomorphism $\phi$ of $\sC$ its corresponding sheaf of matrix coefficients on $G$. Concretely, given $g\in G$, the fiber of $\on{MC}_{\sC}(\phi)$ at $g$ is $\on{Tr}(\sC,g_*\circ\phi)$, the categorical trace of the endomorphism $g_*\circ\phi$ of $\sC$, where $g_*:\sC\to \sC$ is the endofunctor of $\sC$ defined by acting by the delta sheaf corresponding to $g$.

\subsubsection{} Given a closed, conical Ad-stable $Z\subset \fg^*$, we may consider the full subcategory
\[
D_Z(G)\subset D(G)
\]

\noindent of D-modules on $G$ whose singular support is contained in $G\times Z\subset G\times \fg^*\simeq T^*G$. This category is not an ideal inside $D(G)$ in general.

\subsubsection{} We say that a dualizable $G$-category $\sC$ has singular support contained in $Z$ if $\on{MC}_{\sC}$ factors as
\[
\on{MC}_{\sC}: \sC\otimes \sC^{\vee}\to D_Z(G)\into D(G).
\]

\noindent In general, we let $\on{SS}(\sC)\subset \fg^*$ be the smallest closed, conical Ad-stable subset such that $\sC$ has singular support in $\on{SS}(\sC)$.

\begin{rem}
Let us emphasize two basic differences between the above definition of singular support and its definition for finite groups of Lie type and $p$-adic groups. 

First, unlike the $p$-adic setting, we have immediate access to a notion of singular support for $G$-categories. That is, the fact that we may directly talk about the singular support of objects of $D(G)$ bypasses the need to invoke a Fourier transform.

 Second, in the $p$-adic setting, the wavefront set of a representation $V$ is defined purely in terms of the character of $V$, i.e., the image of the identity endomorphism under the matrix coefficient map. While one could  make such a definition for categorical representations, it would be pathological - if one tensors any categorical representation $\sC$ with a category $\sM$ with vanishing Hochschild homology, viewed as  a trivial representation of $G$, the resulting category $\sC \otimes \sM$'s character sheaf vanishes.\footnote{Heuristically, one should regard categorical representations as the analogue of {\em complexes} of modules for a finite group of Lie type, for which one would run into similar problems defining wave front sets via characters; this analogy can be made precise over finite fields via the categorical trace of Frobenius.} For this reason,  we must consider not only the character, but all of the matrix coefficients, when computing singular support.

\end{rem}

\subsection{Examples}\label{s:examp}
We fix an Ad-invariant isomorphism $\fg^*\simeq \fg$. Denote by $\cN\subset \fg^*$ the locus of nilpotent elements under this isomorphism.

Below are some natural examples of $G$-categories and their singular support.
\begin{itemize}

     \item Consider $D(G)$ as a module category over itself. Then $\on{SS}(D(G))=\fg^*$.

     \item At the other extreme, let $\on{Cst}\subset D(G)$ be the subcategory generated by the constant sheaf under colimits. Then $\on{Cst}$ is naturally a $G$-category with singular support equal to $0$. 
     
    \item Let $\lambda\in \ft^*$ be a weight. Consider the category $\fg\on{-mod}_{\lambda}$ of $U(\fg)$-modules with generalized central character $\lambda$. Then $\on{SS}(\fg\on{-mod}_{\lambda})=\cN$.

    \item (Corollary \ref{c:boundonann}): Let $\fg\on{-mod}_{0,Z}\subset \fg\on{-mod}_{0}$ be the subcategory consisting of modules $M$ such that the associated variety of $U(\fg)_0/\on{Ann}_{U(\fg)_0}(M)$ is contained in $Z$. Then $\fg\on{-mod}_{0,Z}$ is $G$-stable, and we have $\on{SS}(\fg\on{-mod}_{0,Z})=Z^{\on{sp}}$, where $Z^{\on{sp}}$ is the union of the closures of the special nilpotent orbits (in the sense of §\ref{s:sospecial} below) contained in $Z$.

    \item (Proposition \ref{p:geomexamp}): Let $X$ be a $G$-variety and $\mu: T^*X\to \fg^*$ the moment map. Let $Z\subset \fg^*$ be the smallest closed Ad-invariant subset such that $\mu$ factors through $Z$. If $Z\subset \cN$, then $\on{SS}(D(X))=Z$. We expect a similar equality to hold when $Z$ is not required to lie in $\cN$ but were unable to prove it.\footnote{The inclusion $Z\subset \on{SS}(D(X))$ is trivially true.}\footnote{To elaborate, denote by $a:G\times X\to X$ the action map. By definition, $\on{SS}(D(X))\subset Z$ if and only if 
\begin{equation}\label{eq:Kunneth}
a^!(\cF)\in D_Z(G)\otimes D(X)
\end{equation}

\noindent for all $\cF\in D(X)$. Note that since the action map is smooth, we have that
\[
a^!(\cF)\in D_{G\times Z\times T^*X}(G\times X).
\]

\noindent However, since the inclusion
\[
D_Z(G)\otimes D(X)\subset D_{G\times Z\times T^*X}(G\times X)
\]

\noindent is rarely an equivalence, it is not clear how to establish (\ref{eq:Kunneth}). We prove (\ref{eq:Kunneth}) for $Z\subset \cN$ using the full strength of the results described in §\ref{s:whitredflag}. The proofs of these results rely heavily on representation-theoretic methods, and it would be interesting to find a purely geometric proof of (\ref{eq:Kunneth}).}

\end{itemize}

\subsection{Nilpotent categories}\label{s:nilpcats}
We say that a dualizable $G$-category is \emph{nilpotent} if its singular support is contained in the nilpotent cone $\cN$. We will mainly be concerned with nilpotent $G$-categories in this paper.

\subsubsection{} Let $B\subset G$ be a Borel subgroup, and let $N=[B,B], T=B/N$. Denote by $X^{\bullet}(T)$ the character lattice of $T$. For each $k$-point $\chi\in\ft^*/X^{\bullet}(T)$, we get a corresponding character sheaf on $T$, which we similarly denote by $\chi$.

\subsubsection{} For a $G$-category $\sC$, we let 
\[
\sC^{(B,\chi)\on{-mon}}\subset \sC^N
\]

\noindent the full subcategory of $\sC^N$ generated under colimits by the essential image of the forgetful functor
\[
\sC^{B,\chi}\to\sC^N.
\]

\subsubsection{} The following characterization of nilpotent $G$-categories is crucially used throughout the paper.
\begin{thm}[Theorem \ref{t:nilp}]\label{t:criterionnilp}
Let $\sC$ be a dualizable $G$-category. Then $\sC$ is nilpotent if and only if
\[
\sC^N=\underset{\chi\in\ft^*/X^{\bullet}(k)}{\oplus}\sC^{(B,\chi)\on{-mon}}.
\]

\end{thm}

\subsubsection{A canonical filtration}\label{s:canfilt}
Let $\sC$ be a nilpotent $G$-category. In light of the above theorem, we assume for simplicity that $\sC^N=\sC^{(B,\chi)\on{-mon}}$ for some $\chi\in \ft^*/X^{\bullet}(k)$.

For any closed conical Ad-stable $Z\subset \cN$, we define $\sC_Z$ to be the image of the fully faithful embedding
\[
D(G/B,\chi)\underset{\sH_{G,\chi}}{\otimes}\sH_{G,\chi,Z}\underset{\sH_{G,\chi}}{\otimes} \sC^{B,\chi}\to \sC
\]

\noindent induced by the action of $D(G)$ on $\sC$. By construction, $\sC_Z\subset \sC$ is the largest $G$-stable subcategory of $\sC$ satisfying that $\on{SS}(\sC_Z)\subset Z$. Note that $\sC_{\cN}=\sC$.
\begin{thm}[Proposition \ref{l:stable}]\label{t:loc}
The category $\sC_Z$ is dualizable. Moreover, the inclusion
\[
\sC_Z\into \sC
\]

\noindent admits a left adjoint.
\end{thm}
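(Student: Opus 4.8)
The plan is to identify $\sC_Z$ with the image of a localization endofunctor of $\sC$ obtained by transporting the localization $D(G) \to D_Z(G)$ through the module structure. First recall that for a closed conical $\Ad$-stable $Z \subset \fg^*$, the inclusion $D_Z(G) \into D(G)$ admits both adjoints: singular support conditions cut out a colocalization, so there is a continuous colocalization functor $i_Z^R : D(G) \to D_Z(G)$ right adjoint to the inclusion, and since $Z$ is conical the inclusion also preserves colimits and compact objects, hence has a continuous right adjoint; the key point for us is the existence of a \emph{coreflection} $P_Z := i_Z \circ i_Z^R : D(G) \to D(G)$, an idempotent comonad whose essential image is $D_Z(G)$. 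Because $D_Z(G)$ is a monoidal ideal inside $D(G)$ (convolution with anything on $G$ preserves the singular support condition, as $Z$ is $\Ad$-stable and the condition only constrains the cotangent directions), $P_Z$ is a $D(G)$-bimodule endofunctor, i.e. it is given by convolution with the object $P_Z(\delta_e) \in D_Z(G)$.

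Now act on $\sC$: since $\sC$ is a $D(G)$-module, convolution with $P_Z(\delta_e)$ gives a continuous endofunctor $\Pi_Z : \sC \to \sC$, and idempotency of $P_Z$ together with coassociativity makes $\Pi_Z$ an idempotent comonad on $\sC$ (equivalently, one checks directly that $P_Z(\delta_e) * P_Z(\delta_e) \simeq P_Z(\delta_e)$ in $D(G)$ and that this identification is compatible with the counit $P_Z(\delta_e) \to \delta_e$). Let $\sC^{Z} := \sC\text{-comod}_{\Pi_Z}$ be the category of $\Pi_Z$-comodules; this is the colocalization of $\sC$ at $\Pi_Z$, it is dualizable (a retract, in the appropriate sense, of the dualizable $\sC$), it is $G$-stable because $\Pi_Z$ is a $D(G)$-bimodule endofunctor hence commutes with the $G$-action up to coherent homotopy, and the inclusion $\sC^Z \into \sC$ automatically admits a \emph{right} adjoint $\Pi_Z$. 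To get the \emph{left} adjoint asserted in the theorem, one uses dualizability: applying the construction to $\sC^\vee$ produces a colocalization $(\sC^\vee)^Z \into \sC^\vee$ with right adjoint; dualizing this inclusion back gives a localization of $\sC$, i.e. a subcategory with a left adjoint, and one identifies it with $\sC^Z$ by noting that $\Pi_Z$ on $\sC$ is dual to $\Pi_Z$ on $\sC^\vee$ (both come from the same self-dual object $P_Z(\delta_e) \in D(G)$, using that $D_Z(G)$ is preserved by the self-duality of $D(G)$, as $Z$ is $\Ad$-stable hence stable under inversion on $G$).

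It remains to identify $\sC^Z$ with the category $\sC_Z$ defined in §\ref{s:canfilt}, i.e. to show $\sC^Z$ is the \emph{smallest} subcategory $\sD \subset \sC$ such that $\on{MC}_{\sC}|_{\sC^\vee \otimes \sD}$ lands in $D_Z(G)$. For the inclusion $\sC^Z \subset \sC_Z$-direction: for $c \in \sC^Z$ we have $c \simeq \Pi_Z(c')$ for some $c'$, and $\on{MC}_\sC(\phi' \otimes \Pi_Z c') $ is computed by convolving the matrix-coefficient sheaf of $\phi'$ with $P_Z(\delta_e)$ on one side — more precisely, one checks $\on{MC}_\sC(\id_{\sC^\vee}\otimes \Pi_Z) = P_Z \circ \on{MC}_\sC$ — and $P_Z$ takes values in $D_Z(G)$, so $\on{MC}_\sC$ restricted to $\sC^\vee \otimes \sC^Z$ factors through $D_Z(G)$; conversely any $\sD$ with this property consists of objects fixed by $\Pi_Z$ (apply the counit $\Pi_Z \to \id$ and use that on $\sD$ the matrix coefficients already see only $D_Z(G)$, where $P_Z$ acts as the identity, together with a faithfulness/conservativity property of $\on{MC}_\sC$ after passing to a generator), so $\sD \subset \sC^Z$, giving minimality.

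The main obstacle I expect is the last identification, specifically extracting from the matrix-coefficient formalism a statement strong enough to conclude that an object with $D_Z(G)$-valued matrix coefficients is actually fixed by $\Pi_Z$; this requires knowing that $\on{MC}_\sC$ is suitably conservative — equivalently that the $D(G)$-action on $\sC$ can be "detected" on matrix coefficients, e.g. that $c \mapsto \on{MC}_\sC(\id \otimes (\delta_g)_* \text{ acting})$ jointly detect whether $c$ lies in the image of $\Pi_Z$. For dualizable $\sC$ this should follow from the fact that $\sC \otimes \sC^\vee$ is generated by endomorphisms and the action functor $D(G)\otimes \sC \to \sC$ together with its adjoint reconstruct objects, but making this precise — and in particular handling the interaction with the $\oplus$-decomposition of Theorem \ref{t:criterionnilp} if needed — is where the real work lies. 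A secondary subtlety is verifying that $\Pi_Z$ genuinely commutes with the strong $G$-action (not merely the weak one): this is where one must use that $P_Z$ is a \emph{bi}module endofunctor of $D(G)$, so that the comodule category inherits a $D(G)$-action, not just an action of the underlying group.
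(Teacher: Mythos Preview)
Your approach has a fatal gap at the very first step: the claim that $D_Z(G)$ is a convolution ideal in $D(G)$ is false. The paper explicitly gives a counterexample in the footnote following Corollary~\ref{c:stablel}: for $G=SL_2$ and $Z=0$, convolving a lisse sheaf with a D-module supported on the torus can produce something non-lisse. So $P_Z(\delta_e)$ is \emph{not} a $D(G)$-bimodule endofunctor, and in particular your $\Pi_Z$ will not commute with the $G$-action. This is precisely the ``secondary subtlety'' you flag at the end, but it is not secondary --- it is fatal to the whole construction.

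There is a second, independent error: you assert that $D_Z(G) \hookrightarrow D(G)$ admits continuous left and right adjoints. With the paper's definition of $D_Z(G)$ (all D-modules whose cohomologies have singular support in $Z$, cf.\ \S\ref{s:singsupp}), this inclusion in general admits \emph{neither} continuous adjoint --- see the footnote in \S\ref{ss:diffren}. So there is no coreflection $P_Z$ to begin with.

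The paper's proof (Proposition~\ref{p:stable}) is completely different and uses the nilpotency hypothesis in an essential way --- note the Remark immediately after Theorem~\ref{t:loc} saying the authors do not know whether $\sC_Z$ is dualizable or $G$-stable when $\sC$ is not nilpotent, which already signals that no purely formal argument can work. The actual proof uses Theorem~\ref{t:nilp} to reduce to the $(B,\chi)$-generated case, then identifies $\sC_Z$ explicitly as $D_Z(G/B,\chi) \underset{\sH_{G,\chi}}{\otimes} \sC^{B,\chi}$. The existence of a left adjoint for $D_Z(G/B,\chi) \hookrightarrow D(G/B,\chi)$ is deduced from the Whittaker characterization of Theorem~\ref{t:van}: one writes $D_Z(G/B,\chi)$ as an intersection of kernels of the functors $\on{Av}_!^{B,\chi}\circ\on{Av}_*^{\psi_e}$, which individually commute with limits. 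So the left adjoint ultimately comes from representation theory (Losev's results feeding into Section~\ref{s:yeahboooi}), not from abstract nonsense about localizations of $D(G)$.
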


\noindent As such, we obtain a filtration $\lbrace \sC_{\overline{\bO}}\rbrace$ of $\sC$ indexed by nilpotent orbits, where $\sC_{\overline{\bO}}$ is a dualizable $G$-category with $\on{SS}(\sC_{\overline{\bO}})\subset \overline{\bO}$.

\subsection{Parabolic induction and restriction}
We next explain how parabolic induction and restriction of nilpotent $G$-categories interact with singular support.

\subsubsection{} Let $P\subset G$ be a parabolic subgroup with Levi quotient $M$. Let $\fp,\fm$ be the corresponding Lie algebras, and let $U_P$ denote the unipotent radical of $P$.

\subsubsection{}\label{s:componmbse}
For a $G$-category $\sC$, we let
\[
\on{Res}^G_M(\sC):=\sC^{U_P}
\]

\noindent denote the $U_P$-invariants of $\sC$ considered as an $M$-category. Conversely, for an $M$-category $\sD$, we let 
\[
\on{Ind}_M^G(\sD):=D(G)\underset{D(P)}{\otimes} \sD
\]

\noindent considered as a $G$-category. The functors $(\on{Ind}_M^G,\on{Res}^G_M)$ define an adjunction of $2$-categories:
\[\begin{tikzcd}
	{D(M)\textrm{\textbf{-mod}}} && {D(G)\textrm{\textbf{-mod}}.}
	\arrow["{\mathrm{Ind}_M^G}", shift left, from=1-1, to=1-3]
	\arrow["{\mathrm{Res}_M^G}", shift left, from=1-3, to=1-1]
\end{tikzcd}\]

\subsubsection{} Consider the correspondence
\[\begin{tikzcd}
	& {\mathfrak{p}} \\
	{\mathfrak{g}} && {\mathfrak{m}.}
	\arrow["{p}"', from=1-2, to=2-1]
	\arrow["{q}", from=1-2, to=2-3]
\end{tikzcd}\]

\noindent For closed conical Ad-invariant subsets $Z\subset \fg$ and $Z_M\subset \fm$, we let
\[
\on{Res}_M^G(Z):=q(p^{-1}(Z))\subset \fm,\;\; \on{Ind}_M^G(Z_M):=\on{Ad}_G(p(q^{-1}(Z_M)))\subset \fg.
\]
\begin{thm}[Theorem \ref{t:pres}, Theorem \ref{t:pind}]
Let $\sC,\sC_M$ be nilpotent $G$ and $M$-categories, respectively. Then
\[
\on{SS}(\on{Res}_M^G(\sC))=\on{Res}_M^G(\on{SS}(\sC)),\;\; \on{SS}(\on{Ind}_M^G(\sC_M))=\on{Ind}_M^G(\on{SS}(\sC_M)).
\]
\end{thm}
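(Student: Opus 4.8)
The plan is to prove the two equalities separately, treating the restriction statement first since the induction statement can then be deduced from it by adjunction and a compatibility of the matrix-coefficient functor with the $(\mathrm{Ind},\mathrm{Res})$-adjunction. For the restriction statement, the key observation is that for a $G$-category $\sC$, the sheaf of matrix coefficients of $\mathrm{Res}_M^G(\sC)=\sC^{U_P}$ on $M$ should be obtained from $\mathrm{MC}_\sC$ on $G$ by a concrete geometric operation: roughly, restrict the sheaf on $G$ along $P\hookrightarrow G$, then push forward along $P\twoheadrightarrow M$ (with appropriate twists/shifts coming from the modular character of $P$ and the averaging against $U_P$). Concretely, I would first establish a formula of the shape
\[
\mathrm{MC}_{\sC^{U_P}}(\bar\phi)\;\simeq\;\pi_{M,!}\big(\pi_P^{!}\,\mathrm{MC}_\sC(\phi)\big)
\]
for $\pi_P:P\hookrightarrow G$ and $\pi_M:P\to M$ the projection, where $\bar\phi$ is the endomorphism of $\sC^{U_P}$ induced by a $U_P$-equivariant endomorphism $\phi$ of $\sC$; the point is that $g_*\circ\phi$ restricted to $\sC^{U_P}$ only sees the $P$-part of $G$ after averaging, and the trace on $\sC^{U_P}$ differs from the trace on $\sC$ by integration over the contractible $U_P$. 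This is the technical heart and will require care with renormalization (the shift by $\dim U_P$) and with checking that enough endomorphisms $\phi$ arise this way to detect singular support.

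Granting the matrix-coefficient formula, the singular support computation becomes a statement about how singular support transforms under $!$-restriction along $P\hookrightarrow G$ and $!$-pushforward along $P\to M$. Since $\fg^*\to\fp^*$ (dual to $\fp\hookrightarrow\fg$) and $\fp^*\to\fm^*$ behave, under the identification $\fg^*\simeq\fg$ and the correspondence $\fg\leftarrow\fp\rightarrow\fm$, exactly via $p^{-1}$ and $q$, one gets the containment $\mathrm{SS}(\mathrm{Res}_M^G(\sC))\subset \mathrm{Res}_M^G(\mathrm{SS}(\sC))=q(p^{-1}(\mathrm{SS}(\sC)))$ from the standard microlocal estimates for $!$-pullback (smooth, so no loss) and $!$-pushforward (proper on the relevant locus after using nilpotency — here is where $\sC$ nilpotent is used, to ensure the fibers over which we push are controlled and the estimate is sharp rather than lossy). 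For the reverse inclusion, I would use the characterization of singular support via vanishing of generalized Whittaker models together with the compatibility of Whittaker models with parabolic restriction: a Whittaker model of $\sC^{U_P}$ for a nilpotent orbit of $\fm$ is, by transitivity of the Whittaker coinvariants construction, a Whittaker model of $\sC$ for the induced orbit $\mathrm{Ind}_M^G(\bO_M)$ (in the sense of Lusztig--Spaltenstein), so nonvanishing propagates in both directions on maximal orbits.

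For the induction statement, the cleanest route is to reduce to the restriction statement. One shows $\mathrm{SS}(\mathrm{Ind}_M^G(\sC_M))\subset \mathrm{Ind}_M^G(\mathrm{SS}(\sC_M))$ by presenting $\mathrm{Ind}_M^G(\sC_M)=D(G)\otimes_{D(P)}\sC_M$ as built from $D(G/U_P)$-type kernels whose singular support along the $G$-direction is governed by $\mathrm{Ad}_G(p(q^{-1}(\mathrm{SS}(\sC_M))))$ via the moment map for $G/U_P$, i.e. via Proposition \ref{p:geomexamp} applied to the relevant induction space, combined with the behaviour of singular support under the relative tensor product. For the reverse inclusion, apply the restriction statement: by adjunction the generalized Whittaker model of $\mathrm{Ind}_M^G(\sC_M)$ for an orbit $\bO$ maximal in $\mathrm{Ind}_M^G(\mathrm{SS}(\sC_M))$ computes (up to a nonzero base-changed contribution) the Whittaker model of $\sC_M$ for the corresponding orbit of $\fm$, which is nonzero by maximality; hence $\bO\in\mathrm{SS}(\mathrm{Ind}_M^G(\sC_M))$, and taking closures and $\mathrm{Ad}_G$-saturations gives the full containment. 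Alternatively, and perhaps more robustly, one can verify directly that $\mathrm{Ind}_M^G$ and $\mathrm{Res}_M^G$ are adjoint as functors between the filtered categories $\{\sC_Z\}$ of Theorem \ref{t:loc}, so that the two singular-support equalities are formally equivalent once the operations $\mathrm{Ind}_M^G(-)$ and $\mathrm{Res}_M^G(-)$ on closed conical subsets are checked to be adjoint in the appropriate sense (which is the classical Lusztig--Spaltenstein fact).

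I expect the main obstacle to be the matrix-coefficient formula under parabolic restriction — i.e. proving that $\mathrm{MC}_{\sC^{U_P}}$ is computed from $\mathrm{MC}_\sC$ by $!$-restriction to $P$ followed by $!$-pushforward to $M$, with the correct shift. The subtlety is that $\sC^{U_P}$-endomorphisms do not all lift to $U_P$-equivariant $\sC$-endomorphisms on the nose, so one must argue at the level of the full $\mathrm{MC}$ functors (using that $\sC\otimes\sC^\vee\to \sC^{U_P}\otimes(\sC^{U_P})^\vee$ is surjective in a suitable sense, or working with the ambient $D(G)$-action and the relation $D(M)=D(U_P\backslash P/U_P)$ inside $D(P)\subset D(G)$), and one must track the relevant cohomological shift coming from the non-compactness of $U_P$ so that the singular-support estimate comes out sharp. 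Once this is in place, both singular-support equalities follow by combining the standard functoriality of singular support for $!$-operations with the Whittaker-vanishing criterion and the Lusztig--Spaltenstein description of $\mathrm{Ind}_M^G(\bO_M)$ and $\mathrm{Res}_M^G(\bO)$ on orbits.
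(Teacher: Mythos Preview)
Your proposed route has a real gap at its technical heart. You want to derive the restriction inclusion $\on{SS}(\sC^{U_P})\subset \on{Res}_M^G(\on{SS}(\sC))$ from a matrix-coefficient formula of the shape $\pi_{M,!}\circ\pi_P^!$ together with ``standard microlocal estimates.'' But $\pi_P:P\hookrightarrow G$ is a closed embedding, and $!$-restriction along a closed embedding does \emph{not} satisfy any useful singular-support estimate in general (the paper flags this explicitly: see the Remark after Proposition~\ref{p:pullb}). Your parenthetical that nilpotency of $\sC$ makes the estimate ``sharp rather than lossy'' is the entire content of the theorem, not an input you can invoke. The paper's actual mechanism for this step is completely different: it works $(B,\chi)$-equivariantly from the start, and the key Proposition~\ref{p:pullb} shows that $i^!:D_Z(B,\chi\backslash G)\to D(B_M,\chi\backslash M)$ lands in $D_{\on{Res}_M^G(Z)}$ by convolving with the invertible kernel $j_{w_0,\chi,!}$ so as to replace the bad closed-embedding restriction by an \emph{open} restriction $j^!$ along $Pw_0B\hookrightarrow G$, which does preserve singular support. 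The converse inclusion is likewise handled by a Hecke-category argument (Proposition~\ref{p:converseprop}) via the Bruhat stratification and the same convolution trick, not by the Whittaker criterion; in the paper's logical order the Whittaker characterization (Theorem~\ref{t:main}) comes after these results.

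Your plan for induction also diverges from the paper. You propose to deduce it from restriction by adjunction or by Lusztig--Spaltenstein compatibility of the orbit operations; the paper instead proves it directly and more simply. One inclusion is obtained from elementary singular-support behavior under a smooth map (Lemma~\ref{l:smoothpullback}) and a closed embedding (Lemma~\ref{l:oneinc}), which is the ``easy'' direction precisely because $M\leftarrow P\to G$ has the smooth and closed maps in the favorable order for induction. The other inclusion again goes through the $(B,\chi)$-generation of nilpotent categories (Theorem~\ref{t:nilp}) and Corollary~\ref{c:catHeckedescrip}, reducing to a convolution estimate on the Hecke side. Your appeal to Proposition~\ref{p:geomexamp} for induction is also misdirected: that proposition itself is proved using the full force of Theorem~\ref{t:van}, and in any case concerns $D(X)$ for a $G$-variety rather than the relative tensor product $D(G)\otimes_{D(P)}\sC_M$.
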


\subsection{Microlocal characterization of generalized Whittaker models}\label{S:1.5}
The main result of this paper, Theorem \ref{t:microchar} below, gives a characterization of the singular support of nilpotent $G$-categories in terms of the vanishing of its generalized Whittaker models.

\subsubsection{}\label{s:defgenwhit} Let $e\in \cN$ be a nilpotent element and $(e,h,f)$ an $\mathfrak{sl}_2$-triple extending $e$. We get a decomposition $\fg=\underset{i\in \bZ}{\oplus} \fg(i)$, where
\[
\fg(i):=\lbrace x\in \fg\;\vert\; [h,x]=i\cdot x\rbrace.
\]

\noindent Fix a Lagrangian $\ell\subset \fg(-1)$ with respect to the natural symplectic structure on $\fg(-1)$ (see §\ref{s:genwhit}). Let $U^{-}=U^{-}_{\ell,e}$ be the unipotent subgroup of $G$ such that
\[
\on{Lie}(U^{-})=\ell\oplus\underset{i\leq -2}{\bigoplus} \fg(i).
\]

\noindent By construction, $U^{-}$ comes equipped with a natural non-degenerate character
\[
\psi_e: U^{-}\to \bA^1.
\]

\subsubsection{} For a $G$-category $\sC$, we let
\[
\on{Whit}_{e}(\sC):=\sC^{U^{-},\psi_e}\subset \sC
\]

\noindent denote the subcategory of $(U^{-},\psi_e)$-invariant objects and refer to it as the \emph{generalized Whittaker model of} $\sC$.

\subsubsection{}\label{s:indeporbit} It is not difficult to see that for any other $e'\in \bO$ (along with an $\mathfrak{sl}_2$-triple and a Lagrangian as above), we have an equivalence
\[
\on{Whit}_e(\sC)\simeq \on{Whit}_{e'}(\sC).
\]

\noindent Thus, for a nilpotent orbit $\bO$, we write $\on{Whit}_{\bO}(\sC):=\on{Whit}_e(\sC)$ for some fixed $e\in \bO$.

\subsubsection{} Our main result is the following:
\begin{thm}[Theorem \ref{t:main}]\label{t:microchar}
Let $\sC$ be a nilpotent $G$-category and $\bO\subset \cN$ a nilpotent orbit. If $\bO\cap\on{SS}(\sC)=\emptyset$, then
\[
\on{Whit}_{\bO}(\sC)=0.
\]

\noindent Moreover, if $\bO$ is maximal with respect to the property that $\bO\subset \on{SS}(\sC)$, then
\[
\on{Whit}_{\bO}(\sC)\neq 0.
\]
\end{thm}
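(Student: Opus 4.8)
\textbf{Proof proposal for Theorem \ref{t:microchar}.}

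The plan is to reduce both assertions to statements about the sheaves of matrix coefficients and their singular support, using the key observation that taking $(U^-,\psi_e)$-invariants can be detected at the level of $\on{MC}_{\sC}$. Concretely, for $\phi \in \End(\sC) \simeq \sC \otimes \sC^\vee$, the category $\on{Whit}_e(\sC)$ is a colimit (indeed a retract, after the appropriate Whittaker averaging) whose trace is computed by pulling back $\on{MC}_{\sC}(\phi)$ along the orbit map $U^- \to G$ twisted by $\psi_e$ and taking $(U^-,\psi_e)$-coinvariants; this is the categorical analogue of the $p$-adic formula $\cW_{\bO}(\pi) = (\cW_{\bO}\otimes\pi)_G$. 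For the first (vanishing) assertion, I would argue contrapositively: if $\on{Whit}_{\bO}(\sC)\neq 0$ then some matrix coefficient sheaf $\on{MC}_{\sC}(\phi)$ has non-vanishing $(U^-,\psi_e)$-averaged restriction, and a micro-local computation — the transverse slice to the $U^-$-orbit through a point where $\psi_e$ is nondegenerate meets $T^*G$ exactly along the conormal directions detecting the covector $e$ (via the Kostant section / the $\mathfrak{sl}_2$-triple) — forces $\bO \subset \on{SS}(\sC)$, or at least $\bO \cap \on{SS}(\sC) \neq \emptyset$. Since $\on{SS}(\sC)$ is closed, conical and $\Ad$-stable, meeting $\bO$ means containing $\overline{\bO} \supseteq \bO$; this contradicts $\bO \cap \on{SS}(\sC) = \emptyset$.

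For the second (non-vanishing) assertion, the strategy is to use the canonical filtration $\{\sC_{\overline{\bO'}}\}$ of Theorem \ref{t:loc} together with the hypothesis that $\bO$ is maximal in $\on{SS}(\sC)$. Maximality means that $\sC_{\overline{\bO}}$ is a nonzero dualizable $G$-subcategory whose singular support is exactly $\overline{\bO}$, and that quotienting by the subcategories $\sC_{\overline{\bO'}}$ for $\bO'$ strictly smaller than (and not equal to) $\bO$ leaves a nonzero $G$-category $\sC'$ with $\on{SS}(\sC') = \bO$ (a single orbit, up to closure). I would then compute $\on{Whit}_{\bO}(\sC')$: because the singular support of every matrix coefficient of $\sC'$ lies on the single orbit $\overline{\bO}$, restricting $\on{MC}_{\sC'}$ to a micro-local neighbourhood of the Whittaker slice for $e \in \bO$ is "clean" — it sees precisely the conormal to the $U^-$-orbit through the relevant point, which is where $\psi_e$ is nondegenerate — and Theorem \ref{t:criterionnilp}, or more precisely the structural description of $D_{\overline{\bO}}(G)$ near that slice, shows the averaged restriction is nonzero as soon as $\sC' \neq 0$. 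Finally one checks that $\on{Whit}_{\bO}$ is insensitive to passing from $\sC$ to the subquotient $\sC'$: the strictly-smaller-orbit pieces $\sC_{\overline{\bO'}}$ have $\on{Whit}_{\bO}(\sC_{\overline{\bO'}}) = 0$ by the first assertion (since $\bO \not\subset \overline{\bO'}$, hence $\bO \cap \on{SS}(\sC_{\overline{\bO'}}) = \emptyset$ by maximality of $\bO$), and $\on{Whit}_{\bO}(-)$ is exact in the relevant sense (it is a composite of the right adjoint $\Oblv$-type functor with an averaging functor, hence preserves the cofiber sequences coming from the filtration), so $\on{Whit}_{\bO}(\sC) \simeq \on{Whit}_{\bO}(\sC') \neq 0$.

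The main obstacle I expect is making the micro-local computation precise: one must show that the operation "restrict $\on{MC}_{\sC}(\phi)$ along the $\psi_e$-twisted $U^-$-orbit and take coinvariants" interacts with singular support exactly as in the classical microlocal picture — namely that its non-vanishing is equivalent to $\on{SS}(\on{MC}_{\sC}(\phi))$ meeting the conormal variety that the Whittaker datum cuts out over $1 \in G$, which by the Kostant-section geometry is the set of covectors in $\cN$ meeting $e + \ell^\perp$, i.e. precisely $\overline{\bO}$. This requires a careful compatibility between the Whittaker averaging functor on $D(G)$-modules and singular support along the action, and is the technical heart of the argument; I would isolate it as a lemma about $D_Z(G)$ and its $(U^-,\psi_e)$-invariants, presumably proved by a contraction/Fourier-transform argument on the $\fg(i)$-grading (relating it to the statement that $\bO$ is the unique open orbit in $\{x : \langle x, e\rangle \text{-type condition}\}$). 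The second subtlety is ensuring that "maximal orbit" together with the filtration genuinely produces a subquotient with singular support a single orbit and that this subquotient is nonzero — this should follow formally from Theorem \ref{t:loc} and the definition of $\sC_Z$, but the bookkeeping with the left adjoints and $G$-stability must be handled with care.
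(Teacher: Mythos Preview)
Your overall strategy diverges from the paper's, and the non-vanishing half has a real gap.

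The paper does not work directly with micro-local analysis on $D(G)$. Instead it first uses Theorem~\ref{t:criterionnilp} to reduce to the case where $\sC$ is $(B,\chi)$-generated, so that $\sC \simeq D(G/B,\chi)\otimes_{\sH_{G,\chi}} \sC^{B,\chi}$. Both halves then become statements about $D(G/B,\chi)$, namely Theorem~\ref{t:whitred}. For vanishing, one gets $\on{Whit}_{\bO}(\sC)\simeq D_{\on{SS}(\sC)}(G/B,\chi)^{U^-,\psi_e}\otimes_{\sH_{G,\chi}}\sC^{B,\chi}$, and the first factor is zero by an argument using Skryabin's equivalence and Losev's results on primitive ideals (Proposition~\ref{prop:lowerb}). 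For non-vanishing, one picks $c\in\sC^{B,\chi}$ and $\phi\in\sC^\vee$ so that the resulting matrix coefficient $\cF\in D(B,\chi\backslash G)$ has $\bO$ maximal in $\overline{G\cdot\on{SS}(\cF)}$, and then applies the non-vanishing half of Theorem~\ref{t:whitred} to $\cF$; that theorem is in turn reduced to the Hecke category and proved in the appendix using Losev's results on Harish--Chandra bimodules for $W$-algebras.

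Your vanishing argument is plausible in spirit but the lemma you isolate (``$(U^-,\psi_e)$-averaging on $D_Z(G)$ vanishes when $e\notin Z$'') is not established by a direct microlocal or Fourier-transform computation in the paper; it goes through Beilinson--Bernstein and Losev. The paper even remarks that the analogous statement for $D(G)^{(B,\chi)\text{-gen}}$ is expected but not proved in general, so a purely geometric approach on $D(G)$ is genuinely open.

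Your non-vanishing argument has a circularity. Passing to the subquotient $\sC'$ with $\on{SS}(\sC')=\overline{\bO}$ does not simplify the problem: showing $\on{Whit}_{\bO}(\sC')\neq 0$ for such $\sC'$ is already the full content of the theorem in that case, and your ``cleanness'' step is precisely what needs proof. The paper's substitute for this step, Theorem~\ref{t:whitred}, is the technical heart of the whole paper and rests on $W$-algebra representation theory, not on a transverse-slice computation. Also note that Theorem~\ref{t:loc}, which you invoke to build the filtration, is itself proved in the paper \emph{using} Theorem~\ref{t:whitred}, so it is not an independent input.
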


\begin{rem}
In fact, one implication of Theorem \ref{t:microchar} is still true without the assumption that $\sC$ be nilpotent. Namely, if $\sC$ is a dualizable $G$-category with $\bO\cap\on{SS}(\sC)=\emptyset$, then it is still true that
\[
\on{Whit}_{\bO}(\sC)=0.
\]

\noindent We do not know if the converse holds without the nilpotency assumption.\footnote{A similar situation occurs in \cite{faergeman2022non}. Here, it is shown that automorphic D-modules with irregular singular support have vanishing Whittaker coefficients. The converse holds when the D-modules have nilpotent singular support, although it is expected to hold without the nilpotency assumption.}
\end{rem}

\subsection{Whittaker reduction of D-modules on the flag variety}\label{s:whitredflag}
The main input needed in the proof of Theorem \ref{t:microchar} is a micro-local criterion for when the Whittaker reduction of a (twisted) D-module on the flag variety vanishes.

\subsubsection{} Fix a character sheaf $\chi\in D(T)$ corresponding to a $k$-point in $\ft^*/X^{\bullet}(T)$. We consider the category 
\[
D(G/B,\chi)
\]

\noindent of $(B,\chi)$-equivariant D-modules of $G$. 

\subsubsection{} For $\cF\in D(G/B,\chi)$, the singular support of $\cF$ naturally lives on $\widetilde{\cN}=T^*(G/B)$. Let 
\[
\mu:\widetilde{\cN}\to \cN
\]

\noindent be the Springer resolution. For a closed conical Ad-stable $Z\subset \cN$, we let
\[
D_Z(G/B,\chi)\subset D(G/B,\chi)
\]

\noindent be the subcategory of D-modules whose singular support is contained in $\mu^{-1}(Z)$.

\subsubsection{} Let
\[
\sH_{G,\chi}:= D(B,\chi\backslash G/B,\chi)
\]

\noindent be the Hecke category associated to $\chi$. We write
\[
\sH_{G,\chi,Z}\subset \sH_{G,\chi}
\]

\noindent for the subcategory of D-modules mapping to $D_Z(G/B,\chi)$ under the forgetful functor
\[
\sH_{G,\chi}\to D(G/B,\chi).
\]

\begin{rem}\label{r:compgen}
Using the notation of §\ref{s:canfilt}, for $\sC=D(G/B,\chi)$, we have:
\[
\sC_Z=D(G/B,\chi)\underset{\sH_{G,\chi}}{\otimes} \sH_{G,\chi,Z}.
\]
\end{rem}

\subsubsection{} For a nilpotent element $e\in\cN$, we may consider the Whittaker averaging functor
\[
\on{Av}_*^{\psi_e}: D(G/B,\chi)\to D(G)\to D(G/U^{-},\psi_e).
\]

\noindent Here, the first functor is the forgetful functor, and the second functor is the right adjoint to the forgetful functor:
\[
D(G/U^-,\psi_e)\to D(G).
\]

\begin{thm}[Theorem \ref{t:van}]\label{t:whitred}
We have an identification
\[
D(G/B,\chi) \underset{\sH_{G,\chi}}{\otimes} \sH_{G,\chi,Z}=\underset{e\in\bO\not\subset Z}{\bigcap}\on{Ker}(\on{Av}_*^{\psi_e})
\]

\noindent as subcategories of $D(G/B,\chi)$. That is, for any $\cF\in D(G/B,\chi) \underset{\sH_{G,\chi}}{\otimes} \sH_{G,\chi,Z}$ and $e\notin Z$, one has $\on{Av}_*^{\psi_e}(\cF)=0$. Conversely, if for all $e\notin Z$, we have $\on{Av}_*^{\psi_e}(\cF)=0$, then $\cF\in D(G/B,\chi) \underset{\sH_{G,\chi}}{\otimes} \sH_{G,\chi,Z}$.
\end{thm}

\begin{rem}
We expect that the tautological inclusion $D(G/B,\chi) \underset{\sH_{G,\chi}}{\otimes} \sH_{G,\chi,Z}\into D_Z(G/B,\chi)$ is an equivalence. We show that any holonomic object of the right-hand side in fact belongs to the left-hand side, see Theorem \ref{t:van1}.

In \cite{faergeman2022non} a special case of Theorem \ref{t:whitred} was proven. Namely, it was shown that for any holonomic $\cF\in D(G/B,\chi)$ whose singular support is contained in the irregular locus, one has $\on{Av}_*^{\psi_e}(\cF)=0$ for all regular $e\in \cN$.
\end{rem}

\begin{rem}
If instead of Whittaker averaging on the right, we average on the left:
\[
D(G/B,\chi)\to D(U^{-},\psi_e,\backslash G/B,\chi),
\]

\noindent one may similarly ask if Theorem \ref{t:whitred} holds in this case. One can prove that the first statement of Theorem \ref{t:whitred} remains true (i.e., averaging for an orbit not contained in the singular support kills the sheaf). However, for the converse to hold, one would need to allow field-valued points $e\in\bO(K)$, where $K/k$ is a possibly transcendental field extension, in the definition of Whittaker averaging.
\end{rem}

\subsubsection{} The categories in Theorem \ref{t:whitred} admit another description. We have a Beilinson-Bernstein equivalence $D(G/B,\chi)\simeq \fg\on{-mod}_{\lambda}$ for a suitable $\lambda\in \ft^*$, where the latter is the category $U(\fg)$-modules with central character $\lambda$. Under this equivalence, a module $M\in \fg\on{-mod}_{\lambda}$ belongs to $D(G/B,\chi)\underset{\sH_{G,\chi}}{\otimes} \sH_{G,\chi,Z}$ if and only the associated variety of $U(\fg)/\on{Ann}_{U(\fg)}(M)$ is contained in $Z$.

\subsection{Realization of Hochschild homology of Whittaker model as a microstalk}\label{S:pseudomicro}

\subsubsection{} For each $e\in\bO \subset\cN$, we associate a Whittaker functional
\[
\on{coeff}_{\bO}: D(G\overset{\on{ad}}{/}G)\to\on{Vect.}
\]

\noindent Namely, let $\fp_e: U^{-}\overset{\on{ad}}{/}U^{-}\to G\overset{\on{ad}}{/}G$ be the canonical map and note that $\psi_e: U^{-}\to \bA^1$ descends to a map $ U^{-}\overset{\on{ad}}{/}U^{-}\to \bA^1$, which we similarly denote by $\psi_e$. Let $\on{exp}\in D(\bA^1)$ be the exponential sheaf normalized to be in perverse degree $-1$. Then we define
\[
\on{coeff}_{\bO}:=C_{\on{dR}}(U^{-}\overset{\on{ad}}{/}U^{-}, \fp_e^!(-)\overset{!}{\otimes} \psi_e^!(\on{exp})).
\]

\subsubsection{} We may describe the (underlying classical stack of the) cotangent bundle of $G\overset{\on{ad}}{/}G$ as the commuting stack 
\[
T^*(G\overset{\on{ad}}{/}G)=\lbrace (g,x)\in G\times \fg^*\;\vert\; gxg^{-1}=x\rbrace /G.
\]

\noindent In particular, we have a natural map
\[
\pi: T^*(G\overset{\on{ad}}{/}G)\to \fg^*/G.
\]

\subsubsection{} For a closed conical $G$-stable subset $Z\subset \cN$, let
\[
\Lambda_Z:=\lbrace (g,x)\in G\times Z\;\vert\; gxg^{-1}=x\rbrace.
\]

\noindent It is a Lagrangian subvariety of $T^*G$. If $\bO$ is a maximal nilpotent orbit contained in $Z$, then 
\[
\Lambda_{\bO}=\Lambda_Z\underset{Z}{\times}\bO\subset \Lambda_Z^{\on{sm}}
\]

\noindent is contained in the smooth locus of $\Lambda_Z$. Note that $\Lambda_{\bO}$ is possibly disconnected. We let $\Lambda_{\bO,0}$ be the connected component containing $(1,\bO)\subset \Lambda_{\bO}$.

\subsubsection{} Denote by $D_Z(G\overset{\on{ad}}{/}G)$ the category of D-modules on $G\overset{\on{ad}}{/}G$ whose singular supported is contained in $\Lambda_Z/G$. This is the category of character sheaves on $G$. We remind that these are all regular holonomic.

\subsubsection{Microstalks}\label{s:microstalk} Suppose our ground field is the complex numbers so that we may apply the Riemann-Hilbert correspondence. Recall the notion of a microstalk as appearing in the work of Kashiwara-Schapira  \cite[§7.5]{kashiwara1990sheaves}. Given a complex variety $X$, a Lagrangian $\Lambda\subset T^*X$, and a smooth point $(x,\xi)\in\Lambda$, we have a microstalk functor:
\[
\mu_{(x,\xi)}: \on{Shv}_{\Lambda}^{\on{Betti}}(X)\to\on{Vect.}
\]

\noindent Here, $\on{Shv}_{\Lambda}^{\on{Betti}}(X)$ is the category of all sheaves of $\bC$-vector spaces on the topological space underlying $X$. The microstalk functor is computed by taking vanishing cycles for a suitably transverse function. In particular, the functor is not canonical but depends on the chosen function.

We remind that the microstalk functor is t-exact. Moreover, for a perverse sheaf $\cF\in \on{Shv}_{\Lambda}^{\on{Betti}}(X)$, the dimension of $\mu_{(x,\xi)}(\cF)$ is given by the multiplicity of the point $(x,\xi)$ in the characteristic cycle of $\cF$ (see \cite[§9.5+10.3]{kashiwara1990sheaves}).

\subsubsection{} Let $Z\subset \cN$ be a closed conical Ad-invariant subset, and let $\bO\subset Z$ be a maximal nilpotent orbit contained in $Z$. Pick an element $e\in\bO$. Since $(1,e)\in\Lambda_Z$ is a smooth point, applying Riemann-Hilbert, we obtain a microstalk functor:
\[
\mu_{\bO}=\mu_{(1,e)}: D_Z(G\overset{\on{ad}}{/}G)\to\on{Vect.}
\]

\noindent Our first main result of this subsection is the following:
\begin{thm}\label{t:micro} We have an isomorphism of functors:
\[
\on{coeff}_{\bO}\simeq \mu_{\bO}: D_Z(G\overset{\on{ad}}{/}G)\to \on{Vect.}
\]

\noindent That is, $\on{coeff}_{\bO}$ computes the microstalk at $(1,e)$.
\end{thm}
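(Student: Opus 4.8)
\emph{Proof strategy.} The plan is to identify $\on{coeff}_{\bO}|_{\on{Princ}_{\chi,Z}}$ with the Kashiwara--Schapira microlocalization functor at $(1,e)$ by realizing it as a shifted vanishing-cycles functor for a test function transverse to $\Lambda_Z$ there. Recall that $\mu_{(1,e)}$ is, by construction, of the form $\varphi_{g}[\,\mathrm{shift}\,]$ for any function $g$ near $[1]\in G\overset{\on{ad}}{/}G$ with $g([1])=0$, with differential $e$ at $[1]$, and whose graph meets $\Lambda_Z$ transversally at $(1,e)$ and is disjoint from $\Lambda_Z$ elsewhere over a neighbourhood of $[1]$; any two such choices yield canonically isomorphic functors on $\on{Shv}_Z(G\overset{\on{ad}}{/}G)$ \cite{kashiwara1990sheaves}. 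Since $\mu_{(1,e)}$ is $t$-exact and, as will emerge below, $\on{coeff}_{\bO}$ is exact on $\on{Princ}_{\chi,Z}$, and since $\on{Princ}_{\chi,Z}$ is compactly generated by the finitely many simple summands of $\on{Spr}_\chi$ whose singular support lies in $\Lambda_Z/G$, it suffices to exhibit, for one admissible $g$, a natural isomorphism $\on{coeff}_{\bO}(-)\simeq\varphi_g(-)[\,\mathrm{shift}\,]$ of functors on $\on{Princ}_{\chi,Z}$; the compact generators then serve only to verify exactness and to pin down the shift.

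For the rewriting I would use a stationary-phase principle for the exponential twist: for a holonomic $\cG$ on $Y$ and $h\colon Y\to\bA^1$ proper on $\on{Supp}\cG$, the complex $C_{\on{dR}}(Y,\cG\overset{!}{\otimes} h^!\on{exp})$ is a direct sum of vanishing-cycles contributions of $h$ at the points of $\on{SS}(\cG)$ lying on the graph of $dh$. Apply this with $Y=U^{-}\overset{\on{ad}}{/}U^{-}$, $h=\psi_e$, and $\cG=\fp_e^!\cF$ for $\cF\in\on{Princ}_{\chi,Z}$. Since $U^-$ is unipotent, the required properness — and the reduction from the quotient stack to a genuine scheme — should be arranged using the contracting $\bG_m$-action attached to $h$ in the chosen $\mathfrak{sl}_2$-triple $(e,h,f)$; this is precisely the mechanism by which Whittaker reduction along $(U^-,\psi_e)$ is governed by the Slodowy slice $e+\fg^f$ in the style of Gan--Ginzburg, and it is here that the normalization of $\on{exp}$ to perverse degree $-1$ makes the final shift vanish.

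The heart of the matter is to show that, for $\cF\in\on{Princ}_{\chi,Z}$ and $\bO$ maximal in $Z$, the only contributing point is $(1,e)$. I would combine: (i) the commuting-stack description $T^*(G\overset{\on{ad}}{/}G)=\{(g,x)\in G\times\fg^*:\on{Ad}_g x=x\}/G$ together with $\on{SS}(\cF)\subset\Lambda_Z$, which controls $\on{SS}(\fp_e^!\cF)$ through the cotangent correspondence of $\fp_e$; (ii) the non-degeneracy of $\psi_e$, which forces any contributing covector to be $U^-$-conjugate into the slice $e+\fg^f$; and (iii) the maximality of $\bO$ in $Z$, which makes $(e+\fg^f)\cap Z$ equal to $\{e\}$ near $e$ — equivalently $(1,e)\in\Lambda_Z^{\on{sm}}$ lies on the component $\Lambda_{\bO,0}$ — so that after conjugation $(1,e)$ is the only point of $\on{SS}(\cF)\cap(\mathrm{graph}\ dg)$ over a neighbourhood of $[1]$; the contraction from (ii) also forces the group coordinate to be $g=1$. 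Granting this, $\on{coeff}_{\bO}(\cF)$ is the single vanishing-cycles contribution at the smooth point $(1,e)$, hence $\mu_{(1,e)}(\cF)$ after a $\cF$-independent shift, which one fixes by evaluating both sides on $\on{Spr}_\chi$ (where $\on{coeff}_{\bO}(\on{Spr}_\chi)$ and the multiplicity of $\Lambda_{\bO,0}$ in the characteristic cycle of $\on{Spr}_\chi$ are both computable in closed form from Springer theory).

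I expect the main obstacle to be the content of the last two paragraphs: making the stationary-phase localization rigorous on the quotient stack $U^{-}\overset{\on{ad}}{/}U^{-}$, which is neither a scheme nor proper, and then performing the critical-locus analysis. Both difficulties are where the $\mathfrak{sl}_2$-triple and the maximality of $\bO$ do the real work, reducing the question to the transversality of $e+\fg^f$ with $\overline{\bO}$ and ultimately to a microstalk computation on a neighbourhood of the base point of the Slodowy slice. A more economical alternative, once one knows that $\on{coeff}_{\bO}$ is exact on $\on{Princ}_{\chi,Z}$ and microsupported at $(1,e)$, is to compare the two exact functors directly on the finitely many simple summands of $\on{Spr}_\chi$: both compute the multiplicity of $\Lambda_{\bO,0}$ in the characteristic cycle, so they agree once normalizations are matched.
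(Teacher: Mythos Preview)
Your primary strategy---a direct stationary-phase/vanishing-cycles analysis of the integral defining $\on{coeff}_{\bO}$ on $U^{-}\overset{\on{ad}}{/}U^{-}$---is genuinely different from the paper's route, and the obstacles you flag (properness on the quotient stack, the critical-locus analysis) are real and not resolved in your sketch. The paper sidesteps them entirely by a three-step detour. First, it proves the analogous statement on the \emph{Lie algebra} (Theorem~\ref{t:whitcoefflie}): there the Fourier--Deligne transform converts $\on{coeff}_{\bO}^{\fg}(\cF)$ into the restriction of $\on{FT}_{\fg}(\cF)$ to the Slodowy slice and then, by the Kac--Kazhdan contraction and maximality of $\bO$, to a single shifted $!$-fiber at $-e$; this is where the hard computation actually happens, and it is purely linear. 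Second, it transfers the result to $\on{Princ}_{\chi,Z}$ on the group by the complex-analytic exponential map (over $\bC$, invoking Lefschetz), using that simple summands of $\on{Spr}_{\chi}$ pull back under $\mathbf{exp}$ near $1$ to summands of $\on{Spr}^{\fg}$ (Theorem~\ref{t:whitismicrostalkG}). Third, rather than constructing a natural isomorphism with $\mu_{\bO}$, it axiomatizes the two properties just established---$t$-exactness and computing the multiplicity of $\Lambda_{\bO,0}$ in the characteristic cycle---under the name \emph{pseudo-microstalk}, and proves abstractly (Proposition~\ref{p:noncaniso}) that any two pseudo-microstalks on $\on{Princ}_{\chi,Z}$ are non-canonically isomorphic, via a short dga argument exploiting that the category has finitely many simples.

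Your ``more economical alternative'' in the final paragraph is essentially the paper's step three, but you are missing the computational input of steps one and two: you assert that $\on{coeff}_{\bO}$ is exact on $\on{Princ}_{\chi,Z}$ and computes the right multiplicity, yet your only proposed justification for this is the stationary-phase argument whose gaps you have already conceded. The Fourier-transform-on-$\fg$ computation is precisely what supplies this, and the exponential-map comparison is what carries it to the group. Note also that the paper emphasizes the isomorphism is \emph{non-canonical} (see the remark following the theorem statement); your direct approach, if it could be made to work, would presumably yield something more canonical, which is not what is being claimed.
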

\begin{rem}
The above isomorphism is non-canonical: indeed, the very definition of a microstalk depends on a transverse function. Rather, we prove that $\on{coeff}_{\bO}$ satisfies the desired properties of a microstalk and show that this pins down the functor up to non-canonical isomorphism.
\end{rem}

\subsubsection{} When our ground field is not the complex numbers, the notion of microstalk does not make sense. Instead, one should read Theorem \ref{t:micro} as saying that 
\[
\on{coeff}_{\bO}: D_Z(G\overset{\on{ad}}{/}G)\to \on{Vect} 
\]

\noindent is t-exact, and for coherent $\cF\in D_Z(G\overset{\on{ad}}{/}G)$, the Euler characteristic $\chi(\on{coeff}_{\bO}(\cF))$ computes the multiplicity of $\Lambda_{\bO,0}$ in the characteristic cycle of $\cF$. In Section \ref{s:5}, we study functors satisfying these properties and refer to them as \emph{pseudo-microstalks}.

For the remainder of this subsection, we will ignore this sublety and talk about microstalks even when we are not over the complex numbers, remarking that we really mean pseudo-microstalks in this case.

\subsubsection{Hochschild homology of Whittaker model}

The main application of Theorem \ref{t:micro} is the realization of the Hochschild homology of the Whittaker model of a nilpotent $G$-category as the microstalk of its character sheaf, which we explain next.

\subsubsection{} Recall that given a dualizable DG-category $\sD$, we may talk about its Hochschild homology:
\[
\on{HH}_*(\sD)\in\on{Vect.}
\]

\noindent Concretely, it is calculated as the categorical trace of the identity functor of $\sC$, see e.g. \cite[§2]{ben2013nonlinear}. In particular, if $\sC$ is a dualizable $G$-category, we may consider the Hochschild homology of its (generalized) Whittaker model:
\[
\on{HH}_*(\on{Whit}_{\bO}(\sC)).
\]

\subsubsection{} For a dualizable $G$-category $\sC$, we let
\[
\chi_{\sC}:=\on{MC}_{\sC}(\on{id})\in D(G\overset{\on{ad}}{/}G)
\]

\noindent be the image of the identity endomorphism of $\sC$ under the matrix coefficient functor $\on{MC}_{\sC}$. We refer to $\chi_{\sC}$ as the \emph{character sheaf associated to} $\sC$.
\begin{thm}\label{t:catstate}
Let $\sC$ be a nilpotent $G$-category and $\bO$ a maximal nilpotent orbit contained in $\on{SS}(\sC)$. Then
\[
\on{HH}_*(\on{Whit}_{\bO}(\sC))=\on{coeff}_{\bO}(\chi_{\sC})=\mu_{\bO}(\chi_{\sC}).
\]

\noindent That is, the Hochschild homology of $\on{Whit}_{\bO}(\sC)$ is given by the microstalk of its character sheaf at $(1,e)$.
\end{thm}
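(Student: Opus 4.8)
The plan is to combine the microlocal statement for Grothendieck--Springer sheaves (Theorem \ref{t:micro}) with a general formula expressing the Hochschild homology of a Whittaker model as the value of the Whittaker functional $\on{coeff}_{\bO}$ on the character sheaf. First I would establish that $\chi_{\sC}$ is regular holonomic with singular support in $\Lambda_{\on{SS}(\sC)}/G$: by construction $\chi_{\sC}=\on{MC}_{\sC}(\id)$ lies in $D_{\on{SS}(\sC)}(G\overset{\on{ad}}{/}G)$ since $\sC$ is nilpotent, and the commuting-stack description of $T^*(G\overset{\on{ad}}{/}G)$ identifies its singular support with (a subset of) $\Lambda_{\on{SS}(\sC)}/G$; regular holonomicity follows because the character sheaf of a nilpotent $G$-category is built, via the canonical filtration $\{\sC_{\overline{\bO}}\}$ of §\ref{s:canfilt} and Theorem \ref{t:loc}, from matrix coefficients of categories with singular support in a single orbit closure, and one reduces to known regularity statements (e.g. via the Hecke/flag-variety description of Remark \ref{r:compgen} and Corollary \ref{c:Heckedescrip}).

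Next I would prove the identity $\on{HH}_*(\on{Whit}_{\bO}(\sC))=\on{coeff}_{\bO}(\chi_{\sC})$. The key point is that Hochschild homology is the categorical trace of the identity functor, and taking $(U^-,\psi_e)$-invariants is an operation on $G$-categories whose effect on traces is computed by the corresponding Whittaker averaging on the matrix-coefficient side. Concretely, $\on{Whit}_{\bO}(\sC)=\sC^{U^-,\psi_e}$ can be written as $D(U^-,\psi_e\backslash G/U^-,\psi_e)\otimes_{D(G)}$ applied to $\sC$ (or, more precisely, as the invariants for the idempotent in $D(G)$ implementing Whittaker averaging), so that
\[
\on{HH}_*(\on{Whit}_{\bO}(\sC))=\on{Tr}\big(\on{Whit}_{\bO}(\sC),\id\big)=\on{Tr}\big(\sC, \text{Whittaker projector}\big),
\]
and the trace of an endofunctor of $\sC$ of the form ``act by $\cK\in D(G)$'' is $C_{\on{dR}}(G\overset{\on{ad}}{/}G, \chi_{\sC}\overset{!}{\otimes}\cK')$ for the appropriate adjoint kernel $\cK'$. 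Unwinding the definition of the Whittaker projector and of $\on{coeff}_{\bO}$ in §\ref{S:pseudomicro} — in particular that $\psi_e$ descends to $U^-\overset{\on{ad}}{/}U^-$ and the relevant kernel is $\fp_{e,*}\psi_e^!(\on{exp})$ — this trace becomes exactly $\on{coeff}_{\bO}(\chi_{\sC})$. This is essentially a base-change/projection-formula computation once the Whittaker invariants are rewritten as a relative tensor product over $D(G)$.

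Finally, to identify $\on{coeff}_{\bO}(\chi_{\sC})$ with the microstalk $\mu_{\bO}(\chi_{\sC})$, I would invoke Theorem \ref{t:micro}. That theorem gives the equality of functors $\on{coeff}_{\bO}\simeq \mu_{\bO}$ on $\on{Princ}_{\chi,Z}$, i.e.\ on D-modules generated by simple summands of Grothendieck--Springer sheaves; so it suffices to show $\chi_{\sC}$ lies in the subcategory where this comparison applies, or more precisely that $\on{coeff}_{\bO}$ is a pseudo-microstalk at $(1,e)$ on all of $D_{\on{SS}(\sC)}(G\overset{\on{ad}}{/}G)$. For this I would use the parabolic induction/restriction compatibilities and the $B$-generation reductions (Theorems \ref{t:specialthm}, \ref{t:whitred}) to reduce the character sheaf of a general nilpotent $G$-category to the Springer-theoretic case: every nilpotent $G$-category is, up to the canonical filtration, controlled by categories of the form $D_Z(G/B,\chi)$ via matrix coefficients, whose character sheaves are precisely objects of $\on{Princ}_{\chi,Z}$. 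The main obstacle I anticipate is the second step — carefully rewriting $\on{HH}_*(\on{Whit}_{\bO}(\sC))$ as $\on{coeff}_{\bO}(\chi_{\sC})$ — because it requires handling the non-compactness of $U^-$ and the $*$- versus $!$-averaging subtleties (the functional $\on{coeff}_{\bO}$ uses $\fp_e^!$ and $\psi_e^!(\on{exp})$, and matching this with the trace of the Whittaker idempotent needs the cleanness/transversality properties of the Whittaker situation, i.e.\ that $\on{Av}_*^{\psi_e}$ and $\on{Av}_!^{\psi_e}$ agree up to shift), together with verifying that the maximality of $\bO$ in $\on{SS}(\sC)$ guarantees the relevant vanishing so that only the component $\Lambda_{\bO,0}$ contributes.
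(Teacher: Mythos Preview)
Your three-step outline matches the paper's structure (this is Theorem~\ref{t:finalboss?} in §\ref{S:5.5}), but you have misidentified the key tools in each step, and one of your anticipated obstacles is a red herring.

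For the identity $\on{HH}_*(\on{Whit}_{\bO}(\sC))=\on{coeff}_{\bO}(\chi_{\sC})$, the paper proves this as Lemma~\ref{l:HHiscoeff} for \emph{any} dualizable $G$-category, with no nilpotency, no maximality of $\bO$, and crucially no cleanness of Whittaker averaging. The proof is a pure diagram chase: one factors $\on{coeff}_{\bO}$ through $D(U^{-}\overset{\on{ad}}{/}U^{-})$ via $p_e^!$, then dualizes the resulting square, and the whole thing reduces to the tautology that $\on{oblv}^{\psi_e}\circ\on{Av}_*^{\psi_e}$ equals convolution with $\psi_e^!(\on{exp})$. Your worry about matching $\on{Av}_*^{\psi_e}$ with $\on{Av}_!^{\psi_e}$ is unnecessary and would lead you astray, since that equality does not hold in the generality needed.

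For regularity of $\chi_{\sC}$ and for landing $\chi_{\sC}$ in the domain of Theorem~\ref{t:micro}, the paper does \emph{not} use the singular support filtration $\{\sC_{\overline{\bO}}\}$ nor parabolic induction/restriction. Instead, Theorem~\ref{t:criterionnilp} decomposes a nilpotent $\sC$ into $(B,\chi)$-generated pieces, and then Lemma~\ref{l:charsheaf} uses the bar resolution of $\sC\simeq D(G/B,\chi)\otimes_{\sH_{G,\chi}}\sC^{B,\chi}$ to show $\chi_{\sC}\simeq \on{Spr}_{\chi}\otimes_{\on{HH}_*(\sH_{G,\chi})}\on{HH}_*(\sC^{B,\chi})$, which lies in $\on{Princ}_{\chi}$ on the nose (Corollary~\ref{c:charreg}). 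This single lemma handles both regularity and the reduction to $\on{Princ}_{\chi,Z}$ where Theorem~\ref{t:micro} applies; your proposed route through the orbit filtration and parabolic functors is a detour.
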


\subsection{Classification of finite-dimensional modules of $\sW$-algebras}\label{S:CLASSIFICATIONOFFDIMRAAAAAPS}

We next describe a surprising application of the above results to the representation theory of $\sW$-algebras. Namely, we provide a short proof of the classification of finite-dimensional representations for $\sW$-algebras, whose story we recall below.

\subsubsection{} Let $e\in\bO$ be a nilpotent element. Choosing a Whittaker datum as in Section \ref{S:1.5}, we get the $\sW$-algebra $\sW_e$ (see \cite{losev2011finite}). Let $\lambda\in \ft^*$ be a regular weight. The center $Z(\sW_e)$ of $\sW_e$ identifies with the center of $U(\fg)$, and we may consider the quotient 
\[
\sW_{e,\lambda}=\sW_e/\sW_e\cdot \on{Ker}(\chi_{\lambda}),
\]

\noindent where $\chi_{\lambda}$ is the corresponding character of $Z(\sW_e)$.

\subsubsection{} Let
\[
\sW_{e,\lambda}\on{-mod}^{\on{fin}}
\]

\noindent be the subcategory of $\sW_{e,\lambda}\on{-mod}$ consisting of complexes each of whose cohomologies are (ind-)finite-dimensional. We let 
\[
\sW_{e,\lambda}\on{-mod}^{\on{fin},\heartsuit}
\]

\noindent be the heart of its natural t-structure.

\subsubsection{} Let
\[
K_0(\sW_{e,\lambda}\on{-mod}^{\on{fin},\heartsuit})_k
\]

\noindent be the Grothendieck group of $\sW_{e,\lambda}\on{-mod}^{\on{fin},\heartsuit}$, base-changed to $k$. We have a Chern character map (see §\ref{s:justificationfin}):
\[
K_0(\sW_{e,\lambda}\on{-mod}^{\on{fin},\heartsuit})_k\to \on{HH}_0(\sW_{e,\lambda}\on{-mod}).
\]

\noindent We show that (cf. Lemma \ref{l:HHiscoho})
\begin{equation}\label{eq:HHiscoho'}
\on{HH}_*(\sW_{e,\lambda}\on{-mod})\simeq H^*(\cB_e)[2\on{dim}\cB_e],
\end{equation}

\noindent where $\cB_e$ is the Springer fiber of $e$. In particular
\begin{equation}\label{eq:HHiscoho'0}
\on{HH}_0(\sW_{e,\lambda}\on{-mod})\simeq H^{\on{top}}(\cB_e).
\end{equation}
\begin{rem}
The isomorphism (\ref{eq:HHiscoho'0}) was proven in \cite{etingof2010traces} by different means.
\end{rem}

\subsubsection{} Thus, we obtain a map
\begin{equation}\label{eq:Dodd}
K_0(\sW_{e,\lambda}\on{-mod}^{\on{fin},\heartsuit})_k\to  \on{HH}_0(\sW_{e,\lambda}\on{-mod})\simeq H^{\on{top}}(\cB_e).
\end{equation}
\begin{rem}
An injective map $K_0(\sW_{e,\lambda}\on{-mod}^{\on{fin},\heartsuit})_k\to H^{\on{top}}(\cB_e)$ was constructed in \cite{dodd2014injectivity} using a different approach (see also Remark \ref{r:Dodd}). We expect the map (\ref{eq:Dodd}) to coincide with that of \emph{loc.cit}.
\end{rem}

\subsubsection{} Let $\chi=\chi_{\lambda}$ be the character sheaf on $T$ corresponding to $\lambda$, and let
\[
W_{[\lambda]}=\lbrace w\in W\;\vert \; w(\lambda)-\lambda\in X^{\bullet}(T)\rbrace\subset W
\]

\noindent be the integral Weyl group of $\lambda$. Let
\[
\sH_{G,\chi}^{\on{ren}}=D(B,\chi\backslash G/B,\chi)^{\on{ren}}
\]

\noindent be the corresponding Hecke category, renormalized so that simple sheaves are compact. A consequence of Skryabin's equivalence and Beilinson-Bernstein localization is that we have an action:
\begin{equation}\label{eq:actonfin}
\sW_{e,\lambda}\on{-mod}^{\on{fin}}\curvearrowleft \sH_{G,\chi}^{\on{ren}}.
\end{equation}

\subsubsection{} One has an isomorphism of algebras (see Lemma \ref{l:HHisgroupalg}):
\begin{equation}\label{eq:algdescripofHH}
K_0(\sH_{G,\chi}^{\on{ren}})_k\simeq \on{HH}_0(\sH_{G,\chi}^{\on{ren}})\simeq k[W_{[\lambda]}].
\end{equation}

\noindent Combined with (\ref{eq:HHiscoho'}) and (\ref{eq:actonfin}), we obtain actions:\footnote{We use that $K_0(\sW_{e,\lambda}\on{-mod}^{\on{fin},\heartsuit})_k$ may be realized as $K_0(-)_k$ of the small category consisting of bounded complexes of finite-dimensional $\sW_{e,\lambda}$-modules. It is not a priori clear that this coincides with $K_0(\sW_{e,\lambda}\on{-mod}^{\on{fin}})$, since the category $\sW_{e,\lambda}\on{-mod}^{\on{fin}}$ contains compact object that are unbounded from below.}
\begin{equation}\label{eq:actsinquestion}
K_0(\sW_{e,\lambda}\on{-mod}^{\on{fin},\heartsuit})_k, H^*(\cB_e)\curvearrowleft k[W_{[\lambda]}].
\end{equation}

\noindent We show:
\begin{thm}\label{t:mandenligneretegern}
The action
\[
H^*(\cB_e)\curvearrowleft k[W_{[\lambda]}]
\]

\noindent coincides with the usual Springer action in such a way that the trivial representation corresponds to $e=0$.
\end{thm}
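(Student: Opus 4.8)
The plan is to route the computation through the twisted Grothendieck--Springer sheaf $\on{Spr}_\chi$, realized as the character sheaf of the $G$-category $D(G/B,\chi)$. Since $\lambda$ is regular, Beilinson--Bernstein localization gives a $G$-equivariant equivalence $\fg\on{-mod}_{\lambda}\simeq D(G/B,\chi)$, and Skryabin's theorem identifies $\cW_{e,\lambda}\on{-mod}$ with $\on{Whit}_e(\fg\on{-mod}_{\lambda})\simeq\on{Whit}_e(D(G/B,\chi))$; under these identifications the action \eqref{eq:actonfin} is the one induced by the right convolution action of $\sH_{G,\chi}^{\on{ren}}$ on $D(G/B,\chi)$, which commutes with the left adjoint $G$-action and hence descends to $\on{Whit}_e(-)=(-)^{U^-,\psi_e}$. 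Functoriality of the ($G$-equivariant) categorical trace then equips $\on{HH}_*(\cW_{e,\lambda}\on{-mod})$, the character sheaf $\chi_{D(G/B,\chi)}=\on{Spr}_\chi$, and the Whittaker functional $\on{coeff}_{\bO}$ with compatible $\on{HH}_0(\sH_{G,\chi}^{\on{ren}})=k[W_{[\lambda]}]$-module structures, and the identification $\on{HH}_*(\on{Whit}_e(\sC))\simeq\on{coeff}_{\bO}(\chi_{\sC})$ underlying Theorem \ref{t:catstate} yields a $k[W_{[\lambda]}]$-equivariant isomorphism
\[
\on{HH}_*(\cW_{e,\lambda}\on{-mod})\simeq\on{coeff}_{\bO}(\on{Spr}_\chi).
\]
Together with \eqref{eq:HHiscoho'}, the theorem reduces to identifying the resulting $k[W_{[\lambda]}]$-action on $\on{coeff}_{\bO}(\on{Spr}_\chi)\simeq H^*(\cB_e)[2\on{dim}\cB_e]$ with the Springer action.

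Next I would show that the $\on{HH}_0(\sH_{G,\chi}^{\on{ren}})=k[W_{[\lambda]}]$-module structure on $\on{Spr}_\chi$ is the tautological one, through the standard isomorphism $k[W_{[\lambda]}]\xrightarrow{\sim}\on{End}(\on{Spr}_\chi)$ (the Springer action by endomorphisms of the twisted Grothendieck--Springer sheaf). It suffices to treat a simple reflection $s\in W_{[\lambda]}$: under \eqref{eq:algdescripofHH} the class $[\Theta_s]$ of the wall-crossing functor corresponds to $1+s\in k[W_{[\lambda]}]$, and its image in $\on{End}(\on{Spr}_\chi)$ is computed by the rank-one correspondence $G/B\leftarrow G/B\times_{G/P_s}G/B\to G/B$ together with base change over the commuting stack $T^*(G\overset{\on{ad}}{/}G)$; since a Borel lies in a unique $\bP^1$ of Borels inside its rank-one parabolic, meeting the adjoint fixed-point locus generically in the two Borels $B$ and $s\cdot B$, this image is again $1+s$, so the algebra map $k[W_{[\lambda]}]\to\on{End}(\on{Spr}_\chi)=k[W_{[\lambda]}]$ is the identity. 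This is the statement that the Springer $W$-action on the twisted Grothendieck--Springer sheaves is the categorical trace of the wall-crossing functors, extending Zhu's observation for integral central character; the genuinely new input is carrying the non-integral monodromy $\chi$ through the argument, so that the relevant group is $W_{[\lambda]}$ and $\Theta_s$ is considered only for $s\in W_{[\lambda]}$.

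Applying the functor $\on{coeff}_{\bO}$ then transports the tautological $\on{End}(\on{Spr}_\chi)=k[W_{[\lambda]}]$-action to the action on $\on{coeff}_{\bO}(\on{Spr}_\chi)$ via $\on{End}(\on{Spr}_\chi)\to\on{End}(\on{coeff}_{\bO}(\on{Spr}_\chi))$, and it remains to recognize this as the Springer action on $H^*(\cB_e)$. This is exactly the microlocal incarnation of the Springer correspondence: the description of the characteristic cycle of the Grothendieck--Springer sheaf together with its $W$-equivariance (Hotta--Kashiwara, Borho--MacPherson) identifies the microstalk of $\on{Spr}_\chi$ along the component $\Lambda_{\bO,0}$ with $H^*(\cB_e)$ carrying the Springer representation of $W_{[\lambda]}$. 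Over $\bC$ one makes this precise using Theorem \ref{t:micro}, after first replacing $D(G/B,\chi)$ by its subcategory $D_{\overline{\bO}}(G/B,\chi)$ — whose character sheaf lies in $\on{Princ}_{\chi,\overline{\bO}}$ and at whose singular support $(1,e)$ is a smooth point — and checking that $\on{coeff}_{\bO}$ depends only on this truncation. Finally the normalization is fixed by the case $e=0$: then $U^{-}=\lbrace 1\rbrace$ and $\psi_e$ is trivial, so $\on{coeff}_{0}$ computes the stalk of $\on{Spr}_\chi$ at $1\in G\overset{\on{ad}}{/}G$, which is $H^*(\cB)$, and one reads off that the $k[W_{[\lambda]}]$-action is the one for which $H^{\on{top}}(\cB_0)$ is the trivial representation; this pins down the normalization of ``Springer action'' asserted in the statement.

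The main obstacle is the middle step: proving that the categorical trace of the convolution action of $\sH_{G,\chi}^{\on{ren}}$ on $D(G/B,\chi)$ recovers the Springer action on $\on{Spr}_\chi$. Three points need care: (a) making the ``trace of a convolution endofunctor'' computation rigorous, i.e.\ the base-change identities over the commuting stack $T^*(G\overset{\on{ad}}{/}G)$ and the geometry of the $\bP^1$-fibrations $G/B\to G/P_s$; (b) reconciling, via \eqref{eq:algdescripofHH}, the two algebra structures on $\on{HH}_0(\sH_{G,\chi}^{\on{ren}})$ — the one coming from $K_0$ and the one coming from the cocenter — so that ``$[\Theta_s]=1+s$'' is meaningful and correctly normalized; and (c) the non-integral twist, which is precisely what goes beyond Zhu: one must work throughout with the $\chi$-monodromic Hecke category and verify that exactly the reflections in $W_{[\lambda]}$ survive in the trace. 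A secondary technical point is that for non-regular $e$ the covector $(1,e)$ need not be a smooth point of $\on{SS}(\on{Spr}_\chi)$, which is what forces the passage to the truncated category $D_{\overline{\bO}}(G/B,\chi)$ before invoking Theorem \ref{t:micro}.
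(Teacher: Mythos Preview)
Your overall architecture matches the paper's: reduce to showing the categorical $\on{HH}_0(\sH_{G,\chi}^{\on{ren}})\simeq k[W_{[\lambda]}]$-action on $\on{Spr}_\chi$ coincides with the usual Springer action (this is the paper's Proposition \ref{p:Springeract3}), and then transport via $\on{coeff}_{\bO}$ to $H^*(\cB_e)$ (the paper's Corollary \ref{c:finalspringerboss}).

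For the first step, however, the paper does \emph{not} argue via $[\Theta_s]=1+s$ and the $\bP^1$-geometry of $G/B\to G/P_s$. Instead it reduces to a simple reflection, then by parabolic induction to semisimple rank one, and there compares the two maps $\on{Spr}_\chi\to\on{Spr}_{s(\chi)}$ after restricting to a regular semisimple point $t\in T_{\on{rs}}$. The comparison is carried out by an explicit computation of the Chern-character-type maps $[\chi_{w,y}]$ and $[j_{wy,y}]$ into $\on{HH}_*(D(G/B,y(\chi)),t_{*,\on{dR}})$ (Lemma \ref{l:mainlemmactually}), together with the identification of the usual Springer action as the trace of the $T^+$-induction isomorphism $\dot s$. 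Your wall-crossing sketch is plausible as an alternative, but the details you flag under (a)--(c) are exactly what the paper circumvents by working over $T_{\on{rs}}$; in particular the non-integral twist is handled by allowing $\chi\neq s(\chi)$ and comparing maps $\on{Spr}_\chi\to\on{Spr}_{s(\chi)}$ rather than endomorphisms.

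For the transport step there is a genuine gap in your argument. You invoke Theorem \ref{t:micro} and Hotta--Kashiwara/Borho--MacPherson to identify $\on{coeff}_{\bO}(\on{Spr}_\chi)$ with the microstalk and hence with the Springer representation on $H^*(\cB_e)$. But the microstalk isomorphism of Theorem \ref{t:micro} is explicitly \emph{non-canonical}, so it does not by itself tell you that the \emph{specific} isomorphism \eqref{eq:HHiscoho'} is $k[W_{[\lambda]}]$-equivariant for the Springer action on the target. The paper avoids this by tracing through the canonical chain in the proof of Lemma \ref{l:HHiscoho}: one passes from $\on{coeff}_{\bO}^G(\on{Spr}_\chi)$ to $\on{coeff}_{\bO}^{\fg}(\on{Spr}^{\fg})$ via restriction to the unipotent/nilpotent cone (where the Springer actions on $\on{Spr}_\chi$ and $\on{Spr}^{\fg}$ visibly match), and then uses the \emph{functorial} identification $\on{coeff}_{\bO}^{\fg}(\on{Spr}^{\fg})\simeq i_e^{*,\on{dR}}\circ\on{FT}_{\fg}(\on{Spr}^{\fg})[-\on{dim}\bO]$ of Remark \ref{r:Springeridentification}. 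Since the usual Springer action on $H^*(\cB_e)$ is \emph{defined} as the action on $i_e^{*,\on{dR}}\circ\on{FT}_{\fg}(\on{Spr}^{\fg})$ inherited from $\on{Spr}^{\fg}$, functoriality gives the equivariance for free. Your truncation to $D_{\overline{\bO}}(G/B,\chi)$ and appeal to microstalks is unnecessary here and, as written, does not close the argument.
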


\begin{rem}\label{r:mansqman}
More generally, we obtain an action
\[
\on{Spr}_{\chi}\curvearrowleft k[W_{[\lambda]}]
\]

\noindent on the (twisted) Springer sheaf $\on{Spr}_{\chi}\in D(G\overset{\on{ad}}{/}G)$ coming from trace methods. The key input in proving Theorem \ref{t:mandenligneretegern} is to show that the action on $\on{Spr}_{\chi}$ coincides with the usual Springer action. For $\chi=0$, this was already observed by Xinwen Zhu in \cite[Example 3.2.7]{zhu2018geometric}, and in Section \ref{s:Spr} we prove the general case. We remark that, as far as we could tell, the presence of a non-trivial character sheaf $\chi$ seems to complicate the proof. 
\end{rem}

\begin{rem} Let us emphasize one appealing feature of the above perspective  on the Springer action. Namely, two fundamental appearances of a Weyl group action in geometric representation theory are (i) the wall crossing Harish--Chandra bimodules acting on Lie algebra representations at a fixed regular infinitesimal character, and (ii) the Springer action. Of course, in the situation of (i), one obtains a Weyl group action only after passing to Grothendieck groups, and at the categorical level one instead gets a braid group action, which generates an action of the Hecke category. 

On the other side of Beilinson--Bernstein localization, this action of the Hecke category, in its guise as Harish--Chandra bimodules, becomes simply the convolution action of $D(B, \chi \backslash G / B, \chi)$ on $D(G/B, \chi)$. In particular, in view of Remark \ref{r:mansqman}, these fundamental appearances (i) and (ii) of the Weyl group are `really the same':
the Springer action is the trace of the wall crossing functors. 
\end{rem}

\subsubsection{} Write
\[
K_0(\sH_{G,\chi}^{\on{ren}})_{\overline{\bO},k}
\]

\noindent for the subspace of $K_0(\sH_{G,\chi}^{\on{ren}})_k$ generated by simple sheaves in $\sH_{G,\chi}^{\on{ren},\heartsuit}$ whose singular support is contained in $\overline{\bO}$. We define the vector space
\[
K_0(\sH_{G,\chi}^{\on{ren}})_{\partial\bO,k}
\]

\noindent similarly. Let
\[
K_0(\sH_{G,\chi}^{\on{ren}})_{\bO,k}=K_0(\sH_{G,\chi}^{\on{ren}})_{\overline{\bO},k}/K_0(\sH_{G,\chi}^{\on{ren}})_{\partial\bO,k}.
\]

\noindent Note that by (\ref{eq:algdescripofHH}), we have an injection
\[
H^{\on{top}}(\cB_e)\underset{k[W_{[\lambda]}]}{\otimes}K_0(\sH_{G,\chi}^{\on{ren}})_{\bO,k}\into H^{\on{top}}(\cB_e).
\]

\subsubsection{} The following is the main theorem of this subsection:
\begin{thm}\label{t:classification1}
The map (\ref{eq:Dodd}):
\[
K_0(\sW_{e,\lambda}\on{-mod}^{\on{fin},\heartsuit})_k \to H^{\on{top}}(\cB_e)
\]

\noindent maps isomorphically onto the subspace
\[
H^{\on{top}}(\cB_e)\underset{k[W_{[\lambda]}]}{\otimes}K_0(\sH_{G,\chi}^{\on{ren}})_{\bO,k}.
\]

\noindent The resulting isomorphism
\[
K_0(\sW_{e,\lambda}\on{-mod}^{\on{fin},\heartsuit})_k\xrightarrow{\simeq} H^{\on{top}}(\cB_e)\underset{k[W_{[\lambda]}]}{\otimes}K_0(\sH_{G,\chi}^{\on{ren}})_{\bO,k}
\]

\noindent is $k[W_{[\lambda]}]$-linear for the actions (\ref{eq:actsinquestion}).
\end{thm}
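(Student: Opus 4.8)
\textbf{Proof proposal for Theorem \ref{t:classification1}.}

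The plan is to leverage Theorem \ref{t:catstate} together with the formalism of pseudo-microstalks developed in Section \ref{s:5}, applied to the $G$-category $\sC = \cW_{e,\lambda}\on{-mod}^{\on{fin}}$ (or rather a closely related $G$-category whose Whittaker model recovers $\cW_{e,\lambda}\on{-mod}^{\on{fin}}$ via Skryabin's equivalence). First I would identify the relevant $G$-category: by Skryabin's equivalence, $\cW_{e,\lambda}\on{-mod}\simeq \on{Whit}_{\bO}(\fg\on{-mod}_{\lambda})$, and passing to the finite-dimensional subcategory corresponds, under Beilinson--Bernstein localization, to the subcategory $D_{\overline{\bO}}(G/B,\chi)$ with singular support in $\overline{\bO}$; this is precisely the setting of Remark \ref{r:compgen} and Theorem \ref{c:minusrho'}. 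The character sheaf $\chi_{\sC}$ of this $G$-category is (a twist/summand of) the Grothendieck--Springer sheaf $\on{Spr}_{\chi}$, or more precisely the piece $\on{Princ}_{\chi,\overline{\bO}}$, so Theorem \ref{t:catstate} gives $\on{HH}_0(\cW_{e,\lambda}\on{-mod}^{\on{fin}}) \simeq \on{coeff}_{\bO}(\chi_{\sC}) = \mu_{\bO}(\chi_{\sC})$, recovering (\ref{eq:HHiscoho'0}).

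Next I would match up the two descriptions of the target. On one hand, (\ref{eq:HHiscoho'0}) gives $\on{HH}_0(\cW_{e,\lambda}\on{-mod})\simeq H^{\on{top}}(\cB_e)$, which carries the Springer action of $k[W_{[\lambda]}]$ by Theorem \ref{t:mandenligneretegern}. On the other hand, the microstalk functor $\mu_{\bO}$ is t-exact and computes multiplicities in characteristic cycles (§\ref{s:microstalk}); so applying $\mu_{\bO}$ to the simple summands of $\on{Spr}_{\chi}$ detects exactly those simples whose characteristic cycle contains $\Lambda_{\bO,0}$ with nonzero multiplicity, i.e. those whose singular support meets $\bO$ — equivalently the ``maximal'' simples, modulo the boundary $\partial\bO$. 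This is what identifies the image of the Chern character map (\ref{eq:Dodd}) with $H^{\on{top}}(\cB_e)\otimes_{k[W_{[\lambda]}]}K_0(\sH_{G,\chi}^{\on{ren}})_{\bO,k}$: the functor $\mu_{\bO}$ annihilates $\on{Princ}_{\chi,\partial\bO}$ and is faithful (on $K_0$, after rationalization) on the quotient, because a microstalk of a nonzero perverse sheaf whose characteristic cycle is supported on $\Lambda_{\overline{\bO}}$ and which has a summand through $\Lambda_{\bO,0}$ is nonzero. The $\sH_{G,\chi}^{\on{ren}}$-action (\ref{eq:actonfin}) on $\cW_{e,\lambda}\on{-mod}^{\on{fin}}$ induces, on Hochschild homology, the $k[W_{[\lambda]}]$-action via (\ref{eq:algdescripofHH}); since $\on{coeff}_{\bO}$ and hence $\mu_{\bO}$ are computed functorially from the matrix-coefficient/character-sheaf package, this action is intertwined with the action on $H^{\on{top}}(\cB_e)$, giving the asserted $k[W_{[\lambda]}]$-linearity.

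The key compatibility to nail down, and the step I expect to be the main obstacle, is the precise relationship between the \emph{Chern character map} (\ref{eq:Dodd}) on $K_0$ and the \emph{microstalk} $\mu_{\bO}$ applied to the character sheaf. Concretely: I must show that for a finite-dimensional $\cW_{e,\lambda}$-module $V$, viewing $[V]\in K_0$ and pushing through (\ref{eq:Dodd}) gives the same element of $H^{\on{top}}(\cB_e)$ as computing the microstalk $\mu_{\bO}$ of the character sheaf of the corresponding object of $D_{\overline{\bO}}(G/B,\chi)$. This requires knowing that the Chern character is compatible with the trace/matrix-coefficient formalism — i.e. that the Chern character of an object is the ``class function'' obtained by taking $\on{coeff}_{\bO}$ of its character sheaf — which should follow from the general compatibility of Hochschild homology with $\on{MC}_{\sC}$ (the image of $\id$), but setting up the comparison on $K_0$ with all the renormalizations and the passage between unbounded and bounded derived categories (as flagged in the footnote to (\ref{eq:actsinquestion})) is delicate. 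Once this is in place, injectivity of (\ref{eq:Dodd}) onto its image is a formal consequence of t-exactness of $\mu_{\bO}$ plus the fact that distinct simples in $\cW_{e,\lambda}\on{-mod}^{\on{fin},\heartsuit}$ have linearly independent microstalk classes (their characteristic cycles are distinct multiples of $\Lambda_{\bO,0}$ on the relevant component), and surjectivity onto $H^{\on{top}}(\cB_e)\otimes_{k[W_{[\lambda]}]}K_0(\sH_{G,\chi}^{\on{ren}})_{\bO,k}$ follows by a dimension count using (\ref{eq:algdescripofHH}) and Theorem \ref{t:mandenligneretegern}.
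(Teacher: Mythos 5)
Your proposal is missing the central idea of the paper's proof, and the steps you flag as ``formal consequences'' in fact beg the question.

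The key difficulty — which you do flag, but then wave away — is establishing that the Chern character map $K_0(\cW_{e,\lambda}\on{-mod}^{\on{fin},\heartsuit})_k \to \on{HH}_0(\cW_{e,\lambda}\on{-mod}^{\on{fin}})$ is injective. You say this ``is a formal consequence of t-exactness of $\mu_{\bO}$ plus the fact that distinct simples in $\cW_{e,\lambda}\on{-mod}^{\on{fin},\heartsuit}$ have linearly independent microstalk classes.'' But that ``fact'' is precisely the assertion that the Chern character is injective: knowing $\mu_{\bO}$ is t-exact tells you nothing about whether simples map to linearly independent elements of $\on{HH}_0$, since $\on{HH}_0$ can have lots of extra ``junk'' coming from the negative-degree part of the endomorphism algebra of a compact generator, and distinct simples can a priori map to the same class. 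Indeed, injectivity of this map $K_0 \to H^{\on{top}}(\cB_e)$ is Dodd's theorem, proven by reduction mod $p$, and avoiding that reliance is the entire point of the paper's argument. Similarly, surjectivity ``by a dimension count'' presupposes you know $\dim K_0(\cW_{e,\lambda}\on{-mod}^{\on{fin},\heartsuit})_k$, which is again what needs to be proven.

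The paper's route is genuinely different: it passes through \emph{periodic cyclic homology} $\on{HP}_0$ as an intermediary. The key observation (Lemma \ref{l:kzerotohp}) is that the compact generator of $\cW_{e,\lambda}\on{-mod}^{\on{fin}}$ has a connective endomorphism dga $A$ with finite-dimensional $H^0(A)$, so by Goodwillie's theorem the map $\on{HP}_*(A)\to\on{HP}_*(H^0(A))$ is an isomorphism; since $H^0(A)$ is a finite-dimensional algebra, its $\on{HP}_0$ is freely spanned by classes of simples. This makes $K_0 \to \on{HP}_0$ an isomorphism essentially for free, with no need for characteristic cycle injectivity. One then computes $\on{HP}_0$ of the tensor product $\cW_{e,\lambda}\on{-mod}\otimes_{\sH^{\on{ren}}_{G,\chi}}\sH^{\on{ren}}_{G,\chi,\overline{\bO}}$ using the $S^1$-action on $\on{HH}_*$, the computation of $\on{HH}_*(\sH^{\on{ren}}_{G,\chi})$ (Lemma \ref{l:HHisgroupalg}), and the microstalk vanishing results (Proposition \ref{p:finreductionstep}, which does use Theorem \ref{t:whitismicrostalkG}). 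So the microstalk machinery you invoke does appear, but only in the step of computing $\on{HP}_0$ of the tensor product; it is not used to argue injectivity of the Chern character directly. Without the HP intermediary, your argument has a gap that you could only fill by invoking Dodd's theorem, which the paper is specifically trying to reprove independently.
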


\begin{rem}
Let $A_{\bO}$ be the component group of the centralizer of the $\mathfrak{sl}_2$-triple that occurs in the definition of $\sW_e$. Then $A_{\bO}$ naturally acts on the vector spaces $K_0(\sW_{e,\lambda}\on{-mod}^{\on{fin},\heartsuit})_k, H^{\on{top}}(\cB_e)$. It follows by construction that the map (\ref{eq:Dodd}) is $A_{\bO}$-equivariant.
\end{rem}

\subsubsection{}
This theorem (along with many other interesting results that we do not address in this paper) was proven by Losev-Ostrik \cite[Thm. 7.4(iii)]{losev2014classification} for trivial central character and by Bezrukavnikov-Losev \cite[Thm. 5.2(2)]{bezrukavnikov2018dimension} for arbitrary central character. Our methods differ from those of \emph{loc.cit}.  In particular, we do not appeal to methods in positive characteristic.\footnote{Contrary to \cite{bezrukavnikov2018dimension}, the arguments in \cite{losev2014classification} are carried out in characteristic zero. However, they do appeal to results of \cite{dodd2014injectivity} that rely on methods in positive characteristic.} In fact, the proof of Theorem \ref{t:classification1} is a fairly straightforward consequence of the results described so far, as we explain below.
\begin{rem}\label{r:reconcile}
The statement of Theorem \ref{t:classification1} is stated slightly differently than in \cite{losev2014classification} and \cite{bezrukavnikov2018dimension}. To see that Theorem \ref{t:classification1} indeed coincides with those of \emph{loc.cit}, recall that given a two-sided cell in $W_{[\lambda]}$, we may assign a nilpotent orbit in $\fg$. Concretely, given $w\in W_{[\lambda]}$, we get a simple sheaf
\[
\on{IC}_{w,\chi}\in \sH_{G,\chi}^{\on{ren},\heartsuit}.
\]

\noindent We may then take the unique open nilpotent orbit contained in the singular support of $\on{IC}_{w,\chi}$. This orbit only depends on the two-sided cell of $W_{[\lambda]}$ containing $w$, and so we get a map
\begin{equation}\label{eq:twosidedcells}
\lbrace \on{two-sided\;cells\; in} W_{[\lambda]}\rbrace\to \lbrace \on{nilpotent\; orbits\; in} \fg\rbrace.
\end{equation}

\noindent A two-sided cell $c\subset W_{[\lambda]}$ naturally gives rise to a $W_{[\lambda]}$-module $[c]$ (see \cite[§2.2]{bezrukavnikov2018dimension}). By construction, we have an isomorphism of $W_{[\lambda]}$-modules
\[
\underset{c}{\bigoplus} [c]\simeq K_0(\sH_{G,\chi}^{\on{ren}})_{\bO,k},
\]

\noindent where the sum is taken over all two-sided cells of $W_{[\lambda]}$ mapping to $\bO$ under the map (\ref{eq:twosidedcells}). Thus, Theorem \ref{t:classification} exactly recovers \cite[Thm. 5.2(2)]{bezrukavnikov2018dimension}.
\end{rem}

\subsubsection{} Let us end this subsection by briefly describing the strategy of the proof of Theorem \ref{t:classification1}. 

First, a $\sW_{e,\lambda}$-module is finite-dimensional if and only if under Skryabin's equivalence and Beilinson-Bernstein localization:
\[
\sW_{e,\lambda}\on{-mod}\simeq D(U^{-},\psi_e,\backslash G/B,\chi),
\]

\noindent the associated variety of the quotient of $U(\fg)$ by its annihilator is contained in $\overline{\bO}$. The results of Section \ref{s:whitredflag} now provides an equivalence:
\begin{equation}\label{eq:tensoprod}
\sW_{e,\lambda}\on{-mod}^{\on{fin}}\simeq \sW_{e,\lambda}\on{-mod}\underset{\sH_{G,\chi}^{\on{ren}}}{\otimes}\sH_{G,\chi,\overline{\bO}}^{\on{ren}}.
\end{equation}

\noindent It is really from this equivalence that we derive everything. In fact, in light of (\ref{eq:tensoprod}), Theorem \ref{t:classification1} is very natural.

\subsubsection{} For a dualizable DG-category $\sC$, we write
\[
\on{HP}_*(\sC)\in\on{Vect}
\]

\noindent for its periodic cyclic homology (see §\ref{s:hp}). If $\sC$ is compactly generated, there is a natural Chern character map
\[
K_0(\sC)_k=K_0(\sC)\underset{\bZ}{\otimes}k\to \on{HP}_0(\sC).
\]

\noindent As such, Theorem \ref{t:classification1} is a consequence of the following two statements:
\begin{enumerate}
    \item The map
\[
K_0(\sW_{e,\lambda}\on{-mod}^{\on{fin},\heartsuit})_k\to \on{HP}_0(\sW_{e,\lambda}\on{-mod}^{\on{fin}})
\]

\noindent is an isomorphism.

\item 

We have a canonical identification
\[
\on{HP}_0(\sW_{e,\lambda}\on{-mod}\underset{\sH_{G,\chi}^{\on{ren}}}{\otimes}\sH_{G,\chi,\overline{\bO}}^{\on{ren}})\simeq H^{\on{top}}(\cB_e)\underset{k[W_{[\lambda]}]}{\otimes}K_0(\sH_{G,\chi}^{\on{ren}})_{\bO,k}.
\]
\end{enumerate}

\noindent Note that in view of (\ref{eq:HHiscoho'}) and (\ref{eq:algdescripofHH}), step (2) is perhaps not very surprising. The proof, however, requires some care, and the realization of Whittaker coefficients as microstalks described in Section \ref{S:pseudomicro} is a crucial input.

\subsection{Relation to other work and alternative approaches}

\subsubsection{Harish-Chandra version}
Recall cf. \cite[Thm. 3.5.7]{beraldo2017loop} that one has a Morita equivalence
\[
D(G)\textbf{-mod}\simeq \on{HC}\textbf{-mod},\;\; \sC\mapsto \sC^{G,w}=\fg\on{-mod}\underset{D(G)}{\otimes}\sC,
\]

\noindent where $\on{HC}$ denotes the monoidal category of Harish-Chandra bimodules of $U(\fg)$. Since one may talk about the associated variety of Harish-Chandra bimodules, we obtain another definition of singular support for (dualizable) $G$-categories (equivalently $\on{HC}$-categories) by replacing $D(G)$ with $\on{HC}$ in the definition in §\ref{s:mc}. That is, if $\sC$ is a dualizable $G$-category, we may say that $\sC$ has singular support contained in $Z\subset \fg^*$ if
\[
\on{End}_{\on{DGCat}_{\on{cont}}}(\sC^{G,w})\to \on{HC}
\]

\noindent factors through $\on{HC}_Z$, where the latter denotes the category of Harish-Chandra bimodules whose associated variety is contained in $Z$. Let $\on{SS}^{\on{HC}}(\sC)\subset \fg^*$ be the singular support of $\sC$ in this sense. We do not know if $\on{SS}^{\on{HC}}(\sC)=\on{SS}(\sC)$ in general. However, all the proofs of our theorems hold if we work in the Harish-Chandra setting. In particular, from the Harish-Chandra analogue of Theorem \ref{t:criterionnilp} and Theorem \ref{t:microchar}, we obtain that if either $\on{SS}(\sC)$ or $\on{SS}^{\on{HC}}(\sC)$ is contained in the nilpotent cone, then they coincide.

\subsubsection{Different renormalization}\label{ss:diffren}
By definition, the category $D_Z(G)$ consists of all D-modules $\cF$ on $G$ satisfying that for any $n\in\bZ$ and any coherent $\cG\subset H^n(\cF)$, the singular support of $\cG$ is contained in $Z$ in the usual sense. There is a slightly smaller category $D_Z(G)^{\on{access}}$ defined as follows.\footnote{The notation ''access'' is adopted from \cite{arinkin2020stack}.} Namely, let
\[
D_Z(G)^{c}=D_Z(G)\cap D(G)^c
\]

\noindent be the category of compact objects\footnote{That is, bounded complexes of D-modules each of whose cohomologies is finitely generated.} of $D(G)$ contained in $D_Z(G)$. Then we define
\[
D_Z(G)^{\on{access}}=\on{Ind}(D_Z(G)^c).
\]

\noindent By construction, there is a fully faithful functor
\[
D_Z(G)^{\on{access}}\into D_Z(G)
\]

\noindent This realizes $D_Z(G)$ as the left completion of the natural t-structure on $D_Z(G)^{\on{access}}$, see \cite[§E.5.5]{arinkin2020stack}.

We could have defined the notion of singular support for $G$-categories using $D_Z(G)^{\on{access}}$ instead of $D_Z(G)$. Let us denote by $\on{SS}^{\on{access}}(\sC)\subset \fg^*$ the resulting subset. In general, one has
\[
\on{SS}(\sC)\subset \on{SS}^{\on{access}}(\sC),
\]

\noindent but they genuinely differ in general.\footnote{Say $\sC=D_0(\bP^1)$, the category of lisse sheaves on $\bP^1$, equipped with its $SL_2$-action. Then $\on{SS}(\sC)=0$, but $\on{SS}^{\on{access}}(\bP^1)=\cN$.} However, it follows from Theorem \ref{t:criterionnilp} that $\on{SS}(\sC)$ is nilpotent if and only if $\on{SS}^{\on{access}}(\sC)$ is. If one wants the analogue of Theorem \ref{t:microchar} to be true for $\on{SS}^{\on{access}}(\sC)$, one needs to modify the definition of Whittaker coefficients.

\subsubsection{Work of Gannon}
In \cite{gannon2022classification}, Gannon defined the notion of degenerate and non-degenerate $G$-categories. If $\sC$ is a dualizable $G$-category such that $\on{SS}^{\on{access}}(\sC)$ is contained in the irregular locus of $\fg^*$, then one can show that $\sC$ is degenerate in the sense of \emph{loc.cit}. Moreover, if $\sC$ is degenerate and $\on{SS}(\sC)^{\on{access}}$ is nilpotent, the converse holds.

\subsubsection{Work of Lysenko} In \cite{lysenko2022fourier}, Lysenko associates to a nilpotent orbit a collection of Fourier coefficients of automorphic sheaves with nilpotent singular support. Similar to the statements of §\ref{s:whitredflag} above, he conjectures that one may characterize the kernel of the Fourier coefficients in terms of singular support. The fact that we have access to representation theory simplifies our situation, and our methods do not shed light on Lysenko's conjecture.

\subsubsection{Singular support for non-dualizable $G$-categories}
One slight annoyance in our definition of singular support for $G$-categories is the requirement that the category be dualizable. An advantage of the version $\on{SS}^{\on{access}}(-)$ is that it can be defined in the non-dualizable setting. Namely, for a $G$-category $\sC$, one says that $\on{SS}^{\on{access}}(\sC)\subset Z$ if the coaction map
\[
\on{coact}: \sC\to D(G)\otimes \sC
\]

\noindent coming from the self-duality of $D(G)$ factors through $D_Z(G)^{\on{access}}\otimes \sC$. The point here is that the map
\[
D_Z(G)^{\on{access}}\otimes \sC\to D(G)\otimes \sC
\]

\noindent is fully faithful because the embedding $D_Z(G)^{\on{access}}\into D(G)$ admits a continuous right adjoint. We do not know whether
\[
D_Z(G)\otimes \sC\to D(G)\otimes \sC
\]

\noindent is fully faithful, unless $\sC$ is dualizable.\footnote{In general, $D_Z(G)\into D(G)$ neither admits a continuous left or right adjoint.}

\subsubsection{Other sheaf-theoretic contexts}
In this paper, we work exclusively with D-modules and not, say, $\ell$-adic sheaves. However, this is not for serious reasons: if one works in a setting where the categorical Künneth formula holds for $\ell$-adic sheaves, our results naturally extend to this context.\footnote{Such a context has long been anticipated but at the time of writing has not appeared yet. That being said, as we saw above, the study of nilpotent $G$-categories reduces to the study of module categories for (twisted) Hecke categories, and the latter categories are independent of the sheaf theory.}
\subsection{Organization of the paper}

In Section \ref{s:notation}, we recall preliminary material.

In Section \ref{s:yeahboooi}, we prove the theorems stated in §\ref{s:whitredflag}.

In Section \ref{s:4}, we define and study singular support of $G$-categories. Specifically, we study how parabolic induction/restriction interacts with singular support. We prove the theorems in §\ref{s:nilpcats}, and finally we prove Theorem \ref{t:microchar}, the main result of this paper.

In Section \ref{s:5}, we show that Whittaker coefficients corresponding to nilpotent orbits that are maximal in the singular support of principal series character sheaves are given by microstalks. As a consequence, we prove Theorem \ref{t:catstate}.

In Section 6, we prove the classification of finite-dimensional modules of $\sW$-algebras.

Finally, there is an appendix. It is devoted to the proof of a technical proposition appearing in Section \ref{s:yeahboooi}.

\subsection{Acknowledgements}
We thank Pramod Achar, Pablo Boixeda Alvarez, Tomoyuki Arakawa, Roman Bezrukavnikov, Thomas Creutzig, Dennis Gaitsgory, Do Kien Hoang, Penghui Li, Simon Riche, Nick Rozenblyum, Pavel Safronov, Pierre Schapira, Peng Shan, David Yang, Zhiwei Yun, and Xinwen Zhu for discussions which influenced our thinking on this subject. 

We particularly thank Ivan Losev for very helpful discussions related to Section \ref{S:6} and for helping us navigate the literature. 

Finally, the authors extend their deepest gratitudes to Sam Raskin. The ideas presented in this paper were significantly shaped by discussions held with him. In particular, the idea of comparing K-theory and Hochschild homology of the category of $\sW$-modules via an intermediate comparison with periodic cyclic homology is a suggestion of his.

G.D. was supported by an NSF Postdoctoral Fellowship under grant No. 2103387.

\section{Notation and conventions}\label{s:notation}
In this section, we set up notation and conventions used in the paper.

\subsection{Notation}\label{s:notation'}

\subsubsection{}
We work an algebraically closed field $k$ of characteristic zero. Let $G$ be a connected reductive algebraic group over $k$. We let $B\subset G$ be a Borel subgroup, and we choose an opposite Borel $B^{-}$. We let $N, N^{-}$ denote the unipotent radicals of $B,B^-$ and let $T=B/N$. Denote by $\fg,\fb^{-},\fn,\fn^{-},\ft$ the corresponding Lie algebras. We let $\cN\subset \fg$ be the nilpotent cone.

We fix a $G$-invariant non-degenerate symmetric form $(\cdot,\cdot)$ on $\fg$. In particular, we obtain a $G$-invariant isomorphism $\fg\simeq \fg^*$. As such, we will also view $\cN$ as living inside $\fg^*$.

\subsubsection{} We let $X^{\bullet}(T), X_{\bullet}(T)$ be the lattice of characters and cocharacters of $T$. Let $W$ be the finite Weyl group of $G$ and $\widetilde{W}^{\on{aff}}=X^{\bullet}(T)\rtimes W$ the extended affine Weyl group for $G$.

\subsection{Higher category theory}
We use the language of higher category theory and higher algebra in the sense of \cite{lurie2006higher}, \cite{lurie2017higher}, \cite{gaitsgory2019study}. Throughout, by a $DG$-category, we mean a $k$-linear presentable stable $(\infty,1)$-category. We denote by $\on{DGCat}_{\on{cont}}$ the $(\infty,2)$-category of DG-categories in which morphisms between DG-categories are required to be colimit-preserving (continuous). We will often refer to DG-categories simply as \emph{categories}.

We denote by $\on{Vect}$ the (DG-)category of $k$-vector spaces.

If a category $\sC$ is equipped with a t-structure, we write $\sC^{\heartsuit}$ for its heart.

\subsection{D-modules}
\subsubsection{}
Given an algebraic stack $\sY$ locally of finite type, we let $D(\sY)$ be the category of D-modules on $\sY$ in the sense of \cite{gaitsgory2019study}.

\subsubsection{} In general, we refer to \cite[Chap. 4]{gaitsgory2017study} for the study of functoriality between D-modules on algebraic stacks locally of finite type. Below, we summarize the functoriality needed for our purposes.

\subsubsection{} Let $f:\sX\to \sY$ be a map of algebraic stacks locally of finite type. We have the pullback functor:
\[
f^!: D(\sY)\to D(\sX).
\]

\noindent Whenever its left adjoint is defined, we denote it by $f_!$.

\subsubsection{} If $f$ is representable, we also have a pushforward functor:
\[
f_{*,\on{dR}}: D(\sX)\to D(\sY).
\]

\noindent Whenever its left adjoint is defined, we denote it by $f^{*,\on{dR}}$. We remind that the functors $f_!,f^{*,\on{dR}}$ are defined on holonomic D-modules.

\subsection{Singular support conventions}

\subsubsection{Singular support for varieties}\label{s:singsupp}
Let $X$ be a smooth variety and $Z\subset T^*X$ a closed conical subset. Recall that $D(X)$ carries a natural t-structure (see e.g. \cite[\S 4.]{gaitsgory2011crystals}). We write $D(X)^{\heartsuit}$ for the heart of its t-structure. 

\subsubsection{} We let $D_{Z}(X)^{\heartsuit,c}$ be the abelian category of coherent D-modules $\cF\in D(X)^{\heartsuit}$ whose singular support is contained in $Z$. Note that $D_{Z}(X)^{\heartsuit,c}$ is stable under extensions and subquotients. We consider the category obtained by taking its ind-completion:
\[
D_{Z}(X)^{\heartsuit}:=\on{Ind}(D_{Z}(X)^{\heartsuit,c})\subset \on{Ind}(D(X)^{\heartsuit,c})=D(X)^{\heartsuit}.
\]

\noindent We let $D_{Z}(X)\subset D(X)$ be the full subcategory consisting of $\cF\in D(X)$ such that each cohomology sheaf $H^i(\cF)$ lies in $D_{Z}(X)^{\heartsuit}$.

For an arbitrary $\cF\in D(X)$, we let $\on{SS}(\cF)\subset T^*X$ be the smallest closed conical subset such that $\cF\in D_{\on{SS}(\cF)}(X)$.

\subsubsection{Functoriality}\label{s:functsing}
We record the following well-known statements about the estimate of singular support under smooth pullback and proper pushforward (see e.g. \cite[§2.2.5]{faergeman2022quasi} for a more detailed account). For a map $f:X\to Y$ of varieties, we get an induced correspondence
\[\begin{tikzcd}
	{T^*Y} && {T^*Y\underset{Y}{\times} X} && {T^*X.}
	\arrow["\pi"', from=1-3, to=1-1]
	\arrow["{df^{\bullet}}", from=1-3, to=1-5]
\end{tikzcd}\]

\noindent If $f$ is smooth, then for any $\cF\in D(Y)$, we have:
\[
\on{SS}(f^!(\cF))=df^{\bullet}(\pi^{-1}(\on{SS}(\cF)))\subset T^*X.
\]

\noindent If $f$ is proper, then for any $\cG\in D(X)$, we have:
\[
\on{SS}(f_{*,\on{dR}}(\cG))\subset \pi((df^{\bullet})^{-1}(\on{SS}(\cG)))\subset T^*Y.
\]

\noindent Moreover, if $f$ is a closed embedding, the above inclusion is an equality.

\subsubsection{Conventions concerning the flag variety}
Let $X=G/B$ be the flag variety, and let $\mu$ be the Springer map
\[
\widetilde{\cN}=T^*(G/B)\to \cN.
\]

\noindent For a closed $G$-stable subset $Z\subset \cN$, we write
\[
D_{Z}(G/B):=D_{\mu^{-1}(Z)}(G/B).
\]

\noindent Explicitly, $\cF\in D(G/B)$ lies in $D_{Z}(G/B)$ if and only if for all $n\in \bZ$ and every coherent sub D-module $\cG\subset H^n(\cF)$, $\mu(\on{SS}(\cG))\subset Z$.

\subsubsection{}\label{s:Icantbelieveitsnotdoneyet} More generally, if $\chi=\chi_{\lambda}$ is the character sheaf on $B$ corresponding to some field-valued point $\lambda\in \ft^*/X^{\bullet}(T)$, we may consider the category 
\[
D(G/B,\chi)
\]

\noindent of $(B,\chi)$-equivariant D-modules of $G$. If $\lambda$ is a $k$-point (as opposed to $K$-point, where $K/k$ is a trascendental field extension), then for any coherent $\cF\in D(G/B,\chi)$, the singular support of $\cF$ naturally lives on $\widetilde{\cN}$. We similarly write
\[
D_{Z}(G/B,\chi):=D_{\mu^{-1}(Z)}(G/B,\chi)
\]

\noindent in this case. For $\cF\in D(G/B,\chi)$, we abuse notation by writing $G\cdot \on{SS}(\cF)\subset \cN$ (resp. $\overline{G\cdot \on{SS}(\cF)}$) for $\on{Ad}_G(\mu(\on{SS}(\cF)))$ (resp. its closure) in $\cN$.

\subsubsection{Hecke categories} For $\chi\in\ft^*/X^{\bullet}(k)$, consider the Hecke category
\[
D(B,\chi\backslash G/B,\chi)
\]

\noindent of bi-$(B,\chi)$-equivariant D-modules on $G$. We write 
\[
D_Z(B,\chi\backslash G/B,\chi)
\]

\noindent for the subcategory consisting of $\cF\in D(B,\chi\backslash G/B,\chi)$ that map to $D_Z(G/B,\chi)$ under the forgetful functor
\[
D(B,\chi\backslash G/B,\chi)\to D(G/B,\chi).
\]

\subsubsection{} We record the following proposition of \cite{campbell2021affine} (see also \cite{ben2020highest} for a general setup), which we use throughout the paper.\footnote{The proof in \cite{campbell2021affine} is not formulated in the presence of a character $\chi$, but the proof remains the same.}
\begin{prop}\label{p:genfull}
For any $G$-category $\sC$, the convolution functor
\[
D(G/B,\chi)\underset{D(B,\chi\backslash G/B,\chi)}{\otimes} \sC^{B,\chi}\to \sC
\]

\noindent is fully faithful.
\end{prop}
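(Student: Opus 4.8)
The plan is to bootstrap from the absolute case (the case $\chi = 0$ and $\sC = D(G)$, regarded as a module over itself, or more precisely the general statement for $\chi = 0$) and then handle the twist. The statement to prove is that the convolution functor
\[
\Phi: D(G/B,\chi)\underset{D(B,\chi\backslash G/B,\chi)}{\otimes} \sC^{B,\chi}\to \sC
\]
is fully faithful for every $G$-category $\sC$. Since this functor is $D(G)$-linear (convolution on the left is via the $G$-action on $D(G/B,\chi)$), and since $\Phi$ commutes with colimits in $\sC$, the first reduction I would make is to replace $\sC$ by $D(G)$ itself: indeed, for a general $\sC$ one has $\sC = D(G)\otimes_{D(G)}\sC$, and $\sC^{B,\chi} = D(G/B,\chi)\otimes_{D(G)}\sC$ (using that taking $(B,\chi)$-invariants is computed by tensoring with $D(G/B,\chi)$ over $D(G)$), so the general statement follows by tensoring the case $\sC = D(G)$ with $\sC$ over $D(G)$ — tensoring a fully faithful $D(G)$-linear functor with an arbitrary module preserves full faithfulness (one must be slightly careful here: full faithfulness of a colimit-preserving functor $F$ is equivalent to the unit $\mathrm{id}\to F^R F$ being an equivalence, and $F^R$ is $D(G)$-linear by the projection formula since $D(G)$ is rigid/self-dual, so this is stable under base change). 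So the crux is:
\[
D(G/B,\chi)\underset{D(B,\chi\backslash G/B,\chi)}{\otimes} D(B,\chi\backslash G/B,\chi)\xrightarrow{\ \simeq\ } D(G/B,\chi)?
\]
Wait — that is trivially an equivalence; the point is that $D(G)^{B,\chi} = D(B\backslash G /B,\chi)$ is \emph{not} the Hecke category $D(B,\chi\backslash G/B,\chi)$ unless $\chi$ is appropriately handled, so the real content when $\sC = D(G)$ is the statement that
\[
D(G/B,\chi)\underset{D(B,\chi\backslash G/B,\chi)}{\otimes} D(B,\chi\backslash G/B,\chi)\to D(B\backslash G/B,\chi)
\]
is fully faithful, which unwinds to a statement about the geometry of $B\backslash G/B$.

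The cleanest route, following \cite{campbell2021affine} and \cite{ben2020highest}, is to argue via the \emph{Barr--Beck--Lurie / monadic} description. The forgetful functor $\mathrm{oblv}: \sC^{B,\chi}\to \sC$ has a right adjoint $\mathrm{Av}_*^{B,\chi}$, and one wants to identify the resulting comonad (or monad) with the one computed by the Hecke category acting on $D(G/B,\chi)$. Concretely: let $\pi: G/N \to G/B$ be the projection (a $T$-torsor), so that $D(G/B,\chi)$ is cut out inside $D(G/N)$ by a $\chi$-monodromicity condition; the Hecke category acts by convolution, and the functor $\Phi$ is the canonical comparison map for the relative tensor product. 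By general nonsense (the bar resolution), $\Phi$ is fully faithful iff for the geometric bar complex computing the tensor product, the map to $\sC$ exhibits $\sC^{B,\chi}$-objects correctly; equivalently, iff the natural transformation relating iterated convolutions over $B$ with convolutions over $G$ is an equivalence on the relevant objects. The key geometric input is that the multiplication map $G\times^B G\to G$ — or rather, at the level of invariants, the composition in $B\backslash G/B$ — is nice enough (an ind-proper, or at least ind-finite-type with the right base-change properties) that $!$-pullback and $*$-pushforward satisfy base change; this is exactly where one uses that $G/B$ is a \emph{proper} variety, so that the relevant correspondences are given by proper maps and de Rham pushforward is well-behaved.

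I would structure the proof as follows. First, reduce to $\sC = D(G)$ by the $D(G)$-linearity argument above. Second, for $\sC = D(G)$, recall $D(G)^{B,\chi}\simeq D(G/B,\chi)$ (as a left $D(G)$-module and right $\sH$-module, where $\sH = D(B,\chi\backslash G/B,\chi)$), so the claim becomes that the action map $D(G/B,\chi)\otimes_{\sH}\sH \to D(G/B,\chi)$ — no, rather the point is to identify $D(G/B,\chi)$ as a $\sH$-module with the correct thing; unwind instead to the statement that the composite $D(G/B,\chi)\otimes_{\sH} D(G/B,\chi)^{\mathrm{op-as-}\sH\text{-mod}}\to \cdots$. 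Concretely: full faithfulness of $\Phi$ for $\sC=D(G)$ is equivalent, via adjunction, to the assertion that for $\sF,\sG\in D(G/B,\chi)$ the natural map
\[
\underline{\mathrm{Hom}}_{\sH}(\sF,\sG)\to \mathrm{Hom}_{D(G)}(\mathrm{oblv}\,\sF, \mathrm{oblv}\,\sG)
\]
is an equivalence, where the left side is the internal Hom in $\sH$-modules. Both sides can be written as de Rham cochains on an explicit stack: the left side as cochains on $B\backslash G/B$ (twisted), the right side as cochains on $B\backslash G/B$ with the \emph{$B$-monodromic} structure, and the identification is the statement that $\chi$-monodromic de Rham cohomology along the $T\times T$-directions of $N\backslash G/N$ recovers the $\chi$-twisted cohomology on $B\backslash G/B$ — a clean, if slightly technical, computation using that $T$ is a torus (so $\chi$-monodromic sheaves on a $T$-torsor behave like sheaves on the base). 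Third, deduce the general case. \textbf{The main obstacle} I anticipate is the careful bookkeeping in the twisted setting: making precise the sense in which $D(G/B,\chi)$ is a bimodule over $\sH = D(B,\chi\backslash G/B,\chi)$ and the absolute category $D(G)$, and verifying that the relevant base-change and projection-formula identities hold with the $\chi$-twist present. As the paper's own footnote to Proposition \ref{p:genfull} indicates, the untwisted proof of \cite{campbell2021affine} goes through verbatim once this bookkeeping is set up, so the essential difficulty is organizational rather than conceptual; the one genuine geometric input — properness of $G/B$, ensuring the convolution correspondences have the needed finiteness — is unchanged by the twist.
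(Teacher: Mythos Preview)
The paper does not give its own proof of this proposition; it records it as a result of \cite{campbell2021affine} (cf.\ also \cite{ben2020highest}), with a footnote noting that the untwisted argument there adapts verbatim to the $\chi$-twisted setting. So there is no in-paper proof to compare against directly.

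That said, your overall strategy matches the standard one from those references: reduce to the universal case $\sC = D(G)$ by $D(G)$-linearity (using rigidity of $D(G)$ so that the right adjoint is again $D(G)$-linear and full faithfulness is stable under base change), and then verify the statement there by a Barr--Beck/monadicity argument, the geometric input being properness of $G/B$ so that the convolution correspondences satisfy base change. Your final paragraph correctly identifies both the structure of the argument and the fact that the twist only adds bookkeeping.

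However, the middle of your write-up is genuinely garbled and should be rewritten. When $\sC = D(G)$ with its left $G$-action, one has $\sC^{B,\chi} \simeq D(B,\chi\backslash G)$, not $D(B\backslash G/B,\chi)$ as you write; the target functor is then
\[
D(G/B,\chi)\underset{\sH_{G,\chi}}{\otimes} D(B,\chi\backslash G)\longrightarrow D(G),
\]
and this is the statement to check. Your paragraph beginning ``Wait --- that is trivially an equivalence'' and the subsequent ``no, rather the point is'' detours are symptoms of this mis-identification; once $\sC^{B,\chi}$ is written correctly the universal case is a clean geometric computation (e.g.\ identify the monad $\on{Av}_*^{B,\chi}\circ\on{oblv}$ on $D(B,\chi\backslash G)$ with convolution by the monoidal unit of $\sH_{G,\chi}$), and the rest follows. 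Clean this up and you have the proof the cited references give.
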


\subsubsection{}\label{s:chigen}

Henceforth, we write
\[
\sC^{(B,\chi)\on{-gen}}:=D(G/B,\chi)\underset{D(B,\chi\backslash G/B,\chi)}{\otimes} \sC^{B,\chi}\subset \sC.
\]

\noindent We say that $\sC$ is $(B,\chi)$-generated if the inclusion
\[
\sC^{(B,\chi)\on{-gen}}\subset \sC
\]

\noindent is an equivalence.

\subsection{Categorical actions}
Throughout the paper, we use the language of categorical actions and categorical representation theory. In general, we refer to \cite{beraldo2017loop}, \cite{gaitsgory2019study} for our conventions.

\subsubsection{} Given an algebraic group $H$, the category $D(H)$ is equipped with a convolution monoidal structure. If a category $\sC$ is equipped with a module structure for $D(H)$, we say that $\sC$ carries a \emph{strong} $H$-action. We denote by 
\[
D(H)\on{\textbf{-mod}}
\]

\noindent the $(\infty,2)$-category of DG-categories equipped with a strong $H$-action. We will refer to objects of $D(H)\on{\textbf{-mod}}$ simply as $H$-categories.

\subsubsection{} Let $K\subset H$ be an algebraic subgroup of $H$. If $\sC$ is an $H$-category, we define
\[
\sC^K:=\on{Hom}_{D(K)\on{\textbf{-mod}}}(\on{Vect}, \sC)
\]

\noindent to be the category of $K$-invariants of $\sC$. 

\subsubsection{} The category 
\[
D(K\backslash H/K)\simeq \on{End}_{D(H)\on{\textbf{-mod}}}(D(H/K))
\]

\noindent of bi-$K$-invariant D-modules on $H$ is naturally equipped with a monoidal structure by convolution. We have a natural functor
\[
D(H)\on{\textbf{-mod}}\to D(K\backslash H/K)\on{\textbf{-mod}},\;\; \sC\to \sC^K.
\]

\subsubsection{} Whenever $\chi$ is a character sheaf on $K$, one may talk about \emph{twisted} invariants:
\[
\sC^{K,\chi}:=\on{Hom}_{D(K)\on{\textbf{-mod}}}(\on{Vect}, \sC),
\]

\noindent where the action $D(K)\curvearrowright \on{Vect}$ is given by 
\[
\cF\star V:=C_{\on{dR}}(K,\chi\overset{!}{\otimes} \cF)\otimes V\in\on{Vect.}
\]

\noindent The above constructions for $K$-invariants go through verbatim for $(K,\chi)$-invariants.
 
\subsection{Whittaker theory}\label{s:genwhit} Let $e\in\cN$ be a nilpotent element, and let $\bO$ the orbit containing $e$. Extend $e$ to an $\mathfrak{sl}_2$-triple $(e,h,f)$. Recall cf. §\ref{s:notation'} that we fixed a $G$-invariant non-degenerate symmetric form $(\cdot,\cdot)$ on $\fg$.

\subsubsection{} From the $\mathfrak{sl}_2$-triple, we get a decomposition $\fg=\underset{i\in \bZ}{\oplus} \fg(i)$, where
\[
\fg(i):=\lbrace x\in \fg\;\vert\; [h,x]=i\cdot x\rbrace.
\]

\noindent Let $\fp^-:=\underset{i\leq 0}{\oplus} \fg(i)$. Note that $f\in \fg(-2)\subset \fp^{-}$ and $e\in\fg(2)$. We write $\psi_e$ for the additive character 
\[
\psi_e=(e,\cdot): \underset{i\leq -2}{\oplus}\fg(i)\to k
\]

\noindent induced by $e\in \fg(2)$ under the form $(\cdot,\cdot):$ $\fg\xrightarrow{\simeq} \fg^*$. Note that $\psi_e$ induces a symplectic form $\omega_e$ on $\fg(-1)$ given by 
\[
\omega_e(x,y):=\psi_e([x,y]).
\]

\noindent We fix a Lagrangian $\ell\subset \fg(-1)$ with respect to $\omega_e$ and write
\[
\fu^-=\fu^-_{\ell,e}:=\ell\oplus\underset{i\leq -2}{\bigoplus} \fg(i)\subset \underset{i\leq -1}{\oplus} \fg(i).
\]

\noindent $\psi_e$ tautologically extends to a character on $\fu^-$, also denoted $\psi_e$, which vanishes on $\ell$.

\subsubsection{} We write $U^-$ for the unipotent subgroup of $G$ such that $\on{Lie}(U^-)=\fu^-$. Note that $\psi_e$ gives rise to a character
\[
U^-\to \bA^1.
\]

\noindent Let $\on{exp}\in D(\bA^1)$ be the exponential sheaf on $\bA^1$ concentrated in perverse degree $-1$, the same degree as the dualizing sheaf. Pulling $\on{exp}$ back to $U^-$, we get a character sheaf on $U^-$, which we similarly denote by $\psi_e$.

\section{Whittaker reduction of D-modules on the flag variety}\label{s:yeahboooi}

In this section, we study generalized Whittaker coefficients of (twisted) D-modules on the flag variety. Our main result is Theorem \ref{t:van} below that provides a micro-local characterization of the vanishing of generalized Whittaker coefficients.

\subsubsection{}\label{s:convsheaff} For each nilpotent element $e$, we have an averaging functor:
\[
\on{Av}_*^{\psi_e}: D(G/B,\chi) \to D(G/U^-,\psi_e).
\]

\noindent Explicitly, writing $m$ for the map multiplication map $B\times U^-\to G$, the above functor is given by convolving on the right with the sheaf $m_{*,\on{dR}}(\chi\boxtimes \psi_e)$ considered as an object in the category
\[
D(B,\chi\backslash G/U^-,\psi_e).
\]

\subsubsection{} Let $Z\subset \cN\subset \fg^*$ be a closed, conical Ad-invariant subset of the nilpotent cone. To simplify notation, we write
\[
\sH_{G,\chi}:=D(B,\chi\backslash G/B,\chi), \;\;\; \sH_{G,\chi,Z}:=D_Z(B,\chi\backslash G/B,\chi).
\]

\noindent We may consider the corresponding subcategory
\[
D(G/B,\chi) \underset{\sH_{G,\chi}}{\otimes} \sH_{G,\chi,Z}\subset D(G/B,\chi).
\]

\subsubsection{} The purpose of this section is to prove the following theorem:
\begin{thm}\label{t:van}
We have an identification
\[
D(G/B,\chi) \underset{\sH_{G,\chi}}{\otimes} \sH_{G,\chi,Z}=\underset{e\in\bO\not\subset Z}{\bigcap}\on{Ker}(\on{Av}_*^{\psi_e})
\]

\noindent as subcategories of $D(G/B,\chi)$. That is, for any $\cF\in D(G/B,\chi) \underset{\sH_{G,\chi}}{\otimes} \sH_{G,\chi,Z}$ and $e\notin Z$, one has $\on{Av}_*^{\psi_e}(\cF)=0$. Conversely, if for all $e\notin Z$, we have $\on{Av}_*^{\psi_e}(\cF)=0$, then $\cF\in D(G/B,\chi) \underset{\sH_{G,\chi}}{\otimes} \sH_{G,\chi,Z}$.
\end{thm}

\begin{rem}
In \cite{faergeman2022non}, it was proved that for the regular orbit $\bO_{\on{reg}}$, any holonomic $\cF\in D(G/B)$ with $\on{SS}(\cF)\cap \bO_{\on{reg}}=\emptyset$ satisfies $\on{Av}_*^{\psi_e}(\cF)=0$ for all $e\in\bO_{\on{reg}}$.
\end{rem}

\subsection{Non-vanishing of generalized Whittaker coefficients}\label{s:3.1r}
In this section, we prove one direction of Theorem \ref{t:van}:
\begin{thm}\label{t:van2}
Let $\cF\in D(G/B,\chi)$. If for all $e\notin Z$, we have $\on{Av}_*^{\psi_e}(\cF)=0$, then $\cF\in D(G/B,\chi) \underset{\sH_{G,\chi}}{\otimes} \sH_{G,\chi,Z}$.
\end{thm}

\subsubsection{} First, we have the following standard observation.
\begin{lem}\label{l:maxvsopen}
Let $Z\subset \cN$ be a locally closed $G$-stable subset of the nilpotent cone. Then an orbit $\bO$ is maximal with respect to the property that $\bO\cap Z\neq \emptyset$ if and only if $\bO\cap Z$ is open in $Z$.
\end{lem}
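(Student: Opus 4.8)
The statement is a standard fact about $G$-stable subsets of the nilpotent cone, and I would prove it by a direct orbit-closure argument. Recall that $\cN$ has finitely many $G$-orbits, and that the closure of an orbit is a union of orbits. The plan is as follows. First I would fix a locally closed $G$-stable subset $Z\subset\cN$. Since $Z$ is $G$-stable it is a union of $G$-orbits, and since $Z$ is locally closed, $Z=\overline{Z}\setminus Y$ for some closed $G$-stable $Y$; I record that both $\overline{Z}$ and $Y$ are finite unions of orbit closures.

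\emph{($\Leftarrow$)} Suppose $\bO\cap Z$ is open in $Z$ and nonempty; since $\bO$ is $G$-stable and $Z$ is a union of orbits, $\bO\cap Z$ nonempty forces $\bO\subset Z$, so the hypothesis says $\bO$ is open in $Z$. If $\bO$ were not maximal among orbits meeting $Z$, there would be an orbit $\bO'\subset Z$ with $\bO\subsetneq\overline{\bO'}$, hence $\bO\subset\overline{\bO'}\setminus\bO'$, which is closed and does not contain $\bO'$. Then $\bO\subset Z$ is contained in the proper closed subset $\overline{Z}\setminus\bO'\subset\overline{Z}$, and intersecting with the open set $Z$ shows $\bO$ is contained in a proper closed subset of $Z$ (namely $Z\cap(\overline{Z}\setminus\bO')$) that is all of $Z$ minus the nonempty set $\bO'\cap Z$ — contradicting openness of $\bO$ in $Z$ together with irreducibility is not quite the right phrasing, so instead I argue: $\bO$ open in $Z$ means $Z\setminus\bO$ is closed in $Z$; but $\bO'\subset Z\setminus\bO$ and $\bO\subset\overline{\bO'}\subset\overline{Z\setminus\bO}\cap Z=Z\setminus\bO$, a contradiction. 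Hence $\bO$ is maximal.

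\emph{($\Rightarrow$)} Suppose $\bO$ is maximal among orbits meeting $Z$; as above $\bO\subset Z$. Consider $W:=\bigcup_{\bO'}\overline{\bO'}$, the union over all orbits $\bO'\subset Z$ with $\bO'\neq\bO$; this is a closed $G$-stable subset (finite union of orbit closures). By maximality of $\bO$, no such $\bO'$ has $\bO\subset\overline{\bO'}$, so $\bO\not\subset W$, i.e. $\bO\cap W=\emptyset$ since $W$ is a union of orbits. Therefore $Z\setminus\bO$, which is the union of all orbits $\bO'\subset Z$ other than $\bO$, satisfies $Z\setminus\bO\subset W$, and conversely $W\cap Z\subset Z\setminus\bO$; thus $Z\setminus\bO=W\cap Z$ is closed in $Z$, i.e. $\bO\cap Z=\bO$ is open in $Z$.

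\emph{Main obstacle.} There is no serious obstacle: the only point requiring care is the bookkeeping that ``$\bO$ open in $Z$'' is equivalent to ``$Z\setminus\bO$ closed in $Z$'' is equivalent to ``$\bO$ is not contained in the closure (within $\cN$, or equivalently within $\overline Z$) of the union of the other orbits in $Z$'', which is exactly non-maximality. The finiteness of the orbit stratification and the fact that orbit closures are unions of orbits are the only inputs, and both are classical.
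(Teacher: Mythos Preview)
Your proof is correct. The paper does not actually give a proof of this lemma; it is introduced as ``the following standard observation'' and stated without argument. Your orbit-closure bookkeeping (finiteness of orbits, closures being unions of orbits, and the dichotomy $\bO\subset\overline{\bO'}$ or $\bO\cap\overline{\bO'}=\emptyset$) is exactly the standard justification one would supply, so there is nothing to compare.
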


\subsubsection{} We reduce Theorem \ref{t:van2} to the corresponding assertion on the Hecke category:
\begin{thm}\label{p:defertoappendix}
Let $Z\subset \cN$ be a closed $G$-stable subset, and let $\bO$ be a maximal orbit intersecting $Z$. If $\cF\in D_Z(B,\chi\backslash G/B,\chi)$ is killed under the functor
\[
D(B,\chi\backslash G/B,\chi)\xrightarrow{\on{Av}_*^{\psi_e}} D(B,\chi\backslash G/U^-,\psi_e),
\]

\noindent for all $e\in \bO$, then $\on{SS}(\cF)\subset Z\setminus \bO$.
\end{thm}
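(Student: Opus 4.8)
The plan is to transport the problem through Beilinson--Bernstein localization to a statement about Harish--Chandra bimodules, and then apply Losev's results on restriction functors to finite $W$-algebras.

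\emph{Reduction to a simple object.} By Lemma \ref{l:maxvsopen} the orbit $\bO$ is open in $Z$, so $Z\setminus\bO$ is closed and $D_{Z\setminus\bO}(B,\chi\backslash G/B,\chi)$ makes sense. Arguing by contradiction, suppose $\on{SS}(\cF)\not\subset\mu^{-1}(Z\setminus\bO)$, i.e.\ $\mu^{-1}(\bO)\cap\on{SS}(\cF)\neq\emptyset$. Writing $\cF$ as a filtered colimit of compact (coherent, bi-$(B,\chi)$-equivariant) objects, passing to cohomology sheaves and then along a composition series, and using that $\cN$ has only finitely many orbits (so that the relevant singular supports are genuine finite unions of orbit closures, not merely closures thereof), one finds a simple subquotient $\cG$ of a cohomology sheaf of a compact subobject of $\cF$ with $\mu^{-1}(\bO)\cap\on{SS}(\cG)\neq\emptyset$. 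Since $\psi_e$ is non-degenerate on $U^-$, the functor $\on{Av}_*^{\psi_e}$ agrees with $\on{Av}_!^{\psi_e}$ up to a cohomological shift, hence is t-exact up to shift; so the hypothesis $\on{Av}_*^{\psi_e}(\cF)=0$ propagates to $\on{Av}_*^{\psi_e}(\cG)=0$ (first to each cohomology sheaf, then to a compact subobject, then along the finite composition series). It therefore suffices to show that such a $\cG$ satisfies $\on{Av}_*^{\psi_e}(\cG)\neq 0$ for some $e\in\bO$.

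\emph{Passage to Harish--Chandra bimodules.} Fix a dominant regular $\lambda=\lambda_\chi$ as in §\ref{s:BBloc}. Localization (\ref{eq:BBloc}) is a $D(G)$-equivariant equivalence $D(G/B,\chi)\simeq\fg\on{-mod}_\lambda$, and it intertwines the convolution action of $D(B,\chi\backslash G/B,\chi)$ with the action of the monoidal category of Harish--Chandra bimodules for $U(\fg)_\lambda$; this gives an equivalence of $D(B,\chi\backslash G/B,\chi)$ with (a renormalization, irrelevant here, of) that category of bimodules, under which the forgetful functor to $\fg\on{-mod}_\lambda$ becomes restriction of scalars along the left action. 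Our simple $\cG$ thus corresponds to a simple Harish--Chandra bimodule $M$; as $M$ is finitely generated and $\on{ad}$-locally finite, its associated variety $\on{SS}(M)\subset\cN$ is $G$-stable and equals $\mu(\on{SS}(\cG))$. In particular $\bO$ meets the $G$-stable closed subset $\on{SS}(M)\subset Z$, and maximality of $\bO$ in $Z$ forces $\bO\subset\on{SS}(M)$. Furthermore, combining localization with Skryabin's equivalence identifies $\on{Av}_*^{\psi_e}$ with the Losev restriction functor $(-)_{\dagger,e}$ from Harish--Chandra bimodules for $U(\fg)_\lambda$ to $(U(\fg)_\lambda,\cW_{e,\lambda})$-bimodules (cf.\ \cite{losev2010finite}, \cite{losev2011finite}, \cite{losev2010quantized}), so that $\on{Av}_*^{\psi_e}(\cG)\neq 0$ if and only if $M_{\dagger,e}\neq 0$.

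\emph{Invoking Losev.} For $e\in\bO$ one has $M_{\dagger,e}\neq 0$ if and only if $\bO\subset\on{SS}(M)$ --- this is precisely the input used, in the opposite direction, in the proof of Proposition \ref{prop:lowerb}, via \cite[Thm.\ 1.2.2]{losev2010quantized} together with Skryabin's equivalence (and it is conjugation-invariant in $e\in\bO$, since $k$ is algebraically closed). As $\bO\subset\on{SS}(M)$ was established above, $M_{\dagger,e}\neq 0$, i.e.\ $\on{Av}_*^{\psi_e}(\cG)\neq 0$, contradicting the first paragraph. Hence $\on{SS}(\cF)\subset\mu^{-1}(Z\setminus\bO)$, which is the assertion. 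The main obstacle is the middle step: pinning down, across Beilinson--Bernstein localization and Skryabin's equivalence, the identification of the geometric averaging functor $\on{Av}_*^{\psi_e}$ on the Hecke category with Losev's algebraically defined restriction functor, together with the dictionary between the singular support of a bi-$(B,\chi)$-equivariant $D$-module and the (two-sided) associated variety of the corresponding Harish--Chandra bimodule; the dévissage of the first paragraph (t-exactness of $\on{Av}_*^{\psi_e}$ up to shift, and bookkeeping of the Hecke renormalization) is comparatively routine.
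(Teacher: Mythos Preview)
Your overall strategy—transport to Harish--Chandra bimodules via Beilinson--Bernstein and invoke Losev—is exactly the paper's. The d\'evissage to a simple object is also fine, and the t-exactness of $\on{Av}_*^{\psi_e}$ (up to shift) can be justified via the exactness of Skryabin's functor. The genuine gap is in your ``Invoking Losev'' step.

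You claim that $\on{Av}_*^{\psi_e}$ on the Hecke category identifies with a \emph{one-sided} Losev restriction $(-)_{\dagger,e}:\on{HC}_{\lambda,\lambda}\to(U(\fg)_\lambda,\cW_{e,\lambda})\text{-bimod}$, and that for simple $M$ one has $M_{\dagger,e}\neq 0$ iff $\bO\subset\on{SS}(M)$. But Losev's functor $\bullet_\dagger$ in \cite{losev2011finite} lands in $\cW_e$-\emph{bimodules}, not one-sided bimodules; the paper's $\on{Av}_!^{\psi_e,\psi_e}$ (bi-Whittaker) is what matches it. Your citation of \cite[Thm.\ 1.2.2]{losev2010quantized} concerns the map on \emph{primitive ideals}, and Proposition~\ref{prop:lowerb} proves only the direction ``non-zero Whittaker invariants $\Rightarrow$ $\bO$ in the associated variety''—precisely the opposite of what you need. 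The ``conjugation-invariance in $e\in\bO$'' is also unjustified: the one-sided averaging $\on{Av}_*^{\psi_e}(\cG)$ on a merely bi-$(B,\chi)$-equivariant sheaf has no reason to be independent of the choice of $e\in\bO$, since $\cG$ is not $G$-equivariant on either side separately.

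The paper fills this gap by a two-step detour. First, from Losev's bi-Whittaker non-vanishing (Theorem~\ref{t:HCWhit}) it extracts one-sided non-vanishing of $\on{Av}_!^{\psi_e}$ for \emph{some} $e\in\bO$, not all: one applies $\on{oblv}^{\Delta G}$, averages on both factors, and then uses Lemma~\ref{l:*cons} to find a $k$-point $g$ where the $*$-fiber is non-zero; base-change identifies this fiber with (a shift of) $\on{Av}_!^{\psi_{g\tilde e g^{-1}}}(\cF)$ (Theorem~\ref{p:!ave}). Second, a Verdier duality trick converts $!$ to $*$: writing $\psi_{e,!}=\on{Av}_*^{B,\chi}(m_!(\chi\boxtimes\psi_e))$ and $\psi_{e,*}=\on{Av}_!^{B,\chi}(m_{*,\on{dR}}(\chi\boxtimes\psi_e))$, one has $\bD\iota^!(\psi_{e,!})\simeq\psi_{e,*}$ up to shift, so $(\psi_{e,*}\star-)$ is left adjoint to $(\psi_{e,!}\star-)$; hence $\on{Hom}(\cF,\psi_{e,!}\star\cF)\neq 0$ forces $\on{Hom}(\psi_{e,*}\star\cF,\cF)\neq 0$, i.e.\ $\on{Av}_*^{\psi_e}(\cF)\neq 0$. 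Neither of these steps is visible in your argument, and without them the passage from Losev's theorem to the one-sided $*$-averaging is not established.
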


\noindent \emph{Proof of Theorem \ref{t:van2}.} Let $\cF\in D(G/B,\chi)$ satisfy the hypothesis of the theorem. Since $D(G/B,\chi)$ is dualizable as a $\sH_{G,\chi}$-module category, it suffices to show that for every $\phi\in \on{Hom}_{\sH_{G,\chi}}(D(G/B,\chi),\sH_{G,\chi})$, we have:
\[
(\phi\otimes\on{id})(\cF)\in \sH_{G,\chi,Z}.
\]

\noindent By Theorem \ref{p:defertoappendix}, if this was not the case, we could find $e\notin Z$ such that $\on{Av}_*^{\psi_e}(\cF)\neq 0$.

\qed

\subsubsection{} In turn, we are going to reduce the proof of Theorem \ref{p:defertoappendix} to an analogous statement about Harish-Chandra bimodules, where the assertion follows from results of Losev \cite{losev2011finite}.

\subsubsection{}
Write $Z(\fg)$ for the center of $U(\fg)$. For $\lambda\in \ft^*$, let $U(\fg)_{\lambda}=U(\fg)/U(\fg)\cdot \on{Ker}(\chi_{\lambda})$, where $\chi_{\lambda}: Z(\fg)\to k$ is the character of $Z(\fg)$ given by the image of $\lambda$ under the map $\ft^*\to \on{Spec}(Z(\fg))$ induced by the Harish-Chandra isomorphism. We denote by $\fg\on{-mod}_{\lambda}$ the category of $U(\fg)_{\lambda}$-modules. For coherent $M\in \fg\on{-mod}_{\lambda}^{\heartsuit}$, we may consider its associated variety in $\cN=\on{Spec}(\on{gr}U(\fg)_{\lambda})$, which we will similarly denote by $\on{SS}(M)$.

\subsubsection{}\label{s:BBloc} Fix a character sheaf $\chi\in \ft^*/X^{\bullet}(T)(k)$ on $T$. We may find a dominant and regular weight $\lambda=\lambda_{\chi}\in\ft^*$ such that $\chi$ and $\lambda$ coincide under the maps
\[
\ft^*/X^{\bullet}(T)\to \ft^*/\widetilde{W}^{\on{aff}} \leftarrow \ft^*.
\]

\noindent Beilinson-Bernstein localization gives a t-exact equivalence of categories
\begin{equation}\label{eq:BBloc}
D(G/B,\chi)\simeq \fg\on{-mod}_{\lambda}, \;\; \cF\mapsto \Gamma(\cF).
\end{equation}

\noindent The singular support of $\cF\in D(G/B,\chi)^{\heartsuit,c}$ maps onto the associated variety of $\Gamma(\cF)$ under the Springer map $\widetilde{N}\to\cN$, see \cite[\S 1.9]{borho1985differential}.

\subsubsection{} We denote by $\on{HC}_{\lambda,\lambda}$ the category of Harish-Chandra bimodules with central character $(\lambda,\lambda)$. Concretely,
\[
\on{HC}_{\lambda,\lambda}=\fg\on{-mod}_{\lambda}\underset{D(G)}{\otimes}\fg\on{-mod}_{\lambda}.
\]

\noindent By (\ref{eq:BBloc}), we may rewrite $\on{HC}_{\lambda,\lambda}$ as:
\[
\on{HC}_{\lambda,\lambda}\simeq D(B,\chi\backslash G) \underset{D(G)}{\otimes} D(G/B,\chi)\simeq D(B,\chi\backslash G/B,\chi).
\]

\noindent For $M\in\on{HC}_{\lambda,\lambda}$, we may talk about its associated variety, denoted $\on{SS}(M)$. It naturally lives on $\cN/G$. It follows from \cite{borho1985differential} that the t-exact equivalence
\[
\on{HC}_{\lambda,\lambda}\simeq D(B,\chi\backslash G/B,\chi)
\]

\noindent is compatible with singular support. That is, for $\cF\in D(B,\chi\backslash G/B,\chi)$, the singular support of the corresponding module in $\on{HC}_{\lambda,\lambda}$ is the image of the Springer map
\[
\fn/B\simeq \widetilde{\cN}/G\to \cN/G.
\]

\subsubsection{} Let $e\in\bO$ be a nilpotent element. We will be concerned with the functor
\[
\on{Av}_!^{\psi_e,\psi_e}: D(B,\chi\backslash G/B,\chi)\to D(B,\chi\backslash G)\otimes D(G/B,\chi)\xrightarrow{\on{Av}_!^{\psi_e}\otimes\on{Av}_!^{\psi_e}} D(B,\chi\backslash G/U^-,\psi_e)\otimes D(U^-,\psi_e\backslash G/B,\chi).
\]

\noindent Here:
\begin{itemize}
    \item The first functor is given by 
    \[
    D(B,\chi\backslash G/B,\chi)\simeq D(B,\chi\backslash G\times G/B,\chi)^{\Delta G}\xrightarrow{\on{oblv}^{\Delta G}} D(B,\chi\backslash G)\otimes D(G/B,\chi),
    \]

    \noindent where $G=\Delta G\into G\times G$ is the diagonal embedding.

    \item The second functor is the partially defined left adjoint to the forgetful functor
    \[
    D(B,\chi\backslash G/U^-,\psi_e)\otimes D(U^-,\psi_e\backslash G/B,\chi)\to D(B,\chi\backslash G)\otimes D(G/B,\chi).
    \]

    \noindent We remind that this functor is defined on holonomic D-modules; in particular, it is defined on the image of the functor $D(B,\chi\backslash G/B,\chi)\to D(B,\chi\backslash G)\otimes D(G/B,\chi)$ above.
\end{itemize}

\subsubsection{} Combining Beilinson-Bernstein localization and Skryabin's equivalence, we obtain an equivalence
\[
D(B,\chi\backslash G/U^-,\psi_e)\simeq \fg\on{-mod}_{\lambda}^{U^-,\psi_e}\simeq \sW_{e,\lambda}\on{-mod},
\]

\noindent where $\sW_e$ is the $\sW$-algebra corresponding to $e$, and $\sW_{e,\lambda}\on{-mod}$ is the category of $\sW_e$-modules with central character $\lambda$ (see \cite{losev2010quantized}).

As such, the functor $\on{Av}_!^{\psi_e,\psi_e}$ defined above gives rise to a functor
\begin{equation}\label{eq:biwhit}
\on{Av}_!^{\on{HC},\sW_e}:\on{HC}_{\lambda,\lambda}\to \sW_{e,\lambda}\on{-Bimod.}
\end{equation}

\noindent Here, $\sW_{e,\lambda}\on{-Bimod}$ denotes the category of bimodules for $\sW_e$ with central character $(\lambda,\lambda)$. In fact, the functor (\ref{eq:biwhit}) naturally lands in the category of Harish-Chandra bimodules for $\sW_e$ considered in \cite{losev2011finite}.

\subsubsection{} We will need the following theorem, which follows easily from results of Losev \cite{losev2011finite}:
\begin{thm}\label{t:HCWhit}
Let $\cF\in D(B,\chi\backslash G/B,\chi)$ be non-zero and suppose $\bO$ is a nilpotent orbit which is maximal with respect to the property that $\bO\cap \on{SS}(\cF)\neq \emptyset$. Then for any $e\in\bO$, one has $\on{Av}_!^{\psi_e,\psi_e}(\cF)\neq 0$.
\end{thm}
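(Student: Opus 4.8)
\textbf{Proof strategy for Theorem \ref{t:HCWhit}.} The plan is to transport the question along the chain of equivalences
\[
D(B,\chi\backslash G/B,\chi)\simeq \on{HC}_{\lambda,\lambda},\qquad D(B,\chi\backslash G/U^-,\psi_e)\simeq \cW_{e,\lambda}\on{-mod},
\]
so that the functor $\on{Av}_!^{\psi_e,\psi_e}$ becomes the bimodule Whittaker reduction functor $\on{Av}_!^{\on{HC},\cW_e}\colon \on{HC}_{\lambda,\lambda}\to\cW_{e,\lambda}\on{-Bimod}$ of \eqref{eq:biwhit}, and then to invoke Losev's results on Harish--Chandra bimodules for $W$-algebras. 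Concretely, given $\cF$, let $M\in\on{HC}_{\lambda,\lambda}$ be the corresponding Harish--Chandra bimodule; by the compatibility of the equivalence with singular support recalled just above, $\on{SS}(M)=\on{SS}(\cF)$ as a subset of $\cN/G$, so $\bO$ is maximal in $\on{SS}(M)$, i.e. $\bO\cap\on{SS}(M)$ is open in $\on{SS}(M)$ (Lemma \ref{l:maxvsopen}). What must be shown is that $\on{Av}_!^{\on{HC},\cW_e}(M)\neq 0$ for every $e\in\bO$.

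First I would pass to the associated graded: the functor $\on{Av}_!^{\psi_e,\psi_e}$, being a composition of a forgetful functor and a partially defined left adjoint given by $!$-averaging, is exact on the relevant (holonomic) objects and intertwines the filtration on $\on{HC}_{\lambda,\lambda}$ with the natural one on $\cW_{e,\lambda}$-bimodules; so it suffices to understand what happens after taking $\on{gr}$. On associated graded, Whittaker $!$-reduction for a bimodule $M$ with $\on{SS}(M)\ni\bO$ amounts to restriction of $\on{gr}M$, a $G$-equivariant coherent sheaf on $\cN$ (or rather on $\cN\times_{\fg^*}\cN$), to the transverse Slodowy slice $\bS_{-e}$ through $-e$, exactly as in the computation \eqref{eq:coeffO}--\eqref{eq:rewrite1} carried out in Section \ref{s:5}. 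Because $\bO$ is open in $\on{SS}(M)$ and $\bS_{-e}$ meets $\overline{\bO}$ precisely in the single $U^-$-orbit of $-e$ (by \cite[Prop.\ 1.3.3(iii)]{ginzburg2009harish}, as used in the proof of Theorem \ref{t:whitcoefflie}), the restriction of $\on{gr}M$ to $\bS_{-e}$ is supported at $-e$ with multiplicity equal to the multiplicity of $\overline{\bO}$ in the characteristic cycle of $M$, which is $\geq 1$ since $\bO\subset\on{SS}(M)$. Hence $\on{gr}$ of the output is nonzero, so the output is nonzero.

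Alternatively — and this is the cleaner route I would actually write up — one cites Losev directly: in \cite{losev2011finite} the functor $M\mapsto M_\dagger$ (restriction to the $W$-algebra) is shown to be exact and to satisfy $\on{SS}(M_\dagger)=\on{SS}(M)\cap\bS_{-e}$, and in particular $M_\dagger\neq 0$ whenever $\overline{\bO}\subset\on{SS}(M)$ with $\bO$ the orbit of $e$; moreover $M_\dagger$ is precisely the Harish--Chandra $\cW_e$-bimodule produced by \eqref{eq:biwhit}. Combined with the identification $\on{SS}(M)=\on{SS}(\cF)$, maximality of $\bO$ gives $\overline{\bO}\subset\on{SS}(M)$, and Losev's non-vanishing yields $\on{Av}_!^{\psi_e,\psi_e}(\cF)=\on{Av}_!^{\on{HC},\cW_e}(M)=M_\dagger\neq 0$ for all $e\in\bO$, as desired.

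\textbf{Main obstacle.} The real work is bookkeeping rather than a new idea: one must check carefully that the composite functor $\on{Av}_!^{\psi_e,\psi_e}$ on $D(B,\chi\backslash G/B,\chi)$ genuinely corresponds, under Beilinson--Bernstein localization on both factors and Skryabin's equivalence, to Losev's functor $(-)_\dagger$ on Harish--Chandra bimodules (including matching the two Whittaker reductions, one on each side of the bimodule, and the normalization of characters), and that it is exact on the holonomic objects at issue so that the reduction to associated graded / to Losev's singular-support formula is legitimate. Once that translation is in place, the non-vanishing is immediate from \cite{losev2011finite} together with Lemma \ref{l:maxvsopen} and the singular-support compatibility of \eqref{eq:BBapp}.
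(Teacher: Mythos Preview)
Your proposal is correct and takes essentially the same approach as the paper: translate along the Beilinson--Bernstein and Skryabin equivalences to Harish--Chandra bimodules, identify $\on{Av}_!^{\psi_e,\psi_e}$ with Losev's restriction functor $(-)_\dagger$, use its t-exactness to reduce to coherent objects in the heart, and then invoke the non-vanishing from \cite[Thm.~1.3.1]{losev2011finite}. The paper also flags exactly the bookkeeping issue you name---matching the categorical functor with Losev's---and resolves it by pointing to \S3.5 of \emph{loc.\ cit.}; your alternative associated-graded sketch is not needed, but is in the right spirit.
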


\begin{proof}
It suffices to show that for any $M\in \on{HC}_{\lambda,\lambda}$ such that $\bO$ is maximal in $\on{SS}(M)$, the image of $M$ under $\on{Av}_!^{\on{HC},\sW_e}$ is non-zero. This functor is considered in \cite{losev2011finite}.\footnote{While it is not immediately clear that the functor $\on{Av}_!^{\on{HC},\sW_e}$ coincides with the one in \cite{losev2011finite}, this follows from §3.5 of \emph{loc.cit} (at least up to cohomological shift).} By \cite[Thm. 1.3.1 (1)]{losev2011finite}, $\on{Av}_!^{\on{HC},\sW_e}$ is t-exact up to a cohomological shift. In particular, we may assume that $M\in\on{HC}_{\lambda,\lambda}^{\heartsuit}$ lies in the heart of the t-structure and is finitely generated. In this case, the fact that $\on{Av}_!^{\on{HC},\sW_e}(M)\neq 0$ follows from \cite[Thm. 1.3.1 (2)]{losev2011finite}.
\end{proof}

\subsubsection{} 

We now turn our attention to the study of the functor
\begin{equation}\label{eq:!ave}
D(B,\chi\backslash G/B,\chi)\xrightarrow{\on{oblv}^{B,\chi}} D(B,\chi\backslash G)\xrightarrow{\on{Av}_!^{\psi_e}} D(B,\chi\backslash G/U^-,\psi_e).
\end{equation}

\noindent Concretely, let $m: B\times U^-\to G,\; (b,u)\mapsto bu$. Then the above composition is given by convolving with $m_!(\chi\boxtimes \psi_e)$ considered as an object of $D(B,\chi\backslash G/U^-,\psi_e)$.

\subsubsection{} We want to show an analogue of Theorem \ref{t:HCWhit} for the functor (\ref{eq:!ave}). Before doing so, we need a preliminary lemma. For a smooth variety $X$, denote by $\on{Shv}(X)$ the category of ind-holonomic D-modules on $X$. Concretely, 
\[
\on{Shv}(X)=\on{Ind}(\on{Shv}(X)^c),
\]

\noindent where $\on{Shv}(X)^c$ is the category of bounded complexes of D-modules on $X$, each of whose cohomologies is a coherent holonomic D-module.
\begin{lem}\label{l:*cons}
Let $\cF\in\on{Shv}(X)$. There exists a $k$-point $x\in X$ such that the $*$-fiber of $\cF$ at $x$ is non-zero.
\end{lem}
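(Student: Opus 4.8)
The statement is really a statement about the non-vanishing of $*$-fibers of an ind-holonomic $D$-module, so the first reduction is to pass to a single coherent holonomic $\cF$: writing $\cF=\colim_i\cF_i$ as a filtered colimit of objects of $\on{Shv}(X)^c$, if $\cF\neq 0$ then some $\cF_i\to \cF$ is nonzero, and since $*$-fibers commute with filtered colimits it suffices to treat coherent holonomic $\cF$ with a chosen nonzero map to the colimit — or more simply, to show that a nonzero coherent holonomic $\cF$ has a nonzero $*$-fiber somewhere. (One has to be mildly careful that a nonzero map $\cF_i\to\cF$ need not have nonzero source $\cF_i$; but replacing $\cF_i$ by the image of this map inside a fixed coherent truncation, or just invoking that $\on{Shv}(X)$ is compactly generated by $\on{Shv}(X)^c$, reduces us to coherent holonomic $\cF\neq 0$.)

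For coherent holonomic $\cF$, I would argue by Noetherian induction on $\Supp(\cF)$, or equivalently by passing to the generic point of a top-dimensional component $Z\subset\Supp(\cF)$. After shrinking $X$ to a smooth open where $Z$ becomes smooth and the restriction of $\cF$ is nonzero, one can use Kashiwara's theorem (for the closed embedding $Z\hookrightarrow X$, or rather its local model) together with generic smoothness of holonomic $D$-modules along their support: on a suitable dense open $U\subset Z$, $\cF|_U$ is, up to shift, pushed forward from a $D$-module on $U$ that is $\cO$-coherent, i.e.\ a vector bundle with flat connection, hence has nonzero (shifted) $*$-fiber at every point of $U$. Concretely: pick a point $x$ in the smooth locus of $\Supp(\cF)$ lying on a component of maximal dimension $d$; near $x$ choose coordinates so that $\Supp(\cF)=\{x_{d+1}=\cdots=x_n=0\}$; then $i_x^{*,\on{dR}}\cF$ is computed by the (derived) restriction, and the lowest nonvanishing cohomology of $\cF$ along $\Supp(\cF)$ contributes a nonzero term because a nonzero flat connection on a connected variety has nonzero fiber at every point.

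An alternative, perhaps cleaner, route: use the fact that $i_x^{*,\on{dR}}$ is, up to shift and sign conventions, Verdier-dual to $i_x^{!}$, and that $i_x^!\cF\neq 0$ for some $x$ simply because otherwise $\cF$ would have empty support (the support of a $D$-module is detected by $!$-fibers at closed points — indeed $\cF$ is the zero object iff $i_x^!\cF=0$ for all $k$-points $x$, since $X$ is a scheme locally of finite type over an algebraically closed field and $D(X)$ is generated under colimits and shifts by the $(i_x)_*$). Dualizing $\cF$ (within coherent holonomic objects, where Verdier duality is available) then converts a nonzero $!$-fiber into a nonzero $*$-fiber of $\bD\cF$, and since $\bD$ is an anti-equivalence on coherent holonomic $D$-modules, running the first reduction in reverse gives the claim for all of $\on{Shv}(X)$.

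\textbf{Main obstacle.} The only genuinely delicate point is the interchange of $*$-fibers with filtered colimits and the reduction to the coherent case: $i_x^{*,\on{dR}}$ is a left adjoint (pullback), so it does commute with colimits, but one must make sure the nonzero coherent subobject one extracts from $\cF\in\on{Shv}(X)$ genuinely sees a nonzero $*$-fiber, which is where either Kashiwara plus generic smoothness, or the duality-with-$i_x^!$ argument, is needed. I expect the duality approach to be the shortest: the statement ``$\cF\neq 0$ in $D(X)$ iff $i_x^!\cF\neq 0$ for some $k$-point $x$'' is standard, and dualizing is a one-line move, so the real content is just citing these two facts.
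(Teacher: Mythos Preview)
Your treatment of the coherent holonomic case is fine: either the Noetherian induction via Kashiwara and generic lisse-ness, or the duality argument converting the standard ``$!$-fibers detect nonvanishing'' into a $*$-fiber statement, works. The paper simply asserts the coherent case as known.

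The gap you flag in the reduction step is real, and your suggested patches do not close it. Writing $\cF = \colim_i \cF_i$ and finding a nonzero $\cF_i$ with $i_x^{*,\on{dR}}\cF_i \neq 0$ does \emph{not} give $i_x^{*,\on{dR}}\cF \neq 0$: the map $i_x^{*,\on{dR}}\cF_i \to i_x^{*,\on{dR}}\cF$ may well be zero. (Think of $X=\bA^1$, $\cF_i = \cO_X$ with transition maps multiplication by the coordinate; each $\cF_i$ has nonzero $*$-fiber at $0$, but the colimit $j_{*,\on{dR}}\cO_{\bG_m}$ does not.) Your phrase ``the image of this map inside a fixed coherent truncation'' gestures at the right idea but does not pin it down.

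The paper's fix is to use the \emph{constructible} t-structure on $\on{Shv}(X)$, characterized precisely by the condition that $i_x^{*,\on{dR}}$ is t-exact for every $k$-point $x$. Then $\cF \neq 0$ implies $H^n(\cF)\neq 0$ for some $n$, and $i_x^{*,\on{dR}} H^n(\cF) = H^n(i_x^{*,\on{dR}}\cF)$, so it suffices to treat $\cF$ in the heart. There, any nonzero object admits a nonzero coherent \emph{subobject} $\cG \hookrightarrow \cF$, and since $i_x^{*,\on{dR}}$ is exact on the heart it preserves this monomorphism; hence a nonzero $*$-fiber of $\cG$ forces a nonzero $*$-fiber of $\cF$. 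This is exactly the step your filtered-colimit argument was missing: exactness on the heart turns ``some coherent piece has a nonzero fiber'' into ``$\cF$ itself has a nonzero fiber at the same point.''
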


\begin{rem}
We remark that the analogue of Lemma \ref{l:*cons} for $!$-fibers is false in general.
\end{rem}

\begin{proof}
First, note that the assertion of the lemma is true whenever $\cF\in\on{Shv}(X)^c$. Next, recall that $\on{Shv}(X)$ has a (constructible) t-structure characterized by the fact that taking $*$-fibers at $k$-points are t-exact. As such, we may assume that $\cF$ lies in the heart of this t-structure and is an element of $\on{Shv}(X)^c$. 
\end{proof}

\begin{thm}\label{p:!ave}
Let $\cF\in D(B,\chi\backslash G/B,\chi)$ be non-zero and suppose $\bO$ is a maximal orbit intersecting $\on{SS}(\cF)$. Then there exists $e\in\bO$ such that
\[
0\neq \on{Av}_!^{\psi_e}(\cF)\in D(B,\chi\backslash G/U^-,\psi_e).
\]
\end{thm}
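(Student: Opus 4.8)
The statement to prove, Theorem \ref{p:!ave}, asserts that the one-sided Whittaker averaging $\on{Av}_!^{\psi_e}$ detects a maximal orbit $\bO$ in the singular support of a non-zero bi-equivariant $\cF \in D(B,\chi\backslash G/B,\chi)$, for a suitable choice of $e \in \bO$. The natural strategy is to leverage the two-sided version, Theorem \ref{t:HCWhit}, which we have already reduced to Losev's results: it tells us that $\on{Av}_!^{\psi_e,\psi_e}(\cF) \neq 0$ for \emph{every} $e \in \bO$ (for $\bO$ maximal). The key observation is that $\on{Av}_!^{\psi_e,\psi_e}$ factors through $\on{Av}_!^{\psi_e}$ on the left followed by a further $\on{Av}_!^{\psi_e}$ on the right, so if $\on{Av}_!^{\psi_e}(\cF) = 0$ for all $e\in\bO$, then $\on{Av}_!^{\psi_e,\psi_e}(\cF)=0$ for all $e\in\bO$ as well, contradicting Theorem \ref{t:HCWhit}. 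Thus there exists \emph{some} $e\in\bO$ with $\on{Av}_!^{\psi_e}(\cF)\neq 0$, which is exactly the statement.

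\textbf{Carrying this out.} First I would make precise the factorization. By §\ref{s:conv*!}, $\on{Av}_!^{\psi_e}(\cF) = \on{oblv}^{B,\chi}(\cF) \star m_!(\chi\boxtimes\psi_e)$, and similarly the two-sided functor $\on{Av}_!^{\psi_e,\psi_e}$ is obtained by forgetting the diagonal $G$-equivariance and then applying $\on{Av}_!^{\psi_e}\otimes \on{Av}_!^{\psi_e}$; in particular $\on{Av}_!^{\psi_e}\otimes\on{id}$ kills $\cF$ as soon as $\on{Av}_!^{\psi_e}(\cF)=0$, since $\on{Av}_!^{\psi_e}$ applied to the first tensor factor already annihilates $\on{oblv}^{\Delta G}(\cF)$. (Here one should be slightly careful: $\on{oblv}^{\Delta G}(\cF)$ lives in $D(B,\chi\backslash G)\otimes D(G/B,\chi)$, which is not literally an external product, but the functor $\on{Av}_!^{\psi_e}\otimes\on{id}$ still makes sense on this tensor product, and it is the relevant functor; if $\on{Av}_!^{\psi_e}$ is zero on $D(B,\chi\backslash G/B,\chi)$ after the first step then so is its composition with the second $\on{Av}_!^{\psi_e}$.) Second, I would run the contrapositive: suppose $\on{Av}_!^{\psi_e}(\cF) = 0$ for all $e\in\bO$. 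Then, since $\bO$ is maximal in $\on{SS}(\cF)$, Theorem \ref{t:HCWhit} gives $\on{Av}_!^{\psi_e,\psi_e}(\cF)\neq 0$ for all $e\in\bO$. But $\on{Av}_!^{\psi_e,\psi_e}(\cF)$ is computed by first applying $\on{Av}_!^{\psi_e}$ (via the first tensor factor) and then applying $\on{Av}_!^{\psi_e}$ on the other side; the first of these already produces $0$, a contradiction. Hence there is some $e\in\bO$ with $\on{Av}_!^{\psi_e}(\cF)\neq 0$, proving the theorem.

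\textbf{The main obstacle.} The genuinely delicate point is the claim that $\on{Av}_!^{\psi_e}$ vanishing on $\cF$ forces $\on{Av}_!^{\psi_e,\psi_e}$ to vanish. One has to check that the functor $\on{Av}_!^{\psi_e,\psi_e}$ really does factor (up to isomorphism) through the intermediate one-sided averaging, rather than merely being "built from" it in some looser sense. Concretely, one needs that the composite
\[
D(B,\chi\backslash G/B,\chi) \xrightarrow{\on{oblv}^{\Delta G}} D(B,\chi\backslash G)\otimes D(G/B,\chi) \xrightarrow{\on{Av}_!^{\psi_e}\otimes\on{id}} D(B,\chi\backslash G/U^-,\psi_e)\otimes D(G/B,\chi)
\]
agrees with the composite $D(B,\chi\backslash G/B,\chi)\xrightarrow{\on{Av}_!^{\psi_e}} D(B,\chi\backslash G/U^-,\psi_e) \xrightarrow{\on{oblv}} D(B,\chi\backslash G/U^-,\psi_e)\otimes D(G/B,\chi)$ — i.e.\ that left averaging commutes with forgetting the right diagonal-equivariance; this is a standard base-change/compatibility statement for convolution (left and right actions commute), but it is exactly where Lemma \ref{l:*cons} and a careful treatment of partially defined left adjoints on holonomic objects may be needed to make the convolutions well-behaved. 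Once this compatibility is nailed down, the rest is a one-line deduction. An alternative route, should the factorization be awkward, is to argue directly: pick $e$ for which $\on{Av}_!^{\psi_e,\psi_e}(\cF)\neq 0$ (using $\bO$ maximal), and observe that its non-vanishing is witnessed by a non-vanishing $*$-fiber (Lemma \ref{l:*cons}) at some point of $G/U^-\times G/U^-$; projecting to one factor shows $\on{Av}_!^{\psi_e}(\cF)\neq 0$ for that same $e$.
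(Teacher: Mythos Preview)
Your overall strategy—reducing to Theorem \ref{t:HCWhit} and then descending from the two-sided average to the one-sided—is the paper's strategy too, but the bridge you propose does not hold. The claimed factorization, that $(\on{Av}_!^{\psi_e}\otimes\on{id})\circ\on{oblv}^{\Delta G}$ agrees with some $\on{oblv}$ applied after the one-sided $\on{Av}_!^{\psi_e}$, is false: the right $U^-$-action on the first factor of $G\times G$ is part of the diagonal $G$-action you have just forgotten, so these operations cannot commute. Concretely (and this is the paper's Step~1), if one applies $\on{oblv}^{\Delta G}$, then $\on{Av}_!^{\psi_{\tilde e}}$ on the first factor, and then takes the $*$-fiber at $g$ in the second factor, base-change shows the result is, up to an invertible pullback, the one-sided $\on{Av}_!^{\psi_{g\tilde e g^{-1}}}(\cF)$—the averaging for the \emph{conjugate} element, not for $\tilde e$ itself. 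So the two-sided functor at $\tilde e$ does not factor through the one-sided at $\tilde e$; rather, its second-factor $*$-fibers sweep out the one-sided averages over all of $\bO$.

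Your alternative route is closer in spirit but also off in the details: taking a nonzero $*$-fiber of the fully two-sided average and ``projecting to one factor'' does not recover the one-sided average, and the phrase ``for that same $e$'' is exactly what fails. The paper instead stops after only the first-factor averaging (call the resulting functor $F$), observes $F(\cF)\neq 0$ since further averaging the second factor gives the nonzero two-sided object of Theorem \ref{t:HCWhit}, applies Lemma \ref{l:*cons} in the second variable to produce $g\in G(k)$ with $i_g^{*,\on{dR}}F(\cF)\neq 0$, and then invokes the base-change identification above to conclude $\on{Av}_!^{\psi_{e}}(\cF)\neq 0$ for $e=g\tilde e g^{-1}\in\bO$. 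The point you are missing is that the element of $\bO$ genuinely has to move: the theorem only promises \emph{some} $e\in\bO$, and the $e$ one finds is dictated by the $g$ coming from Lemma \ref{l:*cons}.
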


\begin{proof}

\emph{Step 1.}
Fix some $\tilde{e}\in \bO$. Consider the functor
\[
F: D(B,\chi\backslash G/B,\chi)\xrightarrow{\on{oblv}^{\Delta G}} D(B,\chi\backslash G)\otimes D(G/B,\chi)\to
\]
\[
\xrightarrow{\on{Av}_!^{\psi_{\tilde{e}}}\otimes \on{id}} D(B,\chi\backslash G/U^-,\psi_{\tilde{e}})\otimes D(G/B,\chi)\xrightarrow{\on{oblv}^{\psi_{\tilde{e}}}\otimes\on{id}} D(B,\chi\backslash G)\otimes D(G/B,\chi).
\]

\noindent Let $g\in G(k)$. Denote by $g^{*,\on{dR}}: D(B,\chi\backslash G)\to D(B,\chi\backslash G)$ the functor of $*$-pullback along the map $G\to G$ induced by multiplication by $g$ on the right. Write $i_g:G\times \lbrace g\rbrace\to G\times G$ for the natural embedding. By base-change, the functor
\[
i_g^{*,\on{dR}}\circ F: D(B,\chi\backslash G/B,\chi)\to D(B,\chi\backslash G)
\]

\noindent coincides with the composition
\[
D(B,\chi\backslash G/B,\chi)\xrightarrow{\on{oblv}^{B,\chi}} D(B,\chi\backslash G)\xrightarrow{g^{*,\on{dR}}} D(B,\chi\backslash G)\to
\]
\[
\xrightarrow{\on{Av}_!^{\psi_{\tilde{e}}}} D(B,\chi\backslash G/U^-,\psi_{\tilde{e}})\xrightarrow{\on{oblv}^{\psi_{\tilde{e}}}} D(B,\chi\backslash G).
\]

\noindent Write $U^-_g=gU^-g^{-1}\subset G$ with its natural character $\psi_{g\tilde{e}g^{-1}}=\psi_{\tilde{e}}\circ \on{Ad}_{g^{-1}}: U^-_g\to \bA^1$. We may rewrite the above composition as:
\[
D(B,\chi\backslash G/B,\chi)\xrightarrow{\on{oblv}^{B,\chi}} D(B,\chi\backslash G)\to
\]
\[
\xrightarrow{\on{Av}_!^{\psi_{g\tilde{e}g^{-1}}}} D(B,\chi\backslash G/U^-_g,\psi_{g\tilde{e}g^{-1}})\xrightarrow{\on{oblv}^{\psi_{g\tilde{e}g^{-1}}}} D(B,\chi\backslash G)\xrightarrow{g^{*,\on{dR}}} D(B,\chi\backslash G).
\]

\emph{Step 2.} By Theorem \ref{t:HCWhit} and Lemma \ref{l:*cons}, we may find $g\in G(k)$ such that $i_g^{*,\on{dR}}\circ F(\cF)\neq 0$. From Step 1, we see that $\on{Av}_!^{\psi_{g\tilde{e}g^{-1}}}(\cF)\neq 0$.

\end{proof}

\subsubsection{} We are now in a position to prove Theorem \ref{p:defertoappendix}. That is, we need to prove the analogue of Theorem \ref{p:!ave} with $\on{Av}_!^{\psi_e}$ replaced by $\on{Av}_*^{\psi_e}$.

Let 
\[
\psi_{e,!}:=\on{Av}_*^{B,\chi}(m_!(\chi\boxtimes \psi_e))\in D(B,\chi\backslash G/B,\chi),
\]

\noindent where $\on{Av}_*^{B,\chi}: D(B,\chi\backslash G/U^-,\psi_e)\to D(B,\chi\backslash G/B,\chi)$ is right adjoint to $\on{Av}_!^{\psi_e}$, and $m$ is the multiplicatio map $B\times U^-\to G$. Similarly, we let
\[
\psi_{e,*}:=\on{Av}_!^{B,\chi}(m_{*,\on{dR}}(\chi\boxtimes \psi_e))\in D(B,\chi\backslash G/B,\chi),
\]

\noindent where $\on{Av}_!^{B,\chi}: D(B,\chi\backslash G/U^-,\psi_e)\to D(B,\chi\backslash G/B,\chi)$ is left adjoint to $\on{Av}_*^{\psi_e}$.

\vspace{2mm}

\begin{proof}[Proof of Theorem \ref{p:defertoappendix}] Let $\cF\in D(B,\chi\backslash G/B,\chi)$ be non-zero. We need to find $e\in\bO$ such that $0\neq\on{Av}_*^{\psi_e}(\cF)\in D(B,\chi\backslash G/U^-,\psi_e)$. Let $e\in \bO$ be the element appearing in the statement of Theorem \ref{p:!ave}.

Consider the composition 
\[
\on{Av}_*^{B,\chi}\circ \on{Av}_!^{\psi_e}: D(B,\chi\backslash G/B,\chi)\xrightarrow{\on{Av}_!^{\psi_e}} D(B,\chi\backslash G/U^-,\psi_e)\xrightarrow{\on{Av}_*^{B,\chi}} D(B,\chi\backslash G/B,\chi).
\]

\noindent  The above composition is given by convolving with $\psi_{e,!}$. By Theorem \ref{p:!ave}, we have
\begin{equation}\label{eq:conv!}
\on{Hom}_{D(B,\chi\backslash G/B,\chi)}(\cF,\psi_{e,!}\star \cF)\neq 0.
\end{equation}

\noindent Consider the inversion map $\iota: G\to G,\; g\mapsto g^{-1}$, and let $\bD$ be the functor of Verdier duality. The composition
\[
\bD\circ \iota^!: D(G)^c\to (D(G)^c)^{\on{op}}
\]

\noindent descends to an endofunctor
\[
\bD\circ \iota^!: D(B,\chi\backslash G/B,\chi)^{\on{coh}}\to  (D(B,\chi\backslash G/B,\chi)^{\on{coh}})^{\on{op}},
\]

\noindent where the former is the category of coherent bi-$(B,\chi)$-equivariant D-modules on $G$ (that is, D-modules that pull back to something compact in $D(G)$).

Consider the sheaf $\bD(\iota^!(\psi_{e,!}))$. By \cite[Prop. 22.10.1]{frenkel2006local}, the functor
\[
\bD(\iota^!(\psi_{e,!}))\star -: D(B,\chi\backslash G/B,\chi)\to D(B,\chi\backslash G/B,\chi)
\]

\noindent is left adjoint to $\psi_{e,!}\star -$. Note that $\bD(\iota^!(\psi_{e,!}))$ coincides with $\psi_{e,*}$ up to a cohomological shift. By (\ref{eq:conv!}), we have
\[
\on{Hom}_{D(B,\chi\backslash G/B,\chi)}(\psi_{e,*}\star \cF,\cF)\neq 0.
\]

\noindent Since the functor $\psi_{e,*}\star -$ coincides with the composition
\[
D(B,\chi\backslash G/B,\chi)\xrightarrow{\on{Av}_*^{\psi_e}} D(B,\chi\backslash G/U^-,\psi_e)\xrightarrow{\on{Av}_!^{B,\chi}} D(B,\chi\backslash G/B,\chi),
\]

\noindent it follows that $\on{Av}_*^{\psi_e}(\cF)\neq 0$.

\end{proof}

\subsection{Vanishing of generalized Whittaker coefficients}
In this subsection, we prove the other direction of Theorem \ref{t:van}. In fact, we prove a slight strengthening:
\begin{thm}\label{t:van1}
Let $\cF\in D(G/B,\chi) \underset{\sH_{G,\chi}}{\otimes} \sH_{G,\chi,Z}$. If $e\notin Z$, then $\on{Av}_*^{\psi_e}(F)=0$. Moreover, if $\cF\in D_Z(G/B,\chi)$ is holonomic, then $\on{Av}_*^{\psi_e}(F)=0$.
\end{thm}

\subsubsection{}
Throughout this subsection, we fix $e\in\bO$ not contained in $Z$.  As in \S\ref{s:3.1r}, we may find a dominant and regular weight $\lambda=\lambda_{\chi}\in\ft^*$ such that we get Beilinson-Bernstein localization equivalences:
\[
D(B,\chi\backslash G)\simeq \fg\on{-mod}_{\lambda},\;\; D(B,\chi\backslash G/B,\chi)\simeq \on{HC}_{\lambda,\lambda}.
\]

\subsubsection{} Theorem \ref{t:van1} will be a consequence of Theorem \ref{t:van1'} below. The proof of the latter follows closely that of \cite[Thm. 3.2.4.1]{faergeman2022non}.

\begin{thm}\label{t:van1'}
The category
\[
D(U^-,\psi_e\backslash G/B,\chi)\underset{\sH_{G,\chi}}{\otimes}\sH_{G,\chi,Z}
\]

\noindent is zero.
\end{thm}

\subsubsection{} We have the following standard result:

\begin{lem}\label{l:Losev}
Let $M\in\on{HC}_{\lambda,\lambda}^{\heartsuit}$ be simple. Then $\on{SS}(U(\fg)_{\lambda}/\on{Ann}_{U(\fg)_{\lambda}}(M))=\on{SS}(M).$
\end{lem}

\begin{proof}[Proof of Theorem \ref{t:van1'}]

\step 

We need to prove that for any $\cF\in \sH_{G,\chi,Z}$ and $\cG\in D(U^-,\psi_e\backslash G/B,\chi)$, we have:
\[
\cG\star\cF=0\in D(U^-,\psi_e\backslash G/B,\chi).
\]

\noindent Let $M\in \on{HC}_{\lambda,\lambda}$ be the image of $\cF$ under the Beilinson-Bernstein equivalence. Let $A:=U(\fg)_{\lambda}/\on{Ann}_{U(\fg)_{\lambda}}(M)$. As in \cite[\S 3.2]{faergeman2022non}, we have a strong right $G$-action on $A\on{-mod}$ such that the forgetful map 
\[
A\on{-mod}\to \fg\on{-mod}_{\lambda}
\]

\noindent is $G$-equivariant. Note that the map 
\[
A\on{-mod}\to \fg\on{-mod}_{\lambda}\simeq D(B,\chi\backslash G)\xrightarrow{\cG\star-} D(U^-,\psi_e\backslash G)
\]

\noindent is given by an object of 
\[
\on{Hom}_{D(G)}(A\on{-mod},D(U^-,\psi_e\backslash G))\simeq A\on{-mod}^{U^-,\psi_e}.
\]

\noindent Thus, it suffices to show that the category $A\on{-mod}^{U^-,\psi_e}$ is zero.

\step 

Note that any $A$-module has singular support contained in $\on{SS}(A)$. The latter coincides with $\on{SS}(M)\subset Z$ by Lemma \ref{l:Losev}. Thus, it suffices to show that any non-zero object $N\in A\on{-mod}^{U^-,\psi_e}$ satisfies $\bO\cap\on{SS}(N)\neq\emptyset$, where $\bO$ is the orbit containing $e$.

We may assume that $N$ is coherent. Suppose for contradiction that $\bO \cap \on{SS}(N)=\emptyset$, and let $J$ be the primitive ideal given by the annihilator of $N$ in $U(\fg)_{\lambda}$. Since, $U(\fg)_{\lambda}/J$ is a quotient of $A$, we have $\bO\cap \on{SS}(U(\fg)_{\lambda}/J)=\emptyset$. However, using Skryabin's equivalence between Whittaker modules and modules for the $\sW$-algebra corresponding to $e$ (see \cite[App. A]{premet2002special}, \cite[§6]{gan2002quantization}), we see by \cite[Thm 1.2.2 (ii), (v)]{losev2010quantized} that $\bO\subset \on{SS}(U(\fg)_{\lambda}/J)$. This is a contradiction, and so we conclude that $\bO \cap \on{SS}(N)\neq\emptyset$.

\end{proof}

\begin{proof}[Proof of Theorem \ref{t:van1}]

It is clear that Theorem \ref{t:van1'} implies the first part of Theorem \ref{t:van1}. Thus, it remains to show that if $\cF\in D_Z(G/B,\chi)$ is holonomic, then $\on{Av}_*^{\psi_e}(F)=0$. Since $\cF$ is of finite length, we may assume that $\cF$ is simple. Let $M\in \fg\on{-mod}_{\lambda}$ be the corresponding module, and let $A=U(\fg)_{\lambda}/\on{Ann}_{U(\fg)_{\lambda}}(M)$. If we can prove that $\on{SS}(A)\subset Z$, then the proof of Theorem \ref{t:van1'} applies to show that $\on{Av}_*^{\psi_e}(F)=0$. More precisely, we will show that if $\bO'\subset Z$ is a maximal orbit intersecting $\on{SS}(M)$, then $\on{SS}(A)=\overline{\bO'}$.

First, it follows from \cite{losev2017bernstein}[Lemma 5.2] that the smooth locus of $\on{SS}(M)\cap \bO'$ is isotropic. Hence $\on{dim} \on{SS}(M)=\frac{1}{2}\on{dim}\bO'$. On the other hand, $\on{dim}\on{SS}(A)\leq 2\cdot\on{dim} \on{SS}(M)= \on{dim}\bO'$, cf. \cite{krause2000growth}[Thm. 9.11]. By a classical result of Joseph \cite{joseph1985associated}, $\on{SS}(A)$ is irreducible, and hence is the closure of a nilpotent orbit. It follows that $\on{SS}(A)=\overline{\bO'}$.

\end{proof}

\subsubsection{}

We record the following results for future use.
\begin{lem}\label{l:boundonann}
Let $M\in \fg\on{-mod}_{\lambda}^{\heartsuit}$ be coherent and lie in the heart of the t-structure. Moreover, suppose $M$ lies in the image of the fully faithful embedding:
\[
D(G/B,\chi)\underset{\sH_{G,\chi}}{\otimes} \sH_{G,\chi,\overline{\bO}} \simeq \fg\on{-mod}_{\lambda}\underset{\sH_{G,\chi}}{\otimes} \sH_{G,\chi,\overline{\bO}}\subset \fg\on{-mod}_{\lambda}.
\]

\noindent Let $A=U(\fg)_{\lambda}/\on{Ann}_{U(\fg)_{\lambda}}(M)$. Then $\on{SS}(A)\subset \overline{\bO}$.
\end{lem}

\begin{proof}

\step
Let $N\in \sH_{G,\chi,\overline{\bO}}\subset \on{HC}_{\lambda,\lambda}$. Let $A'=U(\fg)_{\lambda}/\on{Ann}_{U(\fg)_{\lambda}}(N)$. According to Lemma \ref{l:Losev}, we have $\on{SS}(A')\subset \overline{\bO}$. For any $\cG\in D(G/B,\chi)$, consider the image, denoted $\cG\star N$, of $\cG\boxtimes N$ under the convolution functor:
\[
D(G/B,\chi)\otimes \sH_{G,\chi,\overline{\bO}}\to D(G/B,\chi)\simeq \fg\on{-mod}_{\lambda}.
\]
\noindent We claim that for all $i\in \bZ$, we have $\on{SS}(U(\fg)_{\lambda}/\on{Ann}_{U(\fg)_{\lambda}}(H^i(\cG\star N)))\subset \overline{\bO}$. Indeed, the above functor factors as:

\[
D(G/B,\chi)\otimes \sH_{G,\chi,\overline{\bO}}\to D(G/B,\chi)\otimes A'\on{-mod}^{B,\chi}\to A'\on{-mod}\xrightarrow{\on{oblv}} \fg_{\lambda}\on{-mod}_{\lambda}.
\]

\noindent Here, the second functor is induced from the strong $G$-action on $A'\on{-mod}$.

\step

The lemma now follows from writing $M$ as a finite colimit of objects $\cG_i\star N_i$ as above.

\end{proof}

\begin{cor}\label{c:boundonann}
Let $M\in \fg\on{-mod}_{\lambda}^{\heartsuit}$ be coherent and lie in the heart of the t-structure. Then $M$ lies in the full subcategory
\[
\fg\on{-mod}_{\lambda}\underset{\sH_{G,\chi}}{\otimes} \sH_{G,\chi,\overline{\bO}}\subset  \fg\on{-mod}_{\lambda}
\]

\noindent if and only if $\on{SS}(U(\fg)_{\lambda}/\on{Ann}_{U(\fg)_{\lambda}}(M))\subset \overline{\bO}$.
\end{cor}

\begin{proof}
By Lemma \ref{l:boundonann}, it suffices to prove that any such $M$ with $\on{SS}(U(\fg)_{\lambda}/\on{Ann}_{U(\fg)_{\lambda}}(M))\subset \overline{\bO}$ lies in the subcategory $\fg\on{-mod}_{\lambda}\underset{\sH_{G,\chi}}{\otimes} \sH_{G,\chi,\overline{\bO}}$.

By Theorem \ref{t:van2}, it suffices to show that for any $e\notin \overline{\bO}$, $M$ vanishes under the functor
\[
\fg\on{-mod}_{\lambda}\simeq D(G/B,\chi)\xrightarrow{\on{Av}_*^{\psi_e}} D(G/U^-,\psi_e).
\]

\noindent However, this follows from the proof of Theorem \ref{t:van1}.

\end{proof}

\section{Singular support for $G$-categories}\label{s:4}
In this section, we introduce the notion of singular support for DG-categories equipped with a strong $G$-action. Our main result is Theorem \ref{t:main}, which characterizes the singular support of nilpotent categories in terms of the vanishing of its generalized Whittaker models.

\subsection{Definition and examples}

\subsubsection{} Let $\sC$ be a $G$-category that is dualizable as a plain DG-category. We remind that this is equivalent to $\sC$ being dualizable as a $G$-category. Thus, we will simply refer to such $G$-categories as \emph{dualizable $G$-categories}.

\subsubsection{} Since $D(G)$ is dualizable and self-dual, the action map
\[
\on{act}_{\sC}: D(G)\otimes \sC\to \sC
\]

\noindent gives rise to a coaction map
\[
\on{coact}_{\sC}: \sC\to D(G)\otimes \sC.
\]

\noindent Explicitly, for $\cF\in D(G)$, write $\lambda_{\cF}$ for the continuous functor
\[
\lambda_{\cF}: D(G)\to \on{Vect},\;\; \cG\mapsto C_{\on{dR}}(G,\cF\overset{!}{\otimes} \cG).
\]

\noindent Then we have an isomorphism of functors
\[
(\lambda_{\cF}\otimes \on{id}_{\sC})\circ \on{coact}_{\sC}\simeq \on{act}_{\sC}(\cF\boxtimes -): \sC\to \sC.
\]

\begin{rem}\label{r:omitnotation}
We will often write $\on{act}$ (resp. $\on{coact}$) instead of $\on{act}_{\sC}$ (resp. $\on{coact}_{\sC}$) when there is no ambiguity about which category is acted on.
\end{rem}

\subsubsection{} Let $Z$ be a closed conical $G$-stable subset of $\fg^*$. Recall that $D_Z(G)\subset D(G)$ denotes the category of D-modules on $G$ whose singular support is contained in $\fg^*\times Z\subset T^*G$.

We make the following definition:
\begin{defin}\label{d:ss}
We say that $\sC$ has singular support contained in $Z$ if the coaction map factors as:
\[\begin{tikzcd}
	\sC && {D(G)\otimes \sC} \\
	&& {D_Z(G)\otimes \sC.}
	\arrow["{\mathrm{coact}}", from=1-1, to=1-3]
	\arrow[dashed, from=1-1, to=2-3]
	\arrow[hook', from=2-3, to=1-3]
\end{tikzcd}\]
\end{defin}

\begin{rem}\label{r:ff}
Since $\sC$ is dualizable, the map
\[
D_Z(G)\otimes \sC\to D(G)\otimes \sC
\]

\noindent is fully faithful.

\end{rem}

\begin{rem}\label{r:MC}
We have the following equivalent but perhaps more intuitive definition of what it means for the singular support of $\sC$ to be contained in $Z$. From the action map, we obtain the functor
\[
\on{MC}_{\sC}: \on{End}_{\on{DGCat}_{\on{cont}}}(\sC)\simeq \sC\otimes \sC^{\vee}\to D(G).
\]

\noindent The singular support of $\sC$ is contained in $Z$ if and only if the image of $\on{MC}_{\sC}$ lands in $D_Z(G)$. 

As described in the introduction, given $\phi\in \on{End}(\sC)$, one ought to think of $\on{MC}_{\sC}(\phi)$ as the sheaf of matrix coefficients corresponding to $\phi$.
\end{rem}

\subsubsection{} We let $\on{SS}(\sC)\subset \fg^*$ be the smallest closed conical $G$-stable subset such that the singular support of $\sC$ is contained in $\on{SS}(\sC)$. If $\on{SS}(\sC)\subset \cN$ lies in the nilpotent cone of $\fg^*$, we say that $\sC$ is \emph{nilpotent}.
\begin{rem}
One might also reasonably use the term $G$-category with \emph{restricted variation}\footnote{Especially in light of Theorem \ref{t:nilp} below.} instead of nilpotent $G$-category, in line with the philosophy of \cite{arinkin2020stack}. Indeed, recall that the results of \emph{loc.cit} show that the automorphic sheaves with nilpotent singular exactly match the local systems with restricted variation under geometric Langlands.
\end{rem}

\subsubsection{Examples}\label{e:exples}
\begin{itemize}

     \item $\on{SS}(D(G))=\fg^*$.

     \item Let $\on{Cst}\subset D(G)$ be the subcategory generated by the constant sheaf under colimits. Then $\on{Cst}$ is naturally a $G$-category with singular support equal to $0$. 
     
    \item Consider the category $\fg\on{-mod}_{\lambda}$ of $U(\fg)$-modules with generalized central character $\lambda\in \ft^*$. Then $\on{SS}(\fg\on{-mod}_{\lambda})=\cN$.

    \item Let $Z\subset \cN$ be a closed $G$-stable subset. It is not difficult to see that the singular support of $D(G/B,\chi)\underset{\sH_{G,\chi}}{\otimes}\sH_{G,\chi,Z}$, considered as a $G$-category by the action on the first factor is contained in $Z$. If $\chi=0$, then $\on{SS}(D(G/B)\underset{\sH_{G}}{\otimes}\sH_{G,Z})=Z^{\on{sp}}$, where $Z^{\on{sp}}$ is the union of the closures of all special nilpotent orbits contained in $Z$ (see §\ref{s:special}).

\end{itemize}

\subsubsection{} One of the most natural examples of a $G$-category is $D(X)$, where $X$ is a $G$-variety. Consider the moment map
\[
\mu: T^*X\to \fg^*.
\]

\noindent In general, one might expect that if $\mu$ factors through some $Z\subset \fg^*$, then $\on{SS}(D(X))\subset Z$. We do not know whether this is true in general. The sublety comes from the fact while the coaction map (i.e., pullback via the action map):
\[
D(X)\to D(G)\otimes D(X)
\]

\noindent lands in $D_{Z\times T^*X}(G\times X)$, it is not clear that it lands in $D_Z(G)\otimes D(X)$ (the latter usually being a strictly smaller category than the former). Using the results of Section 3, we are able to obtain the following partial result, however:
\begin{prop}\label{p:geomexamp}
Suppose $\mu: T^*X\to \fg^*$ factors through the nilpotent cone. Let $Z=\overline{\on{Im}(\mu)}$ be the closure of the set-theoretic image. Then $\on{SS}(D(X))=Z$.
\end{prop}

\begin{proof}
Denote by $\on{act}: G\times X\to X$ the action map.

\step Let us show that $\on{SS}(D(X))\subset Z$. We need to show that for any $\cF\in D(X)$, we have 
\[
\on{act}^!(\cF)\in D_Z(G)\otimes D(X).
\]

First we reduce to the case when $\cF\in D(B\backslash X)$. Note that the assumption that $\mu: T^*X\to \fg^*$ factors through the nilpotent cone implies that the moment map $T^*(N\backslash X)\to \ft^*$ is trivial. In particular, if $i_x: \on{Spec}(k)\to N\backslash X$ is a $k$-point with automorphism group $\on{Aut}_x$, the action of $T$ on $N\backslash X$ restricts to an action
\[
T\times \bB \on{Aut}_x\to \bB \on{Aut}_x.
\]

\noindent We claim that the image of the forgetful functor
\[
\on{oblv}^T: D(B\backslash X)\to D(N\backslash X)
\]

\noindent generates the target under colimits. To show this, we need to show that if $\cF\in D(N\backslash X)$ is killed under the averaging functor
\[
\on{oblv}^T\circ \on{Av}_*^T: D(N\backslash X)\to D(N\backslash X),
\]

\noindent then $\cF=0$. Suppose $\cF$ is such a sheaf. By \cite[Lemma 9.2.8]{arinkin2020stack}, it suffices to show that for any field-valued point $i_x: \on{Spec}(K)\to N\backslash X$, the image of $\cF$ under the composition
\[
D(N\backslash X)\to \on{Vect}_K\underset{\on{Vect}_k}{\otimes} D(N\backslash X)\simeq D_{K/k}(N\backslash X)\xrightarrow{i_x^!} \on{Vect}_K
\]

\noindent vanishes. Here, the first functor is base-change to $K$, and $D_{K/k}(N\backslash X)$ denotes the category of D-modules (over $K$) of $\on{Spec}(K)\underset{\on{Spec}(k)}{\times} N\backslash X$. The argument we provide below is evidently stable under base-change, and so we assume that $K=k$.\footnote{Concretely, all we need is that the functor $\on{oblv}^T\circ \on{Av}_*^T$ commutes with base-change to $K$.}

We similarly write $i_x$ for the corresponding map of stacks
\[
i_x: \bB \on{Aut}_x\to N\backslash X.
\]

\noindent By the above, both stacks are equipped with a $T$-action, and $i_x$ is $T$-equivariant. Thus, we have
\[
\on{oblv}^T\circ \on{Av}_*^T\circ i_x^!(\cF)=i_x^!\circ \on{oblv}^T\circ \on{Av}_*^T(\cF)=0.
\]

\noindent Since every object in $D(\bB \on{Aut}_x)$ is $T$-monodromic (the category $D(\bB \on{Aut}_x)$ is generated under colimits by the dualizing sheaf, see \cite[§2.4.4]{beraldo2021tempered}), it follows that $i_x^!(\cF)=0$. This shows that $\cF=0$, as desired.

By \cite{ben2020highest}, we have an equivalence of categories
\[
D(G/N)\underset{D(N\backslash G/N)}{\otimes} D(N\backslash X)\xrightarrow{\simeq} D(X),
\]

\noindent where the functor from left to right is given by convolution. Since every object of $D(N\backslash X)$ is $B$-monodromic, this implies that in fact we have an equivalence
\[
D(G/B)\underset{D(B\backslash G/B)}{\otimes} D(B\backslash X)\xrightarrow{\simeq} D(X).
\]

\noindent Since convolution with $D(G/B)$ sends $D_Z(B\backslash G)$ to $D_Z(G)$, the reduction step follows.

\vspace{2mm}

\step

\noindent Let $\cF\in D(B\backslash X)$. We may assume that $\cF$ is coherent. By Theorem \ref{t:van}, we need to show that $\on{act}^!(\cF)$ lies in
\[
\big(\underset{e\in\bO\not\subset Z}{\bigcap}\on{Ker}(\on{Av}_*^{\psi_e}: D(B\backslash G)\to D(U^-,\psi_e\backslash G))\big)\otimes D(X)\simeq 
\underset{e\in\bO\not\subset Z}{\bigcap}\on{Ker}(\on{Av}_*^{\psi_e}\otimes \on{id}_{D(X)})\subset D(B\backslash G)\otimes D(X).
\]

\noindent Since $\cG:=(\on{Av}_*^{\psi_e}\otimes \on{id}_{D(X)})(\on{act}^!(\cF))$ is coherent,\footnote{We are using that $\on{Av}_*^{\psi_e}$ preserves compact objects here.} it suffices to show that for any $k$-point $i_x: \on{Spec}(k)\to X$, we have $(\on{id}\otimes i_x^!)(\cG)=0$. Let $\bO_x:=G\cdot x\subset X$ be the $G$-orbit through $x$. Denote by $\on{act}_x$ the map $B\backslash G\to B\backslash \bO_x,\; g\mapsto gx$. Note:
\[
(\on{id}\otimes i_x^!)(\cG)=\on{Av}_*^{\psi_e}(\on{act}_x^!(\cF)).
\]

\noindent The moment map $T^*(\bO_x)\to \fg^*$ also factors through $Z$. By Lemma \ref{l:Bourbaki} below, $\bO_x$ is a partial flag variety, and thus the assertion follows from Theorem \ref{t:van1}.

\step Finally, let us show that $\on{SS}(D(X))=Z$. Suppose for contradiction that $\on{SS}(D(X))=:Z'$ is a proper subset of $Z$. This means that for all $\cF\in D(X)$, we have
\[
\on{act}^!(\cF)\in D_{Z'}(G)\otimes D(X).
\]

\noindent Since $\on{act}$ is smooth, the image of $\on{SS}(\cF)$ under $\mu: T^*X\to \fg^*$ lands in $Z'$. Since this holds for all $\cF\in D(X)$, we see that $\mu$ must factor through $Z'$, a contradiction.
\end{proof}

\begin{lem}\label{l:Bourbaki}
Let $X$ be a homogeneous $G$-variety such that the moment map $\mu: T^*X\to \fg^*$ factors through the nilpotent cone. Then $X$ is a partial flag variety.

\end{lem}

\begin{proof}

Write $X=G/H$ for some subgroup $H$ with Lie algebra $\fh$. We need to show that $\fh$ is a parabolic subalgebra. In other words, we need to show that if a subalgebra $\fh\subset \fg$ satisfies that its orthogonal complement $\fh^{\perp}$ with respect to the Killing form consists of nilpotent elements, then it is parabolic. However, this is exactly the statement of Bourbaki's \cite{burbaki1975elements}[\S 10, Théorème 1].
    
\end{proof}

\subsubsection{} We end this subsection with the following useful proposition.

\begin{prop}\label{p:heckedescrip}
Let $\sC$ be a dualizable $G$-category. Then the embedding
\[
\sH_{G,\chi,\on{SS}(\sC)}\underset{\sH_{G,\chi}}{\otimes} \sC^{B,\chi}\into \sC^{B,\chi}
\]

\noindent is an equivalence.
\end{prop}

\begin{proof}
Note that the functor
\[
D(G/B,\chi)\to \on{Hom}_{\sH_{G,\chi}}(D(B,\chi\backslash G),\sH_{G,\chi}), \;\; \cG\mapsto (\cF\mapsto \cF\star\cG)
\]

\noindent is an equivalence. As such, it suffices to show that for every $\cG\in D(G/B,\chi)$, the composition
\[\begin{tikzcd}
	{\sC^{B,\chi}} && {D_{\on{SS}(\sC)}(B,\chi\backslash G)\otimes \sC} \\
	\\
	&& {\sH_{G,\chi}\otimes \sC}
	\arrow["{\mathrm{coact}^{B,\chi}}", from=1-1, to=1-3]
	\arrow["{(-\star\mathcal{G})\otimes\mathrm{id}}", from=1-3, to=3-3]
\end{tikzcd}\]

\noindent lands in $\sH_{G,\chi,\on{SS}(\sC)}\otimes\sC$. Here, $\on{coact}^{B,\chi}$ is the functor given by taking $(B,\chi)$-invariance with respect to the coaction functor. However, the further composition with the forgetful functor $\sH_{G,\chi}\otimes\sC\to D(B,\chi\backslash G)\otimes \sC$ coincides with the functor
\[\begin{tikzcd}
	{\sC^{B,\chi}} && {D_{\mathrm{SS}(\sC)}(B,\chi\backslash G)\otimes \sC} \\
	\\
	&& {D_{\mathrm{SS}(\sC)}(B,\chi\backslash G)\otimes \sC.}
	\arrow["{\mathrm{coact}^{B,\chi}}", from=1-1, to=1-3]
	\arrow["{\mathrm{id}\otimes(\mathcal{G}\star-)}", from=1-3, to=3-3]
\end{tikzcd}\]

\end{proof}

\subsection{Definition of parabolic induction and restriction}

\subsubsection{} Fix a parabolic subgroup $P\subset G$ with Lie algebra $\fp$. Denote by $U_P$ its unipotent radical and $M$ its Levi quotient. Consider the correspondence
\[\begin{tikzcd}
	& P \\
	G && M.
	\arrow["p"', from=1-2, to=2-1]
	\arrow["q", from=1-2, to=2-3]
\end{tikzcd}\]

\begin{defin}
For a $G$-category $\sC$, we let $\on{Res}^G_M(\sC)=\sC^{U_P}$ be the $U_P$-invariant subcategory of $\sC$, considered as an $M$-category. Conversely, for an $M$-category $\sD$, we let $\on{Ind}_M^G(\sD)=D(G)\underset{D(P)}{\otimes} \sD$.
\end{defin} 

\subsubsection{}\label{s:compose} The two functors $(\on{Ind}_M^G,\on{Res}^G_M)$ define an adjunction pair of $2$-categories
\[\begin{tikzcd}
	{D(M)\textrm{\textbf{-mod}}} && {D(G)\textrm{\textbf{-mod}}.}
	\arrow["{\mathrm{Ind}_M^G}", shift left, from=1-1, to=1-3]
	\arrow["{\mathrm{Res}_M^G}", shift left, from=1-3, to=1-1]
\end{tikzcd}\]

\noindent Note that the functors compose. That is, if $M'$ is a Levi subgroup of $M$, then
\[
\on{Res}^M_{M'}\circ\on{Res}^G_M\simeq \on{Res}^G_{M'},\;\;\; \on{Ind}^G_{M}\circ\on{Ind}^M_{M'}\simeq \on{Ind}^G_{M'}.
\]

\noindent To ease notation, we will usually write $\sC^{U_P}$ instead of $\on{Res}_M^G(\sC)$, remembering that it carries an $M$-action.

\subsection{Characterization of nilpotence}
In this subsection, we prove Theorem \ref{t:criterionnilp}, giving a characterization of nilpotent $G$-categories.

\subsubsection{Mellin transform}\label{s:Mellin} The Mellin transform (see e.g. \cite[App. A]{gannon2022classification}) provides a symmetric monoidal equivalence
\[
D(T)\simeq \on{QCoh}(\ft^*/X^{\bullet}(T)).
\]
\noindent Here, the monoidal structure on $D(T)$ is given by convolution, and the monoidal structure on $\on{QCoh}(\ft^*/X^{\bullet}(T))$ is given by tensor product.

Given a $K$-point of $\ft^*/X^{\bullet}(T)$ for some field $K/k$, we obtain a character sheaf on $T$.
\begin{lem}\label{Algebraskin}
Let $\lambda: \on{Spec}(K)\to \ft^*/X^{\bullet}(T)$, and let $\chi=\chi_{\lambda}$ be the corresponding character sheaf on $T$. If $K/k$ is a transcendental field extension and $\lambda$ does not factor through a $k$-point, then no coherent $D_T$-submodule of $\chi$ lies in the category $D_0(T)$ of lisse D-modules on $T$. In particular, $\chi\notin D_0(T)$.
\end{lem}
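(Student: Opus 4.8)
The plan is to make the character sheaf $\chi=\chi_\lambda$ completely explicit and then to run an elementary argument over the generic point of $T$ using that $K$ is a field.

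First I would unwind the Mellin transform of \S\ref{s:Mellin}. By construction $\chi_\lambda$ is the object of $D(T)$ corresponding to $\lambda_*\cO_{\Spec K}\in\QCoh(\ft^*/X^{\bullet}(T))$, and — exactly as for a $k$-point — this identifies $\chi_\lambda$ with the flat bundle on $T$ whose underlying $\cO_T$-module is $\cO_T\otimes_k K$ and whose connection is $d+\omega_\lambda$, where $\omega_\lambda\in\Omega^1_T\otimes_k K$ is the translation-invariant $1$-form attached to $\lambda\in\Hom_k(\ft,K)$; in multiplicative coordinates $T=\bG_m^n$ with $\lambda=(\lambda_1,\dots,\lambda_n)\in K^n$ one has $\omega_\lambda=\sum_i\lambda_i\,dt_i/t_i$. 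In particular $\chi_\lambda\in D(T)^{\heartsuit}$ and is $\cO_T$-flat. Since $X^{\bullet}(T)\subseteq\ft^*$, the hypothesis that $\lambda$ does not factor through a $k$-point says precisely that some coordinate $\lambda_i$ lies in $K\setminus k$; fixing such an $i$ and using that $k$ is algebraically closed, $\lambda_i$ is transcendental over $k$.

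Next, suppose $0\neq N\subseteq\chi_\lambda$ is a coherent $D_T$-submodule lying in $D_0(T)$, i.e.\ $N$ is lisse; I would reach a contradiction by restricting along the generic point $j\colon\eta=\Spec k(T)\hookrightarrow T$. As $j^*$ is exact (it is pullback along the flat localization $\cO_T\to k(T)$) and sends nonzero lisse $D$-modules on the connected variety $T$ to nonzero objects, it suffices to prove $j^*N=0$. Now $j^*N$ is a finite-dimensional $k(T)$-subspace of $j^*\chi_\lambda=(k(T)\otimes_k K,\,d+\omega_\lambda)$, stable under the connection; let $V\subseteq K$ be the smallest $k$-subspace with $j^*N\subseteq k(T)\otimes_k V$, which is finite-dimensional over $k$ since $j^*N$ is finitely generated over $k(T)$. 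The crux is that $V$ is stable under multiplication by $\lambda_i$: writing $\theta_i=t_i\partial_{t_i}$, one has $\nabla_{\theta_i}=\delta_i+\lambda_i\cdot\id$, where $\delta_i$ acts only on the $k(T)$-factor and hence preserves $k(T)\otimes_k W$ for every $k$-subspace $W\subseteq K$; since $j^*N$ is stable under $\nabla_{\theta_i}$ and $j^*N\subseteq k(T)\otimes_k V$ is preserved by $\delta_i$, we get $\lambda_i\cdot j^*N=\nabla_{\theta_i}(j^*N)-\delta_i(j^*N)\subseteq k(T)\otimes_k V$. Letting $V'$ be the smallest $k$-subspace with $\lambda_i\cdot j^*N\subseteq k(T)\otimes_k V'$, we read off $V'\subseteq V$ from this inclusion, $V'\subseteq\lambda_i V$ from $\lambda_i\cdot j^*N\subseteq k(T)\otimes_k\lambda_i V$, and $\lambda_i V\subseteq V'$ by applying $\lambda_i^{-1}$ (note $\lambda_i\neq0$) to $\lambda_i\cdot j^*N$ and using minimality of $V$; hence $V'=\lambda_i V\subseteq V$. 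Finally, a nonzero finite-dimensional $k$-subspace $V$ of the field $K$ cannot be stable under multiplication by the transcendental element $\lambda_i$, since then some nonzero $p\in k[x]$ would satisfy $p(\lambda_i)\cdot\id_V=0$ while $p(\lambda_i)\in K^{\times}$. Therefore $V=0$, so $j^*N=0$, so $N=0$. The clause $\chi\notin D_0(T)$ then follows formally: $\chi_\lambda\in D(T)^{\heartsuit}$, and any nonzero object of $D_0(T)^{\heartsuit}=\on{Ind}(D_0(T)^{\heartsuit,c})$ admits a nonzero subobject from $D_0(T)^{\heartsuit,c}$ (which, per \S\ref{s:singsupp}, is closed under subquotients), whereas $\chi_\lambda$ has none.

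I expect the only genuinely non-formal step to be the explicit identification of $\chi_\lambda$ with the $\lambda$-twisted flat bundle $(\cO_T\otimes_k K,\,d+\omega_\lambda)$; once that is recorded, the remainder is linear algebra over $K$. If one prefers to sidestep the explicit description, an alternative is to note that $\chi_\lambda$ is regular singular along the toric boundary, so any $D$-submodule $N$ is too, with all of its exponents along $\{t_i=0\}$ congruent mod $\bZ$ to $\lambda_i$; since a lisse $D$-module defined over $k$ has exponents in $k/\bZ$, this again forces $\lambda_i\in k$, the same contradiction.
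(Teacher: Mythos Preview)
Your proof is correct and follows essentially the same approach as the paper: both use the Euler operator $\theta_i=t_i\partial_{t_i}$, which acts on $\chi_\lambda$ as $\delta_i+\lambda_i$, to force a finite-dimensional $k$-subspace of $K$ (the coefficient space of a putative coherent submodule) to be stable under multiplication by the transcendental element $\lambda_i$, a contradiction. The paper first reduces to rank one and works with the explicit basis $\{t^{\lambda+n}\}$, whereas you work in arbitrary rank at the generic point and argue with the minimal envelope $V$; these are organizational differences only.
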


\begin{proof} Write $T=\bG_m^{\times r}$. Then $\chi$ is of the form $\underset{i}{\boxtimes}\chi_i$, and at least one of the $\chi_i$ is not base-changed from $k$. Thus, we assume that $T=\bG_m$ has rank $1$. In particular, we consider $\lambda$ as an element of $K$, transcendental over $k$. Recall that $D_{\bG_m}=k[t^{\pm 1},\partial_t]$. As a $D_{\bG_m}$-module, $\chi$ is given by
\[
\chi=\lbrace \underset{n\in \bZ}{\sum} a_nt^{\lambda + n}\rbrace ,
\]

\noindent where $a_n\in K$ and only finitely many summands are non-zero. Fix $g\in \chi$ non-zero. We need to show that the $D_{\bG_m}$-module $k[t^{\pm 1},\partial_t]\cdot g\subset \chi$ generated by $g$ is not finitely generated as a $k[t^{\pm 1}]$-module. First, note that for any given finitely generated $k[t^{\pm 1}]$-submodule $M\subset \chi$, we may find a finite-dimensional $k$-vector space $W\subset K$ such that
\begin{equation}\label{eq:boundedcoeffs}
M\subset W\otimes k[t^{\pm 1}]\cdot t^{\lambda}.
\end{equation}

\noindent In other words, we may find $W$ so that for any element $h=\underset{n\in \bZ}{\sum} a_nt^{\lambda + n}\in M$, all the coefficients $a_n$ lie in $W$.

We may assume that $g$ has the form $g=\overset{N}{\underset{n=0}{\sum}}a_nt^{\lambda + n}$ with $a_0\neq 0$. Note that for any $r\in \bN$, we have
\[
(t\partial_t)^r\cdot g= \overset{N}{\underset{n=0}{\sum}}a_n(\lambda+n)^r t^{\lambda + n}.
\]

\noindent In particular, the numbers $\lbrace a_0\lambda^r\rbrace $ all occur as coefficients of elements of $k[t^{\pm 1},\partial_t]\cdot g$. Since $\lambda$ is transcendental over $k$, we conclude that no finite-dimensional $W\subset K$ exists satisfying (\ref{eq:boundedcoeffs}). In particular, $k[t^{\pm 1},\partial_t]\cdot g$ is not finitely generated over $k[t^{\pm 1}]$.
\end{proof}

\subsubsection{} We say that a character sheaf $\chi$ on $T$ is transcendental if it comes from a $K$-point $\lambda: \on{Spec}(K)\to \ft^*/X^{\bullet}(T)$ with $K/k$ transcendental, and such that $\lambda$ does not factor through a $k$-point $\on{Spec}(k)\to \ft^*/X^{\bullet}(T)$.
\begin{lem}\label{l:nilp}
Let $\chi$ be a transcendental character sheaf on $T$, and let $\cF\in D(B,\chi\backslash G)$. If the singular support of $\cF$ is nilpotent as an object of $D(N\backslash G)$, then $\cF=0$.
\end{lem}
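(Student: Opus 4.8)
\emph{Proof idea.} The plan is to reduce the assertion to the torus case, which is exactly Lemma \ref{Algebraskin}. Assuming $\cF\neq 0$, I will exhibit a point of $\on{SS}(\cF)$ at which the moment map for the residual left $T$-action on $N\backslash G$ takes a non-zero value, which the nilpotency hypothesis forbids.

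First I would unwind what nilpotency of $\on{SS}(\cF)$ says in the $T$-direction. Let $p\colon G\to N\backslash G$ be the projection, so that $T^*(N\backslash G)\subset T^*G$ is the locus of covectors killing the fibres of $p$, and let $\mu_\ft\colon T^*(N\backslash G)\to\ft^*$ be the moment map for the left $T=B/N$ action, i.e.\ the $\ft^*$-component of the left $G$-moment map of $T^*G$ restricted to $T^*(N\backslash G)$. A covector lying in $T^*(N\backslash G)$ has left $G$-moment orthogonal to $\fn$, hence lying in $\fn^\perp=\fb^-$; on the other hand the left and right $G$-moment maps of $T^*G$ are intertwined by the coadjoint action, and the hypothesis ``$\on{SS}(\cF)$ is nilpotent as an object of $D(N\backslash G)$'' is precisely that the right $G$-moment of $\on{SS}(\cF)$ lies in $\cN$. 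Since $\cN$ is $\on{Ad}$-stable, the left $G$-moment of $\on{SS}(\cF)$ then also lies in $\cN$, so it lies in $\fb^-\cap\cN=\fn^-$, whose $\ft^*$-component is $0$. Thus $\mu_\ft(\on{SS}(\cF))=\lbrace 0\rbrace$.

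Next I would localize to the Bruhat strata. The cells $N\backslash BwB$, $w\in W$, stratify $N\backslash G$, and each one is a $T$-torsor for the left $T$-action over the corresponding Schubert cell of $B\backslash G$; since that base is an affine space the torsor is trivial, so $N\backslash BwB\cong T\times\bA^{d_w}$ $T$-equivariantly with $T$ acting by translation on the first factor. Hence the restriction to $N\backslash BwB$ of any object of $D(B,\chi\backslash G)$ has the form $\chi\boxtimes\cG$ with $\cG\in D(\bA^{d_w})$. If $\cF\neq 0$, its restriction to some cell $S_w\cong T\times\bA^{d_w}$ is non-zero, so the corresponding $\cG$ is non-zero; picking a non-zero coherent sub-D-module $\cG'\subset\cG$ and, by Lemma \ref{Algebraskin}, a coherent sub-D-module $\chi'\subset\chi$ which is not lisse, the object $\chi'\boxtimes\cG'\subset\cF|_{S_w}$ is a non-zero coherent sub-D-module with $\on{SS}(\chi'\boxtimes\cG')=\on{SS}(\chi')\times\on{SS}(\cG')$ not contained in the zero section of $T^*T$. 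Therefore $\mu_\ft(\on{SS}(\cF|_{S_w}))\neq\lbrace 0\rbrace$, and since moment maps are compatible with the locally closed inclusion $S_w\into N\backslash G$ this forces $\mu_\ft(\on{SS}(\cF))\neq\lbrace 0\rbrace$, contradicting the previous paragraph. Hence $\cF=0$.

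The step I expect to require the most care is this last one: one must make the singular-support bookkeeping precise for the non-coherent sheaf $\chi$ — this is exactly where transcendentality enters, via Lemma \ref{Algebraskin}, which supplies a coherent submodule of $\chi$ whose singular support escapes the zero section — and one must check that the moment-map estimate of the second paragraph is genuinely inherited under restriction to the locally closed piece $S_w$. The remaining ingredients are routine computations with the left and right moment maps on $T^*G$.
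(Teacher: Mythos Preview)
Your approach is correct and essentially the same as the paper's: both stratify $N\backslash G$ by Bruhat cells, identify the restriction to a cell $S_w\cong T\times\bA^{d_w}$ with $\chi\boxtimes\cG$, use nilpotence of $\on{SS}(\cF)$ to force the $T$-component of the singular support to vanish, and then invoke Lemma~\ref{Algebraskin}; your moment-map formulation is just a repackaging of the paper's direct computation on $B\times N$, and the singular-support estimate under locally closed restriction that you flag as delicate is exactly what the paper uses (and is why the paper chooses $w$ maximal, so that the restriction is effectively open). One small slip: under the invariant form the annihilator of $\fn$ in $\fg^*\simeq\fg$ is $\fb$, not $\fb^-$, so the relevant intersection with $\cN$ is $\fn$ rather than $\fn^-$; the conclusion $\mu_\ft=0$ is unaffected.
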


\begin{proof}
Consider the Bruhat decomposition of $G$:
\[
G=\underset{w\in W}{\bigsqcup} BwN.
\]

\noindent Let $j_w: BwN\into G$ be the inclusion. Choose $w\in W$ maximal such that $j_w^!(\cF)\neq 0$. In this case $j_w^!(\cF)$ has nilpotent singular support when considered as an object of $D(N\backslash BwN)$.

Consider the smooth map $m: B\times N\to BwN,\; (b,n)\mapsto bwn$. Then
\[
m^!\circ j_w^!(\cF)\in D(T,\chi\backslash T)\otimes D(N)\simeq D(B,\chi\backslash B)\otimes D(N)\simeq D(N).
\]

\noindent We claim that
\begin{equation}\label{eq:lisseandtranscendental}
m^!\circ j_w^!(\cF)\in (D(T,\chi\backslash T)\cap D_0(T))\otimes D(N).
\end{equation}

\noindent Indeed, this follows from the fact that, by nilpotence of $\on{SS}(\cF)$, the image of the map
\[
\on{SS}(\cF)\underset{N\backslash G}{\times} B\times N\to T^*(B\times N)
\]

\noindent induced by $j_w\circ m: B\times N\to N\backslash G$ intersects the image of the map
\[
T^*(T\times N)\underset{T\times N}{\times} B\times N\to T^*(B\times N)
\]

\noindent induced by the projection $B\to T$ in the zero section. By (\ref{eq:lisseandtranscendental}) and Lemma \ref{Algebraskin}, we have $m^!\circ j_w^!(\cF)=0$, and in particular $\cF=0$.
\end{proof}

\subsubsection{} For a $G$-category $\sC$, we may consider the full subcategory
\begin{equation}\label{eq:fullsubcat}
\underset{\lambda\in \ft^*/X^{\bullet}(k)}{\oplus} \sC^{(B,\chi_{\lambda})\on{-mon}}\subset \sC^N.
\end{equation}

\noindent Here, $\sC^{(B,\chi_{\lambda})\on{-mon}}$ denotes the full subcategory of $\sC^N$ given by the cocompletion of the image of the forgetful functor $\sC^{B,\chi_{\lambda}}\to \sC^N$. Note that in (\ref{eq:fullsubcat}), we only sum over $k$-points of $\ft^*/X^{\bullet}(T)$. In other words, we only consider non-transcendental character sheaves.\footnote{We remind that $k$ is algebraically closed.}
\begin{thm}\label{t:nilp}
Let $\sC$ be a dualizable $G$-category. Then $\sC$ is nilpotent if and only if 
\[
\underset{\lambda\in \ft^*/X^{\bullet}(k)}{\oplus} \sC^{(B,\chi_{\lambda})\on{-mon}}=\sC^N.
\]
\end{thm}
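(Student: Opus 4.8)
## Proof strategy for Theorem \ref{t:nilp}

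The plan is to reduce the statement to a purely spectral property of the $T$-category $\sC^N$ and then feed in the results of Section \ref{s:yeahboooi}.

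\emph{Step 1: reformulation via the Mellin transform.} Via the equivalence $D(T)\simeq\QCoh(\ft^*/X^{\bullet}(T))$ of §\ref{s:Mellin}, the $T$-category $\sC^N$ becomes a module over $\QCoh(\ft^*/X^{\bullet}(T))$. I claim that the asserted equality $\sC^N=\bigoplus_{\lambda\in\ft^*/X^{\bullet}(k)}\sC^{(B,\chi_{\lambda})\on{-mon}}$ is equivalent to: $\sC^{B,\chi}=0$ for every \emph{transcendental} character sheaf $\chi$; equivalently, $\sC^N$ is \emph{lisse} as a $T$-category, i.e.\ every $T$-matrix coefficient $\on{MC}_{\sC^N}(\phi)\in D(T)$ is lisse. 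One direction is immediate: if $\sC^N$ decomposes as stated, each object is $(T,\chi_{\lambda})$-monodromic for a $k$-point $\lambda$, hence has lisse matrix coefficients. For the converse one uses the standard fact that a $\QCoh(\ft^*/X^{\bullet}(T))$-module category which is set-theoretically supported on $k$-points splits as the direct sum (= product in $\on{DGCat}_{\on{cont}}$) of its localizations at those points; here Lemma \ref{Algebraskin} is exactly what forbids transcendental points from occurring once lisseness is assumed. So it remains to prove: $\sC$ is nilpotent $\iff$ $\sC^{B,\chi}=0$ for all transcendental $\chi$.

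\emph{Step 2: nilpotent $\Rightarrow$ no transcendental characters.} Let $\chi$ be transcendental and $c\in\sC^{B,\chi}$; I will show $c=0$. For any $c^{\vee}\in\sC^{\vee}$, the matrix coefficient $\on{MC}_{\sC}(c\otimes c^{\vee})\in D(G)$ is left $(B,\chi)$-equivariant, hence lies in $D(B,\chi\backslash G)$, and its singular support is contained in $G\times\on{SS}(\sC)\subseteq G\times\cN$, i.e.\ it is nilpotent as an object of $D(N\backslash G)$. By Lemma \ref{l:nilp} it vanishes. Since this holds for all $c^{\vee}$ and $\sC$ is dualizable, $\on{coact}_{\sC}(c)=0$; but $\on{coact}_{\sC}$ is split injective --- a one-sided inverse is $!$-restriction to $1\in G$ in the $D(G)$-factor, via the identity $\on{act}_{\sC}(\delta_1\boxtimes-)=\id_{\sC}$ --- so $c=0$.

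\emph{Step 3: converse, and the main obstacle.} Regard $B$ as a parabolic with Levi quotient $T$, so $\on{Res}^G_T(\sC)=\sC^N$. Granting the compatibility of singular support with parabolic restriction, $\on{SS}(\sC^N)=q(p^{-1}(\on{SS}(\sC)))$ along $\fg\xleftarrow{p}\fb\xrightarrow{q}\ft$. One then needs the elementary observation that for a closed conical $\on{Ad}$-stable $Z\subseteq\fg^{*}$ one has $q(p^{-1}(Z))=\{0\}\iff Z\subseteq\cN$: the implication $\Leftarrow$ holds since $\cN\cap\fb=\fn$, and for $\Rightarrow$, if $z\in Z$ is not nilpotent then using a cocharacter contracting its nilpotent part together with closedness and conicity of $Z$ the semisimple part of $z$ lies in $Z$, and conjugating it into $\ft$ yields a nonzero point of $q(p^{-1}(Z))$. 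Combining with Step 1: if there are no transcendental characters, $\sC^N$ is lisse, so $\on{SS}(\sC^N)=\{0\}$, whence $\on{SS}(\sC)\subseteq\cN$. The genuine difficulty is concentrated in the inclusion $q(p^{-1}(\on{SS}(\sC)))\subseteq\on{SS}(\sC^N)$ --- the non-vanishing half of parabolic restriction for $(B,T)$. This does \emph{not} follow from the formal estimates of §\ref{s:functsing}: expressing the $T$-matrix coefficients of $\sC^N$ in terms of those of $\sC$ involves the \emph{non-proper} pushforward along $\fb\to\ft$ (a Jacquet-type functor $D(G)\to D(T)$), and controlling singular support through it is precisely where the input of Theorem \ref{t:van} (hence Lemma \ref{l:nilp} and Losev's theorems) is used. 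If one wishes to avoid citing parabolic restriction, Step 2 already gives the forward implication, and the converse can be run by the same matrix-coefficient method applied to $\sC^N$ --- but the non-proper pushforward resurfaces there as the crux.
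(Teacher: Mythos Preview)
Your Step 2 (nilpotent $\Rightarrow$ no transcendental characters) is correct and is essentially the paper's argument for that implication, using Lemma \ref{l:nilp} and dualizability.

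The gap is Step 3. Your proposed route via parabolic restriction is circular: Theorem \ref{t:pres} is stated and proved only for \emph{nilpotent} $G$-categories, and its proof invokes Theorem \ref{t:nilp} (through Corollary \ref{c:catHeckedescrip} and the decomposition of $\sC$ into $(B,\chi)$-generated pieces). You flag the inclusion $q(p^{-1}(\on{SS}(\sC)))\subseteq\on{SS}(\sC^N)$ as the crux and attribute it to Theorem \ref{t:van}, but you do not actually give an argument, and the vague alternative at the end (``run the matrix-coefficient method on $\sC^N$'') does not produce one either: the Jacquet-type pushforward $D_{\cN}(G)\to D(T)$ really does not obviously land in $D_0(T)$.

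The paper's argument for this direction is both more direct and uses none of that machinery. If $\sC^N=\bigoplus_{\lambda}\sC^{(B,\chi_{\lambda})\on{-mon}}$ with $\lambda$ ranging over $k$-points, then by \cite{ben2020highest} the $G$-category $\sC$ is generated by the subcategories
\[
\sC^{(B,\chi_{\lambda})\on{-gen}}=D(G/B,\chi_{\lambda})\underset{\sH_{G,\chi_{\lambda}}}{\otimes}\sC^{B,\chi_{\lambda}}.
\]
For such a subcategory the coaction lands in $D(G)^{(B,\chi_{\lambda})\on{-gen}}\otimes\sC^{(B,\chi_{\lambda})\on{-gen}}$, so it suffices to show $D(G)^{(B,\chi_{\lambda})\on{-gen}}\subset D_{\cN}(G)$. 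This is elementary: every object of $D(B,\chi_{\lambda}\backslash G)$ has nilpotent singular support since $\lambda$ is $k$-rational, and the convolution $D(G/B,\chi_{\lambda})\otimes D(B,\chi_{\lambda}\backslash G)\to D(G)$ factors through a smooth pullback and a proper pushforward, hence preserves nilpotent singular support by §\ref{s:functsing}. No parabolic restriction, no Jacquet functor, no Theorem \ref{t:van}.
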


\begin{proof}
\emph{Step 1.} Suppose first that 
\[
\underset{\lambda\in \ft^*/X^{\bullet}(k)}{\oplus} \sC^{(B,\chi_{\lambda})\on{-mon}}=\sC^N.
\]

\noindent As in §\ref{s:chigen}, denote by $\sC^{(B,\chi_{\lambda})\on{-gen}}$ the $G$-subcategory
\[
D(G/B,\chi_{\lambda})\underset{D(B,\chi_{\lambda}\backslash G/B,\chi_{\lambda})}{\otimes} \sC^{B,\chi_{\lambda}}\into \sC.
\]

\noindent It follows from \cite{ben2020highest} that the image of the functor
\[
\underset{\lambda\in \ft^*/X^{\bullet}(k)}{\oplus}\sC^{(B,\chi_{\lambda})\on{-gen}}\to \sC
\]

\noindent generates the target under colimits.\footnote{In fact, we have $\sC\simeq \underset{\lambda\in \ft^*/W\ltimes X^{\bullet}(k)}{\oplus}\sC^{(B,\chi_{\lambda})\on{-gen}}$.}

We need to prove that $\on{SS}(\sC^{(B,\chi_{\lambda})\on{-gen}})\subset \cN$. Note that the coaction map
\[
\sC^{(B,\chi_{\lambda})\on{-gen}}\to D(G)\otimes \sC^{(B,\chi_{\lambda})\on{-gen}}
\]

\noindent lands in the full subcategory $D(G)^{(B,\chi)\on{-gen}}\otimes \sC^{(B,\chi_{\lambda})\on{-gen}}$. Thus, we need to show that
\[
D(G)^{(B,\chi)\on{-gen}}=D(G/B,\chi_{\lambda})\underset{D(B,\chi_{\lambda}\backslash G/B,\chi_{\lambda})}{\otimes} D(B,\chi_{\lambda}\backslash G)\into D(G)
\]

\noindent lands in $D_{\cN}(G)$. However, this follows from the fact that any sheaf in $D(B,\chi_{\lambda}\backslash G)$ has nilpotent singular support and the fact that the convolution map
\[
D(G/B,\chi_{\lambda})\otimes D(B,\chi_{\lambda}\backslash G)\to D(G)
\]

\noindent preserves having nilpotent singular support, cf. §\ref{s:functsing}.

\vspace{2mm}

\emph{Step 2.} Let us prove the converse assertion. Suppose $\on{SS}(\sC)\subset \cN$. Note that a quasi-coherent sheaf $\cF\in \on{QCoh}(\ft^*/X^{\bullet}(T))$ has zero-dimensional support if and only if the pullback of $\cF$ under a field-valued point $\lambda: \on{Spec}(K)\to \ft^*/X^{\bullet}(T)$ with $K/k$ transcendental and such that $\lambda$ does not factor through a $k$-point, vanishes.

Using the Mellin transform §\ref{s:Mellin}, we see that it suffices to show that for any transcendental character sheaf $\widetilde{\chi}$ on $T$, we have
\[
\sC^{B,\widetilde{\chi}}=0.
\]

\noindent From the coaction map
\[
\sC\to D(G)\otimes \sC,
\]

\noindent we obtain the map
\[
\on{coact}^{B,\widetilde{\chi}}_{\sC}: \sC^{B,\widetilde{\chi}}\to D(B,\widetilde{\chi}\backslash G)\otimes \sC
\]

\noindent by taking $(B,\widetilde{\chi})$-invariants. Since $\sC$ is dualizable, it suffices to show that for any $\phi\in \sC^{\vee}$, the composition
\[
(\on{id}\otimes \phi)\circ \on{coact}^{B,\widetilde{\chi}}_{\sC}: \sC^{B,\widetilde{\chi}}\to D(B,\widetilde{\chi}\backslash G)\otimes \sC\to D(B,\widetilde{\chi}\backslash G)
\]

\noindent vanishes. By assumption, any object of $D(B,\widetilde{\chi}\backslash G)$ lying in the image of the above map has nilpotent singular support as an object of $D(N\backslash G)$. We conclude by Lemma \ref{l:nilp}.
\end{proof}

\begin{cor}\label{c:nilp}
Let $\sC$ be a dualizable $G$-category, and let $M\subset G$ be a Levi subgroup. Then $\sC$ is nilpotent if and only if $\on{Res}^G_M(\sC)$ is nilpotent.
\end{cor}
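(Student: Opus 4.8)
The plan is to deduce this directly from the characterization of nilpotence in Theorem~\ref{t:nilp}, by showing that the criterion there for the $G$-category $\sC$ is \emph{verbatim} the same statement as the criterion for the $M$-category $\on{Res}^G_M(\sC)=\sC^{U_P}$. To set things up, I would identify $M$ with the Levi subgroup of $P$ containing $T$ and take the Borel $B\subset G$ to lie in $P$; then $B_M:=B\cap M$ is a Borel of $M$ with unipotent radical $N_M:=N\cap M$ and maximal torus $T$, and one has $B=B_M\ltimes U_P$, $N=N_M\ltimes U_P$ with $U_P$ normal. In particular $T$ is a common maximal torus, so the character lattice $X^\bullet(T)$, and hence the index set $\ft^*/X^\bullet(T)$ appearing in Theorem~\ref{t:nilp}, are the same for $G$ and for $M$.

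The next step is the one requiring (standard) care: since $U_P$ is normal in $N$ and in $B$, iterated invariants for categorical group actions provide canonical equivalences
\[
\sC^N\simeq (\sC^{U_P})^{N_M},\qquad \sC^{B,\chi_\lambda}\simeq (\sC^{U_P})^{B_M,\chi_\lambda}
\]
for every $\lambda\in\ft^*/X^\bullet(T)$, compatibly with the forgetful functors down to $\sC^N\simeq(\sC^{U_P})^{N_M}$ (the character $\chi_\lambda$ is trivial on $U_P$ and lives on the common torus $T$). Granting this, the essential images of these forgetful functors coincide, and hence so do their cocompletions, giving
\[
\sC^{(B,\chi_\lambda)\on{-mon}}\simeq (\sC^{U_P})^{(B_M,\chi_\lambda)\on{-mon}}
\]
as full subcategories of $\sC^N\simeq(\sC^{U_P})^{N_M}$, for every $\lambda$. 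One should also note here that $\sC^{U_P}$ is dualizable (as $U_P$ is unipotent), so that the notion of nilpotence, and Theorem~\ref{t:nilp}, indeed apply to $\on{Res}^G_M(\sC)$.

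With this the corollary is immediate: Theorem~\ref{t:nilp} applied to $\sC$ says that $\sC$ is nilpotent if and only if $\bigoplus_{\lambda}\sC^{(B,\chi_\lambda)\on{-mon}}=\sC^N$, while applied to the $M$-category $\sC^{U_P}$ with the Borel $B_M$ it says that $\on{Res}^G_M(\sC)$ is nilpotent if and only if $\bigoplus_{\lambda}(\sC^{U_P})^{(B_M,\chi_\lambda)\on{-mon}}=(\sC^{U_P})^{N_M}$; by the previous paragraph these two conditions are identical. The only inputs beyond Theorem~\ref{t:nilp} are the dualizability of $\sC^{U_P}$ and the compatibility of iterated invariants with the forgetful functors, both of which are standard; I expect the latter to be the main (though routine) bookkeeping point.
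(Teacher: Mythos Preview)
Your proposal is correct and takes essentially the same approach as the paper: the paper's one-line proof invokes Theorem~\ref{t:nilp} together with the fact that parabolic restriction functors compose (cf.\ \S\ref{s:compose}), which applied with $M'=T$ gives exactly your identifications $(\sC^{U_P})^{N_M}\simeq \sC^N$ and $(\sC^{U_P})^{B_M,\chi_\lambda}\simeq \sC^{B,\chi_\lambda}$. Your version simply unpacks this and makes explicit the matching of the monodromic subcategories, which is the substance behind the paper's terse appeal to composition of restriction functors.
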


\begin{proof}
This is an immediate consequence of Theorem \ref{t:nilp} and the fact that the parabolic restriction functors compose, cf. §\ref{s:compose}.
\end{proof}

\subsection{Singular support estimate of parabolic restriction}\label{s:jwchi}
In this subsection, we show that parabolic restriction plays well with singular support for nilpotent $G$-categories. Let $P\subset G$ be a standard parabolic with Levi subgroup $M$.

\subsubsection{} Consider the correspondence
\[\begin{tikzcd}
	& {\mathfrak{p}} \\
	{\mathfrak{g}} && {\mathfrak{m}.}
	\arrow["{q}"', from=1-2, to=2-1]
	\arrow["{p}", from=1-2, to=2-3]
\end{tikzcd}\]

\noindent For closed conical subsets $Z\subset \fg$ and $Z_M\subset \fm$, we define the subsets
\[
\on{Res}_M^G(Z):=p(q^{-1}(Z))\subset \fm,\;\; \on{Ind}_M^G(Z_M):=\on{Ad}_G(q(p^{-1}(Z_M)))\subset \fg.
\]

\subsubsection{} The following lemma follows from \cite[§7.1]{collingwood1993nilpotent}:
\begin{lem}
Let $Z_M\subset \cN_M$ be a closed conical $\on{Ad}_M$-stable subset of the nilpotent cone of $\fm$. Then $\on{Ind}_M^G(Z_M)$ is nilpotent and closed.
\end{lem}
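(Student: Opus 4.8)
The plan is to treat the two assertions---nilpotence and closedness---separately, after recording what the set $\on{Ind}_M^G(Z_M)$ actually is. Fix a Levi decomposition $\fp=\fm\oplus\fu_P$, where $\fu_P=\on{Lie}(U_P)$, so that the projection $p\colon\fp\to\fm$ kills $\fu_P$ and the inclusion $q\colon\fp\to\fg$ is the obvious one. Then $q(p^{-1}(Z_M))=Z_M+\fu_P\subset\fp$, and this subset is conical (as $Z_M$ is conical and $\fu_P$ linear), closed in $\fp$ (it is the preimage of the closed set $Z_M$ under the continuous map $p$), and $P$-stable (it is the preimage of the $\on{Ad}_M$-stable set $Z_M$ under the $P$-equivariant map $p$). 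Conicality of $\on{Ind}_M^G(Z_M)=\on{Ad}_G(Z_M+\fu_P)$ is then automatic; the content of the lemma is that this $G$-saturation is nilpotent and closed.

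For nilpotence, I would use the standard fact that an element $x\in\fp$ is nilpotent in $\fg$ if and only if $p(x)\in\fm$ is nilpotent in $\fm$. The relevant implication ($p(x)$ nilpotent $\Rightarrow x$ nilpotent) follows by choosing a cocharacter $\lambda\colon\bG_m\to G$ with $\fp=\bigoplus_{i\ge 0}\fg_i$, $\fm=\fg_0$, $\fu_P=\bigoplus_{i>0}\fg_i$; writing $x=m+n$ with $m=p(x)$ and $n\in\fu_P$, the operator $\on{ad}_x$ is a filtered endomorphism of $\fg$ whose associated graded is $\on{ad}_m$, and $\on{ad}_m$ is nilpotent because a nilpotent element of the Levi $\fm$ is nilpotent in $\fg$; hence $\on{ad}_x$ is nilpotent. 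Thus $Z_M+\fu_P\subset p^{-1}(\cN_M)\subset\cN$, and so $\on{Ind}_M^G(Z_M)=\on{Ad}_G(Z_M+\fu_P)\subset\on{Ad}_G(\cN)=\cN$.

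For closedness, the plan is the usual parabolic-induction argument. Form the associated bundle $G\times^{P}(Z_M+\fu_P)$, a closed subvariety of the vector bundle $G\times^{P}\fp$ over $G/P$, and consider
\[
\pi\colon G\times^{P}(Z_M+\fu_P)\longrightarrow\fg,\qquad (g,x)\longmapsto\on{Ad}_g(x).
\]
The map $G\times^{P}\fp\to\fg\times G/P$, $(g,x)\mapsto(\on{Ad}_g(x),gP)$, is a closed immersion, and the projection $\fg\times G/P\to\fg$ is proper because $G/P$ is complete; hence $\pi$, being the restriction of a proper map to a closed subvariety, is proper, so its image is closed in $\fg$. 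Since that image is exactly $\on{Ad}_G(Z_M+\fu_P)=\on{Ind}_M^G(Z_M)$, we are done. Equivalently, one may first reduce---using that $\cN_M$ has finitely many $\on{Ad}_M$-orbits, so $Z_M=\bigcup_i\overline{\bO_{M,i}}$ and both $p^{-1}(-)$ and $\on{Ad}_G(q(-))$ commute with finite unions---to the case $Z_M=\overline{\bO_M}$ of a single orbit, and then invoke the theory of Lusztig--Spaltenstein induction as in \cite[§7.1]{collingwood1993nilpotent}, where $\on{Ad}_G(\overline{\bO_M}+\fu_P)$ is identified with the closure of the induced nilpotent orbit $\on{Ind}_M^G(\bO_M)$.

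I do not expect a serious obstacle here: the only substantive inputs are the elementary grading argument for nilpotence and the completeness of $G/P$ (equivalently, the properness of $\pi$) for closedness, both of which are standard. The one point to be careful about is bookkeeping with the conventions of §\ref{s:jwchi} (where $p$ is the projection $\fp\to\fm$ and $q$ the inclusion $\fp\hookrightarrow\fg$, opposite to the introduction), so that $q(p^{-1}(Z_M))$ is correctly identified with $Z_M+\fu_P$.
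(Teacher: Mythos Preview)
Your proposal is correct. The paper does not give a proof at all: it simply records that the lemma ``follows from \cite[§7.1]{collingwood1993nilpotent}'', i.e.\ the Lusztig--Spaltenstein induction theory you mention as your second alternative. Your main argument is a self-contained unpacking of this: the grading argument for nilpotence and the properness of $G\times^P(Z_M+\fu_P)\to\fg$ for closedness are precisely the ingredients underlying the cited reference, so there is no substantive difference in approach, only in level of detail. Your remark about the swapped labels $p,q$ between the introduction and §\ref{s:jwchi} is accurate and worth flagging.
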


\noindent  The singular support estimate of parabolic restriction relies on the following result:
\begin{lem}\label{l:pullb}
Denote by $B_M=B\cap M$ the Borel subgroup of $M$, and let $Z\subset \cN$ be a closed conical $G$-stable subset of the nilpotent cone. Denote by $i$ the inclusion $M\into G$. For $\chi=\chi_{\lambda}$ a character sheaf on $T$ corresponding to $\lambda\in \ft^*/X^{\bullet}(k)$, $i^!$ restricts to functor
\[
i^!: \sH_{G,\chi,Z}\underset{\sH_{G,\chi}}{\otimes}D(B,\chi\backslash G)\to \sH_{M,\chi,\on{Res}_M^G(Z)}\underset{\sH_{M,\chi}}{\otimes}D(B_M,\chi\backslash M).
\]
\end{lem}

\begin{rem}
Note that $i^!: D(B,\chi\backslash G)\to D(B_M,\chi\backslash M)$ is tautologically isomorphic to the functor obtained by pull-push along the correspondence $M\leftarrow P\to G$.
\end{rem}

\begin{rem}
It is in general false that restriction along a closed embedding preserves singular support. See \cite[\S 2.2.1]{nadler2022whittaker} for a basic example.
\end{rem}

\begin{proof}
\step Writing $D(B,\chi\backslash G/B,\chi)\simeq D(B_M,\chi\backslash (U_P\backslash G)/B,\chi)$, we have an action
\begin{equation}\label{eq:action}
\sH_{M,\chi}\curvearrowright \sH_{G,\chi,Z}.
\end{equation}

\noindent We begin by proving that the embedding
\begin{equation}\label{eq:emb}
\sH_{M,\chi,\on{Res}^G_M(Z)}\underset{\sH_{M,\chi}}{\otimes} \sH_{G,\chi,Z} \into \sH_{G,\chi,Z}
\end{equation}

\noindent is an equivalence. The functor
\[
\sH_{G,\chi}\to\on{Hom}_{\sH_{M,\chi}}(\sH_{G\chi}, \sH_{M,\chi}),\;\; \cG\mapsto (\cF\mapsto i^!(\cF\star\cG))
\]

\noindent is an equivalence. Thus, it suffices to show that for all $\cF\in \sH_{G,\chi,Z}, \sG\in \sH_{G,\chi}$, we have:
\[
i^!(\cF\star \cG)\in \sH_{M,\chi,\on{Res}_M^G(Z)}.
\]

\noindent Since convolution preserves having singular support in $Z$, it in turn suffices to show that for any $\cF\in \sH_{G,\chi,Z}$, we have
\begin{equation}\label{eq:embhecke}
i^!(\cF)\in \sH_{M,\chi,\on{Res}_M^G(Z)}.
\end{equation}

\noindent Let $w_0\in W$ be the longest element and consider the corresponding open embedding $j_{w_0}: Bw_0B\into G$. Consider the character sheaf $w_0(\chi)=(w_0)_{*,\on{dR}}(\chi)$ on $T$. Let $m: B\times w_0\times B\to Bw_0B$ be the multiplication map. Note that $\chi\boxtimes w_0(\chi)\in D(B\times w\times B)$ descends to a sheaf $\phi\in D(B,\chi\backslash Bw_0B/B,w_0(\chi))$. We let
\[
j_{w_0,\chi,!}:=(j_{w_0})_!(\phi)\in D(B,\chi\backslash G/B,w_0(\chi)).
\]

\noindent Consider the composition
\begin{equation}\label{eq:comp2}
\sH_{M,\chi}\xrightarrow{i_!} \sH_{G,\chi}\xrightarrow{-\star j_{w_0,\chi,!}} D(B,\chi\backslash G/B,w_0(\chi)).
\end{equation}

\noindent Since the functor $-\star j_{w_0,\chi,!}$ is invertible (\cite[Lemma 3.5]{lusztig2020endoscopy}), it suffices to show that the right adjoint to (\ref{eq:comp2}) sends $\cF_0:=\cF\star j_{w_0,\chi,!}$ to an object of $\sH_{M,\chi,\on{Res}_M^G(Z)}$.

Let $j$ denote the open embedding
\[
Pw_0B\into G.
\]

\noindent Then the right adjoint to (\ref{eq:comp2}) is given by 
\begin{equation}\label{eq:rightadj}
D(B,\chi\backslash G/B,w_0(\chi))\xrightarrow{j^!} D(B,\chi\backslash Pw_0B/B,w_0(\chi))\simeq D(B,\chi\backslash P/B,\chi)\simeq \sH_{M,\chi}.
\end{equation}

\noindent Here, the first equivalence is the inverse to
\[
D(B,\chi\backslash P/B,\chi)\to D(B,\chi\backslash PwB/B,w_0(\chi)),\;\; \cG\mapsto \cG\star j_{w_0,\chi,!}.
\]

\noindent Since $j^!$ preserves singular support, being the pullback functor along an open embedding, the image of $\cF_0$ under (\ref{eq:rightadj}) has singular support contained in $\on{Res}_M^G(Z)\subset \fm^*$.

\step
Let us finish the proof. Since the functor
\[
i^!: \sH_{G,\chi,Z}\underset{\sH_{G,\chi}}{\otimes}D(B,\chi\backslash G)\to \sH_{M,\chi,\on{Res}_M^G(Z)}\underset{\sH_{M,\chi}}{\otimes}D(B_M,\chi\backslash M)
\]

\noindent is $\sH_{M,\chi}$-linear, the proof follows from the embedding (\ref{eq:emb}) being an equivalence.
\end{proof}

\subsubsection{} We need a converse statement to the above lemma.
\begin{lem}\label{l:converseprop}
Let $\cF\in \sH_{G,\chi}$ and write $Z:=\on{SS}(\cF)$. Let $Z_M\subset \fm^*$ be a closed conical Ad-invariant subset. If $\cF$ lies in the full subcategory
\[
\sH_{M,\chi,Z_M}\underset{\sH_{M,\chi}}{\otimes} \sH_{G,\chi} \subset \sH_{G,\chi},
\]

\noindent then $\on{Res}_M^G(Z)\subset Z_M$.

\end{lem}

\begin{proof}
Suppose for contradiction that this is not the case. Then there exists $w'\in W$ such that the map
\[
Z\underset{\cN/G}{\times}T^*(B\backslash G/U_P)\underset{B\backslash G/U_P}{\times}B\backslash Pw'B/U_P\to T^*(B\backslash G/U_P)\to \fm^* 
\]

\noindent does not factor through $Z_M$. Here, the last map is the moment map of the action $B\backslash G/U_P\curvearrowleft M$.

Let $W_P\subset W$ be the Weyl group of $M$, considered as a subgroup of $W$. Let $w$ be the unique longest element in $W_Pw'$. Note that
\[
Pw'B=PwB.
\]

\noindent Let $i_w$ be the locally closed embedding
\[
i_w: PwB\into G.
\]

\noindent To arrive at a contradiction, it suffices to show that the image of $\cF$ under the $\sH_{M,\chi}$-equivariant map
\[
\sH_{G,\chi}\xrightarrow{i_w^!} D(B,\chi\backslash PwB/B,\chi)
\]

\noindent does not land in 
\[
\sH_{M,\chi,Z_M}\underset{\sH_{M,\chi}}{\otimes} D(B,\chi\backslash PwB/B,\chi).
\]

\noindent As in the proof of Lemma \ref{l:pullb}, we may consider the equivalence
\[
-\star j_{w^{-1}w_0,\chi,!}: \sH_{G,\chi}\xrightarrow{\simeq} D(B,\chi\backslash G/B,w^{-1}w_0(\chi)).
\]

\noindent The functor restricts to an equivalence
\[
D(B,\chi\backslash PwB/B,\chi)\simeq D(B,\chi\backslash Pw_0B/B,w^{-1}w_0(\chi)).
\]

\noindent Thus, it suffices to show that
\[
i_w^!(\cF)\star j_{w^{-1}w_0,\chi,!}\in D(B,\chi\backslash Pw_0B/B,w^{-1}w_0(\chi))
\]

\noindent does not lie in 
\[
\sH_{M,\chi,Z_M}\underset{\sH_{M,\chi}}{\otimes} D(B,\chi\backslash Pw_0B/B,w^{-1}w_0(\chi)).
\]

\noindent Denote by $j$ the open embedding
\[
j: Pw_0B\into G.
\]

\noindent We have a commutative diagram
\[\begin{tikzcd}
	{\sH_{G,\chi}} && {D(B,\chi\backslash PwB/B,\chi)} \\
	\\
	{D(B,\chi\backslash G/B,w^{-1}w_0(\chi))} && {D(B,\chi\backslash Pw_0B/B,w^{-1}w_0(\chi)),}
	\arrow["{i_w^!}", from=1-1, to=1-3]
	\arrow["{-\star j_{w^{-1}w_0,\chi,!}}", from=1-1, to=3-1]
	\arrow["{-\star j_{w^{-1}w_0,\chi,!}}", from=1-3, to=3-3]
	\arrow["{j^!}", from=3-1, to=3-3]
\end{tikzcd}\]

\noindent where the vertical maps are equivalences. By construction, the image of $\on{SS}(\cF\star j_{w^{-1}w_0,\chi,!})$ under the map
\[
T^*(B\backslash G/U_P)\underset{B\backslash G/U_P}{\times}B\backslash Pw_0B/U_P\to \fm^*
\]

\noindent does not factor through $Z_M$. Since the functor $j^!$ preserves singular support, it follows that:
\[
i_w^!(\cF)\star j_{w^{-1}w_0,\chi,!}\notin 
 \sH_{M,\chi,Z_M}\underset{\sH_{M,\chi}}{\otimes} D(B,\chi\backslash Pw_0B/B,w^{-1}w_0(\chi)),
\]

\noindent as desired.

\end{proof}

\subsubsection{} The following is the main result of this subsection:
\begin{thm}\label{t:pres}
Let $\sC$ be a nilpotent $G$-category. Then $\on{SS}(\on{Res}^G_M(\sC))=\on{Res}^G_M(\on{SS}(\sC))$.
\end{thm}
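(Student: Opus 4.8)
\emph{Proof strategy.} The plan is to reduce to the case where $\sC$ is $(B,\chi)$-generated for a single non-transcendental character sheaf $\chi$, and then to combine Proposition~\ref{p:pullb} (which will give the inclusion $\on{SS}(\on{Res}^G_M(\sC))\subseteq\on{Res}^G_M(\on{SS}(\sC))$) with Proposition~\ref{p:converseprop} (which will give the reverse inclusion). Note first that $\on{Res}^G_M(\sC)$ is again nilpotent by Corollary~\ref{c:nilp}, so $\on{SS}(\on{Res}^G_M(\sC))$ makes sense.

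For the reduction: by (the proof of) Theorem~\ref{t:nilp} and \cite{ben2020highest}, a nilpotent $G$-category decomposes as $\sC\simeq\bigoplus_\lambda\sC^{(B,\chi_\lambda)\on{-gen}}$, the sum over $k$-points of $\ft^*/(W\ltimes X^\bullet(T))$. The functor $\on{Res}^G_M=(-)^{U_P}$ commutes with direct sums, since for the unipotent group $U_P$ invariants agree with coinvariants and hence preserve colimits; and the matrix coefficient functor of a direct sum of $G$-categories only detects the diagonal summands, so $\on{SS}(\bigoplus_i\sC_i)=\overline{\bigcup_i\on{SS}(\sC_i)}$ and likewise for the restrictions. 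Since $\on{SS}(\sC)\subseteq\cN$ meets only finitely many orbits, this closure is a finite union of orbit closures; $\on{Res}^G_M(-)=p(q^{-1}(-))$ commutes with finite unions, and $\on{Res}^G_M(\overline{\bO})$ is closed for every nilpotent orbit $\bO$ (theory of induced/restricted orbits, cf. \cite[§7.1]{collingwood1993nilpotent}). Hence $\on{Res}^G_M$ of the union is the union of the $\on{Res}^G_M$'s, and the theorem reduces to the $(B,\chi)$-generated case.

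So assume $\sC=D(G/B,\chi)\otimes_{\sH_{G,\chi}}\sC^{B,\chi}$. Then $\on{Res}^G_M(\sC)=\sC^{U_P}$ is a direct sum of $(B_M,\chi')$-generated $M$-categories, whose underlying equivariant subcategories are the $\sC^{B,\chi'}\simeq(\sC^{U_P})^{B_M,\chi'}$ (using $B=B_M\ltimes U_P$). The crucial point is that the matrix coefficients controlling $\on{SS}(\sC)$ may, via Corollary~\ref{c:Heckedescrip} and Remark~\ref{r:conv}, be taken valued in the Hecke category $\sH_{G,\chi}=D(B,\chi\backslash G/B,\chi)$ (namely the matrix coefficients of $\sC^{B,\chi}$ regarded as an $\sH_{G,\chi}$-module), that those controlling $\on{SS}(\on{Res}^G_M(\sC))$ are the analogous ones valued in $\sH_{M,\chi}$, and that the two families are intertwined by $i^!\colon\sH_{G,\chi}\to\sH_{M,\chi}$, i.e. by pull--push along the correspondence $M\leftarrow P\rightarrow G$ (cf. the Remark after Proposition~\ref{p:pullb}). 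Establishing this intertwining --- reconciling ``pass to $U_P$-invariants and then the $(B_M,\chi)$-part'' with ``pass to the $(B,\chi)$-part of $\sC$'', compatibly with the (co)action maps on both sides --- is the main technical obstacle; it requires careful bookkeeping of the left and right $G$- and $M$-actions and of the identification of $U_P$-invariants with $U_P$-coinvariants.

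Granting this, the two inclusions follow. For $\subseteq$: with $Z=\on{SS}(\sC)$, the $\sH_{G,\chi}$-valued matrix coefficients of $\sC$ lie in $\sH_{G,\chi,Z}$; Proposition~\ref{p:pullb} puts their $i^!$-images in $\sH_{M,\chi,\on{Res}^G_M(Z)}$; by the intertwining these are the matrix coefficients of $\on{Res}^G_M(\sC)$, so $\on{SS}(\on{Res}^G_M(\sC))\subseteq\on{Res}^G_M(Z)$. For $\supseteq$: set $Z_M=\on{SS}(\on{Res}^G_M(\sC))$. Every matrix coefficient $\cF\in\sH_{G,\chi,Z}$ of $\sC$ has the property that $\cF\star\cG$ is again such a matrix coefficient for all $\cG\in\sH_{G,\chi}$, so $i^!(\cF\star\cG)$ is a matrix coefficient of $\on{Res}^G_M(\sC)$ and therefore lies in $\sH_{M,\chi,Z_M}$; by the argument establishing the equivalence (\ref{eq:emb}) in Proposition~\ref{p:pullb}, this forces $\cF\in\sH_{M,\chi,Z_M}\otimes_{\sH_{M,\chi}}\sH_{G,\chi,Z}\subseteq\sH_{M,\chi,Z_M}\otimes_{\sH_{M,\chi}}D(B,\chi\backslash G)$. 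Proposition~\ref{p:converseprop} then gives $\on{Res}^G_M(\on{SS}(\cF))\subseteq Z_M$; taking the union over all such $\cF$, whose singular supports exhaust $Z$ by minimality in the definition of $\on{SS}(\sC)$, yields $\on{Res}^G_M(\on{SS}(\sC))\subseteq\on{SS}(\on{Res}^G_M(\sC))$.
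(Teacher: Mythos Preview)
Your approach is essentially the same as the paper's: both directions hinge on Propositions~\ref{p:pullb} and~\ref{p:converseprop}, together with the compatibility between the $G$- and $M$-coaction maps after passing to $(B,\chi)$-invariants (your ``intertwining''). A few comparative remarks:

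\textbf{Reduction to the $(B,\chi)$-generated case is unnecessary.} The paper does not decompose $\sC$ into a direct sum; it works directly with an arbitrary nilpotent $\sC$ and, for each $\chi\in\ft^*/X^{\bullet}(T)(k)$, uses the commutative square
\[
\begin{tikzcd}
\sC^{B,\chi}=(\sC^{U_P})^{B_M,\chi} \arrow[r,"\on{coact}_{\sC^{U_P}}^{B_M,\chi}"] \arrow[d,"\on{coact}_{\sC}^{B,\chi}"'] & D(B_M,\chi\backslash M)\otimes\sC^{U_P} \arrow[d,"\on{id}\otimes\on{oblv}^{U_P}"] \\
D(B,\chi\backslash G)\otimes\sC \arrow[r,"i^!\otimes\on{id}"] & D(B_M,\chi\backslash M)\otimes\sC.
\end{tikzcd}
\]
This diagram is exactly your intertwining, and once written down its commutativity is straightforward; there is no real obstacle here. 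The $\subseteq$ inclusion then follows immediately from Proposition~\ref{p:pullb} plus Theorem~\ref{t:nilp} (applied to the $M$-category $\sC^{U_P}$).

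\textbf{For $\supseteq$, the paper has a shorter route.} Rather than arguing via ``every $\cF\star\cG$ is again a matrix coefficient, hence $i^!(\cF\star\cG)\in\sH_{M,\chi,Z_M}$, hence by the mechanism of (\ref{eq:emb})\dots'', the paper applies Corollary~\ref{c:catHeckedescrip} directly to the $M$-category $\on{Res}_M^G(\sC)$: since $(\on{Res}_M^G(\sC))^{B_M,\chi}=\sC^{B,\chi}$, one obtains
\[
\sC^{B,\chi}\simeq \sH_{M,\chi,Z_M}\underset{\sH_{M,\chi}}{\otimes}\sC^{B,\chi},\qquad Z_M:=\on{SS}(\on{Res}_M^G(\sC)).
\]
Because $\on{coact}_{\sC}^{B,\chi}$ is $\sH_{M,\chi}$-linear, every $D(B,\chi\backslash G)$-valued matrix coefficient of $\sC^{B,\chi}$ automatically lies in $\sH_{M,\chi,Z_M}\underset{\sH_{M,\chi}}{\otimes}D(B,\chi\backslash G)$, and Proposition~\ref{p:converseprop} finishes. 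Your detour through (\ref{eq:emb}) recovers the same conclusion, but the single invocation of Corollary~\ref{c:catHeckedescrip} is more transparent and avoids having to justify that the characterization ``$i^!(\cF\star\cG)\in\sH_{M,\chi,Z_M}$ for all $\cG$'' really pins down the tensor subcategory.
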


\begin{proof}
\step 

First, we show that
\[
\on{SS}(\on{Res}^G_M(\sC))\subset\on{Res}^G_M(\on{SS}(\sC)).
\]

\noindent For any $\chi\in \ft^*/X^{\bullet}(T)(k)$, we have a commutative diagram
\[\begin{tikzcd}
	{D(B,\chi\backslash G)\otimes \sC} && {D(B_M,\chi\backslash M)\otimes \sC} \\
	\\
	{\sC^{B,\chi}=(\sC^{U_P})^{B_M,\chi}} && {D(B_M,\chi\backslash M)\otimes \sC^{U_P}.}
	\arrow["{\mathrm{coact}_{\sC^{U_P}}^{B_M,\chi}}", from=3-1, to=3-3]
	\arrow["{\mathrm{id}\otimes \mathrm{oblv}^{U_P}}"', from=3-3, to=1-3]
	\arrow["{\mathrm{coact}_{\sC}^{B,\chi}}", from=3-1, to=1-1]
	\arrow["{i^!\otimes \mathrm{id}_{\sC}}", from=1-1, to=1-3]
\end{tikzcd}\]

\noindent Here, $\on{coact}_{\sC}^{B,\chi}$ denote the functor of taking $(B,\chi)$-invariance for the $G$-linear functor $\on{coact}_{\sC}: \sC\to D(G)\otimes \sC$. The functor $\on{coact}_{\sC^{U_P}}^{B_M,\chi}$ is defined similarly. Since $\sC^{U_P}$ is nilpotent as an $M$-category (cf. Corollary \ref{c:nilp}), it suffices to show that the image of $\on{coact}_{\sC^{U_P}}^{B_M,\chi}$ lands in $D_{\on{Res}^G_M(\on{SS}(\sC))}(B_M,\chi\backslash M)\otimes \sC^{U_P}$ for all $\chi\in \ft^*/X^{\bullet}(T)(k)$.

From Proposition \ref{p:heckedescrip}, Lemma \ref{l:pullb} and the above diagram, we conclude that the composition
\[
\sC^{B,\chi}\xrightarrow{\on{coact}_{\sC^{U_P}}^{B_M,\chi}} D(B_M,\chi\backslash M)\otimes \sC^{U_P}\xrightarrow{\on{id}\otimes \on{oblv}^{U_P}} D(B_M,\chi\backslash M)\otimes \sC
\]

\noindent lands in 
\[
\sH_{M,\chi,\on{Res}^G_M(\on{SS}(\sC))}\underset{\sH_{M,\chi}}{\otimes} D(B_M,\chi\backslash M)\otimes \sC\subset D_{\on{Res}^G_M(\on{SS}(\sC))}(B_M,\chi\backslash M)\otimes \sC.
\]

\noindent In particular, the image of $\on{coact}_{\sC^{U_P}}^{B_M,\chi}$ lands in $D_{\on{Res}^G_M(\on{SS}(\sC))}(B_M,\chi\backslash M)\otimes \sC^{U_P}$. 

\step
Next, we show that the inclusion
\[
\on{SS}(\on{Res}^G_M(\sC))\subset \on{Res}^G_M(\on{SS}(\sC))
\]

\noindent is an equality. Suppose for contradiction that it is not. In this case, by Theorem \ref{t:nilp} and Proposition \ref{p:heckedescrip}, we can find $\chi\in\ft^*/X^{\bullet}(T)(k), c\in \sC^{B,\chi}, \phi\in\sC^{\vee}$ and $\cG\in D(G/B,\chi)$ such that the image of $c$ (denoted $\cF$) under the composition
\[
\sC^{B,\chi}\xrightarrow{\on{coact}_{\sC}^{B,\chi}} D(B,\chi\backslash G)\otimes \sC\xrightarrow{\on{id}\otimes \phi} D(B,\chi\backslash G)\xrightarrow{-\star\cG} D(B,\chi\backslash G/B,\chi)
\]

\noindent satisfies that $\on{Res}_M^G(\on{SS}(\cF))$ is not contained $\on{SS}(\on{Res}_M^G(\sC))$. By Lemma \ref{l:converseprop}, $\cF$ is not an object of
\[
\sH_{M,\chi,\on{SS}(\on{Res}_M^G(\sC))}\underset{\sH_{M,\chi}}{\otimes} \sH_{G,\chi}.
\]

\noindent However, applying \ref{p:heckedescrip} again, we must have
\[
\sC^{B,\chi}\simeq \sH_{M,\chi,\on{SS}(\on{Res}_M^G(\sC))}\underset{\sH_{M,\chi}}{\otimes} \sC^{B,\chi}
\]

\noindent so that 
\[
\cF=(-\star\cG)\circ (\on{id}\otimes\phi)\circ \on{coact}_{\sC}^{B,\chi}(c)\in \sH_{M,\chi,\on{SS}(\on{Res}_M^G(\sC))}\underset{\sH_{M,\chi}}{\otimes} \sH_{G,\chi},
\]

\noindent yielding a contradiction.

\end{proof}

\subsection{Singular support estimate on parabolic induction}\label{s:pind}
We now consider how parabolic induction of categories interacts with singular support. As we will see, the situation is simpler compared to that of parabolic restriction.\footnote{The intuition here is that parabolic induction is given by a pull-push along $M\leftarrow P\rightarrow G$, where the first map is smooth and the second is a closed embedding, and these operations play well with singular support.} 

\subsubsection{}
First, note that the notion of singular support for a $G$-category makes sense for any algebraic group. For a map of algebraic groups $H\to K$ and any dualizable $K$-category $\sC$, we let $\on{SS}_H(\sC)$ denote the singular support of $\sC$ considered as an $H$-category. 

We have the following basic lemma:
\begin{lem}\label{l:smoothpullback}
Let $f:H\to K$ be a smooth map of connected algebraic groups, and let $\sC$ be a dualizable $K$-category. We have:
\[
\on{SS}_H(\sC)=\on{Im}(\on{SS}_K(\sC)\subset \fk^*=\on{Lie}(K)^*\to \fh^*).
\]
\end{lem}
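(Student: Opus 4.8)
\textbf{Proof plan for Lemma \ref{l:smoothpullback}.}

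The plan is to unwind the definitions of $\on{SS}_H(\sC)$ and $\on{SS}_K(\sC)$ in terms of the matrix coefficient functors and to transport sheaves between $D(H)$ and $D(K)$ along $f$. Write $\on{MC}_{\sC}^K\colon \sC\otimes\sC^{\vee}\to D(K)$ and $\on{MC}_{\sC}^H\colon \sC\otimes\sC^{\vee}\to D(H)$ for the two matrix coefficient functors; the first records the strong $K$-action and the second the strong $H$-action obtained by restriction along $f$. The first step is to observe that these are intertwined by $f^!$: concretely, $\on{MC}_{\sC}^H\simeq f^!\circ\on{MC}_{\sC}^K$ up to the cohomological shift coming from the relative dimension of $f$, because the $H$-coaction on $\sC$ is literally the composite of the $K$-coaction with $f^!\otimes\on{id}_{\sC}$ (this uses that pullback of delta-sheaves along $f$ computes the restricted action, together with the self-duality of $D(H)$ and $D(K)$ being compatible with $f^!$ and $f_{*,\on{dR}}$ for $f$ smooth with connected fibers). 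Granting this, both inclusions will follow from the singular support estimate for smooth pullback recorded in §\ref{s:functsing}, applied to the smooth map $f\colon H\to K$ viewed fiberwise (i.e. $f$ is smooth of relative dimension $d=\dim H-\dim K$, and its differential gives the correspondence $T^*K\xleftarrow{\pi} T^*K\times_K H\xrightarrow{df^{\bullet}} T^*H$, which in the group setting at the level of the $\fg^*$-coordinate is just the transpose $\fk^*\to\fh^*$ of $\on{Lie}(f)$).

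For the inclusion $\on{SS}_H(\sC)\subset \on{Im}(\on{SS}_K(\sC)\to\fh^*)$: if $\on{MC}_{\sC}^K$ factors through $D_{\on{SS}_K(\sC)}(K)$, then since $f$ is smooth the functoriality statement of §\ref{s:functsing} gives $\on{SS}(f^!\cG)=df^{\bullet}(\pi^{-1}(\on{SS}(\cG)))$ for any $\cG\in D(K)$, so $f^!$ carries $D_{\on{SS}_K(\sC)}(K)$ into $D_{Z'}(H)$ where $Z'$ is the image of $\on{SS}_K(\sC)$ under $\fk^*\to\fh^*$. Combined with the intertwining from the first step, $\on{MC}_{\sC}^H$ factors through $D_{Z'}(H)$, which is exactly the assertion $\on{SS}_H(\sC)\subset Z'$; note $Z'$ is closed because $f$ smooth surjective makes $\fk^*\to\fh^*$ a (split, since it has a section after choosing a splitting of $\on{Lie}(f)$) surjection of vector spaces, hence the linear image of a closed conical set is closed, and $Z'$ is $\on{Ad}_H$-stable since $\on{Lie}(f)$ is $H$-equivariant. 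For the reverse inclusion, I would argue that $f^!$ is conservative on the relevant categories (it is fully faithful up to shift since $f_{*,\on{dR}}f^!\simeq\on{id}$ — or more robustly, $f^!$ is conservative because $f$ is a smooth surjection, even just faithfully flat, of connected groups): if $\on{MC}_{\sC}^H\simeq f^![d]\circ\on{MC}_{\sC}^K$ has image in $D_{Z_0}(H)$ for $Z_0=\on{SS}_H(\sC)$, then for each object $x\in\sC\otimes\sC^{\vee}$ the sheaf $f^!(\on{MC}_{\sC}^K(x))$ has singular support in $H\times Z_0$; since $\on{SS}(f^!\cG)=df^{\bullet}(\pi^{-1}(\on{SS}(\cG)))$ and $df^{\bullet}$ corresponds to the injective-on-a-complement map dual to the surjection $\on{Lie}(f)$, one recovers that $\on{SS}(\on{MC}_{\sC}^K(x))\subset K\times (\text{preimage of }Z_0)$, and taking the smallest such set over all $x$ gives $\on{SS}_K(\sC)\subset(\fk^*\to\fh^*)^{-1}(Z_0)$, i.e. $\on{Im}(\on{SS}_K(\sC)\to\fh^*)\subset Z_0$.

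The main obstacle I expect is not either inclusion once the intertwining $\on{MC}_{\sC}^H\simeq f^!\circ\on{MC}_{\sC}^K$ (up to shift) is in hand, but rather establishing that intertwining cleanly — in particular checking the compatibility of the self-dualities of $D(H)$ and $D(K)$ with the adjunction $(f^{*,\on{dR}}[d],f^!)$ (equivalently $(f_!,f^![-d])$ after accounting for shifts), so that the $H$-coaction really is $f^!$ applied to the $K$-coaction, and tracking the relative-dimension shift so that it does not interfere with the (shift-insensitive) notion of singular support. A secondary, more bookkeeping-level point is the verification that $\fk^*\to\fh^*$ has closed image and that $df^{\bullet}$ in the group case reduces to this linear map on the $\fg^*$-factor; this is where one uses that $f$ is a homomorphism, so the action correspondence is homogeneous and everything is determined by the differential at the identity.
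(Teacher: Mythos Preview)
Your proposal is correct and takes essentially the same approach as the paper: both establish that the $H$-coaction (equivalently, the matrix coefficient functor for $H$) is $f^!$ applied to the $K$-coaction, and then both inclusions follow from the singular support estimate for smooth pullback in §\ref{s:functsing}. The paper's proof is terser—it simply asserts the intertwining of coactions and says both inclusions follow from §\ref{s:functsing}—whereas you spell out each direction and flag the intertwining as a potential technical point, but there is no substantive difference in strategy; in particular your worry about cohomological shifts is harmless since singular support is shift-insensitive.
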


\begin{proof}
Let $Z:=\on{Im}(\on{SS}_K(\sC)\subset \fk^*\to \fh^*)$. Note that the coaction map of $H\curvearrowright \sC$ is given by the composition
\[
\sC\xrightarrow{\on{coact}} D(K)\otimes \sC\xrightarrow{f^!\otimes \on{id}_{\sC}} D(H)\otimes \sC,
\]

\noindent where the first map is given by the coaction map for $K\curvearrowright \sC$. We need to show that for all $\phi\in \sC^{\vee}$, the composition
\[
\sC\xrightarrow{\on{coact}} D(K)\otimes \sC\xrightarrow{f^!\otimes \on{id}_{\sC}} D(H)\otimes \sC\xrightarrow{\on{id}_{D(H)}\otimes \phi} D(H)
\]

\noindent lands in $D_Z(H)$, and moreover that $Z\subset \fh^*$ is the smallest closed, conical $H$-stable subset so that this holds. However, this simply follows from the singular support estimate for pullback via smooth maps, cf. §\ref{s:functsing}.
\end{proof}

\subsubsection{} Next, let $i: H\into K$ be a closed embedding of connected algebraic groups. For an $H$-category $\sC$, we write $\on{Ind}_H^K(\sC):=D(K)\underset{D(H)}{\otimes} \sC$ equipped with its natural $K$-action. Let $di^{\bullet}: \fk^*\onto \fh^*$ be the induced map of dual Lie algebras. Note that for any $\phi\in\sC^{\vee}$, we obtain an $H$-linear map
\[
\sC\xrightarrow{\on{coact}} D(H)\otimes \sC\xrightarrow{\on{id}\otimes \phi} D(H).
\]

\noindent By abuse of notation, we denote the above composition by $\phi$ as well. Similarly, for any $\widetilde{\phi}\in D(K)^{\vee}$, we obtain an $H$-linear map
\[
\widetilde{\phi}: D(K)\to D(H).
\]
\begin{lem}\label{l:oneinc}
Let $\sC$ be a dualizable $H$-category. Then $\on{Ad}_K((di^{\bullet})^{-1}(\on{SS}(\sC)))\subset\on{SS}(\on{Ind}_H^K(\sC))$.
\end{lem}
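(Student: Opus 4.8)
\textbf{Overview of the strategy.} The plan is to prove the inclusion by exhibiting, for each closed conical $\on{Ad}_H$-stable $Z\subset \fh^*$ with $\on{SS}(\sC)\subset Z$, an explicit endofunctor of $\on{Ind}_H^K(\sC)$ whose matrix coefficient sheaf on $K$ has singular support meeting the "wrong" part of $\fk^*$, forcing $\on{SS}(\on{Ind}_H^K(\sC))$ to contain $\on{Ad}_K((di^\bullet)^{-1}(Z))$ whenever $Z$ genuinely contains $\on{SS}(\sC)$. Equivalently, and more cleanly, I would argue by contradiction: suppose $\on{Ad}_K((di^\bullet)^{-1}(\on{SS}(\sC)))\not\subset \on{SS}(\on{Ind}_H^K(\sC))=:Z'$. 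The goal is then to produce an object of $\on{Ind}_H^K(\sC)=D(K)\otimes_{D(H)}\sC$ and a covector $\phi$ so that the associated matrix coefficient sheaf on $K$ has singular support not contained in $K\times Z'$, contradicting the definition of $Z'$ as the singular support of $\on{Ind}_H^K(\sC)$.

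\textbf{Key steps.} First I would unwind the coaction map for $K\curvearrowright \on{Ind}_H^K(\sC)$. Using the self-duality of $D(K)$ and the projection formula, the coaction $\on{Ind}_H^K(\sC)\to D(K)\otimes\on{Ind}_H^K(\sC)$ is built from the left regular coaction $D(K)\to D(K)\otimes D(K)$ together with the coaction of $H$ on $\sC$; concretely, for $\phi\in\sC^\vee$ one gets an $H$-linear map $\phi:\sC\to D(H)$ (as in the paragraph preceding the statement), and the corresponding matrix-coefficient-type functor on $\on{Ind}_H^K(\sC)$ is obtained by inducing along $D(H)\to D(K)$. Second, I would compute the singular support of the induced sheaf: induction (i.e.\ $\otimes_{D(H)}D(K)$, or equivalently the pushforward along the smooth-up-to-the-$H$-bundle map $K\times^H(-)\to K$) interacts with singular support via pull-push along the correspondence attached to $H\hookrightarrow K$. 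Since $i:H\hookrightarrow K$ is a closed embedding, the cotangent correspondence is $\fk^*\xleftarrow{}\fk^*\xrightarrow{di^\bullet}\fh^*$ on the relevant pieces, and the $K$-orbit appears because we have induced up to a $K$-category; this is exactly where $\on{Ad}_K((di^\bullet)^{-1}(-))$ enters. Third, I would check that if $\xi\in\on{SS}(\sC)\subset\fh^*$ is realized by some $\phi$ and some object $c\in\sC$ — meaning the matrix coefficient sheaf of $c$ under $\phi$ has singular support touching $H\times\{\xi\}$ — then after inducing, the preimage $(di^\bullet)^{-1}(\xi)$ and its $K$-translates are touched by the singular support of the induced sheaf, because the closed embedding $i$ induces an \emph{equality} (not just an inclusion) on singular support estimates under $!$-restriction, cf.\ the last sentence of §\ref{s:functsing}.

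\textbf{Main obstacle.} The delicate point is the non-exactness / non-fully-faithfulness issues around the tensor product $D(K)\otimes_{D(H)}\sC$: I need to know that the singular support estimate for pushforward along the closed embedding $i$ remains an \emph{equality} after the base change by $\sC$, i.e.\ that tensoring the correspondence $T^*K \leftarrow \cdots \to T^*H$ with $\sC$ does not lose the covectors I want to detect. In the bare D-module setting this is the statement that $i_{*,\dR}$ preserves singular support exactly for a closed embedding; the work is to promote this to the relative/categorical setting, where one must be careful that $D_{Z'}(K)\otimes\sC\to D(K)\otimes\sC$ is fully faithful (true since $\sC$ is dualizable, Remark \ref{r:ff}) and that the induction functor $D(H)\to D(K)$, $\sC\to\on{Ind}_H^K(\sC)$ is compatible with these full subcategories. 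I expect this to be a relatively soft argument — inducing up is "pull-push along smooth then closed embedding", both of which are well-behaved for singular support — which is consistent with the remark in §\ref{s:pind} that parabolic induction is easier than restriction; the reverse inclusion (equality) presumably requires the nilpotence hypothesis and is treated separately, so for this lemma the one-sided inclusion should follow by the direct matrix-coefficient computation sketched above.
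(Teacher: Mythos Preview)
Your approach is correct and essentially matches the paper's: use the equality of singular support under the closed embedding $i_{*,\on{dR}}$, and relate matrix coefficients of $\sC$ to those of $\on{Ind}_H^K(\sC)$. However, the paper's execution is considerably cleaner than your sketch, and your ``main obstacle'' dissolves entirely. Rather than unwinding the coaction of $K$ on the induced category, the paper simply observes that the natural functors $i_{*,\on{dR}}:\sC\to\on{Ind}_H^K(\sC)$ and $(i^!)^\vee:\sC^\vee\to\on{Ind}_H^K(\sC)^\vee$ fit into a commutative square
\[
\begin{tikzcd}
\sC\otimes\sC^\vee \arrow[r,"\on{MC}_{\sC}"] \arrow[d] & D(H) \arrow[d,"i_{*,\on{dR}}"] \\
\on{Ind}_H^K(\sC)\otimes\on{Ind}_H^K(\sC)^\vee \arrow[r,"\on{MC}_{\on{Ind}_H^K(\sC)}"] & D(K).
\end{tikzcd}
\]
Since the bottom arrow lands in $D_{\on{SS}(\on{Ind}_H^K(\sC))}(K)$ by definition, so does $i_{*,\on{dR}}\circ\on{MC}_{\sC}$; the equality for closed embeddings in §\ref{s:functsing} then gives $(di^\bullet)^{-1}(\on{SS}(\sC))\subset\on{SS}(\on{Ind}_H^K(\sC))$, and $K$-stability of the right-hand side yields the $\on{Ad}_K$. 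No compatibility of full subcategories under base change is ever invoked.
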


\begin{proof}
Since $\on{SS}(\on{Ind}_H^K(\sC))$ is $K$-stable by definition, it suffices to show that
\[
(di^{\bullet})^{-1}(\on{SS}(\sC))\subset\on{SS}(\on{Ind}_H^K(\sC)).
\]

\noindent We need to show that for all $c\in \sC, \phi\in\sC^{\vee}$, we have
\[
(di^{\bullet})^{-1}\on{SS}(\phi(c))\subset \on{SS}(\on{Ind}_H^K(\sC)).
\]

\noindent From the singular support estimate of closed embeddings (cf. §\ref{s:functsing}), this is equivalent to showing that the map (see Remark \ref{r:MC}):
\[
\sC\otimes \sC^{\vee}\xrightarrow{\on{MC}_{\sC}} D(H)\xrightarrow{i_{*,\on{dR}}} D(K)
\]

\noindent lands in $D_{\on{SS}(\on{Ind}_H^K(\sC))}(K)$. Consider the functors
\[
i_{*,\on{dR}}: \sC\to D(K)\underset{D(H)}{\otimes} \sC=\on{Ind}_H^K(\sC),
\]
\[
(i^!)^{\vee}: \sC^{\vee}\to (D(K)\underset{D(H)}{\otimes} \sC)^{\vee}=\on{Ind}_H^K(\sC)^{\vee}.
\]

\noindent We have a commutative diagram
\[\begin{tikzcd}
	{\sC\otimes \sC^{\vee}} && {D(H)} \\
	\\
	\on{Ind}_H^K(\sC)\otimes \on{Ind}_H^K(\sC)^{\vee} && {D(K).}
	\arrow["{\mathrm{MC}_{\sC}}", from=1-1, to=1-3]
	\arrow["{i_{*,\mathrm{dR}}\otimes (i^!)^{\vee}}"', from=1-1, to=3-1]
	\arrow["{i_{*,\mathrm{dR}}}", from=1-3, to=3-3]
	\arrow["{\mathrm{MC}_{\on{Ind}_H^K(\sC)}}", from=3-1, to=3-3]
\end{tikzcd}\]

\noindent By definition, the lower horizontal maps factors through $D_{\on{SS}(\on{Ind}_H^K(\sC))}(K)$.

\end{proof}

\subsubsection{} We may now prove the main theorem of this subsection. Resuming the previous notation, we let $G$ be a connected reductive group and $M$ a Levi subgroup.
\begin{thm}\label{t:pind}
Let $\sC$ be a nilpotent $M$-category. Then $\on{SS}(\on{Ind}_M^G(\sC))=\on{Ind}_M^G(\on{SS}(\sC))$.
\end{thm}

\begin{proof}
Combining Lemma \ref{l:smoothpullback} and Lemma \ref{l:oneinc}, we see that we have an inclusion
\[
\on{Ind}_M^G(\on{SS}(\sC))\subset \on{SS}(\on{Ind}_M^G(\sC)),
\]

\noindent and so it remains to prove the reverse inclusion.

By Theorem \ref{t:nilp}, we may assume that $\sC$ is $(B_M,\chi)$-generated for some $\chi\in \ft^*/X^{\bullet}(T)(k)$. That is:
\[
D(M/B_M,\chi)\underset{\sH_{M,\chi}}{\otimes} \sC^{B_M,\chi}\simeq\sC. 
\]

\noindent Consider $\sC$ as a $P$-category via the projection $P\to M$. Note that $\sC\simeq \sC^{U_P}$ as a $P$-category. As such, we obtain
\begin{equation}\label{eq:paradescrip}
D(P/B,\chi)\underset{D(B,\chi\backslash P/B,\chi)}{\otimes} \sC^{B,\chi}\simeq \sC.
\end{equation}

\noindent In particular:
\[
\on{Ind}_M^G(\sC)=D(G)\underset{D(P)}{\otimes} \sC\simeq D(G/B,\chi)\underset{D(B,\chi\backslash P/B,\chi)}{\otimes} \sC^{B,\chi}.
\]

\noindent This shows that $\on{Ind}_M^G(\sC)$ is $(B,\chi)$-generated as a $G$-category (since $D(G/B,\chi)$ is). It suffices to show that the $\sH_{G,\chi}$-category
\begin{equation}\label{eq:pind}
\on{Ind}_M^G(\sC)^{B,\chi}\simeq \sH_{G,\chi}\underset{D(B,\chi\backslash P/B,\chi)}{\otimes} \sC^{B,\chi}
\end{equation}

\noindent is supported on $\sH_{G,\chi,\on{Ind}_M^G(\on{SS}(\sC))}$. Let us denote by
\[
D_{\on{SS}(\sC)}(B,\chi\backslash P/B,\chi)
\]

\noindent the subcategory of $D(B,\chi\backslash P/B,\chi)$ consisting of objects whose singular support (considered as a D-module on $P$) is contained in $\on{SS}(\sC)\subset \fm^*\subset \fp^*$. Note that we have a canonical monoidal equivalence
\[
D(B,\chi\backslash P/B,\chi)\simeq D(B_M,\chi\backslash M/B_M,\chi).
\]

\noindent We may rewrite (\ref{eq:pind}) as
\[
\sH_{G,\chi}\underset{D(B,\chi\backslash P/B,\chi)}{\otimes}\big(D_{\on{SS}(\sC)}(B,\chi\backslash P/B,\chi)\underset{D(B,\chi\backslash P/B,\chi)}{\otimes} \sC^{B,\chi}\big).
\]

\noindent The theorem now follows from the fact that the convolution functor
\[
\sH_{G,\chi}\underset{D(B,\chi\backslash P/B,\chi)}{\otimes}D_{\on{SS}(\sC)}(B,\chi\backslash P/B,\chi)\to \sH_{G,\chi}
\]

\noindent factors through $\sH_{G,\chi,\on{Ind}_M^G(\on{SS}(\sC))}$, cf. §\ref{s:functsing}.
\end{proof}

\subsection{Filtration on nilpotent $G$-categories}
In this subsection, we introduce a natural filtration by nilpotent orbits on nilpotent $G$-categories. Moreover, we show that if $\sC$ is $B$-generated, then the filtration is determined by special nilpotent orbits.

\subsubsection{}\label{s:canfiltdef} Throughout, let $\sC$ be a nilpotent $G$-category. Let $\sC_Z\subset \sC$ be the full subcategory generated under colimits by the essential image of the convolution functor
\[
\underset{\chi\in\ft^*/X^{\bullet}(T)(k)}{\bigoplus} D(G/B,\chi)\underset{\sH_{G,\chi}}{\otimes}\sH_{G,\chi,Z}\underset{\sH_{G,\chi}}{\otimes} \sC^{B,\chi}\to \sC.
\]

\noindent Note that $\sC_Z$ is $G$-stable. It is easy to see that $\sC_Z$ is the largest $G$-stable subcategory of $\sC$ with $\on{SS}(\sC_Z)\subset Z$. Moreover, by Theorem \ref{t:nilp}, $\sC_{\cN}=\sC$. As such, we obtain a filtration of our category $\sC$ by $G$-subcategories indexed by nilpotent orbits.

\begin{prop}\label{l:stable}
The subcategory $\sC_Z$ is dualizable. Moreover, the inclusion
\[
\sC_Z\into \sC
\]

\noindent admits a left adjoint.
\end{prop}

\begin{proof}
By Theorem \ref{t:nilp}, we may assume that $\sC$ is $(B,\chi)$-generated for some $\chi\in \ft^*/X^{\bullet}(T)(k)$. 

We claim that the inclusion
\[
D(G/B,\chi)\underset{\sH_{G,\chi}}{\otimes}\sH_{G,\chi,Z}\into D(G/B,\chi)
\]

\noindent admits a left adjoint. Indeed, we saw in the proof of Theorem \ref{t:van1} that $\on{Ker}(\on{Av}_*^{\psi_e})=\on{Ker}(\on{Av}_!^B\circ \on{Av}_*^{\psi_e})$, and that $\on{Av}_!^B\circ \on{Av}_*^{\psi_e}$ commuted with cofiltered limits. It then follows from Theorem \ref{t:van} that the inclusion

\[
D(G/B,\chi)\underset{\sH_{G,\chi}}{\otimes}\sH_{G,\chi,Z}\to D(G/B,\chi)
\]

\noindent commutes with cofiltered limits. In particular, it admits a left adjoint:
\[
D(G/B,\chi)\to D(G/B,\chi)\underset{\sH_{G,\chi}}{\otimes}\sH_{G,\chi,Z}.
\]

\noindent This implies that the right-hand side is compactly generated, and hence dualizable. It follows formally that the inclusion 

\[\sC_Z\simeq D(G/B,\chi)\underset{\sH_{G,\chi}}{\otimes}\sH_{G,\chi,Z}\underset{\sH_{G,\chi}}{\otimes} \sC^{B,\chi}\into D(G/B,\chi)\underset{\sH_{G,\chi}}{\otimes} \sC^{B,\chi}\simeq \sC
\]

\noindent similarly admits a left adjoint. Moreover, from the above presentation of $\sC$, we see that $\sC_Z$ is dualizable.

\end{proof}

\subsubsection{Special nilpotent orbits}\label{s:sospecial} There are various (non-obviously) equivalent definitions of special nilpotent orbits. For the original due to Lusztig, see \cite{lusztig1979class}. Spaltenstein \cite[Chap. III]{spaltenstein2006classes} later gave a definition in terms of a duality of nilpotent orbits of $\fg$ and $\check{\fg}$, the Lie algebra of the Langlands dual group of $G$.

For us, we need the following characterization due to Barbasch-Vogan \cite[Thm. 1.1]{barbasch1983primitive} and Duflo \cite[Thm. 4.3]{duflo1977classification}: let $L_w=L_{w\cdot 0}$ be the simple highest weight module with central character $0$ corresponding to $w\in W$. Then the singular support (or associated variety) of the annihilator of $L_w$ in $U(\fg)$ is the closure of a special nilpotent orbit. Moreover, every special nilpotent orbit arises this way. We write $\fX(\fg)^{\on{sp}}$ for the set of special nilpotent orbits of $\fg$.

\subsubsection{} Let $Z\subset \cN$ be a closed $G$-stable subset of the nilpotent cone. Let
\[
Z^{\on{sp}}=\bigcup_{\substack{\bO\in\fX(\fg)^{\on{sp}} \\ \bO\subset Z}}\overline{\bO}\subset Z
\]

\noindent be the union of the closures of all special nilpotent orbits contained in $Z$.
\begin{prop}\label{p:specialneeds}
Suppose $\sC$ is a $B$-generated (and in particular nilpotent) dualizable $G$-category. Then the inclusion
\[
\sC_{Z^{\on{sp}}}\subset \sC_Z
\]

\noindent is an equivalence.
\end{prop}

\begin{proof}

We have:
\[
\sC_Z\simeq D(G/B,\chi)\underset{\sH_{G,\chi}}{\otimes}\sH_{G,\chi,Z}\underset{\sH_{G,\chi}}{\otimes} \sC^{B,\chi},
\]
\[
\sC_{Z^{\on{sp}}}\simeq D(G/B,\chi)\underset{\sH_{G,\chi}}{\otimes}\sH_{G,\chi,Z^{\on{sp}}}\underset{\sH_{G,\chi}}{\otimes} \sC^{B,\chi}.
\]

\noindent As such, it suffices to show that the inclusion
\[
\sH_{G,Z^{\on{sp}}}\into\sH_{G,Z}
\]

\noindent is an equivalence. However, this follows from the definition of special nilpotent orbits.
\end{proof}

\begin{rem}\label{r:specialgen}
If $\sC$ is nilpotent but not $B$-generated, Proposition \ref{p:specialneeds} fails in general, because the singular support of the annihilator of a highest weight module with non-zero central character need not be the closure of a special nilpotent orbit. Instead, if $\sC^N=\sC^{(B,\chi)\on{-mon}}$ for some $\chi$, there is an obvious analogue replacing special nilpotent orbits with orbits maximal in the singular support of the annihilator of highest weight modules with central character $\lambda\in\ft^*$, where $\lambda$ is a regular lift of $\chi$.
\end{rem}

\subsection{Whittaker characterization of singular support of $G$-categories}\label{s:whitchar}

We now prove the main result of this section. 

\subsubsection{} For a nilpotent element $e\in\bO$, write
\[
\on{Whit}_{e}(\sC):=\sC^{U^-,\psi_e}.
\]

\noindent Since we have a (non-canonical) equivalence
\[
\on{Whit}_e(\sC)\simeq \on{Whit}_{e'}(\sC)
\]

\noindent for Ad-conjugate elements $e,e'$,\footnote{That is, $\on{Whit}_e(\sC)$ is non-canonically independent of the choice of an $\mathfrak{sl}_2$-triple and a Lagrangian in the construction of $U^{-}$. Indeed, this follows from the fact that for a Heisenberg group $H$ with a Lagrangian $L\subset H$ and a categorical representation $D(H)\curvearrowright \sC$, one has a canonical equivalence $\sC^{L}\simeq \sC^{H,\on{w}}$, whenever the center of $H$ (which is isomorphic to $\bG_a$) acts by an exponential character. The centralizer of the $\mathfrak{sl}_2$-triple acts non-trivially on $\on{Whit}_e(\sC)$ in general.} we also denote this common DG-category by $\on{Whit}_{\bO}(\sC)$.

\begin{thm}\label{t:main}
Let $\sC$ be a nilpotent $G$-category. Then 
\[
\on{Whit}_{\bO}(\sC)=0
\]

\noindent whenever $\bO\cap\on{SS}(\sC)=\emptyset$. Moreover, if $\bO$ is a nilpotent orbit maximal with respect to the property that $\bO\cap \on{SS}(\sC)\neq\emptyset$, then
\[
\on{Whit}_{\bO}(\sC)\neq 0.
\]
\end{thm}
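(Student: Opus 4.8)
\textbf{Plan for the proof of Theorem \ref{t:main}.} The first assertion is a formal consequence of Theorem \ref{t:van} together with the $(B,\chi)$-generation decomposition of Theorem \ref{t:nilp}. Indeed, if $\bO\cap\on{SS}(\sC)=\emptyset$, then by Theorem \ref{t:nilp} we may reduce to the case where $\sC$ is $(B,\chi)$-generated for some non-transcendental $\chi$, so that $\sC\simeq D(G/B,\chi)\otimes_{\sH_{G,\chi}}\sC^{B,\chi}$. The Whittaker invariants $\on{Whit}_e(\sC)=\sC^{U^-,\psi_e}$ are then computed by applying $\on{Av}_*^{\psi_e}$ to the $D(G/B,\chi)$-factor, and since $\bO\cap\on{SS}(\sC)=\emptyset$ forces (via Corollary \ref{c:catHeckedescrip} and Proposition \ref{p:stable}) that the relevant sheaves lie in $D_{\on{SS}(\sC)}(G/B,\chi)$ with $\bO\not\subset\on{SS}(\sC)$, Theorem \ref{t:van} gives the vanishing. (In fact this direction does not even need nilpotency — one uses the coaction map and the singular support estimate directly, as indicated in the remark following the theorem statement.)

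For the second assertion, suppose $\bO$ is maximal with $\bO\cap\on{SS}(\sC)\neq\emptyset$. Again reduce via Theorem \ref{t:nilp} to the $(B,\chi)$-generated case, so $\sC^{B,\chi}\simeq \sH_{G,\chi,\on{SS}(\sC)}\otimes_{\sH_{G,\chi}}\sC^{B,\chi}$ by Corollary \ref{c:catHeckedescrip}. The strategy is to argue by contradiction: if $\on{Whit}_e(\sC)=0$ for all $e\in\bO$, then the corresponding Whittaker averaging functor kills everything in the image of the coaction map. Concretely, for every $c\in\sC^{B,\chi}$ and $\phi\in\sC^\vee$ the sheaf $\cF_{c,\phi}:=(\on{id}\otimes\phi)\circ\on{coact}^{B,\chi}_\sC(c)\in D_{\on{SS}(\sC)}(B,\chi\backslash G)$ would satisfy $\on{Av}_*^{\psi_e}(\cF_{c,\phi})=0$ for all $e\in\bO$. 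By Corollary \ref{c:Heckedescrip} (in its $B,\chi\backslash G$ form) and Theorem \ref{p:defertoappendix}, this forces $\on{SS}(\cF_{c,\phi})\subset\on{SS}(\sC)\setminus\bO$ — i.e. the coaction map of $\sC$ factors through $D_{\on{SS}(\sC)\setminus\bO}(G)\otimes\sC$. But $\on{SS}(\sC)\setminus\bO$ is closed and $G$-stable (by maximality of $\bO$, using Lemma \ref{l:maxvsopen}), contradicting the minimality of $\on{SS}(\sC)$.

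To make this rigorous I would phrase it using the filtration of §\ref{s:canfiltdef}: by Proposition \ref{p:stable}, $\sC_{\on{SS}(\sC)\setminus\bO}=D_{\on{SS}(\sC)\setminus\bO}(G/B,\chi)\otimes_{\sH_{G,\chi}}\sC^{B,\chi}$, and the vanishing of all $\on{Whit}_e(\sC)$, $e\in\bO$, together with Theorem \ref{t:van} (equivalently its Hecke form, Theorem \ref{p:defertoappendix}, propagated through the tensor product exactly as in the proof of Theorem \ref{t:van2}), shows $\sC=\sC_{\on{SS}(\sC)\setminus\bO}$; hence $\on{SS}(\sC)\subset\on{SS}(\sC)\setminus\bO$, a contradiction. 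The main obstacle is the bookkeeping needed to transfer the non-vanishing statement of Theorem \ref{p:defertoappendix}, which is phrased for a single Hecke-bimodule sheaf, to the relative tensor product $\sC^{B,\chi}=\sH_{G,\chi,\on{SS}(\sC)}\otimes_{\sH_{G,\chi}}\sC^{B,\chi}$; this is handled by the same dualizability-and-base-change manipulation used in the proof of Corollary \ref{c:Heckedescrip} and Theorem \ref{t:van2}, together with the fact that one only needs the Whittaker functional to be nonzero against some $\phi\in\sC^\vee$, which follows since $\on{MC}_\sC$ is conservative on the part of $\sC$ detected by $\on{SS}(\sC)$.
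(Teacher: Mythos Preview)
Your proposal is correct and follows essentially the same route as the paper: reduce to the $(B,\chi)$-generated case via Theorem \ref{t:nilp}, then use Corollary \ref{c:catHeckedescrip} and the results of \S\ref{s:yeahboooi} on matrix-coefficient sheaves. The paper's argument is slightly leaner --- for Step 1 it invokes Proposition \ref{prop:lowerb} directly to show $D_Z(G/B,\chi)^{U^-,\psi_e}=0$, and for Step 2 it proceeds directly rather than by contradiction: it picks a single pair $(c,\phi)$ with $\bO$ maximal in $\overline{G\cdot\on{SS}(\cF_{c,\phi})}$, applies Theorem \ref{t:van2} to get some $e\in\bO$ with $\on{Av}_*^{\psi_e}(\cF_{c,\phi})\neq 0$, and deduces $\on{Av}_*^{\psi_e}(c)\neq 0$, avoiding the filtration machinery of \S\ref{s:canfiltdef} entirely.
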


\begin{proof}
By Theorem \ref{t:nilp}, we may assume that $\sC$ is $(B,\chi)$-generated for some $\chi\in \ft^*/X^{\bullet}(k)$. In particular, any $G$-subcategory of $\sC$ is also $(B,\chi)$-generated, cf. \cite[Cor. 3.2.5]{campbell2021affine}.

\step Let us prove the first part of the theorem. Thus, let $e$ be a nilpotent element with $e\notin\on{SS}(\sC)$. By Proposition \ref{p:heckedescrip}, it follows that
\[
\on{Whit}_{\bO}(\sC)\simeq \big(D(U^{-},\psi_e\backslash G/B,\chi)\underset{\sH_{G,\chi}}{\otimes}\sH_{G,\chi,Z}\big) \underset{\sH_{G,\chi}}{\otimes} \sC^{B,\chi}.
\]

\noindent However by Theorem \ref{t:van1'}, the category $D(U^{-},\psi_e\backslash G/B,\chi)\underset{\sH_{G,\chi}}{\otimes}\sH_{G,\chi,Z}$ is zero.

\step Next, we prove the second statement. Let $\bO$ be a maximal nilpotent orbit with $\bO\subset \on{SS}(\sC)$. Since $\sC$ is $(B,\chi)$-generated, applying Proposition \ref{p:heckedescrip} again, we can find $c\in \sC^{B,\chi}, \phi\in \sC^{\vee}, \cG\in D(G/B,\chi)$ such that the image of $c$ (denoted $\cF$) under
\[
\sC^{B,\chi}\xrightarrow{\on{coact}^{B,\chi}} D(B,\chi\backslash G)\otimes \sC\xrightarrow{\on{id}\otimes \phi} D(B,\chi\backslash G)\xrightarrow{-\star\cG} D(B,\chi\backslash G/B,\chi)
\]

\noindent satisfies $\bO\cap\on{SS}(\cF)\neq\emptyset$. Since $\on{SS}(\cF)\subset \on{SS}(\sC)$, we conclude that $\bO$ is a maximal orbit intersecting $\on{SS}(\cF)$. By Theorem \ref{t:van2}, we may find $e\in\bO$ such that 
\[
0\neq \on{Av}_*^{\psi_e}\circ(-\star\cG)\circ(\on{id}\otimes\phi)\circ\on{coact}^{B,\chi}(c)\simeq (-\star\cG)\circ(\on{id}\otimes\phi)\circ\on{coact}^{U^-,\psi_e}\on{Av}_*^{\psi_e}(c)\in D(U^-,\psi_e\backslash G/B,\chi).
\]

\noindent Here, we consider $\on{Av}_*^{\psi_e}$ as averaging on the left. In particular, $\on{Av}_*^{\psi_e}(c)\neq 0$, which implies $\on{Whit}_{\bO}(\sC)\neq 0$.

\end{proof}

\section{Whittaker coefficients of character sheaves}\label{s:5}
In this section, we study generalized Whittaker coefficients of character sheaves. Our main result is Theorem \ref{t:finalboss?}, which realizes the Hochschild homology of the Whittaker models of nilpotent $G$-categories (at maximal orbits) in terms of the microstalk of its corresponding character sheaf.

\subsection{Hochschild homology of DG-categories}\label{S:5.1}
We review some basic constructions of categorical traces and Hochschild homology. We refer to \cite{ben2009character}, \cite{ben2013nonlinear}, \cite{gaitsgory2022toy} for details.

\subsubsection{} 

Recall that if $\sD$ is a dualizable DG-category, we may talk about its Hochschild homology, denoted $\on{HH}_*(\sC)\in\on{Vect}$. By definition, $\on{HH}_*(\sC)$ is the composition
\[
\on{Vect}\xrightarrow{\eta_{\sD}} \sD\otimes \sD^{\vee}\xrightarrow{\epsilon_{\sD}} \on{Vect},
\]

\noindent where $\eta_{\sD}$ (resp. $\epsilon_{\sD}$) is the unit (resp. counit) of the duality datum of $\sD$. Since $\on{End}_{\on{DGCat}_{\on{cont}}}(\on{Vect})\simeq \on{Vect}$, this composition is given by tensoring with a vector space $\on{HH}_*(\sC)$.

\subsubsection{}\label{s:HHendo} More generally, if $F\in\on{End}_{\on{DGCat}_{\on{cont}}}(\sD)$, we define $\on{HH}_*(\sD,F)\in\on{Vect}$ to be the composition
\[
\on{Vect}\xrightarrow{\eta_{\sD}} \sD\otimes \sD^{\vee}\xrightarrow{F\otimes \on{id}}\sD\otimes \sD^{\vee}\xrightarrow{\epsilon_{\sD}} \on{Vect}.
\]

\noindent We refer to $\on{HH}_*(\sD,F)$ as the Hochschild homology of $\sD$ with coefficients in $F$. Note that taking Hochschild homology is functorial in the natural sense: if $F\to G$ is a natural transformation of continuous endofunctors of $\sD$, we obtain a map of vector spaces
\[
\on{HH}_*(\sD,F)\to \on{HH}_*(\sD,G).
\]

\subsubsection{} Let $F_{1,2}: \sD_1\to \sD_2, \; F_{2,1}: \sD_2\to \sD_1$ be continuous functors. Hochschild homology is \emph{cyclic} in the sense that we have a natural isomorphism
\[
\on{HH}_*(\sD_1,F_{2,1}\circ F_{1,2})\simeq \on{HH}_*(\sD_2, F_{1,2}\circ F_{2,1})
\]

\subsubsection{}\label{s:514} Let $F:\sD_1\to \sD_2$ be a continuous functor which admits a continuous right adjoint, $G=F^R$. We obtain a map $\on{HH}_*(\sD_1)\to \on{HH}_*(\sD)$ as the composition
\[
\on{HH}_*(\sD_1)=\on{HH}_*(\sD_1,\on{id}_{\sD_1})\to \on{HH}_*(\sD_1, G\circ F)\simeq \on{HH}_*(\sD_2, F\circ G)\to \on{HH}_*(\sD_2, \on{id}_{\sD_2})=\on{HH}_*(\sD_2).
\]

\noindent Here, the first (resp. last) arrow is the unit (resp. counit) of the adjunction between $F$ and $G$.

\subsubsection{}\label{s:quaterlyrigid} Let $\sA$ be a dualizable monoidal DG-category such that the monoidal structure map
\[
\sA\otimes \sA\to \sA
\]

\noindent admits a continuous right adjoint. The functoriality of Hochschild homology described in the previous paragraph exhibits $\on{HH}_*(\sA)$ as an associative algebra.

If $\sM$ is a dualizable DG-category equipped with a module structure for $\sA$ such that the action map
\[
\sA\otimes \sM\to \sM
\]

\noindent admits a continuous right adjoint, we similary obtain an action
\[
\on{HH}_*(\sA)\curvearrowright \on{HH}_*(\sM).
\]

\subsubsection{Example} Let $\sA=\sH_{G,\chi}=D(B,\chi\backslash G/B,\chi)$. The convolution map
\[
\sH_{G,\chi}\otimes \sH_{G,\chi}\to \sH_{G,\chi}
\]

\noindent admits a continuous right adjoint, and hence $\on{HH}_*(\sH_{G,\chi})$ is an associative algebra. In fact, $\sH_{G,\chi}$ is \emph{semi-rigid} in the sense of \cite{ben2009character}. As such, if $\sM$ is a dualizable DG-category equipped with a module structure for $\sH_{G,\chi}$, then $\sM$ is dualizable as a $\sH_{B,\chi}$-category, and the action map
\[
\sH_{G,\chi}\otimes \sM\to \sM
\]

\noindent admits a continuous right adjoint. In particular, we obtain an action
\[
\on{HH}_*(\sH_{G,\chi})\curvearrowright \on{HH}_*(\sM).
\]

\subsection{Character sheaves of $G$-categories} 

\subsubsection{} Let $\sC$ be a dualizable $G$-category. We have unit and counit maps
\[
\eta_{\sC}: D(G)\to \sC\otimes \sC^{\vee},\;\; \epsilon_{\sC}: \sC\otimes \sC^{\vee}\to D(G).
\]

\noindent The functor $\eta_{\sC}: D(G)\to \sC\otimes \sC^{\vee}\simeq\on{End}_{\on{DGCat}_{\on{cont}}}(\sC)$ is nothing more than the module structure of $\sC$, while $\epsilon_{\sC}=\on{MC}_{\sC}$ is the functor of matrix coefficients. The composition
\[
D(G)\to \sC\otimes \sC^{\vee}\to D(G)
\]

\noindent is naturally $G\times G$-linear. Thus, it gives rise to an object
\[
\chi_{\sC}\in \on{End}_{D(G\times G)\on{\mathbf{-mod}}}(D(G))\simeq D(G)\underset{D(G)\otimes D(G)}{\otimes} D(G)\simeq D(G\overset{\on{ad}}{/}G).
\]

\noindent Here, $G\overset{\on{ad}}{/}G$ denotes the quotient of $G$ by itself under the adjoint action.
\begin{defin}
$\chi_{\sC}\in D(G\overset{\on{ad}}{/}G)$ is the \emph{character sheaf} associated to $\sC$.
\end{defin}
\begin{rem}
We note that $\chi_{\sC}$ is nothing but $\on{MC}_{\sC}(\on{id}_{\sC})$.
\end{rem}

\subsubsection{Compatibility with parabolic restriction and induction}\label{s:compindpres} Let $P\subset G$ be a parabolic subgroup with Levi $M$. Consider the correspondence
\[\begin{tikzcd}
	& {P\overset{\mathrm{ad}}{/}P} \\
	{G\overset{\mathrm{ad}}{/}G} && {M\overset{\mathrm{ad}}{/}M.}
	\arrow["p"', from=1-2, to=2-1]
	\arrow["q", from=1-2, to=2-3]
\end{tikzcd}\]

\noindent The following is clear from the definitions:
\begin{lem}\label{l:compwithpind}
Let $\sC$ be a dualizable $G$-category. Then
\[
\chi_{\on{Res}_M^G(\sC)}=q_{*,\on{dR}}\circ p^!(\chi_{\sC}).
\]

\noindent Similarly, if $\sC_M$ is a dualizable $M$-category, then
\[
\chi_{\on{Ind}_M^G(\sC_M)}=p_{*,\on{dR}}\circ q^!(\chi_{\sC_M}).
\]
\end{lem}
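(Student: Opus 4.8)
The statement to prove is Lemma~\ref{l:compwithpind}, which asserts the compatibility of the character-sheaf construction $\sC\mapsto\chi_\sC$ with parabolic restriction and induction, via the correspondence $G\overset{\on{ad}}{/}G\xleftarrow{p}P\overset{\on{ad}}{/}P\xrightarrow{q}M\overset{\on{ad}}{/}M$. Here is how I would prove it.

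\begin{proof}[Proof sketch]
The plan is to trace through the duality data. Recall that $\chi_\sC$ arises by viewing the composite $\epsilon_\sC\circ\eta_\sC\colon D(G)\to \sC\otimes\sC^\vee\to D(G)$ as a $G\times G$-linear endofunctor of $D(G)$, hence as an object of $\on{End}_{D(G\times G)\text{-}\mathbf{mod}}(D(G))\simeq D(G\overset{\on{ad}}{/}G)$. So the first step is to record the identifications of $2$-categorical Hom/End categories that turn functors into sheaves: for a parabolic $P$ with Levi $M$, one has $D(P\overset{\on{ad}}{/}P)\simeq \on{End}_{D(P\times P)\text{-}\mathbf{mod}}(D(P))$, and the maps $p,q$ correspond, on the level of these endomorphism categories, to the functors $\on{Ind}$ and $\on{Res}$ between $D(G)$ and $D(P)$ (for $p$) and between $D(P)$ and $D(M)$ (for $q$), with $p^!$, $p_{*,\on{dR}}$, $q^!$, $q_{*,\on{dR}}$ being the induced operations on sheaves. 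These are standard facts about D-modules on adjoint quotients that I would cite from \cite{ben2009character}, \cite{ben2013nonlinear}, \cite{beraldo2017loop}.

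The second step is to express $\on{Res}^G_M(\sC)=\sC^{U_P}$ and its duality datum in terms of that of $\sC$. Dualizability of $\sC^{U_P}$ as an $M$-category follows since $U_P$ has no relevant cohomology; concretely, $(\sC^{U_P})^\vee\simeq (\sC^\vee)^{U_P}$, and the unit/counit for $\sC^{U_P}$ over $D(M)$ are obtained from those of $\sC$ over $D(G)$ by taking $U_P$-invariants and using the functor $D(M)\xrightarrow{}D(P)\xrightarrow{}D(G)$, i.e.\ by the adjunction $(\on{Ind}^G_M,\on{Res}^G_M)$ of $2$-categories from \S\ref{s:compose}. Unwinding, $\epsilon_{\sC^{U_P}}\circ\eta_{\sC^{U_P}}\colon D(M)\to D(M)$ is computed by first inducing up to $D(G)$ (this is the geometric operation corresponding to $q^!$ then $p$-pushforward on the correspondence, reading the other way), applying $\epsilon_\sC\circ\eta_\sC$, and pushing back down. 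Translating this composite of $2$-categorical functors into the corresponding operation on objects of $D(\,\cdot\,\overset{\on{ad}}{/}\,\cdot\,)$ under the identifications of Step~1 yields exactly $\chi_{\on{Res}_M^G(\sC)}=q_{*,\on{dR}}\circ p^!(\chi_\sC)$. The induction statement is entirely parallel: $\on{Ind}_M^G(\sC_M)=D(G)\otimes_{D(P)}\sC_M$, its duality datum is obtained by tensoring up, the composite $\epsilon\circ\eta$ over $D(G)$ factors through the corresponding composite over $D(M)$ sandwiched between $D(M)\to D(P)\to D(G)$ and its adjoint, and reading this off on adjoint quotients gives $\chi_{\on{Ind}_M^G(\sC_M)}=p_{*,\on{dR}}\circ q^!(\chi_{\sC_M})$. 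By the self-duality of the situation (restriction and induction are adjoint), it suffices to prove one of the two identities and deduce the other formally.

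The main obstacle, and the only place that requires genuine care rather than bookkeeping, is Step~1 together with the precise matching of left/right adjoints: one must be sure that under the equivalence $\on{End}_{D(G\times G)\text{-}\mathbf{mod}}(D(G))\simeq D(G\overset{\on{ad}}{/}G)$, the endofunctor "$\on{Ind}^G_P$ followed by $\on{Res}^G_P$" (which is how $\epsilon_{\sC^{U_P}}\eta_{\sC^{U_P}}$ relates to $\epsilon_\sC\eta_\sC$) corresponds to the kernel operation $q_{*,\on{dR}}p^!$ rather than, say, $q_!p^!$ or $q_{*,\on{dR}}p^{*,\on{dR}}$ --- i.e.\ one needs the correct base-change and projection-formula identities along $p$ (which is proper-ish after the adjoint quotient, being the map $P\overset{\on{ad}}{/}P\to G\overset{\on{ad}}{/}G$) and along $q$ (which is a fibration with unipotent-by-affine fibers, so $q^!$ and $q_{*,\on{dR}}$ differ from their siblings only by a shift/twist that one must track). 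Once these identifications are pinned down, the Lemma is indeed "clear from the definitions" as the text asserts, but I would spell out the kernel-level computation to make the shifts manifest.
\end{proof}
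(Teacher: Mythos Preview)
The paper provides no proof at all: the lemma is simply prefaced with ``The following is clear from the definitions.'' Your sketch is a correct and reasonable unpacking of what that phrase means---tracing the unit/counit of the duality datum through the adjunction $(\on{Ind}_M^G,\on{Res}_M^G)$ and matching the resulting $2$-categorical operations with the pull-push along the correspondence $G\overset{\on{ad}}{/}G\leftarrow P\overset{\on{ad}}{/}P\rightarrow M\overset{\on{ad}}{/}M$. You correctly flag the one place requiring care (pinning down which of $p^!,p^{*,\on{dR}},q_{*,\on{dR}},q_!$ appears), and your instinct to spell this out is sound; the paper evidently regards this bookkeeping as standard and omits it. So you are taking the same (non-)approach as the paper, just with more detail.
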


\subsubsection{Example}\label{e:Springer} Let $\sC=D(G/B,\chi)$ with its left $G$-action. Consider the action
\[
D(T)\curvearrowright D(T/T,\chi).
\]

\noindent Then
\[
D(G/B,\chi)\simeq \on{Ind}_T^G(D(T/T,\chi)).
\]

\noindent As such, $\chi_{\sC}$ is the (Grothendieck-)Springer sheaf, which we denote by $\on{Spr}_{\chi}$. That is, 
$\on{Spr}_{\chi}$ is the image of $\chi\in D(T\overset{\on{ad}}{/}T)$ under $!$-pull, $*$-push along the correspondence
\[\begin{tikzcd}
	& {B\overset{\mathrm{ad}}{/}B} \\
	{G\overset{\mathrm{ad}}{/}G} && {T\overset{\mathrm{ad}}{/}T.}
	\arrow["p"', from=1-2, to=2-1]
	\arrow["q", from=1-2, to=2-3]
\end{tikzcd}\]

\subsubsection{}\label{s:bimodulestructure} Let $\sC$ be a $(D(G),\sH_{G,\chi})$-bimodule category. In this case, $\chi_{\sC}$ naturally acquires an action of $\on{HH}_*(\sH_{G,\chi})$. Informally, the action is given as follows. Consider
\[
\sC\otimes \sH_{G,\chi}
\]

\noindent as a $G$-category in the natural way. We have
\[
\chi_{\sC\otimes \sH_{G,\chi}}\simeq \chi_{\sC}\otimes \on{HH}_*(\sH_{G,\chi}).
\]

\noindent Since the action map
\[
\sC\otimes \sH_{G,\chi}\to \sC
\]

\noindent is $G$-linear and admits a continuous right adjoint, we get a map
\[
\chi_{\sC}\otimes \on{HH}_*(\sH_{G,\chi})\to \chi_{\sC},
\]

\noindent which is the desired action of $\on{HH}_*(\sH_{G,\chi})$ on $\chi_{\sC}$.

More formally, let $D(G)\textrm{\textbf{-mod}}^r$ be the $2$-category of dualizable $G$-categories, where functors are required to be $G$-linear and admits continuous right adjoints.\footnote{Which we remind are automatically $G$-linear.} The functor
\begin{equation}\label{eq:charsheaf}
\chi: D(G)\textrm{\textbf{-mod}}^r\to D(G\overset{\on{ad}}{/}G),\;\; \sC\mapsto \chi_{\sC}
\end{equation}

\noindent then upgrades to a functor
\[
\sH_{G,\chi}\on{-mod}(D(G)\textrm{\textbf{-mod}}^r)\to  \on{HH}_*(\sH_{G,\chi})\on{-mod}(D(G\overset{\on{ad}}{/}G)).
\]

\subsubsection{} From Example \ref{e:Springer}, we obtain an action
\[
\on{Spr}_{\chi}\curvearrowleft \on{HH}_*(\sH_{G,\chi}).
\]

\begin{lem}\label{l:charreg}
Let $\sC$ be a nilpotent $G$-category. Then $\chi_{\sC}\in D(G\overset{\on{ad}}{/} G)$ is regular holonomic.
\end{lem}

\begin{proof}
By assumption, $\chi_{\sC}$ is a character sheaf in the sense of \S \ref{S:5.4} below. In particular, it is regular holonomic.
\end{proof}

\subsection{Character sheaves on the Lie algebra}\label{s:charliealg}
In this subsection, we realize Whittaker coefficients of character sheaves on $\fg$ as microstalks. We will ultimately deduce Theorem \ref{t:catstate} from this situation.

\subsubsection{} We say that a D-module on $\fg$ has nilpotent singular support if the singular support is contained in $\fg\times \cN\subset T^*\fg$.

\begin{defin}
A character sheaf $\cF\in D(\fg/G)$ is D-module with nilpotent singular support. We remind that these are all regular holonomic.
\end{defin}

\subsubsection{} Consider the scaling action of $\bG_m$ on $\fg$. The following is well-known (see e.g. \cite[Cor. 4.8+Thm. 5.3]{mirkovic2004character}):
\begin{prop}\label{p:scalinginv}
Any character sheaf $\cF\in D(\fg/G)$ is $\bG_m$-monodromic.
\end{prop}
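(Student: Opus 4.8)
The plan is to reduce the statement to a microlocal assertion plus an elementary Lie-algebra computation. Throughout I use the fixed invariant form $(\cdot,\cdot)$ to identify $\fg^*\simeq\fg$, so that $T^*\fg\simeq\fg\times\fg$.

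\textbf{Step 1: reformulate monodromicity via singular support.} For a holonomic D-module $\cF$ on $\fg$, being $\bG_m$-monodromic for the scaling action is equivalent to the inclusion $\on{SS}(\cF)\subset \mu^{-1}(0)$, where $\mu\colon T^*\fg\to \on{Lie}(\bG_m)^*\simeq\bA^1$ is the moment map of the scaling action. Since the vector field generating the scaling action takes the value $x$ at the point $x\in\fg$, this moment map is the pairing $(x,\xi)\mapsto (x,\xi)$, so $\mu^{-1}(0)=\{(x,\xi)\in\fg\times\fg: (x,\xi)=0\}$. The ``only if'' direction is immediate by decomposing $\cF$ according to the generalized eigenvalues of the Euler operator; the ``if'' direction for (regular) holonomic $\cF$ is standard and is exactly the kind of statement recorded in \cite[§7.5]{kashiwara1990sheaves} and used in \cite{mirkovic2004character}. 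Granting this, since a character sheaf is regular holonomic by definition, it suffices to prove that $\on{SS}(\cF)\subset\{(x,\xi):(x,\xi)=0\}$.

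\textbf{Step 2: use $G$-equivariance.} As $\cF$ descends to $\fg/G$, it is in particular weakly equivariant for the adjoint $G$-action, so its singular support is annihilated by the moment map $T^*\fg\to\fg^*$ of that action, which under our identification is $(x,\xi)\mapsto [x,\xi]$. Combining with the defining hypothesis that $\cF$ has nilpotent singular support, we obtain
\[
\on{SS}(\cF)\ \subset\ \{(x,\xi)\in\fg\times\cN\ :\ [x,\xi]=0\}.
\]

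\textbf{Step 3: the Lie-algebra lemma.} It remains to show $(x,\xi)=0$ whenever $\xi\in\cN$ and $[x,\xi]=0$. Let $x=x_s+x_n$ be the Jordan decomposition; then $x_s$ and $x_n$ both commute with $\xi$. For the nilpotent part: $\ad x_n$ and $\ad\xi$ are commuting nilpotent operators, so their composite is nilpotent of trace zero; since the invariant form is, on each simple ideal, proportional to the trace form $\tr(\ad(-)\ad(-))$, and nilpotent elements lie in $[\fg,\fg]$ which is orthogonal to the center, this gives $(x_n,\xi)=0$. For the semisimple part: the centralizer $\fl$ of $x_s$ in $\fg$ is a reductive (Levi) subalgebra on which $(\cdot,\cdot)$ restricts nondegenerately, with $x_s$ lying in the center $\fz(\fl)$; and $\xi\in\fl$ is nilpotent, hence lies in $[\fl,\fl]$, which is orthogonal to $\fz(\fl)$, so $(x_s,\xi)=0$. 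Therefore $(x,\xi)=(x_s,\xi)+(x_n,\xi)=0$, which by Steps 1--2 finishes the proof.

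The only non-formal ingredient is the microlocal characterization of monodromicity invoked in Step 1; Steps 2 and 3 are routine. If one prefers to avoid appealing to the ``if'' direction there, an alternative is to argue directly that the Euler operator acts locally finitely on $\cF$ using the estimate of Step 2 together with the coisotropicity of $\on{SS}(\cF)$, but citing the standard equivalence is cleaner.
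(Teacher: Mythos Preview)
Your proof is correct and takes a genuinely different route from the paper. The paper does not argue directly: it cites Mirkovi\'c, whose proof passes through the Fourier transform (a character sheaf has nilpotent singular support, so its Fourier transform is supported on $\cN$; any regular holonomic sheaf supported on the cone $\cN$ is constructible for the orbit stratification, hence $\bG_m$-monodromic; and Fourier transform intertwines the scaling actions, so preserves monodromicity). You instead work directly on the singular support side: $G$-equivariance together with nilpotence confines $\on{SS}(\cF)$ to the commuting variety $\Lambda=\{(x,\xi)\in\fg\times\cN:[x,\xi]=0\}$, and your Jordan-decomposition computation in Step~3 shows $\Lambda\subset\mu^{-1}(0)$, whence the microlocal criterion of Step~1 applies.

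The two approaches rest on the same underlying fact---regular holonomic sheaves with singular support in the conormals of a $\bG_m$-invariant stratification are monodromic---but package it differently. Yours is more self-contained and isolates a pleasant Lie-theoretic lemma; the Fourier approach ties the statement into the classification of character sheaves as transforms of orbital sheaves. One small remark: Step~3, while nice, is not strictly needed. The variety $\Lambda$ is already a union of closures of the $\Lambda_{\bO}$, each of which is $\bG_m$-stable for the induced action $(x,\xi)\mapsto(tx,t^{-1}\xi)$ on $T^*\fg$ (since $\cN$ is conical and the commutator is bilinear); this alone forces $\on{SS}(\cF)$ to lie in the conormals of a $\bG_m$-invariant stratification, which is the form of the criterion one actually uses. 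But your direct computation that $(x,\xi)=0$ is equally clean. The only point to be a bit careful about is that the ``if'' direction in Step~1 genuinely requires regularity (e.g.\ the exponential D-module on $\bA^1$ has singular support in the zero section, hence in $\mu^{-1}(0)$, yet is not monodromic), which you correctly flag.
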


\subsubsection{} Consider the variety
\[
\Lambda:=\lbrace (x,y)\in \fg\times \cN\;\vert\; [x,y]=0\rbrace.
\]

\noindent More generally, for a closed, conical Ad-invariant $Z\subset \cN$, we may consider the subvariety
\[
\Lambda_Z:=\lbrace (x,y)\in \fg\times Z\;\vert\; [x,y]=0\rbrace\subset \Lambda.
\]

\noindent If $\bO\subset Z$ is a nilpotent orbit, the corresponding subvariety
\[
\Lambda_{\bO}:=\Lambda_Z\underset{Z}{\times}\bO\subset \Lambda_Z
\]

\noindent is smooth and irreducible, since it is the conormal bundle of $\bO$. Thus, $\Lambda_Z$ is Lagrangian. Moreover, the closures of $\Lambda_{\bO}$ for $\bO\subset Z$ are exactly the irreducible components of $\Lambda_Z$.

\subsubsection{} The underlying classical stack of $T^*(\fg/G)$ naturally identifies with 
\[
T^*(\fg/G)=\lbrace (x,y)\in \fg\times \fg^*\;\vert\; [x,y]=0\rbrace/G.
\]

\noindent In particular,
\[
T^*(\fg/G)\underset{\fg^*/G}{\times}\cN/G=\Lambda/G.
\]

\subsubsection{} For a coherent character sheaf $\cF$, considered as a sheaf on $\fg$, its characteristic cycle takes the form
\[
\on{CC}(\cF)=\underset{\bO\subset \cN}{\sum} c_{\bO,\cF}[\bO],
\]

\noindent where $c_{\bO,\cF}$ denotes the multiplicity of $\overline{\Lambda_{\bO}}$ in the characteristic variety of $\cF$.

\subsubsection{Whittaker coefficients}

For a nilpotent element $e\in \bO$, consider the map
\[
\fp_e: \fu^{-}/U^{-}\to \fg/G.
\]

\noindent We get a corresponding Whittaker functional
\[
\on{coeff}_{\bO}: D(\fg/G)\to \on{Vect},\;\; \cF\mapsto C_{\on{dR}}(\fu^{-}/U^{-}, \fp_e^!(\cF)\overset{!}{\otimes} \psi_e^!(\on{exp})).
\]

\noindent Note that the functional is independent of the choice of $e\in\bO$ and an $\mathfrak{sl}_2$-triple containing it. It is also not difficult to show that it is independent of the Lagrangian $\ell\subset \fg(-1)$.

\subsubsection{} For a character sheaf $\cF\in D(\fg/G)$, we say that a nilpotent orbit $\bO$ is maximal in the singular support of $\cF$ if either $\bO$ is a maximal orbit satisfying $\fg\times \bO\cap \on{SS}(\cF)\neq\emptyset$, or if $\fg\times\bO\cap \on{SS}(\cF)=\emptyset$.
\begin{thm}\label{t:whitcoefflie}
Let $\cF\in D(\fg/G)^{\heartsuit}$ be a perverse character sheaf, and let $\bO$ be a maximal orbit in the singular support of $\cF$. Then
\[
\on{coeff}_{\bO}(\cF)\in\on{Vect}
\]

\noindent is concentrated in cohomological degree zero, and moreover
\[
\on{dim}\on{coeff}_{\bO}(\cF)=c_{\bO,\cF}.
\]
\end{thm}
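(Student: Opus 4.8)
The plan is to prove Theorem \ref{t:whitcoefflie} by reducing it, via a local model around the orbit $\bO$, to a transversal slice computation where $\on{coeff}_{\bO}$ literally becomes a vanishing cycles functor. First I would recall that by Proposition \ref{p:scalinginv} the character sheaf $\cF$ is $\bG_m$-monodromic for the scaling action, so its singular support is conical in the fiber direction; combined with the hypothesis that $\bO$ is maximal in the singular support, Lemma \ref{l:maxvsopen} gives that $\fg \times \bO \cap \on{SS}(\cF)$ is open in $\fg \times \overline{\bO} \cap \on{SS}(\cF)$. This means that, microlocally near a generic point of the conormal $\Lambda_{\bO} = \overline{\Lambda_{\bO}}^{\mathrm{sm}}$, the sheaf $\cF$ looks like (a shift of) a rank-$c_{\bO,\cF}$ local system on $\bO$ pushed into $\fg$, with no other components of $\on{SS}$ interfering.

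\textbf{Key steps.} (1) Choose $e \in \bO$ with $\mathfrak{sl}_2$-triple $(e,h,f)$ and the Whittaker datum $(U^-,\psi_e)$ as in §\ref{s:genwhit}. The classical fact (Kostant, Gan--Ginzburg) is that the map $\fp_e \colon \fu^-/U^- \to \fg/G$, after passing to the Slodowy-type slice, realizes $\bO$ as the zero locus of $\psi_e$ on an affine space, so that the functional $\on{coeff}_{\bO}(\cF) = C_{\on{dR}}(\fu^-/U^-, \fp_e^!(\cF) \overset{!}{\otimes} \psi_e^!(\on{exp}))$ computes, up to shift, the vanishing cycles $\phi_{\psi_e}$ of the restriction of $\cF$ to this slice along the linear function $\psi_e$. (2) Show that $\psi_e$ is transverse to $\on{SS}(\cF)$ on the slice away from the critical locus lying over $\bO$: here one uses that $\bO$ is maximal, so the only component of the characteristic variety meeting the relevant conormal direction is $\overline{\Lambda_{\bO}}$, and the defining equation $[e,\cdot]$-type transversality from the $\mathfrak{sl}_2$-grading. (3) Deduce $t$-exactness: since $\cF$ is perverse and $\psi_e$ is (micro-)transverse, the vanishing cycles $\phi_{\psi_e}[-1]$ (in the perverse normalization, which is what the shift by $\psi_e^!(\on{exp})$ and the dimension of $\fu^-$ arrange) is perverse, hence concentrated in degree $0$; this gives the first claim. (4) Compute the dimension: by the microlocal Morse/index theorem (Kashiwara--Schapira \cite[§9.5, 10.3]{kashiwara1990sheaves}), the Euler characteristic — here equal to the dimension by step (3) — of $\phi_{\psi_e}(\cF|_{\mathrm{slice}})$ at the critical point equals the local intersection multiplicity of the graph of $d\psi_e$ with $\on{CC}(\cF|_{\mathrm{slice}})$, which by transversality and the slice being Slodowy-transverse to $\bO$ is exactly the multiplicity $c_{\bO,\cF}$ of $\overline{\Lambda_{\bO}}$ in $\on{CC}(\cF)$.

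\textbf{Main obstacle.} The delicate point is step (2)–(4): verifying the precise transversality of $\psi_e$ to the characteristic variety and bookkeeping all the cohomological shifts so that "vanishing cycles of a perverse sheaf" comes out in degree $0$ rather than shifted, and that the characteristic cycle multiplicity is read off with coefficient exactly $1$ rather than some local index attached to $\psi_e$. The maximality of $\bO$ is exactly what is needed to kill contributions from larger orbits in $\overline{\Lambda_{\bO}}^{\mathrm{c}} \setminus \Lambda_{\bO}$, but one must also ensure that smaller orbits in $\partial \bO$ contribute nothing to the microstalk at the chosen smooth point $(x,e)$ of $\Lambda_{\bO}$ — this follows since such components have conormals disjoint from a neighborhood of that point, but it should be spelled out. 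A secondary technical nuisance is working $G$-equivariantly / on the stack $\fg/G$ rather than on $\fg$ itself, which I would handle by descent along $\fu^-/U^- \to \fg/G$ reducing everything to the Slodowy slice $\Sigma_e = e + \fg^f$, on which the computation is the classical one; alternatively one can invoke that $\on{coeff}_{\bO}$ as defined is manifestly the composite of $!$-restriction to $\fu^-/U^-$ with a character twist, and that this composite is well-known to be (up to shift) Kostant--Whittaker reduction, whose underlying classical statement is precisely the Fourier/vanishing-cycles identity above.
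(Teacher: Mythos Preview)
Your proposal has the right intuition---relating the exponential twist in $\on{coeff}_{\bO}$ to a microlocal computation---but step (1) is not justified as stated, and the underlying geometry is confused. You assert that $\fp_e: \fu^-/U^- \to \fg/G$ ``realizes $\bO$ as the zero locus of $\psi_e$,'' but $\psi_e$ is a nonzero linear functional on the vector space $\fu^-$, whose zero locus is a hyperplane with no isolated critical points; there is no direct way to read $C_{\on{dR}}(\fu^-/U^-, \fp_e^!(\cF) \overset{!}{\otimes} \psi_e^!(\on{exp}))$ as $\phi_{\psi_e}$ of a restriction at a point. Moreover the Slodowy slice $e + \on{ker}(\on{ad} f)$ lives in $\fg$ (or rather its dual), not in $\fu^-$, and the map $\fp_e$ does not land on or near it. In the paper's logic the identification of $\on{coeff}_{\bO}$ with a genuine microstalk is deduced as a \emph{consequence} of this theorem (via the pseudo-microstalk uniqueness in Proposition \ref{p:noncaniso}), so invoking the Kashiwara--Schapira index formula at this stage would be circular.

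The paper's proof supplies exactly the bridge you are missing: the Fourier--Deligne transform $\on{FT}_{\fg}$. Tensoring by $\psi_e^!(\on{exp})$ and taking de Rham cohomology on $\fu^-$ is literally computing a fiber of a Fourier transform, and a base-change diagram converts $\on{coeff}_{\bO}(\cF)$ into cohomology of $\on{FT}_{\fg}(\cF)$ restricted to the Slodowy slice $-e + (\fu^-)^\perp/U^- \simeq \bS_{-e}$ on the \emph{dual} side (equation \eqref{eq:coeffO}). Since $\cF$ is $\bG_m$-monodromic (Proposition \ref{p:scalinginv}), $\on{FT}_{\fg}$ swaps the base and fiber directions in the singular support, so the multiplicity $c_{\bO,\cF}$ becomes the generic rank of $\on{FT}_{\fg}(\cF)$ along $\bO$, and maximality of $\bO$ means $\bO$ is open in the \emph{support} of $\on{FT}_{\fg}(\cF)$. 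The Slodowy slice meets this support only in the free $U^-$-orbit of $-e$, reducing everything to a single $!$-fiber (equation \eqref{eq:rewrite1}); degree zero and $\on{dim} = c_{\bO,\cF}$ then follow from an elementary dimension count using $\on{dim}\bO = 2\on{dim} U^-$. No transversality verification for $\psi_e$ or microlocal index theorem is needed.
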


\begin{proof}
Consider the Fourier-Deligne transform:
\[
\on{FT}=\on{FT}_{\fg}: D(\fg)\to D(\fg^*),\;\; \cF\mapsto (\on{pr}_2)_{*,\on{dR}}(\on{pr}_1^!(\cF)\overset{!}{\otimes} Q^!(\on{exp})).
\]

\noindent Here:
\begin{itemize}
    \item $Q$ denotes the canonical pairing $\fg\times \fg^*\to \bA^1$.

    \item $\on{exp}$ denotes the exponential sheaf on $\bA^1$ concentrated in perverse degree $-1$.

    \item $\on{pr}_1$ (resp. $\on{pr}_2$) is the projection map $\fg\times \fg^*\to \fg$ (resp. $\fg\times \fg^*\to \fg^*$).

\end{itemize}

\noindent By \cite[Corollary 1.7.9]{gaitsgory2013functors}, the functor $\on{FT}[-\on{dim} G]$ is t-exact. The Fourier transform descends to a functor
\[
D(\fg/G)\to D(\fg^*/G).
\]

\noindent We similarly have a Fourier transform
\[
\on{FT}_{\fu^{-}}: D(\fu^{-}/U^{-})\to D((\fu^{-})^*/U^{-}).
\]

\noindent Let $i_e: \on{Spec}(k)\to (\fu^{-})^*/U^{-}$ be the $k$-point corresponding to $\psi_e$. We have a tautological isomorphism of functors
\begin{equation}\label{eq:FTu}
i_e^!\circ \on{FT}_{\fu^{-}}[-2\on{dim}U^{-}]\simeq C_{\on{dR}}(\fu^{-}/U^{-}, -\overset{!}{\otimes} \psi_e^!(\on{exp})).
\end{equation}

\noindent Denote by $\pi$ the projection
\[
\fg^*/U^{-}\to \fg^*/G.
\]

\noindent The following diagram commutes (see \cite[Lemma 5.1.9]{beraldo2017loop}):
\[\begin{tikzcd}
	{D(\mathfrak{g}/G)} && {D(\mathfrak{g}^*/G)} && {D(\mathfrak{g}^*/U^{-})} \\
	\\
	{D(\mathfrak{u}^{-}/U^{-})} &&&& {D((\mathfrak{u}^{-})^*/U^{-}).}
	\arrow["{\mathfrak{p}_e^!}"', from=1-1, to=3-1]
	\arrow["{\mathrm{FT}_{\mathfrak{g}}}", from=1-1, to=1-3]
	\arrow["{\mathrm{FT}_{\mathfrak{u}^{-}}}"', from=3-1, to=3-5]
	\arrow["{\pi^!}", from=1-3, to=1-5]
	\arrow["{(-\mathfrak{p}_e^*)_{*,\mathrm{dR}}[2(\mathrm{dim} U^{-}-\mathrm{dim} G)]}", from=1-5, to=3-5]
\end{tikzcd}\]

\noindent Here, $-p_e^*$ is the negative of the dual to the map $p_e: \fu^{-}/U^{-}\to \fg/U^{-}$.

Combining (\ref{eq:FTu}) and the above diagram, we have an isomorphism
\[
\on{coeff}_{\bO}(\cF)\simeq i_e^!\circ (-\mathfrak{p}_e^*)_{*,\mathrm{dR}}\circ \pi^!\circ\on{FT}_{\fg}(\cF)[-2\on{dim}G].
\]

\noindent Identifying $\fg\simeq \fg^*$, we consider $(\fu^{-})^{\perp}$ as a subspace of $\fg$. Denote by $\fp^{\perp}_{-e}: -e+(\fu^{-})^{\perp}/U^{-}\into \fg/U^{-}$ the corresponding embedding. By base-change, we obtain an isomorphism
\begin{equation}\label{eq:coeffO}
\on{coeff}_{\bO}(\cF)\simeq C_{\on{dR}}(-e+(\fu^{-})^{\perp}/U^{-}, (\fp^{\perp}_{-e})^!\circ \pi^!\circ\on{FT}_{\fg}(\cF))[2(\on{dim} U^{-}-\on{dim} G)].
\end{equation}

\noindent By Proposition \ref{p:scalinginv}, $\cF$ is $\bG_m$-monodromic. As such, the characteristic cycle of $\on{FT}_{\fg}(\cF)$ coincides with the characteristic cycle of $\cF$ under the identification $T^*\fg=\fg\times \fg^*=(\fg^*)^*\times \fg^*=T^*\fg^*$. Recall that $-e+(\fu^{-})^{\perp}/U^{-}\simeq \bS_{-e}:=-e+\on{ker}(\on{ad}(f))$ is the Slodowy slice of $-e$ (cf. \cite[Lemma 2.1]{gan2002quantization}).

We have for any closed conical Ad-stable $Z\subset \cN$ in which $\bO$ is maximal:
\[
\bS_{-e}\cap Z=\bS_{-e}\cap \bO=\on{Ad}_{U^{-}}(-e).
\]

\noindent Indeed, this follow from \cite[Prop. 1.3.3 (iii)]{ginzburg2009harish} and the fact that if the Slodowy slice  meets a nilpotent orbit $\bO'$, then $\bO\subset \overline{\bO'}$. Moreover by \emph{loc.cit}, the action of $U^{-}$ on $-e$ is free. Since $\bO$ is maximal in the singular support of $\cF$, we may rewrite the right hand side of (\ref{eq:coeffO}) as
\begin{equation}\label{eq:rewrite1}
i_{-e}^!\circ \on{FT}_{\fg}(\cF)[2(\on{dim} U^{-}-\on{dim} G)],
\end{equation}

\noindent where $i_{-e}: \on{Spec}(k)\to \fg/G$ corresponds to $-e$. The restriction of $\on{FT}_{\fg}(\cF)$ to $\bO$ is lisse, since $\on{FT}_{\fg}(\cF)$ is $G$-equivariant. Moreover, the pullback of $\on{FT}_{\fg}(\cF)$ to $\bO$ is still concentrated in perverse degree $-\on{dim} G$ because $\bO$ is open in the support of $\on{FT}_{\fg}(\cF)$.

Thus, we see that the vector space (\ref{eq:rewrite1}) is concentrated in degree
\[
-\on{dim} G-2(\on{dim} U^{-}-\on{dim} G)+(\on{dim}\bO-\on{dim} G)=0.
\]

\noindent Here, we used that $\on{dim}\bO=2\on{dim}U^{-}$ (see \cite[§1.3]{ginzburg2009harish}). Moreover, since $\on{coeff}_{\bO}(\cF)$ coincides with (\ref{eq:rewrite1}), and $\bO$ is maximal in the support of $\on{FT}_{\fg}(\cF)$, we see that the dimension of $\on{coeff}_{\bO}(\cF)$ is exactly given by $c_{\bO,\cF}$.

\end{proof}

\subsection{Character sheaves on the group}\label{S:5.4}

We now prove Theorem \ref{t:whitcoefflie} for character sheaves on the group.

\subsubsection{} We say that a coherent D-module $\cF\in D(G\overset{\on{ad}}{/}G)$ has nilpotent singular support if its singular support (as a D-module on $G$) is contained in $G\times \cN\subset T^*G$.

\begin{defin}
A character sheaf $\cF\in D(G\overset{\on{ad}}{/}G)$ is a D-module with nilpotent singular support. As before, these are automatically regular holonomic.
\end{defin}

\subsubsection{} Consider the Lagrangian variety
\[
\Lambda_G:=\lbrace (g,x)\in G\times \cN\;\vert \on{Ad}_g(x)=x\rbrace.
\]

\noindent For an Ad-invariant subscheme $Z\subset \cN$, we let
\[
\Lambda_Z=\lbrace (g,x)\in G\times Z\;\vert\; \on{Ad}_g(x)=x\rbrace.
\]

\subsubsection{} For a nilpotent orbit $\bO$, note that $\Lambda_{\bO}$ is smooth. It is not connected in general, however, because the cenralizer of a nilpotent element in $G$ need not be connected. We let $\Lambda_{\bO,0}$ be the connected component of $\Lambda_{\bO}$ containing $(1,\bO)\subset \Lambda_{\bO}$.

\subsubsection{} For a character sheaf $\cF\in D(G\overset{\on{ad}}{/}G)$, we say that an orbit $\bO$ is maximal in the singular support of $\cF$ if either $G\times \bO\cap \on{SS}(\cF)=\emptyset$, or if $\bO$ is maximal with respect to the property that $G\times \bO\cap\on{SS}(\cF)\neq \emptyset$.

In this case, we let $c_{\bO,0,\cF}$ be the multiplicity of $\Lambda_{\bO,0}$ in the characteristic cycle of $\cF$.

\subsubsection{Exponential}\label{ss:exp} Assume now that our ground field $k$ is the complex numbers. We continue to write $\fg,G$ etc. for their underlying complex analytic varieties.\footnote{As opposed to $\fg^{\on{an}},G^{\on{an}}$, say.}

In this case, we have an exponential map\footnote{We write the exponential in bold letters to distinguish it from the exponential D-module $\on{exp}\in D(\bA^1)$.}
\[
\mathbf{exp}: \fg\to G,
\]

\noindent which is $G$-equivariant and a biholomorphism near $0$. Restricting the exponential map to $\cN\subset \fg$, we obtain an isomorphism
\[
\mathbf{exp}: \cN\xrightarrow{\simeq} \cU,
\]

\noindent of algebraic varieties, where $\cU\subset G$ denotes the subvariety of unipotent elements. Henceforth, we will refer to $\cU\subset G$ as the nilpotent cone for convenience.

\subsubsection{} For $e\in\bO$, we denote by $p_e$ the map $U^{-}\overset{\on{ad}}{/}U^{-}\to G\overset{\on{ad}}{/}G$. As in the Lie algebra case, we have a Whittaker functional
\[
\on{coeff}_{\bO}: D(G\overset{\on{ad}}{/}G)\to\on{Vect},\;\; \cF\mapsto C_{\on{dR}}(U^{-}\overset{\on{ad}}{/}U^{-}, p_e^!(\cF)\overset{!}{\otimes} \psi_e^!(\on{exp})).
\]

\noindent When we want to emphasize the difference between the coefficient functor $\on{coeff}_{\bO}$ on $D(\fg/G)$ and $D(G\overset{\on{ad}}{/}G)$, we write $\on{coeff}_{\bO}^{\fg},\on{coeff}_{\bO}^G$, respectively.

\subsubsection{Digression: constructible sheaves} For an algebraic stack $\sY$, we let 
\[
\on{Shv}(\sY)
\]

\noindent be the category of ind-constructible sheaves of $\bC$-vector spaces as in \cite[§F]{arinkin2020stack}. Concretely, for a finite type scheme $S$, we let
\[
\on{Shv}(S):=\on{Ind}(\on{Shv}(S)^{\on{constr}}),
\]

\noindent where $\on{Shv}(S)^{\on{constr}}$ denotes the usual (small) category of constructible sheaves on $S$. If $\sY$ is an algebraic stack, we define
\[
\on{Shv}(\sY)=\underset{S\to \sY}{\on{lim}} \on{Shv}(S),
\]

\noindent where the limit is taken over the category of finite type schemes mapping to $\sY$.

\subsubsection{} Let us define variants of the Whittaker coefficient functors that do not involve the exponential D-module (which we remind is not available in the constructible setting). Let $\bG_m$ act on $\bA^1$ by scaling. The point is to use the fact that $\on{Av}_*^{\bG_m}(\on{exp})$ is regular holonomic to define such Whittaker coefficients for $\bG_m$-monodromic sheaves.

\subsubsection{} Consider the open embedding
\[
j:\bG_m\into \bA^1.
\]

\noindent Let $\pi: \bG_m\to \on{pt}$ be the projection. Denote by $C^*(\bG_m)$ the cohomology complex of $\bG_m$. Since
\[
C^*(\bG_m)\simeq \on{End}_{D(\bG_m)}(\underline{\bC}_{\bG_m}),
\]

\noindent it naturally acts on the constant sheaf $\underline{\bC}_{\bG_m}$. In particular, we get an action
\[
C^*(\bG_m)\curvearrowright j_{*,\on{dR}}(\omega_{\bG_m})\simeq j_{*,\on{dR}}(\underline{\bC}_{\bG_m})[2].
\]

\subsubsection{} Define the functor
\begin{equation}\label{eq:whitconstr}
\widetilde{\on{coeff}}_{\bO}^G: D(G\overset{\on{ad}}{/}G)\to\on{Vect},\;\; \cF\mapsto \on{Hom}_{D(\bA^1)}(j_{*,\on{dR}}(\omega_{\bG_m}),(\psi_e)_{*,\on{dR}}\circ p_e^!(\cF))\underset{C^*(\bG_m)}{\otimes} \bC[1].
\end{equation}

\noindent We have a similarly defined functor
\begin{equation}\label{eq:whitlconstr}
\widetilde{\on{coeff}}_{\bO}^{\fg}: D(\fg/G)\to \on{Vect}
\end{equation}

\noindent defined by replacing $G$ with $\fg$.

\begin{lem}\label{l:widetildetonormal}
We have canonical isomorphisms of functors
\[
\widetilde{\on{coeff}}_{\bO}^G\simeq \on{coeff}_{\bO}^G: D(G\overset{\on{ad}}{/}G)\to \on{Vect,}
\]
\[
\widetilde{\on{coeff}}_{\bO}^{\fg}\simeq \on{coeff}_{\bO}^{\fg}: D(\fg/G)\to \on{Vect.}
\]

\end{lem}

\begin{proof}
Denote by $D(\bA^1)^{\bG_m\on{-mon}}$ the full subcategory of $D(\bA^1)$ generated under colimits by objects in the image of the forgetful functor
\[
D(\bA^1)^{\bG_m}\to D(\bA^1).
\]

\noindent We have a natural map
\[
-\!\on{exp}\to j_{*,\on{dR}}(\omega_{\bG_m})[-1], 
\]

\noindent where $-\!\on{exp}=\bD(\on{exp})$. This induces a functor
\begin{equation}
\on{Hom}_{D(\bA^1)}(j_{*,\on{dR}}(\omega_{\bG_m}),-)\underset{C^*(\bG_m)}{\otimes}\bC[1]\to \on{Hom}_{D(\bA^1)}(-\!\on{exp},-).
\end{equation}

\noindent It it easy to see that the above natural transformation is an isomorphism when restricted to $D(\bA^1)^{\bG_m\on{-mon}}$. Indeed, it suffices to check that they agree on $\omega_{\bG_m}$ (which is killed) and the delta sheaf $\delta_0$ (which is sent to $\bC[-1]$).

Next, let $\cF\in D(G\overset{\on{ad}}{/}G)$. Observe that we have a canonical isomorphism
\[
\on{coeff}_{\bO}^G(\cF)\simeq \on{Hom}_{D(\bA^1)}(-\on{exp},(\psi_e)_{*,\on{dR}}\circ p_e^!(\cF)).
\]

\noindent Thus, we obtain a natural transformation
\begin{equation}\label{eq:widetonormal}
\widetilde{\on{coeff}}_{\bO}^G\to \on{coeff}_{\bO}^G.
\end{equation}

\noindent By the above, the above two functors agree on objects $\cF\in D(G\overset{\on{ad}}{/}G)$ that satisfy that
\[
(\psi_e)_{*,\on{dR}}\circ p_e^!(\cF)\in D(\bA^1)
\]

\noindent is $\bG_m$-monodromic. However, any such $\cF$ has this property. Indeed, $p_e$ factors through $\cN/G$ and any sheaf on $\cN/G$ is $\bG_m$-monodromic. This shows that the natural transformation (\ref{eq:widetonormal}) is an isomorphism.

In exactly the same way, we see that the similarly defined natural transformation
\[
\widetilde{\on{coeff}}_{\bO}^{\fg}\to \on{coeff}_{\bO}^{\fg}
\]

\noindent is an isomorphism.
\end{proof}

\subsubsection{} Note that the functor $\widetilde{\on{coeff}}_{\bO}^G$ is well-defined as a functor on the category $\on{Shv}(G\overset{\on{ad}}{/}G)$, and similarly $\widetilde{\on{coeff}}_{\bO}^{\fg}$ makes sense on $\on{Shv}(\fg/G)$.
\begin{lem}\label{l:exppull}
We have a canonical isomorphism
\[
\widetilde{\on{coeff}}_{\bO}^{\fg}\circ \mathbf{exp}^!\simeq \widetilde{\on{coeff}}_{\bO}^G: \on{Shv}(G\overset{\on{ad}}{/}G)\to \on{Vect.}
\]
\end{lem}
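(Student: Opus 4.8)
The plan is to descend the exponential to a morphism of adjoint quotient stacks and observe that it is already an \emph{isomorphism} on the unipotent pieces over which the Whittaker functionals are computed. Since $U^-$ is unipotent, $\mathbf{exp}\colon \fu^-\xrightarrow{\sim} U^-$ is an algebraic isomorphism, equivariant for the respective adjoint actions of $U^-$; it therefore induces an isomorphism of stacks $\mathbf{exp}_{\fu^-}\colon \fu^-/U^-\xrightarrow{\sim} U^-\overset{\on{ad}}{/}U^-$. The $G$-equivariant (analytic) map $\mathbf{exp}\colon \fg\to G$ descends to $\mathbf{exp}_G\colon \fg/G\to G\overset{\on{ad}}{/}G$, and since $\fp_e$ and $p_e$ are the evident maps factoring through $\fg/U^-$ and $G/U^-$, the square
\[\begin{tikzcd}
	{\fu^-/U^-} && {U^-\overset{\on{ad}}{/}U^-} \\
	{\fg/G} && {G\overset{\on{ad}}{/}G}
	\arrow["{\mathbf{exp}_{\fu^-}}", from=1-1, to=1-3]
	\arrow["{\fp_e}"', from=1-1, to=2-1]
	\arrow["{p_e}", from=1-3, to=2-3]
	\arrow["{\mathbf{exp}_G}"', from=2-1, to=2-3]
\end{tikzcd}\]
commutes. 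Finally, as $\psi_e\colon U^-\to\bA^1$ is a homomorphism to $\bG_a$ whose differential is the linear character $\psi_e\colon\fu^-\to k$ and the exponential of $\bG_a$ is the identity, we get $\psi_e^G\circ\mathbf{exp}_{\fu^-}=\psi_e^{\fg}$ as maps $\fu^-/U^-\to\bA^1$ (all three statements just need to be checked, which is routine).

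Granting these compatibilities, I would argue directly. For $\cF\in\on{Shv}(G\overset{\on{ad}}{/}G)$, the commuting square and $\psi_e^G\circ\mathbf{exp}_{\fu^-}=\psi_e^{\fg}$ give
\[
(\psi_e^{\fg})_{*,\on{dR}}\circ \fp_e^!\circ\mathbf{exp}_G^!(\cF)\simeq (\psi_e^G)_{*,\on{dR}}\circ(\mathbf{exp}_{\fu^-})_{*,\on{dR}}\circ(\mathbf{exp}_{\fu^-})^!\circ p_e^!(\cF)\simeq (\psi_e^G)_{*,\on{dR}}\circ p_e^!(\cF),
\]
where the last isomorphism holds because $\mathbf{exp}_{\fu^-}$ is an isomorphism, so $(\mathbf{exp}_{\fu^-})_{*,\on{dR}}\circ(\mathbf{exp}_{\fu^-})^!\simeq\on{id}$. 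Applying the functor $\on{Hom}_{D(\bA^1)}(j_{*,\on{dR}}(\omega_{\bG_m}),-)\underset{C^*(\bG_m)}{\otimes}\bC[1]$ to both sides and using the definitions of $\widetilde{\on{coeff}}_{\bO}^{\fg}$ and $\widetilde{\on{coeff}}_{\bO}^G$ yields the asserted isomorphism, which is natural in $\cF$ since it is assembled entirely from natural transformations.

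The only point requiring genuine care is that this all takes place in the constructible (analytic) setting: $\mathbf{exp}\colon\fg\to G$ is merely a map of complex analytic stacks, so $\mathbf{exp}_G^!$ is defined on $\on{Shv}(G\overset{\on{ad}}{/}G)$ but not on $D(G\overset{\on{ad}}{/}G)$ — which is exactly why the lemma is phrased for categories of ind-constructible sheaves and why $\widetilde{\on{coeff}}$ (defined without the exponential $D$-module) is used in place of $\on{coeff}$. The functoriality invoked above — $!$-pullback, $*$-pushforward along $\psi_e$, and base change along the isomorphism $\mathbf{exp}_{\fu^-}$ — is available for ind-constructible sheaves on these stacks by \cite[§F]{arinkin2020stack}. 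Thus the main (mild) obstacle is bookkeeping: checking that the displayed square commutes as a diagram of analytic stacks and that the $C^*(\bG_m)$-linear structures are carried along under these identifications, rather than anything substantive.
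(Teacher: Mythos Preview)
Your argument is correct and is essentially the same as the paper's: the paper draws the single commutative diagram
\[\begin{tikzcd}
	{\mathbb{A}^1} && {U^{-}\overset{\mathrm{ad}}{/}U^{-}} && {G\overset{\mathrm{ad}}{/}G} \\
	{\mathbb{A}^1} && {\mathfrak{u}^{-}/U^{-}} && {\mathfrak{g}/G}
	\arrow["{\psi_e}"', from=1-3, to=1-1]
	\arrow["{p_e}", from=1-3, to=1-5]
	\arrow["{\mathrm{id}}", from=2-1, to=1-1]
	\arrow["{\mathbf{exp}}", from=2-3, to=1-3]
	\arrow["{d\psi_e}"', from=2-3, to=2-1]
	\arrow["{\mathfrak{p}_e}"', from=2-3, to=2-5]
	\arrow["{\mathbf{exp}}"', from=2-5, to=1-5]
\end{tikzcd}\]
and concludes from the middle vertical arrow being an isomorphism, which is exactly your commuting square together with the compatibility $\psi_e^G\circ\mathbf{exp}_{\fu^-}=\psi_e^{\fg}$. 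You have simply spelled out the functorial chase that the paper leaves implicit.
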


\begin{proof}
This is an immediate consequence of the commutative diagram
\[\begin{tikzcd}
	{\mathbb{A}^1} && {U^{-}\overset{\mathrm{ad}}{/}U^{-}} && {G\overset{\mathrm{ad}}{/}G} \\
	\\
	{\mathbb{A}^1} && {\mathfrak{u}^{-}/U^{-}} && {\mathfrak{g}/G}
	\arrow["{\psi_e}"', from=1-3, to=1-1]
	\arrow["{p_e}", from=1-3, to=1-5]
	\arrow["{\mathrm{id}}", from=3-1, to=1-1]
	\arrow["{\textbf{exp}}", from=3-3, to=1-3]
	\arrow["{d\psi_e}"', from=3-3, to=3-1]
	\arrow["{\mathfrak{p}_e}"', from=3-3, to=3-5]
	\arrow["{\textbf{exp}}"', from=3-5, to=1-5]
\end{tikzcd}\]

\noindent and the fact that the middle vertical arrow is an isomorphism.

\end{proof}

\subsubsection{} We will need the following lemma which follows from Lusztig's generalized Springer correspondence:

\begin{lem}\label{l:reductiontoliealg}
Let $\cF\in D(G\overset{\on{ad}}{/}G)$ be an irreducible perverse character sheaf whose support intersects the nilpotent cone of $G$. There exists a perverse character sheaf $\cF^{\fg}\in D(\fg/G)$ and an open ball $j: V\subset \fg$ containing $0$ such that (under Riemann-Hilbert) $(\mathbf{exp}\circ j)^!(\cF)\simeq j^!(\cF^{\fg})$.

\end{lem}

\begin{proof}
Recall that a character sheaf is cuspidal if parabolic restriction to any proper parabolic subgroup kills the sheaf. There exists a (not necessarily proper) parabolic $P\subset G$ with Levi $M$ and an irreducible perverse cuspidal character sheaf $\cF_M\in D(M\overset{\on{ad}}{/}M)$ such that $\cF$ appears as a subquotient of $\on{Ind}_M^G(\cF_M)$. Here, $\on{Ind}_M^G$ is the functor of pull-push along the correspondence
\[\begin{tikzcd}
	& P\overset{\on{ad}}{/}P \\
	G\overset{\on{ad}}{/}G && M\overset{\on{ad}}{/}M.
	\arrow["p"', from=1-2, to=2-1]
	\arrow["q", from=1-2, to=2-3]
\end{tikzcd}\]

\noindent We remind that $\on{Ind}_M^G$ is t-exact when restricted to character sheaves (see \cite{lusztig1985character}).\footnote{In fact, $\on{Ind}_M^G$ is t-exact on all equivariant sheaves, see \cite{bezrukavnikov2021parabolic}[Thm. 5.4].} Note that by assumption on $\cF$ the support of $\cF_M$ intersects $\cN_M$, the nilpotent cone of $M$. Since $\cF_M$ is irreducible, perverse and cuspidal, there exists a nilpotent orbit $\bO\subset \cN_M$ and a local system $\sL$ on $k:Z(M)^{\circ}\cdot \bO\subset M$ such that $\cF_M\simeq k_{!*}(\sL)$ (see e.g. \cite{mars1989ta}[Thm. 6.3.1(i)]). Here, $Z(M)^{\circ}$ is the connected component of the identity of the center of $M$.

Let $k^{\fm}: Z(\fm)+\bO\into \fm$ denote the inclusion map. Let $\sL^{\fm}$ be the pullback of $\sL$ under the exponential map $\mathbf{exp}: Z(\fm)+\bO\to Z(M)^{\circ}\cdot\bO$. Then $\cF_M^{\fm}:=k^{\fm}_{!*}(\sL^{\fm})\in D(\fm/M)$ is a perverse character sheaf. Indeed, we may find a small ball around $0\in \fm$ such that the preimage of $Z(M)^{\circ}\cdot \bO$ under the exponential map is contained in $Z(\fm)+\bO$. As such, $\cF_M^{\fm}$ is a $\bG_m$-equivariant D-module whose singular support intersects the cotangent fiber at $0\in \fm$ in the nilpotent locus. This implies that the entire singular support of $\cF_M^{\fm}$ is nilpotent (e.g., by observing that its Fourier transform is supported on the nilpotent locus).

Let $'\cF^{\fg}:=\on{Ind}_{\fm}^{\fg}(\cF_M^{\fm})\in D(\fg/G)$ be the parabolic induction of $\cF_M^{\fm}$. Choose $V$ to be a sufficiently small ball around $0$ such that the exponential map is a biholomorphism and such that $\mathbf{exp}^!$ commutes with parabolic induction upon restriction to $V$. Then $(\mathbf{exp}\circ j)^!(\cF)$ is a subquotient of $j^!('\cF^{\fg})$.

Let $\bR_{0<r\leq 1}$ be the positive real numbers in the interval $(0,1]$ considered as a monoid under multiplication. Since $j^!$ gives an equivalence between $\bR_{0<r\leq 1}$-equivariant sheaves on $\fg$ and on $V$, we see that there exists a subquotient $\cF^{\fg}$ of $'\cF^{\fg}$ such that $(\mathbf{exp}\circ j)^!(\cF)\simeq j^!(\cF^{\fg})$.

\end{proof}

\subsubsection{} Let now $k$ again be an arbitrary algebraically closed field of characteristic zero. The following is the main theorem of this subsection.
\begin{thm}\label{t:whitismicrostalkG}
Let $\cF\in D(G\overset{\on{ad}}{/}G)$ be a perverse character sheaf, and let $\bO$ be a maximal orbit in the singular support of $\cF$. Then
\[
\on{coeff}_{\bO}^G(\cF)\in\on{Vect}^{\heartsuit}
\]

\noindent is a finite-dimensional vector space concentrated in degree zero, and moreover
\[
\on{dim}\on{coeff}_{\bO}^G(\cF)=c_{\bO,0,\cF}.
\]

\end{thm}

\begin{proof}
By Lefschetz principle, we may assume that $k=\bC$ is the complex numbers. In particular, we may work with constructible sheaves using Riemann-Hilbert.

\step 

Suppose first that the support of $\cF$ does not intersect the nilpotent cone. Then $\on{coeff}_{\bO}(\cF)$ tautologically vanishes, and moreover $c_{\bO,0,\cF}=0$. Hence the theorem is trivial in this case.

\step Thus, we may assume that the support of $\cF$ intersects the nilpotent cone. Let $\cF^{\fg}\in D(\fg/G)$ be the sheaf and $0\in V\subset \fg$ the ball appearing in Lemma \ref{l:reductiontoliealg}. In particular, the restriction of $\mathbf{exp}$ to $V$ is a biholomorphism. We have a commutative diagram:
\[\begin{tikzcd}
	{\cU\overset{\on{ad}}{/}G} && {G\overset{\mathrm{ad}}{/}G} \\
	\\
	{\cN/G} && {\mathfrak{g}/G.}
	\arrow["{\mathbf{exp}}", from=3-1, to=1-1]
	\arrow[from=1-1, to=1-3]
	\arrow["{\mathbf{exp}}"', from=3-3, to=1-3]
	\arrow[from=3-1, to=3-3]
\end{tikzcd}\]

\noindent By the isomorphism
\[
(\mathbf{exp}\circ j)^!(\cF)\simeq j^!(\cF^{\fg}),
\]

\noindent we get an equality $c_{\bO,0,\cF}=c_{\bO,\cF^{\fg}}$. Since $\cF$ satisfies that $p_e^!(\cF)\in \on{Shv}(U^{-}\overset{\on{ad}}{/}U^{-})\simeq \on{Shv}(\fu^{-}/U^{-})$ is $\bG_m$-monodromic for the natural scaling action on $\fu^{-}$ (because the map $U^-\overset{\on{ad}}{/}U^-\to G\overset{\on{ad}}{/}G$ factors through $\cN/G$), we see that we may compute $\on{coeff}_{\bO}^G(\cF)$ locally around $1\in G$ (and similarly for $\on{coeff}_{\bO}^{\fg}(\cF^{\fg})$). Thus, it follows from Lemma \ref{l:exppull} that
\[
\on{coeff}_{\bO}^G(\cF)\simeq \on{coeff}_{\bO}^{\fg}(\cF^{\fg}).
\]

\noindent We conclude by Theorem \ref{t:whitcoefflie}.

\end{proof}

\subsection{Microstalks} 

\subsubsection{Pseudo-microstalks}\label{s:pseudom} 

Recall that our goal is to show that Whittaker coefficients compute microstalks. That is, while the results in Section \ref{s:charliealg} and \ref{S:5.4} realize the main properties of being a microstalk, we now show that the Whittaker coefficient are themselves microstalks.

\subsubsection{} For this, it is convenient to introduce the notion of a \emph{pseudo-microstalk}. Let $X$ be a smooth stack. Let $\Lambda\subset T^*X$ be a closed conical subset such that all objects of $D_{\Lambda}(X)$ are regular holonomic.

Let us say that a functional
\[
\mu: D_{\Lambda}(X)\to\on{Vect}
\]

\noindent is a \emph{pseudo-microstalk} at $(x,\xi)\in \Lambda^{\on{sm}}$ if:
\begin{itemize}
    \item $\mu$ sends coherent D-modules to (finite complexes of) finite-dimensional vector spaces.

    \item $\mu$ is t-exact and commutes with arbitrary limits.

    \item For coherent $\cF\in D_{\Lambda}(X)$, the Euler characteristic of $\mu(\cF)$ coincides with the multiplicity of $(x,\xi)$ in the characteristic cycle of $\cF$.
    
\end{itemize}

\subsubsection{} Let $Z\subset \cN$ be closed conical and Ad-invariant. Let
\[
D_{\Lambda_Z}(\fg/G)
\]

\noindent be the category of character sheaves on $\fg$ with singular support contained in $\Lambda_Z$. Recall that these are regular holonomic.

Similarly, let
\[
D_{\Lambda_Z}(G\overset{\on{ad}}{/}G)
\]

\noindent be the category of character sheaves on $G$ with singular support contained in $\Lambda_Z$.

\subsubsection{} We claim that $\on{coeff}_{\bO}^{\fg}: D_{\Lambda_Z}(\fg/G)\to \on{Vect}$ commutes with limits (and similarly for $\on{coeff}^G_{\bO}$). By Lemma \ref{l:widetildetonormal}, it suffices to show that $\widetilde{\on{coeff}_{\bO}^{\fg}}: D_{\Lambda_Z}(\fg/G)\to \on{Vect}$ commutes with limits. By Riemann-Hilbert, we have an embedding $D_{\Lambda_Z}(\fg/G)\into \on{Shv}^{\on{all}}(\fg/G)$, where the latter denotes the category of all Betti sheaves; see e.g., \cite{arinkin2020stack}[\S G.7] for the definition of Betti sheaves for stacks. By \emph{loc.cit}, this embedding commutes with limits. Moreover, $\widetilde{\on{coeff}_{\bO}^{\fg}}$ extends to a functional on $\on{Shv}^{\on{all}}(\fg/G)$, and this functional evidently commutes with limits.

\subsubsection{} Let $\bO$ be a maximal nilpotent orbit in $\bO$, and let $e\in\bO$. We may summarize the results of Section \ref{s:charliealg} and \ref{S:5.4} as follows:
\begin{thm}\label{t:summ5354}
The Whittaker coefficient functors
\[
\on{coeff}_{\bO}^{\fg}: D_{\Lambda_Z}(\fg/G)\to \on{Vect},
\]
\[
\on{coeff}_{\bO}^{G}: D_{\Lambda_Z}(G\overset{\on{ad}}{/}G)\to \on{Vect}
\]

\noindent are pseudo-microstalks at $(0,e)$ and $(1,e)$, respectively.
\end{thm}

\subsubsection{} The following allows us to prove that pseudo-microstalks (non-canonically) compute microstalks:
\begin{prop}\label{p:noncaniso}
Let $\mu_1,\mu_2$ be two pseudo-microstalks
\[
\mu_1,\mu_2: D_{\Lambda_Z}(\fg/G)\to \on{Vect.}
\]

\noindent Then there is a non-canonical isomorphism $\mu_1\simeq \mu_2$.

Similarly, any two pseudo-microstalks
\[
\mu_1',\mu_2': D_{\Lambda_Z}(G\overset{\on{ad}}{/}G)\to \on{Vect}
\]

\noindent are non-canonically isomorphic.
\end{prop}

\begin{proof}

\step 
We prove the assertion for $D_{\Lambda_Z}(\fg/G)$, the argument in the group case being similar.

We first define some variants of the category $D_{\Lambda_Z}(\fg/G)$. Consider the small category
\[
D_{\Lambda_Z}(\fg/G)^{\on{access},c}:=D_{\Lambda_Z}(\fg/G)\cap D(\fg/G)^c
\]

\noindent of objects in $D_{\Lambda_Z}(\fg/G)$ that are compact as objects of $D_{\Lambda_Z}(\fg/G)$. Let
\[
D_{\Lambda_Z}(\fg/G)^{\on{access}}=\on{Ind}(D_{\Lambda_Z}(\fg/G)^{\on{access},c}).
\]

\noindent Similarly, let
\[
D_{\Lambda_Z}(\fg/G)^{\on{access},\on{coh}}:=D_{\Lambda_Z}(\fg/G)\cap D(\fg/G)^{\on{coh}}
\]

\noindent be the category of objects $D_{\Lambda_Z}(\fg/G)$ the are coherent as objects of $D(\fg/G)$ (that is, become compact after pulling back along $\fg\to \fg/G$). Let
\[
D_{\Lambda_Z}(\fg/G)^{\on{access},\on{ren}}=\on{Ind}(D_{\Lambda_Z}(\fg/G)^{\on{access},\on{coh}}).
\]

\noindent This category carries a natural t-structure making the fully faithful embedding
\[
D_{\Lambda_Z}(\fg/G)^{\on{access}}\into D_{\Lambda_Z}(\fg/G)^{\on{access},\on{ren}}
\]

\noindent t-exact. We may ind-extend $\mu_1,\mu_2$ to functors
\[
D_{\Lambda_Z}(\fg/G)^{\on{access},\on{ren}}\to\on{Vect.}
\]

\noindent We have a fully faithful embedding
\[
D_{\Lambda_Z}(\fg/G)^{\on{access}}\into D_{\Lambda_Z}(\fg/G)
\]

\noindent with a continuous right adjoint:
\begin{equation}\label{eq:rightadjacc}
D_{\Lambda_Z}(\fg/G)\to D_{\Lambda_Z}(\fg/G)^{\on{access}}.
\end{equation}

\step  In this step, we provide an isomorphism $\mu_1\simeq \mu_2$ as functionals on $D_{\Lambda_Z}(\fg/G)^{\on{access,ren}}$ and hence as functionals on $D_{\Lambda_Z}(\fg/G)^{\on{access}}$.

The Fourier transform provides an equivalence (up to a cohomological shift):
\[
\on{FT}_{\fg}: D_{\Lambda_Z}(\fg/G)\to D(Z/G).
\]

\noindent In particular, the abelian category $D_{\Lambda_Z}(\fg/G)^{\heartsuit}$ only has finitely many simple objects. Let $\sP$ be the direct sum over all simples. By construction, $\sP$ is a compact generator of $D_{\Lambda_Z}(\fg/G)^{\on{access},\on{ren}}$. Let 
\[
A:=\on{End}_{D_{\Lambda_Z}(\fg/G)}(\sP).
\]

\noindent Then $A\in\on{Vect}^{\geq 0}$ is a coconnective dga. To provide an isomorphism $\mu_1\simeq \mu_2$, it suffices to provide an isomorphism $\mu_1(\sP)\simeq \mu_2(\sP)$ of $A$-modules. Since $\mu_1$ and $\mu_2$ are pseudo-microstalks, it follows that the vector spaces $\mu_1(\sP),\mu_2(\sP)$ are both concentrated in degree zero and have the same dimension.

Let $A^0:=H^0(A)$. Note that as an algebra, $A^0$ is just a bunch of copies of the ground field $k$ (one for each simple). By \cite[Prop. 3.9]{dwyer2006duality}, it follows that $\mu_1(\sP)$ and $\mu_2(\sP)$ are isomorphic as $A$-modules if and only if they are so as $A^0$-modules, which is immediate.

\step By step 2, we have an isomorphism $\mu_1\simeq \mu_2$ as functionals on $D_{\Lambda_Z}(\fg/G)^{\on{access}}$. By construction, $D_{\Lambda_Z}(\fg/G)$ identifies with the left completion with respect to the natural t-structure on $D_{\Lambda_Z}(\fg/G)^{\on{access}}$ (see \cite{arinkin2020stack}[\S E.5.5]). Since each $\mu_i$ is t-exact and commutes with limits, we may upgrade the above isomorphism to an isomorphism as functionals on $D_{\Lambda_Z}(\fg/G)$.
\end{proof}

\subsubsection{} For the rest of this subsection, we assume our ground field is the complex numbers.

Recall the notion of microstalks, cf. §\ref{s:microstalk}. Since $(0,e)\in\Lambda_Z$ is a smooth point, we may consider the microstalk functor at $(0,e)$:
\[
\mu_{\bO}^{\fg}: D_{\Lambda_Z}(\fg/G)\to \on{Vect.}
\]

\noindent Similarly, we may consider the microstalk functor at $(1,e)$:
\[
\mu_{\bO}^G: D_{\Lambda_Z}(G\overset{\on{ad}}{/}G)\to \on{Vect.}
\]

\subsubsection{} Unsurprisingly, microstalks are pseudo-microstalks. Combining Theorem \ref{t:summ5354} and Proposition \ref{p:noncaniso}, we obtain:
\begin{thm}\label{t:mainmicro1}
We have isomorphisms
\[
\on{coeff}_{\bO}^{\fg}\simeq \mu_{\bO}^{\fg}: D_{\Lambda_Z}(\fg/G)\to \on{Vect},
\]
\[
\on{coeff}_{\bO}^{G}\simeq \mu_{\bO}^G: D_{\Lambda_Z}(G\overset{\on{ad}}{/}G)\to \on{Vect.}
\]

\noindent That is, the Whittaker coefficient functors compute the microstalk at $(0,e)$ and $(1,e)$, respectively.
\end{thm}

\subsection{Hochschild homology of Whittaker model as a microstalk}\label{S:5.5}
In this section, we prove Theorem \ref{t:catstate} realizing Hochschild homology of the Whittaker model of a nilpotent $G$-category as the Whittaker coefficient of its character sheaf.

\subsubsection{} First, we start on general grounds. Let $\sC$ be a dualizable $G$-category. For a $G$-equivariant endofunctor
\[
F: \sC\to \sC,
\]

\noindent we may consider its image, denoted $\chi_{F}$, under the map
\[
\on{MC}_{\sC}: \on{End}_{D(G)\on{-\textbf{mod}}}(\sC)\simeq \sC\underset{D(G)}{\otimes} \sC^{\vee}\to D(G\overset{\on{ad}}{/}G).
\]

\subsubsection{} On the other hand, $F$ induces an endofunctor
\[
F_{\bO}:=F_{\vert \on{Whit}_{\bO}(\sC)}: \on{Whit}_{\bO}(\sC)\to \on{Whit}_{\bO}(\sC).
\]

\noindent As in §\ref{s:HHendo}, we may consider its Hochschild homology
\[
\on{HH}_*(\on{Whit}_{\bO}(\sC),F_{\bO})\in\on{Vect}.
\]
\begin{lem}\label{l:HHiscoeff}
We have a canonical isomorphism of vector spaces
\[
\on{HH}_*(\on{Whit}_{\bO}(\sC),F_{\bO})\simeq \on{coeff}_{\bO}(\chi_{F}).
\]

\noindent In particular, for $F=\on{id}_{\sC}$, we get an isomorphism
\[
\on{HH}_*(\on{Whit}_{\bO}(\sC))\simeq \on{coeff}_{\bO}(\chi_{\sC}).
\]
\end{lem}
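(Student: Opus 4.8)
The identity to establish is
\[
\on{HH}_*(\on{Whit}_{\bO}(\sC),F_{\bO})\simeq \on{coeff}_{\bO}(\chi_F),
\]
and the strategy is to recognize both sides as computing the same categorical trace, via a Whittaker-twisted version of the standard ``trace of a kernel'' calculation. Recall that $\on{Whit}_{\bO}(\sC)=\sC^{U^-,\psi_e}=\on{Hom}_{D(G)\textbf{-mod}}(D(G/U^-,\psi_e),\sC)$. Since $\sC$ is dualizable as a $G$-category and $D(G/U^-,\psi_e)$ is dualizable over $D(G)$, we can compute $\on{Whit}_{\bO}(\sC)$ as $D(G/U^-,\psi_e)^\vee\underset{D(G)}{\otimes}\sC$, where the dual is taken in $D(G)\textbf{-mod}$. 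The first step is therefore to identify the Hochschild homology of this relative tensor product (with coefficients in the endofunctor $F_{\bO}$ induced by the $G$-linear $F$) using the general formula for traces of endomorphisms of relative tensor products — namely, for $\sA$-linear categories $\sM,\sN$ with $\sA$ rigid (or here semi-rigid, as discussed in \S\ref{s:quaterlyrigid}), one has $\on{HH}_*(\sM\underset{\sA}{\otimes}\sN)\simeq \on{HH}_*^{\sA}(\sN\underset{\sA^{\on{op}}\otimes\sA}{\otimes}\sM)$, i.e.\ a trace taken in the monoidal category $\sA$ itself. Applying this with $\sA=D(G)$, $\sM=D(G/U^-,\psi_e)^\vee$, $\sN=\sC$, the trace of $F_{\bO}$ becomes the trace, computed in $D(G\overset{\on{ad}}{/}G)\simeq D(G)\underset{D(G)\otimes D(G)}{\otimes}D(G)$, of $\chi_F$ paired against the ``character object'' of $D(G/U^-,\psi_e)$ as a $D(G)$-bimodule.

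The second step is to compute that character object. By the same formalism (Lemma \ref{l:compwithpind}-style reasoning, or directly), the self-dual $D(G)$-bimodule $D(G/U^-,\psi_e)$ has character sheaf on $G\overset{\on{ad}}{/}G$ equal to the !-pushforward of $\psi_e^!(\on{exp})$ along $U^-\overset{\on{ad}}{/}U^-\to G\overset{\on{ad}}{/}G$, i.e.\ precisely the kernel $\fp_{e,*}(\psi_e^!(\on{exp}))$ defining $\on{coeff}_{\bO}$. Concretely, $D(G/U^-,\psi_e)\simeq D(G)\underset{D(U^-)}{\otimes}(\on{Vect}_{\psi_e})$ is induced from the $(U^-,\psi_e)$-character, so its bimodule character is obtained by !-pull/*-push along $U^-\overset{\on{ad}}{/}U^-\leftarrow ? \to G\overset{\on{ad}}{/}G$ from the rank-one character sheaf $\psi_e^!(\on{exp})$ on $U^-\overset{\on{ad}}{/}U^-$ — this is exactly the correspondence appearing in the definition of $\on{coeff}_{\bO}$ in \S\ref{S:pseudomicro}. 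Thus pairing $\chi_F$ against this character object in $D(G\overset{\on{ad}}{/}G)$ yields
\[
C_{\on{dR}}\bigl(U^-\overset{\on{ad}}{/}U^-,\ \fp_e^!(\chi_F)\overset{!}{\otimes}\psi_e^!(\on{exp})\bigr)=\on{coeff}_{\bO}(\chi_F),
\]
which is the right-hand side. The final statement for $F=\on{id}_\sC$ is then immediate since $\chi_{\on{id}_\sC}=\on{MC}_\sC(\on{id}_\sC)=\chi_\sC$ and $(\on{id}_\sC)_{\bO}=\on{id}_{\on{Whit}_{\bO}(\sC)}$, so $\on{HH}_*(\on{Whit}_{\bO}(\sC))=\on{HH}_*(\on{Whit}_{\bO}(\sC),\on{id})\simeq\on{coeff}_{\bO}(\chi_\sC)$.

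\textbf{Main obstacle.} The routine part is the diagram chase identifying the two correspondences; the delicate part is the bookkeeping in the trace formula for relative tensor products — specifically, ensuring that all the functors in sight ($F$, the (co)units of the various dualities, the forgetful and averaging functors relating $\sC^{U^-,\psi_e}$ to $\sC$) admit continuous adjoints so that the trace manipulations of \S\ref{S:5.1} apply, and that the cyclicity isomorphisms are tracked compatibly with the $G$-equivariance of $F$. One should be slightly careful that $D(G/U^-,\psi_e)$ is genuinely dualizable over $D(G)$ and that the counit realizing $\on{Whit}_{\bO}(\sC)$ as a summand/retract behaves well; this is where the semi-rigidity of $D(G)$ (and the fact that $U^-$ is unipotent, so $D(G/U^-,\psi_e)$ is even better behaved) is used. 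Modulo this standard-but-careful categorical trace bookkeeping, the geometric content is entirely contained in the coincidence of the two correspondences, which is essentially a tautology once both are written down.
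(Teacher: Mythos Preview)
Your approach is correct and arrives at the same conclusion, but it is packaged more abstractly than the paper's argument. You invoke the general 2-categorical trace formula for relative tensor products (realizing $\on{Whit}_{\bO}(\sC)$ as $D(G/U^-,\psi_e)^{\vee}\underset{D(G)}{\otimes}\sC$ and then pairing character objects in $D(G\overset{\on{ad}}{/}G)$), together with the computation of $\chi_{D(G/U^-,\psi_e)}$ via induction from $U^-$. The paper instead performs a direct diagram chase: it first restricts from $G$ to $U^-$ (so that one only needs to compare the $U^-$-trace of $F$ with the Whittaker trace), and then passes to dual functors, reducing the claim to the elementary identity that $\on{oblv}^{\psi_e}\circ\on{Av}_*^{\psi_e}$ is convolution by $\psi_e^!(\on{exp})$. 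The geometric content of the two proofs is literally the same --- your identification of the character of $D(G/U^-,\psi_e)$ with $\fp_{e,*}(\psi_e^!(\on{exp}))$ unwinds to exactly that identity --- but your route requires importing the Morita-style trace formula as a black box, whereas the paper's route is self-contained with respect to \S\ref{S:5.1}. One small terminological slip: $D(G/U^-,\psi_e)$ is a left $D(G)$-module, not a bimodule; what you are computing is its character as a dualizable object of $D(G)\textbf{-mod}$, not a bimodule trace. This does not affect your argument.
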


\begin{proof}
We need to show that the following diagram commutes:
\[\begin{tikzcd}
	{\mathrm{End}_{D(G)\mathrm{-\textbf{mod}}}(\sC)} && {D(G\overset{\mathrm{ad}}{/}G)} \\
	\\
	{\mathrm{End}_{\mathrm{DGCat}_{\mathrm{cont}}}(\mathrm{Whit}_{\mathbb{O}}(\sC))} && {\mathrm{Vect}.}
	\arrow["{\mathrm{MC}_{\sC}}", from=1-1, to=1-3]
	\arrow["{\mathrm{coeff}_{\mathbb{O}}}", from=1-3, to=3-3]
	\arrow["{\mathrm{HH}_*(\mathrm{Whit}_{\mathbb{O}}(\sC),-)}"', from=3-1, to=3-3]
	\arrow["{F\mapsto F_{\bO}}"', from=1-1, to=3-1]
\end{tikzcd}\]

\noindent Note that we have a commutative diagram
\[\begin{tikzcd}
	{\mathrm{End}_{D(G)\mathrm{-\textbf{mod}}}(\sC)} && {\mathrm{End}_{D(U^{-})\mathrm{-\textbf{mod}}}(\sC)} \\
	\\
	{D(G\overset{\mathrm{ad}}{/}G)} && {D(U^{-}\overset{\mathrm{ad}}{/}U^{-}).}
	\arrow["{\mathrm{Res}}", from=1-1, to=1-3]
	\arrow["{\mathrm{MC}_{\sC}}", from=1-3, to=3-3]
	\arrow["{\mathrm{MC}_{\sC}}"', from=1-1, to=3-1]
	\arrow["{p_e^!}"', from=3-1, to=3-3]
\end{tikzcd}\]

\noindent Thus, it suffices to show that the following diagram commutes:
\begin{equation}\label{d:final}
\begin{tikzcd}
	{\mathrm{End}_{D(U^{-})\mathrm{-\textbf{mod}}}(\sC)} && {D(U^{-}\overset{\mathrm{ad}}{/}U^{-})} \\
	\\
	{\mathrm{End}_{\mathrm{DGCat}_{\mathrm{cont}}}(\mathrm{Whit}_{\mathbb{O}}(\sC))} && {\mathrm{Vect}.}
	\arrow["{\mathrm{MC}_{\sC}}", from=1-1, to=1-3]
	\arrow["{C_{\on{dR}}(U^{-}\overset{\on{ad}}{/}U^{-}, \psi_e^!(\on{exp})\overset{!}{\otimes}-)}", from=1-3, to=3-3]
	\arrow["{\mathrm{HH}_*(\mathrm{Whit}_{\mathbb{O}}(\sC),-)}"', from=3-1, to=3-3]
	\arrow["{F\mapsto F_{\bO}}"', from=1-1, to=3-1]
\end{tikzcd}
\end{equation}

\noindent Note that under the canonical self-duality
\[
(\mathrm{End}_{D(U^{-})\mathrm{-\textbf{mod}}}(\sC))^{\vee}\simeq (\sC^{\vee}\underset{D(U^{-})}{\otimes} \sC)^{\vee}\simeq \sC^{\vee}\underset{D(U^{-})}{\otimes}(\sC^{\vee})^{\vee}\simeq \mathrm{End}_{D(U^{-})\mathrm{-\textbf{mod}}}(\sC),
\]

\noindent the dual to the functor
\[
\on{MC}_{\sC}: \mathrm{End}_{D(U^{-})\mathrm{-\textbf{mod}}}(\sC)\to D(U^{-}\overset{\on{ad}}{/}U^{-})
\]

\noindent goes to the functor
\[
\on{MC}_{\sC}^{\vee}: D(U^{-}\overset{\on{ad}}{/}U^{-})\simeq (D(U^{-}\overset{\on{ad}}{/}U^{-}))^{\vee}\to \mathrm{End}_{D(U^{-})\mathrm{-\textbf{mod}}}(\sC),\;\; \cF\mapsto (c\mapsto \cF\star c).
\]

\noindent Moreover, the dual to the functor
\[
\mathrm{End}_{D(U^{-})\mathrm{-\textbf{mod}}}(\sC)\xrightarrow{F\mapsto F_{\bO}}\mathrm{End}_{\mathrm{DGCat}_{\mathrm{cont}}}(\mathrm{Whit}_{\mathbb{O}}(\sC))
\]

\noindent goes (under the canonical self-dualities of the source and target) to the functor
\[
\mathrm{End}_{\mathrm{DGCat}_{\mathrm{cont}}}(\mathrm{Whit}_{\mathbb{O}}(\sC))\to \mathrm{End}_{D(U^{-})\mathrm{-\textbf{mod}}}(\sC),\;\; F'\mapsto \on{oblv}^{\psi_e}\circ F'\circ \on{Av}_*^{\psi_e}.
\]

\noindent Here, $\on{oblv}^{\psi_e}$ denotes the forgetful functor $\on{Whit}_{\bO}(\sC)\to \sC$. Indeed, this follows from the fact that under the $U^{-}\times U^{-}$-action on $\sC\otimes \sC^{\vee}$, we have
\[
\mathrm{End}_{D(U^{-})\mathrm{-\textbf{mod}}}(\sC)\simeq (\sC\otimes \sC^{\vee})^{\Delta U^{-}},\;\; \mathrm{End}_{\mathrm{DGCat}_{\mathrm{cont}}}(\mathrm{Whit}_{\mathbb{O}}(\sC))\simeq (\sC\otimes \sC^{\vee})^{U^{-}\times U^{-}, \psi_e\times \psi_e}.
\]

\noindent Passing to dual functors in the diagram (\ref{d:final}), it suffices to show that the following diagram commutes:
\[\begin{tikzcd}
	{\mathrm{End}_{D(U^{-})\mathrm{-\textbf{mod}}}(\sC)} && {D(U^{-}\overset{\mathrm{ad}}{/}U^{-})} \\
	\\
	{\mathrm{End}_{\mathrm{DGCat}_{\mathrm{cont}}}(\mathrm{Whit}_{\mathbb{O}}(\sC))} && {\mathrm{Vect}.}
	\arrow["{(\mathrm{MC}_{\sC})^{\vee}}"', from=1-3, to=1-1]
	\arrow["{k\mapsto \psi_e^!(\mathrm{exp})}"', from=3-3, to=1-3]
	\arrow["{k\mapsto \mathrm{id_{\mathrm{Whit}_{\mathbb{O}}(\sC)}}}", from=3-3, to=3-1]
	\arrow["{F'\mapsto \mathrm{oblv}^{\psi_e}\circ F'\circ\mathrm{Av}_*^{\psi_e}}", from=3-1, to=1-1]
\end{tikzcd}\]

\noindent 

\noindent However, the commutativity of the above diagram is clear, since the endofunctor $\on{oblv}^{\psi_e}\circ \on{Av}_*^{\psi_e}$ coincides with acting by $\psi_e^!(\on{exp})$.

\end{proof}

\subsubsection{} Putting it together, we get:
\begin{thm}\label{t:finalboss?}
Let $\sC$ be a nilpotent $G$-category and $\bO$ a maximal nilpotent orbit contained in $\on{SS}(\sC)$. Then 
\[
\on{HH}_*(\on{Whit}_{\bO}(\sC))=\on{coeff}_{\bO}(\chi_{\sC}).
\]

\noindent Moreover, $\on{coeff}_{\bO}(\chi_{\sC})$ coincides with the microstalk of $\chi_{\sC}$ at $(1,e)$.
\end{thm}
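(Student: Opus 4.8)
The plan is to assemble Theorem \ref{t:finalboss?} from the three inputs already established, so almost no new work is required. First I would invoke Corollary \ref{c:charreg} to conclude that $\chi_{\sC}\in D(G\overset{\on{ad}}{/}G)$ is regular holonomic; in fact, by Theorem \ref{t:nilp} we may assume $\sC$ is $(B,\chi)$-generated, and then Lemma \ref{l:charsheaf} exhibits $\chi_{\sC}$ as $\on{Spr}_{\chi}\underset{\on{HH}_*(\sH_{G,\chi})}{\otimes}\on{HH}_*(\sC^{B,\chi})$, hence as an object of $\on{Princ}_{\chi}$ (it is built under colimits from the simple summands of $\on{Spr}_{\chi}$). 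Moreover, by the singular support estimates already in hand — concretely Proposition \ref{p:stable} and the fact that $\on{SS}(\sC)$ controls $\chi_{\sC}$ via $\on{MC}_{\sC}$ — we have $\chi_{\sC}\in \on{Princ}_{\chi,\on{SS}(\sC)}$, so if $\bO$ is maximal in $\on{SS}(\sC)$ then $\bO$ is a maximal orbit in the singular support of $\chi_{\sC}$ in the sense of §\ref{S:5.4}.

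Next I would apply Lemma \ref{l:HHiscoeff} (i.e.\ Lemma \ref{l:HHiscoeff}/\ref{l:HHiscoeff} in the text, stated as Lemma \ref{l:HHiscoeff}) with $F=\on{id}_{\sC}$, which gives directly the canonical isomorphism
\[
\on{HH}_*(\on{Whit}_{\bO}(\sC))\simeq \on{coeff}_{\bO}(\chi_{\sC}).
\]
This is the first asserted equality and needs nothing further.

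For the second assertion, I would feed $\chi_{\sC}\in \on{Princ}_{\chi,\on{SS}(\sC)}$ into Theorem \ref{t:whitismicrostalkG} (together with Theorem \ref{t:mainmicro1}): since $\bO$ is maximal in the singular support of $\chi_{\sC}$, the functor $\on{coeff}_{\bO}=\on{coeff}_{\bO}^G$ restricted to $\on{Princ}_{\chi,\on{SS}(\sC)}$ is a pseudo-microstalk at $(1,e)$, and by Proposition \ref{p:noncaniso} any two pseudo-microstalks on $\on{Princ}_{\chi,\on{SS}(\sC)}$ are non-canonically isomorphic; comparing with the genuine microstalk functor $\mu_{\bO}^G$ (which is itself a pseudo-microstalk) yields $\on{coeff}_{\bO}(\chi_{\sC})\simeq \mu_{\bO}(\chi_{\sC})$. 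Equivalently, one may cite Theorem \ref{t:mainmicro1} directly, which already records $\on{coeff}_{\bO}^G\simeq \mu_{\bO}^G$ on $\on{Princ}_{\chi,Z}$, and evaluate at $\chi_{\sC}$.

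The only genuine subtlety — and the step I would be most careful about — is the verification that $\chi_{\sC}$ really lands in $\on{Princ}_{\chi,\on{SS}(\sC)}$ rather than merely in $D_{\Lambda_{\on{SS}(\sC)}}(G\overset{\on{ad}}{/}G)$; that is, that the bar-resolution argument of Lemma \ref{l:charsheaf} genuinely expresses $\chi_{\sC}$ as a colimit of objects built from summands of $\on{Spr}_{\chi}$, so that Theorem \ref{t:whitismicrostalkG} (stated for objects of $\on{Princ}_{\chi}$) applies. Once the reduction to $(B,\chi)$-generated $\sC$ via Theorem \ref{t:nilp} is in place this is immediate, but it is worth spelling out, since the microstalk comparison of §\ref{S:5.4} was only proved for principal-series character sheaves and not for arbitrary nilpotent character sheaves on the group. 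Everything else is a formal concatenation of already-proven statements.
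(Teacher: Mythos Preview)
Your proposal is correct and follows essentially the same approach as the paper: reduce to the $(B,\chi)$-generated case via Theorem \ref{t:nilp}, use Lemma \ref{l:charsheaf} to place $\chi_{\sC}$ in $\on{Princ}_{\chi}$ (with singular support contained in $\on{SS}(\sC)$), and then combine Lemma \ref{l:HHiscoeff} with Theorem \ref{t:mainmicro1}. Your careful discussion of why $\chi_{\sC}$ lands in $\on{Princ}_{\chi}$ rather than merely in $D_{\Lambda_{\on{SS}(\sC)}}(G\overset{\on{ad}}{/}G)$ is a useful elaboration of a point the paper passes over quickly.
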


\begin{proof}
By Theorem \ref{t:nilp}, we may assume that $\sC$ is $(B,\chi)$-generated. Since $\bO$ is also maximal in $\on{SS}(\chi_{\sC})$, the theorem now follows from a combination Lemma \ref{l:HHiscoeff} and Theorem \ref{t:mainmicro1}.
\end{proof}

\section{Finite-dimensional modules of $\sW$-algebras}\label{S:6}

In this section, we identify the rationalized Grothendieck group of finite-dimensional representations of a $\sW$-algebra with fixed regular central character as a certain subspace of the top cohomology of the Springer fiber. This was originally proved by Losev-Ostrik \cite[Thm. 7.4(iii)]{losev2014classification} for trivial central character and by Bezrukavnikov-Losev \cite[Thm. 5.2(2)]{bezrukavnikov2018dimension} for arbitrary central character.

Our proof is self-contained. The idea is to use that the category of finite-dimensional representations of a $\sW$-algebra admits a characterization in terms of singular support and then leverage the results we have obtained so far. In particular, we do not appeal to methods in positive characteristic.

\subsection{Statement of theorem}

\subsubsection{Renormalized Hecke categories}

We will work with a renormalized version of the Hecke category. Fix a character sheaf on $T$ corresponding to $\chi\in \ft^*/X^{\bullet}(T)(k)$. Let
\[
\sH_{G,\chi}^{\on{coh}}\subset \sH_{G,\chi}:=D(B,\chi\backslash G/B,\chi)
\]

\noindent be the small subcategory of  coherent objects. That is, objects that become compact after applying the forgetful functor
\[
\sH_{G,\chi}\xrightarrow{\on{oblv}^{B,\chi}} D(G/B,\chi).
\]

\subsubsection{} We let
\[
\sH_{G,\chi}^{\on{ren}}:=\on{Ind}(\sH_{G,\chi}^{\on{coh}})
\]

\noindent be its ind-completion and refer to it as the \emph{renormalized Hecke category}.
\begin{example}
The monoidal unit in $\sH_{G,\chi}$ is not compact but defines a compact object in $\sH_{G,\chi}^{\on{ren}}$.
\end{example}

\subsubsection{} Since compact objects of $\sH_{B,\chi}$ are coherent, we have a fully faithful embedding
\[
\sH_{G,\chi}\into \sH_{G,\chi}^{\on{ren}}.
\]

\noindent The functor admits a continuous right adjoint 
\begin{equation}\label{eq:killnilp}
\sH_{G,\chi}^{\on{ren}}\to \sH_{G,\chi}
\end{equation}

\noindent defined by ind-extending the embedding $\sH_{G,\chi}^{\on{coh}}\into \sH_{G,\chi}$.

Note that $\sH_{G,\chi}^{\on{ren}}$ inherits a t-structure making the functor (\ref{eq:killnilp}) t-exact.

\subsubsection{}\label{s:chern} We want to consider the Grothendieck group of $\sH_{G,\chi}^{\on{ren}}$ and its relation to Hochschild homology. Let us consider this on general grounds.

For a dualizable DG-category $\sC$, write $\sC^c\subset \sC$ for its compact objects. If $c\in \sC^c$, we get a functor
\[
\on{Vect}\to \sC,\;\; k\mapsto c.
\]

\noindent Since $c$ is compact, the above functor admits a continuous right adjoint. Thus, from the functoriality of traces, we get a map of vector spaces
\[
k=\on{HH}_*(\on{Vect})\to \on{HH}_*(\sC).
\]

\noindent We denote by $[c]\in \on{HH}_0(\sC)$ the corresponding element and refer to it as the \emph{Chern character} of $c$. 

\subsubsection{} If $\sC$ is moreover compactly generated, we may consider its Grothendieck group 
\[
K_0(\sC):=K_0(\sC^c).
\]

\noindent We write $K_0(\sC)_k:=K_0(\sC)\underset{\bZ}{\otimes} k$. In this case, we obtain the Chern character map
\begin{equation}\label{eq:chern''}
K_0(\sC)_k\to \on{HH}_0(\sC), \;\; c\mapsto [c].
\end{equation}

\subsubsection{} Let $\sC=\sH_{G,\chi}^{\on{ren}}$ and consider the map
\begin{equation}\label{eq:chernabc}
K_0(\sH_{G,\chi}^{\on{ren}})_k\to \on{HH}_0(\sH_{G,\chi}^{\on{ren}}).
\end{equation}

\noindent Since $\sH_{G,\chi}^{\on{ren}}$ is rigid, $\on{HH}_*(\sH_{G,\chi}^{\on{ren}})$ is a dga, and the map (\ref{eq:chernabc}) is a map of algebras. 

\subsubsection{} Let $W$ be the Weyl group of $G$, and let 
\[
W_{[\chi]}=\lbrace w\in W\;\vert\; w\chi-\chi=0\in \ft^*/X^{\bullet}(T)\rbrace
\]

\noindent be the integral Weyl group of $\chi$. We remind (see e.g. \cite{lusztig2020endoscopy}) that we have an isomorphism of algebras
\begin{equation}\label{eq:simofalgs}
k[W_{[\lambda]}]\simeq K_0(\sH_{G,\chi}^{\on{ren}})_k,
\end{equation}

\noindent sending $w$ to $j_{w,\chi,!}$ (the latter being defined in §\ref{s:jwchi}).

On the other hand, we are interested in the Hochschild homology of $\sH_{G,\chi}^{\on{ren}}$:
\begin{lem}\label{l:HHisgroupalg}
As a vector space, we have:
\[
\on{HH}_*(\sH_{G,\chi}^{\on{ren}})=k[W_{[\chi]}]\otimes \on{Sym}(\ft^*[-1]\oplus \ft^*[-2]).
\]

\noindent Moreover, the Chern character map
\[
k[W_{[\chi]}]\simeq K_0(\sH_{G,\chi}^{\on{ren}})_k\to \on{HH}_*(\sH_{G,\chi}^{\on{ren}})
\]

\noindent induces an isomorphism $k[W_{[\chi]}]\simeq \on{HH}_0(\sH_{G,\chi}^{\on{ren}})$.

\end{lem}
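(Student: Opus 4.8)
The plan is to compute $\on{HH}_*(\sH_{G,\chi}^{\on{ren}})$ directly as the (derived) Hochschild homology of the renormalized Hecke category, using the standard identification of the trace of a rigid monoidal category with its center/cocenter and a Bruhat-stratification argument. First I would recall that $\sH_{G,\chi}^{\on{ren}}$ is compactly generated by the coherent objects $\on{IC}_{w,\chi}$, or equivalently by the standard objects $j_{w,\chi,!}$ for $w\in W_{[\chi]}$ (the twisted Bruhat cells supporting a nonzero $(B,\chi)$-bi-equivariant sheaf are exactly those indexed by $W_{[\chi]}$, by the usual analysis of when $\chi$ and $w(\chi)$ agree). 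Since $\on{HH}_*$ is computed as the colimit over the cyclic bar construction, and since $\sH_{G,\chi}^{\on{ren}}$ is rigid (semi-rigid, as recalled in §\ref{s:quaterlyrigid}), I can instead compute $\on{HH}_*(\sH_{G,\chi}^{\on{ren}})$ as $\sH_{G,\chi}^{\on{ren}}\underset{\sH_{G,\chi}^{\on{ren}}\otimes (\sH_{G,\chi}^{\on{ren}})^{\on{op}}}{\otimes}\sH_{G,\chi}^{\on{ren}}$, which geometrically is sections over the (derived) loop space / inertia of $B\backslash G/B$ with the $\chi$-twist — i.e. the self-$\on{Ext}$ algebra of the Springer-type object, suitably renormalized.

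The key computational input is the following: $\on{HH}_*$ of the renormalized Hecke category is, as an algebra, the $W_{[\chi]}$-equivariant (derived) global functions on a neighborhood of the identity in $T$, which unwinds to $k[W_{[\chi]}]$ tensored with the cohomology of a point twisted by $\ft$. Concretely, the cell $j_{w,\chi,!}$ contributes a copy of $\on{HH}_*$ of the "torus block" $D(T,\chi\backslash T)$ — which by Mellin transform (§\ref{s:Mellin}) is $\on{QCoh}$ of the formal/derived fiber over a $k$-point of $\ft^*/X^\bullet(T)$, whose Hochschild homology is $\Sym(\ft^*[-1]\oplus\ft^*[-2])$ (the usual HH of $k[\ft]$-at-a-point: one $\ft^*[-1]$ from $\on{HH}_1$ of a polynomial ring in $\dim\ft$ variables after taking the fiber, and one $\ft^*[-2]$ from the derived self-intersection). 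Summing over the cells $w\in W_{[\chi]}$ and tracking the monoidal (convolution) structure — which permutes the cells according to the group law of $W_{[\chi]}$ on the nose at the level of the leading term — yields
\[
\on{HH}_*(\sH_{G,\chi}^{\on{ren}})\simeq k[W_{[\chi]}]\otimes \Sym(\ft^*[-1]\oplus \ft^*[-2])
\]
as a vector space, with the degree-zero part $\on{HH}_0 = k[W_{[\chi]}]$, the symmetric-algebra factor being concentrated in negative degrees.

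For the second statement, I would identify the Chern character $K_0(\sH_{G,\chi}^{\on{ren}})_k\to \on{HH}_0(\sH_{G,\chi}^{\on{ren}})$ on the basis: the class $[j_{w,\chi,!}]$ maps to the idempotent-like generator supported on the $w$-cell, which under the identification above is precisely the basis vector $w\in k[W_{[\chi]}]$. Since by \eqref{eq:simofalgs} the classes $[j_{w,\chi,!}]$ form a $k$-basis of $K_0(\sH_{G,\chi}^{\on{ren}})_k\cong k[W_{[\chi]}]$, and they map to a basis of $\on{HH}_0$, the map is an isomorphism of algebras; functoriality and multiplicativity of the Chern character (§\ref{s:quaterlyrigid}) give that it respects the algebra structure.

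The main obstacle I anticipate is the bookkeeping in the cell-by-cell computation: showing that the convolution product on $\on{HH}_*$ really does reduce, modulo the $\Sym(\ft^*[-1]\oplus\ft^*[-2])$-part, to the group algebra multiplication of $W_{[\chi]}$ — in other words, that there are no correction terms mixing different cells in $\on{HH}_0$, and that the "lower" ($\Sym$) contributions assemble into a free $k[W_{[\chi]}]$-module rather than something more twisted. This is where I would need to be careful with the twisted standard objects $j_{w,\chi,!}$ and the composition $-\star j_{w^{-1}w_0,\chi,!}$ (as in §\ref{s:jwchi}), using invertibility of these functors (\cite[Lemma 3.5]{lusztig2020endoscopy}, \cite[Lemma 3.5]{lusztig2020endoscopy}) to reduce all cells to the big cell and thereby make the torus computation. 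The rest — the identification of $\on{HH}_*$ of the torus block and the t-exactness statements — is routine given the Mellin transform and the rigidity already in hand.
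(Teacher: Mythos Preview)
Your approach---Bruhat stratification plus a torus-block computation---is the same as the paper's, but several details are off or missing.

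First, the torus block: what each cell contributes is not $D(T,\chi\backslash T)$ (which is just $\on{Vect}$), but the renormalized \emph{bi}-equivariant category $D(T,\chi\backslash T/T,\chi)^{\on{ren}}$. The paper identifies this with $C^*(\bB T)\on{-mod}\simeq H^*(\bB T)\on{-mod}\simeq \on{Sym}(\ft^*[-2])\on{-mod}$ via the compact generator $\chi^{\vee}$, and then $\on{HH}_*(\on{Sym}(\ft^*[-2]))\simeq \on{Sym}(\ft^*[-1]\oplus\ft^*[-2])$ by HKR. Your Mellin-transform explanation does not straightforwardly produce this category, and your account of where the two copies of $\ft^*$ come from is garbled. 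Also, $\on{Sym}(\ft^*[-1]\oplus\ft^*[-2])$ lives in non-negative cohomological degrees, not negative ones; the conclusion $\on{HH}_0=k[W_{[\chi]}]$ is still correct, but for the opposite reason.

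Second, you never justify why the cell contributions \emph{sum} directly. In the paper's framework (§\ref{S:5.1}), maps on $\on{HH}_*$ come from functors admitting continuous right adjoints, so to split $\on{HH}_*(\sH_{G,\chi}^{\on{ren}})$ over the strata you need the inclusions $\sC_{\leq w}\hookrightarrow \sH_{G,\chi}^{\on{ren}}$ to admit both continuous left \emph{and} right adjoints. The paper checks this explicitly by identifying the left orthogonal to $\sC_{\leq w'}$ in $\sC_{\leq w}$ as generated by the $j_{w'',\chi,!}$ with $w''\leq w$, $w''\nleq w'$. You should not skip this step.

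Finally, your ``main obstacle''---tracking the convolution product and ruling out correction terms between cells---is not needed here: the lemma only asserts a vector-space isomorphism, and the $\on{HH}_0$ statement follows because the Chern character is automatically an algebra map. The full dg-algebra structure $W_{[\chi]}\ltimes \on{Sym}(\ft^*[-1]\oplus\ft^*[-2])$ is a separate (harder) statement, deferred to Lemma~\ref{l:strongversion}.
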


\begin{rem}\label{r:celinedionatolympics?}
Although we will not need it for our purposes, one can show that there is a canonical isomorphism
\[
\on{HH}_*(\sH_{G,\chi}^{\on{ren}})=W_{[\chi]}\ltimes \on{Sym}(\ft^*[-1]\oplus \ft^*[-2])
\]

\noindent of dg-algebras, where $W_{[\chi]}$ acts on $\on{Sym}(\ft^*[-1]\oplus \ft^*[-2])$ via the diagonal action of $W_{[\chi]}$ on $\ft^*[-1]\oplus \ft^*[-2]$ in the usual way. We include a sketch of this proof in Section \ref{S:actiononspringerfibrelaks}, see Lemma \ref{l:strongversion}. An alternative proof appears in \cite{li2023derivedyan}.

\end{rem}

\begin{proof}

\step Let $D(T,\chi\backslash T/T,\chi)^{\on{ren}}$ be the renormalized Hecke category for the torus. By construction, $\chi^{\vee}\in D(T,\chi\backslash T/T,\chi)^{\on{ren}}$ is a compact generator,\footnote{Note that it is the dual sheaf $\chi^{\vee}$ that appears here, and not $\chi$ itself. This is analogous to the fact that for a multiplicative function $\chi: B\rightarrow k$, the function $\chi^{\vee}(x)=\chi(x)^{-1}$ is right-invariant for the action $B: \on{Fun}(B)\rightarrow \mathrm{Fun}(B),\;\; (fb)(x):=f(xb^{-1})$.} and so we get an equivalence
\[
D(T,\chi\backslash T/T,\chi)^{\on{ren}}\simeq \on{End}_{D(T,\chi\backslash T/T,\chi)^{\on{ren}}}(\chi^{\vee})\on{-mod}\simeq C^*(\bB T)\on{-mod}\simeq H^*(\bB T)\on{-mod.}
\]

\noindent The last equivalence follows by formality of $C^*(\bB T)$.

\step Note that for $w\in W$, the similarly defined renormalized category
\[
D(B,\chi\backslash BwB/B,\chi)^{\on{ren}}
\]

\noindent is non-zero if and only if $w\in W_{[\chi]}$. In this case, we have an equivalence (canonical up to tensoring with a $!$-fibre of $\chi$, see §\ref{s:noncanequiv3}):
\[
D(B,\chi\backslash BwB/B,\chi)^{\on{ren}}\simeq D(T,\chi\backslash T/T,\chi)^{\on{ren}}.
\]

\noindent Thus, for each $w\in W_{[\chi]}$, let $j_{w,\chi,!}\in\sH_{G,\chi}^{\on{ren}}$ be the $!$-pushforward of $\chi^{\vee}\in D(T,\chi\backslash T/T,\chi)^{\on{ren}}\simeq D(B,\chi\backslash BwB/B,\chi)^{\on{ren}}$ under the locally closed embedding
\[
B\backslash BwB/B\into B\backslash G/B.
\]

\noindent Let $\sC_{\leq w}$ be the full subcategory of $\sH_{G,\chi}^{\on{ren}}$ generated by $j_{w',\chi,!}$ for $w'\leq w$ in the Bruhat order. Then the $\sC_{\leq w}$ stratify $\sH_{G,\chi}^{\on{ren}}$. By definition, the inclusion $\sC_{\leq w}\into \sH_{G,\chi}$ admits a continuous right adjoint. Moreover, for $w'\leq w$, the left orthogonal to $\sC_{\leq w'}$ in $\sC_{\leq w}$ is generated by $j_{w'',\chi,!}$ for all $w''\leq w$ with $w''\nleq w'$. Thus, the inclusion $\sC_{\leq w}\into \sH_{G,\chi}$ admits a left adjoint as well. It follows that as a vector space,
\[
\on{HH}_*(\sH_{G,\chi}^{\on{ren}})\simeq \underset{w\in W_{[\chi]}}{\bigoplus} \on{HH}_*(D(T,\chi\backslash T/T,\chi)^{\on{ren}}).
\]

\noindent By Step 1, we have $\on{HH}_*(D(T,\chi\backslash T/T,\chi)^{\on{ren}})=\on{HH}_*(H^*(\bB T))$. Since $H^*(\bB T)\simeq \on{Sym}(\ft^*[-2])$, we get
\[
\on{HH}_*(H^*(\bB T))\simeq \on{Sym}(\ft^*[-1]\oplus \ft^*[-2]).
\]

\noindent Thus, we obtain an isomorphism of vector spaces
\[
\on{HH}_*(\sH_{G,\chi}^{\on{ren}})\simeq k[W_{[\chi]}]\otimes \on{Sym}(\ft^*[-1]\oplus \ft^*[-2]).
\]

\noindent Moreover, observe that the embedding
\[
k[W_{[\chi]}]\into k[W_{[\chi]}]\otimes \on{Sym}(\ft^*[-1]\oplus \ft^*[-2])\simeq \on{HH}_*(\sH_{G,\chi}^{\on{ren}})
\]

\noindent canonically identifies with the Chern character map under the isomorphism (\ref{eq:simofalgs}).

\end{proof}

\subsubsection{Finite-dimensional modules of $\sW$-algebras} Fix a nilpotent element $e\in\bO$ along with an $\mathfrak{sl}_2$-triple $(e,h,f)$ and a Lagrangian subspace $\ell\subset \fg(-1)$ as in §\ref{s:genwhit}. We let $\sW_e$ be the corresponding $\sW$-algebra (see \cite[§2.3]{losev2010finite}). The center of $\sW_e$ identifies with the center of $U(\fg)$. As such, for $\lambda\in \ft^*$, we may consider the DG-category
\[
\sW_{e,\lambda}\on{-mod}
\]

\noindent of $\sW_e$-modules with central character $\lambda$. Henceforth, we assume that $\lambda$ is regular.

\subsubsection{} We have a full subcategory
\[
\sW_{e,\lambda}\on{-mod}^{\on{fin}}\subset \sW_{e,\lambda}\on{-mod}
\]

\noindent consisting of objects $M\in \sW_{e,\lambda}\on{-mod}$ such that $H^n(M)$ is a filtered colimit of finite-dimensional representations for each $n\in \bZ$. We write 
\[
\sW_{e,\lambda}\on{-mod}^{\on{fin},\heartsuit}
\]

\noindent for the heart of the natural t-structure on $\sW_{e,\lambda}\on{-mod}^{\on{fin}}$.

\subsubsection{} Let
\[
K_0(\sW_{e,\lambda}\on{-mod})_k=K_0(\sW_{e,\lambda}\on{-mod})\underset{\bZ}{\otimes}k.
\]

\noindent We have a Chern character map
\begin{equation}\label{eq:cherncba}
K_0(\sW_{e,\lambda}\on{-mod})_k\to \on{HH}_0(\sW_{e,\lambda}\on{-mod}).
\end{equation}

\subsubsection{}\label{s:justificationfin} We let
\[
K_0(\sW_{e,\lambda}\on{-mod}^{\on{fin},\heartsuit})_k=K_0(\sW_{e,\lambda}\on{-mod}^{\on{fin},\heartsuit})\underset{\bZ}{\otimes} k,
\]

\noindent where $K_0(\sW_{e,\lambda}\on{-mod}^{\on{fin},\heartsuit})$ is the Grothendieck group of the abelian category of finite-dimensional $\sW_{e,\lambda}$-modules.\footnote{We note that it is not clear that $K_0(\sW_{e,\lambda}\on{-mod}^{\on{fin},\heartsuit})$ should coincide with $K_0(\sW_{e,\lambda}\on{-mod}^{\on{fin}})$, since the category $\sW_{e,\lambda}\on{-mod}^{\on{fin}}$ contains compact objects that are unbounded from below.}

If $M\in \sW_{e,\lambda}\on{-mod}$ is a bounded complex each of whose cohomologies are finitely generated, it defines a compact object in $\sW_{e,\lambda}\on{-mod}$.\footnote{This follows from a combination of Skryabin's equivalence and Beilinson-Bernstein localization, using that $\lambda$ is regular.} Thus, we get a natural map
\begin{equation}\label{eq:fintononfin}
K_0(\sW_{e,\lambda}\on{-mod}^{\on{fin},\heartsuit})_k\to K_0(\sW_{e,\lambda}\on{-mod})_k.
\end{equation}

\subsubsection{} Consider the Springer resolution:
\[
\pi: \widetilde{\cN}\to \cN.
\]

\noindent Denote by $\cB_e:=\pi^{-1}(e)$ the Springer fiber of $e\in \cN$. We let $H^*(\cB_e)$ be the cohomology of $\cB_e$ and write $H^{\on{top}}(\cB_e):=H^{2\on{dim} \cB_e}(\cB_e)$.

We have the following lemma whose proof we provide at the end of this subsection:
\begin{lem}\label{l:HHiscoho}
There is a canonical identification
\[
\on{HH}_*(\sW_{e,\lambda}\on{-mod})=H^*(\cB_e)[2\on{dim}\cB_e].
\]

\noindent In particular
\[
\on{HH}_0(\sW_{e,\lambda}\on{-mod})=H^{\on{top}}(\cB_e).
\]
\end{lem}

\begin{rem}
The second statement was proven in \cite{etingof2010traces}. Our proof, while similar in spirit, is essentially an immediate consequence of Lemma \ref{l:HHiscoeff}.
\end{rem}

\subsubsection{} Let $\chi=\chi_{\lambda}$ be the character sheaf on $T$ corresponding to $\lambda$. We write $W_{[\lambda]}$ for the corresponding integral Weyl group. By Skryabin's equivalence and Beilinson-Bernstein localization, we get an equivalence
\begin{equation}\label{eq:Skryabin+BB}
\sW_{e,\lambda}\on{-mod}\simeq D(U^{-},\psi_e\backslash G/B,\chi).
\end{equation}

\noindent In particular, $\sW_{e,\lambda}\on{-mod}$ carries a right action of $\sH_{G,\chi}^{\on{ren}}$. Moreover, the action map
\[
\sW_{e,\lambda}\on{-mod}\otimes \sH_{G,\chi}^{\on{ren}}\to \sW_{e,\lambda}\on{-mod}
\]

\noindent admits a continuous right adjoint. Passing to Hochschild homology and applying Lemma \ref{l:HHisgroupalg} and Lemma \ref{l:HHiscoho}, we get an action:
\begin{equation}\label{eq:10/09:1716}
H^*(\cB_e)\simeq\on{HH}_*(\sW_{e,\lambda}\on{-mod})[-2\on{dim}\cB_e]\curvearrowleft \on{HH}_*(\sH_{G,\chi})\overset{\on{Lem.}\;\ref{l:HHisgroupalg}}{\leftarrow}k[W_{[\lambda]}].
\end{equation}

\noindent In particular, we get an action:
\begin{equation}\label{eq:actHH_Ow-mod}
H^{\on{top}}(\cB_e)\curvearrowleft k[W_{[\lambda]}].
\end{equation}

\noindent By Corollary \ref{c:finalspringerboss} below, this coincides with the usual Springer action, normalized so that for $e=0$, we get the trivial representation.

\subsubsection{} We claim that we also have an action
\begin{equation}
K_0(\sW_{e,\lambda}\on{-mod}^{\on{fin},\heartsuit})\curvearrowleft k[W_{[\lambda]}],
\end{equation}

\noindent which we now define.

Let $\bO$ be the nilpotent orbit containing $e$. Let
\[
\sH_{G,\chi,\overline{\bO}}^{\on{ren}}:=\sH_{G,\chi}^{\on{ren}}\underset{\sH_{G,\chi}}{\times}\sH_{G,\chi,\overline{\bO}}
\]

\noindent be the full subcategory of $\sH_{G,\chi}^{\on{ren}}$ consisting of objects that land in $\sH_{G,\chi,\overline{\bO}}$ under the map $\sH_{G,\chi}^{\on{ren}}\to \sH_{G,\chi}$. Note that the inclusion
\[
\sH_{G,\chi,\overline{\bO}}^{\on{ren}}\into \sH_{G,\chi}^{\on{ren}}
\]

\noindent commutes with limits, and hence admits a left adjoint. Indeed, this follows from the corresponding fact for the embedding $\sH_{G,\chi,\overline{\bO}}\into \sH_{G,\chi}$ (cf. Proposition \ref{l:stable}) and the fact that the map $\sH_{G,\chi}^{\on{ren}}\to \sH_{G,\chi}$ commutes with limits, being a right adjoint.

\subsubsection{} At the end of this subsection, we will show that:
\begin{prop}\label{p:keytoeverythingrs}
We have an identification
\[
\sW_{e,\lambda}\on{-mod}^{\on{fin}}\simeq \sW_{e,\lambda}\on{-mod}\underset{\sH_{G,\chi}^{\on{ren}}}{\otimes} \sH_{G,\chi,\overline{\bO}}^{\on{ren}}
\]

\noindent as subcategories of $\sW_{e,\lambda}\on{-mod}$.

\end{prop}

\noindent As such, $\sW_{e,\lambda}\on{-mod}^{\on{fin}}$ similarly carries a right $\sH_{G,\chi}^{\on{ren}}$-action. Using that $K_0(\sW_{e,\lambda}\on{-mod}^{\on{fin},\heartsuit})_k$ identifies with $K_0(-)_k$ of the small category consisting of bounded complexes of finite-dimensional $\sW_{e,\lambda}$-modules, we get an action
\begin{equation}\label{eq:intweylfin}
K_0(\sW_{e,\lambda}\on{-mod}^{\on{fin},\heartsuit})_k\curvearrowleft K_0(\sH_{G,\chi}^{\on{ren}})_k\overset{(\ref{eq:simofalgs})}{\simeq} k[W_{[\lambda]}].
\end{equation}

\subsubsection{} Let 
\[
K_0(\sH_{G,\chi}^{\on{ren}})_{\overline{\bO},k}
\]

\noindent be the Grothendieck group of the abelian subcategory of $\sH_{G,\chi,\overline{\bO}}^{\on{ren},\heartsuit}$ consisting of coherent D-modules, base-changed to $k$. That is, $K_0(\sH_{G,\chi}^{\on{ren}})_{\overline{\bO},k}$ has a basis consisting of simple sheaves in $\sH_{G,\chi}^{\on{ren},\heartsuit}$ whose singular support is contained in $\overline{\bO}$.

\subsubsection{} Consider the subcategory 
\[
\sH_{G,\chi,\partial\bO}^{\on{ren}}\subset \sH_{G,\chi,\overline{\bO}}^{\on{ren}},
\]

\noindent where $\partial\bO=\overline{\bO}\setminus\bO$. We define the vector space
\[
K_0(\sH_{G,\chi}^{\on{ren}})_{\partial\bO,k}
\]

\noindent similarly. We have an injection
\[
K_0(\sH_{G,\chi}^{\on{ren}})_{\partial\bO,k}\into K_0(\sH_{G,\chi}^{\on{ren}})_{\overline{\bO},k}.
\]

\noindent Let
\[
K_0(\sH_{G,\chi}^{\on{ren}})_{\bO,k}:=K_0(\sH_{G,\chi}^{\on{ren}})_{\overline{\bO},k}/K_0(\sH_{G,\chi}^{\on{ren}})_{\partial\bO,k}.
\]

\subsubsection{} We have now come to the statement of the main theorem of this section. Using the isomorphism $k[W_{[\lambda]}]=K_0(\sH_{G,\chi}^{\on{ren}})$ and the action (\ref{eq:actHH_Ow-mod}), we may consider the subrepresentation
\[
H^{\on{top}}(\cB_e)\underset{k[W_{[\lambda]}]}{\otimes} K_0(\sH_{G,\chi}^{\on{ren}})_{\bO,k}\subset H^{\on{top}}(\cB_e)\underset{k[W_{[\lambda]}]}{\otimes} K_0(\sH_{G,\chi}^{\on{ren}})_k=H^{\on{top}}(\cB_e).
\]

\noindent Moreover, combining Lemma \ref{l:HHiscoho} with the Chern character (\ref{eq:cherncba}), we get a map:
\begin{equation}\label{eq:1009}
K_0(\sW_{e,\lambda}\on{-mod}^{\on{fin},\heartsuit})_k\xrightarrow{(\ref{eq:fintononfin})} K_0(\sW_{e,\lambda}\on{-mod})_k\to H^{\on{top}}(\cB_e).
\end{equation}
\begin{thm}\label{t:classification}
The map
\[
K_0(\sW_{e,\lambda}\on{-mod}^{\on{fin},\heartsuit})_k\to H^{\on{top}}(\cB_e)
\]

\noindent maps isomorphically onto 
\[
H^{\on{top}}(\cB_e)\underset{k[W_{[\lambda]}]}{\otimes} K_0(\sH_{G,\chi}^{\on{ren}})_{\bO,k.}
\]

\noindent Moreover, the resulting isomorphism
\[
K_0(\sW_{e,\lambda}\on{-mod}^{\on{fin}})_k\simeq H^{\on{top}}(\cB_e)\underset{k[W_{[\lambda]}]}{\otimes} K_0(\sH_{G,\chi}^{\on{ren}})_{\bO,k}
\]

\noindent is $k[W_{[\lambda]}]$-linear for their respective right actions.

\end{thm}

\begin{rem}\label{r:Dodd}
A map
\[
K_0(\sW_{e,\lambda}\on{-mod})_k\to H^{\on{top}}(\cB_e)
\]

\noindent was constructed by Dodd in \cite{dodd2014injectivity} by associating to a $W_{e,\lambda}$-module a coherent sheaf on the resolution $\widetilde{\cS}_e$ of the nilpotent Slodowy slice via localization onto a quantization of $\widetilde{\cS}_e$ and then degenerating. To get a class in $H^{\on{top}}(\cB_e)$, one takes the corresponding characteristic cycle. Dodd showed that the resulting map
\[
K_0(\sW_{e,\lambda}\on{-mod}^{\on{fin},\heartsuit})_k\to K_0(\sW_{e,\lambda}\on{-mod})_k\to H^{\on{top}}(\cB_e)
\]

\noindent is injective by reducing mod $p$. We expect our map (\ref{eq:1009}) to coincide with that of \cite{dodd2014injectivity}.
\end{rem}

\subsubsection{} We end this subsection by proving Lemma \ref{l:HHiscoho} and Proposition \ref{p:keytoeverythingrs}:

\vspace{2mm}

\begin{proof}[Proof of Lemma \ref{l:HHiscoho}]

\noindent We use the notation of Section \ref{S:5.5}. Recall the equivalence (\ref{eq:Skryabin+BB}):
\[
\sW_{e,\lambda}\on{-mod}\simeq D(U^{-},\psi_e\backslash G/B,\chi).
\]

\noindent By Lemma \ref{l:HHiscoeff} and Example \ref{e:Springer}, we get
\[
\on{HH}_*(\sW_{e,\lambda}\on{-mod})\simeq \on{coeff}^G_{\bO}(\on{Spr}_{\chi}).
\]

\noindent We remind that $\on{Spr}_{\chi}\in D(G\overset{\on{ad}}{/}G)$ is the $\chi$-twisted Grothendieck Springer sheaf. Let $\on{Spr}^{\fg}$ be the Grothendieck Springer sheaf on $\fg/G$. That is, the image of $\omega_{\fb/B}$ under pushforward along
\[
\fb/B\to \fg/G.
\]

\noindent Since the restrictions of $\on{Spr}_{\chi}$ and $\on{Spr}^{\fg}$ to the unipotent/nilpotent locus agree, we obtain a canonical isomorphism

\begin{equation}\label{eq:whitspringerequality}
\on{coeff}_{\bO}^G(\on{Spr}_{\chi})\simeq \on{coeff}_{\bO}^{\fg}(\on{Spr}^{\fg}).
\end{equation}

\noindent By (\ref{eq:coeffO}), we have an isomorphism
\begin{equation}\label{eq:nudetnok!}
\on{coeff}_{\bO}(\on{Spr}^{\fg})\simeq C_{\on{dR}}(\cS_{-e}, f_{-e}^!\circ\on{FT}_{\fg}(\on{Spr}^{\fg}))[2(\on{dim} U^{-}-\on{dim} G)].
\end{equation}

\noindent Here:
\begin{itemize}
    \item The sheaf $\on{FT}_{\fg}(\on{Spr}^{\fg})$ naturally lives on $\cN/G$, since $\on{Spr}^{\fg}$ has nilpotent singular support.

    \item $\cS_{-e}=(-e+\on{ker}(\on{ad}(f)))\cap \cN$ is the nilpotent Slodowy slice of $-e$.

    \item $f_{-e}$ is the natural map $\cS_{-e}\into \cN\to \cN/G$.
\end{itemize}

\noindent Let $i_{\cN}:\cN/G\into \fg/G$ be the inclusion, and let $r$ denote the rank of $G$. Recall (see e.g. \cite[Lemma 2.2]{achar2014weyl})\footnote{Note that there is no cohomological shift in the statement of \emph{loc.cit}. This is because their Fourier transform differs from our by a shift.} that we have an equivalence
\[
\on{FT}_{\fg}(\on{Spr}^{\fg})\simeq i_{\cN}^!(\on{Spr}^{\fg})[r+\on{dim}G].
\]

\noindent Let $\pi:\widetilde{\cN}\to \cN$ be the Springer resolution, and let $g_{-e}$ be the composition 
\[
\widetilde{\cS}_{-e}:=\pi^{-1}(\cS_{-e})\into \widetilde{\cN}\to \widetilde{\cN}/G\simeq \fn/B\into \fb/B.
\]

\noindent Applying base change, we get:
\begin{equation}\label{eq:resoslodowy}
\on{coeff}_{\bO}(\on{Spr}_{\chi}^{\fg})\simeq C_{\on{dR}}(\widetilde{\cS}_{-e}, \omega_{\widetilde{\cS}_{-e}})[2\on{dim} U^{-}-\on{dim} G+r].
\end{equation}

\noindent Note that $\omega_{\widetilde{\cS}_{-e}}=\underline{k}_{\widetilde{\cS}_{-e}}[2(\on{dim}\cN-\on{dim}\bO)]$, since $\widetilde{\cS}_{-e}$ is smooth of dimension $\on{dim}\cN-\on{dim}\bO$ (see \cite[Prop. 2.1.2]{ginzburg2009harish}). Thus, the right hand side of (\ref{eq:resoslodowy}) identifies with
\begin{equation}\label{eq:gettingcloser}
H^*(\widetilde{\cS}_{-e})[2\on{dim} U^{-}-\on{dim} G+r+2(\on{dim}\cN-\on{dim}\bO)].
\end{equation}

\noindent Recall that $2\on{dim} U^{-}=\on{dim}\bO$. Thus the above shift reduces to:
\[
2\on{dim} U^{-}-\on{dim} G+r+2(\on{dim}\cN-\on{dim}\bO)=\on{dim}\cN-\on{dim}\bO=2\on{dim}\cB_{-e}.
\]

\noindent Finally, $\cS_{-e}$ carries the Kac-Kazhdan $\bG_m$-action contracting it onto $-e$. This lifts to an action on $\widetilde{\cS}_{-e}$ contracting it onto $\cB_{-e}$. Thus, we may rewrite (\ref{eq:gettingcloser}) as
\[
H^*(\cB_{-e})[2\on{dim}\cB_{-e}]\simeq H^*(\cB_{e})[2\on{dim}\cB_{e}],
\]

\noindent as desired.

\end{proof}

\begin{rem}\label{r:Springeridentification}
The following technical remark will be useful in Section \ref{S:actiononspringerfibrelaks}.

For $x\in \cN$, let $i_x:\on{Spec}(k)\to \cN/G$ be the corresponding $k$-point of $\cN/G$. The nilpotent Slodowy slice $\cS_{-e}$ intersects the regular nilpotent locus (see \cite[§2]{ginzburg2009harish}), and so the map $f_{-e}$ is smooth of relative dimension $\on{dim}G-\on{dim}\bO$. As such, the right-hand side of (\ref{eq:nudetnok!}) coincides with:
\[
C_{\on{dR}}(\cS_{-e}, f_{-e}^{*,\on{dR}}\circ\on{FT}_{\fg}(\on{Spr}^{\fg}))[-\on{dim}\bO].
\]

\noindent Moreover, the sheaf $f_{-e}^{*,\on{dR}}\circ\on{FT}_{\fg}(\on{Spr}^{\fg})$ is $\bG_m$-equivariant for the Kac-Kazhdan action on $\cS_{-e}$. By the contraction principle, equation (\ref{eq:nudetnok!}) thus yields:
\begin{equation}\label{eq:odetillisa}
\on{coeff}_{\bO}(\on{Spr}^{\fg})\simeq i_{-e}^{*,\on{dR}}\circ \on{FT}_{\fg}(\on{Spr}^{\fg})[-\on{dim}\bO]\simeq i_{e}^{*,\on{dR}}\circ \on{FT}_{\fg}(\on{Spr}^{\fg})[-\on{dim}\bO].
\end{equation}

\end{rem}

\begin{proof}[Proof of Proposition \ref{p:keytoeverythingrs}]

Let $M\in \sW_{e,\lambda}\on{-mod}^{\heartsuit}$ be coherent. Let $M'\in \fg\on{-mod}_{\lambda}$ be the image of $M$ under Skryabin's equivalence. Then according to \cite{ginzburg2009harish}[Cor. 4.1.6], $M$ is finite-dimensional if and only if $M'$ satisfies that $\on{SS}(U(\fg)_{\lambda}/\on{Ann}_{U(\fg)_{\lambda}}(M'))\subset \overline{\bO}$. Thus, the proposition follows from Corollary \ref{c:boundonann}.

\end{proof}

\subsection{Springer action}\label{s:Spr}
In this subsection, we show that the realization of the ($\chi$-twisted) Grothendieck Springer sheaf $\on{Spr}_{\chi}$ as the character sheaf associated to $D(G/B,\chi)$ leads to the usual Springer action of $k[W_{[\lambda]}]$ on $\on{Spr}_{\chi}$. This in turn allows us to identify our categorical action (\ref{eq:actHH_Ow-mod}) of $k[W_{[\lambda]}]$ on the cohomology of the Springer fiber with the usual Springer action, see Corollary \ref{c:finalspringerboss} below.

\subsubsection{Categorical Springer action}
By §\ref{s:bimodulestructure} and Example \ref{e:Springer}, we have an action
\[
\on{Spr}_{\chi}\otimes \on{HH}_*(\sH_{G,\chi}^{\on{ren}})\to \on{Spr}_{\chi.}
\]

\noindent Since
\[
\on{HH}_*(\sH_{G,\chi}^{\on{ren}})\overset{\on{Lem.} \ref{l:HHisgroupalg}}{\simeq} k[W_{[\lambda]}]\otimes \on{Sym}(\ft^*[-1]\oplus \ft^*[-2]),
\]

\noindent we get an induced action
\begin{equation}\label{eq:springeract1}
\on{Spr}_{\chi}\curvearrowleft k[W_{[\lambda]}].
\end{equation}

\subsubsection{}\label{s:noncanequiv3} More generally, recall (cf. for example \cite{lusztig2020endoscopy}) that for any $w,y\in W$, we have an equivalence
\begin{equation}\label{eq:noncanequiv}
D(N\backslash NwB/B,y(\chi))\simeq \on{Vect}
\end{equation}

\noindent such that the image of $k$ is concentrated in perverse degree $-\ell(w)=-\on{dim}(NwB)/B$ as an object in $D(NwB/B,y(\chi))$.

We remind that $y(\chi)=y_{*,\on{dR}}(\chi)$ is the pushforward of $\chi$ along $y: T\to T.$ The equivalence (\ref{eq:noncanequiv}) is not canonical but depends on tensoring with a $!$-fibre of $y(\chi)$. Let $\phi_{wy,y}$ be the image of $k$ under the equivalence. It naturally descends to an object of $D(B,wy(\chi)\backslash NwB/B,y(\chi))^{\on{ren}}$, which we similarly denote by $\phi_{wy,y}$. Let
\begin{equation}\label{eq:jwyy}
j_{wy,y}\in D(B,wy(\chi)\backslash G/B,y(\chi))^{\on{ren}}
\end{equation}

\noindent be the $!$-pushforward of $\phi_{wy,y}$ under the locally closed embedding $NwB\into G$.

\subsubsection{} The $G$-linear functor
\[
D(G/B,\chi)\xrightarrow{-\star j_{1,w}} D(G/B,w(\chi))
\]

\noindent induces a map of character sheaves:
\begin{equation}\label{eq:springermap2}
\on{Spr}_{\chi}\to \on{Spr}_{w(\chi).}
\end{equation}

\noindent When $w\in W_{[\lambda]}$, we recover the action (\ref{eq:springeract1}).

\subsubsection{Usual Springer action}\label{s:USA} Recall that we also have a usual Springer action of $k[W_{[\lambda]}]$ on $\on{Spr}_{\chi}$ defined as follows. Let
\[
\pi: \widetilde{G}=G\overset{B}{\times}B\to G
\]

\noindent be the Grothendieck Springer resolution. Let $j:G_{\on{rs}}\into G$ be the open locus of regular semisimple elements. Recall that the map
\[
\pi_{\on{rs}}:\widetilde{G}_{\on{rs}}:=\pi^{-1}(G_{\on{rs}})\to G_{\on{rs}}
\]

\noindent is a regular $W$-covering. The map $\pi$ is $G$-equivariant, and so we get an induced map
\[
\pi_{\on{rs}}/G: \widetilde{G}_{\on{rs}}/G\to G\overset{\on{ad}}{/}G.
\]

\subsubsection{}\label{s:chirs} Write $\chi_{\on{rs}}$ for the $!$-pullback of $\chi$ under the composition
\[
\widetilde{G}_{\on{rs}}/G\into \widetilde{G}/G\simeq B\overset{\on{ad}}{/}B\to T\overset{\on{ad}}{/}T.
\]

\noindent Since $W_{[\lambda]}$ is the stabilizer of $\chi\in\ft^*/X^{\bullet}(T)$ in $W$, we get an action
\[
(\pi_{\on{rs}}/G)_{*,\on{dR}}(\chi_{\on{rs}})\curvearrowleft k[W_{[\lambda]}].
\]

\noindent In particular, we obtain an action
\begin{equation}\label{eq:springeract2}
\on{Spr}_{\chi}=j_{!*}\circ (\pi_{\on{rs}}/G)_{*,\on{dR}}(\chi_{\on{rs}})\curvearrowleft k[W_{[\lambda]}].
\end{equation}

\subsubsection{} More generally, for $w\in W$, we get a map
\begin{equation}\label{eq:springermap1}
w: \on{Spr}_{\chi}\to \on{Spr}_{w(\chi).}
\end{equation}

\noindent For $w\in W_{[\lambda]}$, this recovers the action (\ref{eq:springeract2}).

\subsubsection{} The goal of this subsection is to prove:
\begin{prop}\label{p:Springeract3}
The maps (\ref{eq:springermap2}) and (\ref{eq:springermap1}) coincide. In particular, the actions (\ref{eq:springeract1}) and (\ref{eq:springeract2}) coincide.
\end{prop}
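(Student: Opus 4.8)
The plan is to reduce the comparison of the two $W_{[\lambda]}$-actions to an identification of the two functors $D(G/B,\chi)\to D(G/B,w(\chi))$ that induce them, and ultimately to a statement over the open locus of regular semisimple elements where everything becomes a concrete comparison of local systems. The key point is that both maps \eqref{eq:springermap2} and \eqref{eq:springermap1} are obtained by applying the (symmetric monoidal, by \S\ref{s:bimodulestructure}) functor $\chi\colon D(G)\textbf{-mod}^r\to D(G\overset{\on{ad}}{/}G)$ to a $G$-linear endofunctor (resp. equivalence of $G$-categories) $D(G/B,\chi)\to D(G/B,w(\chi))$; so it suffices to show that the endofunctor $-\star j_{1,w}$ defining \eqref{eq:springermap2} agrees with the endofunctor whose matrix coefficients produce the classical Springer map \eqref{eq:springermap1}.

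First I would recall the standard description of the usual Springer action: over $G_{\on{rs}}$, the map $\pi_{\on{rs}}/G$ is a $W$-torsor, $(\pi_{\on{rs}}/G)_{*,\on{dR}}(\chi_{\on{rs}})$ is the local system induced from $\chi$, and the $W_{[\lambda]}$-action is the deck-transformation action; the full action on $\on{Spr}_\chi$ is then obtained by clean/intermediate extension $j_{!*}$ from $G_{\on{rs}}$. Dually, on the categorical side, the functor $D(G/B,\chi)\to D(G/B,w(\chi))$ realizing the convolution action restricts, after passing to $B$-invariants and using Beilinson--Bernstein, to the intertwining/wall-crossing functor; and the corresponding matrix coefficient $\chi_{D(G/B,\chi)}=\on{Spr}_\chi$ with its $\on{HH}_*(\sH_{G,\chi}^{\on{ren}})$-action has an explicit description over $G_{\on{rs}}$ coming from Lemma~\ref{l:HHisgroupalg} and the stratification of $\sH_{G,\chi}^{\on{ren}}$ by the cells $\sC_{\leq w}$. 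Concretely, the sheaf $j_{wy,y}$ of \eqref{eq:jwyy} restricts over $G_{\on{rs}}$ (equivalently, over the $N w B$-stratum for each $w$) to the one-dimensional constant object, and convolution by it implements precisely the $w$-translation of the $\chi$-isotypic summand; matching this with the deck transformation on the $W$-cover $\widetilde G_{\on{rs}}/G\to G_{\on{rs}}$ gives the coincidence of \eqref{eq:springermap2} and \eqref{eq:springermap1} over $G_{\on{rs}}$.

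The reduction from $G_{\on{rs}}$ to all of $G\overset{\on{ad}}{/}G$ is then formal: both $\on{Spr}_\chi\to \on{Spr}_{w(\chi)}$ are maps of perverse sheaves which are intermediate extensions of their restrictions to the regular semisimple locus (the Springer sheaf has no subobjects or quotients supported on $G\setminus G_{\on{rs}}$, since $\on{Spr}_\chi=j_{!*}(\pi_{\on{rs}}/G)_{*,\on{dR}}(\chi_{\on{rs}})$), so a map between them is determined by its restriction to $G_{\on{rs}}$. Hence agreement over $G_{\on{rs}}$ upgrades to agreement on the nose, and passing to the $W_{[\lambda]}$-actions (the case $y=1$, $w\in W_{[\lambda]}$, where $w(\chi)=\chi$) gives the final statement. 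I would carry this out in the order: (i) identify \eqref{eq:springermap2} as $\chi$ applied to $-\star j_{1,w}$ and reduce to comparing functors; (ii) compute the restriction of $j_{1,w}$ (and of $\on{Spr}_\chi$ with its action) over $G_{\on{rs}}$ via the cell stratification and Lemma~\ref{l:HHisgroupalg}; (iii) match with the deck-transformation description of the classical Springer action on the $W$-cover; (iv) invoke $j_{!*}$-rigidity to conclude globally.

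\textbf{Main obstacle.} The delicate step is (ii)--(iii): pinning down the normalizations so that the convolution action of $j_{1,w}$ on $\on{Spr}_\chi$ matches the deck transformation \emph{without a sign or a twist}. Because the equivalence \eqref{eq:noncanequiv} is only canonical up to tensoring with a $!$-fibre of $y(\chi)$ (this is exactly the source of the ``mild complication'' noted in Remark~\ref{r:mansqman} when $\chi\neq 0$), one must track these one-dimensional ambiguities carefully through the bar-resolution computation of Lemma~\ref{l:charsheaf} and check they cancel; I expect this bookkeeping, rather than any conceptual difficulty, to be where the real work lies. The case $\chi=0$ is Zhu's \cite[Example 3.2.7]{zhu2018geometric}, and the argument above is designed so that the $\chi\neq 0$ case differs only in this twisting bookkeeping.
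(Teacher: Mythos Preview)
Your high-level strategy—reduce to the regular semisimple locus and use that $\on{Spr}_\chi = j_{!*}$ of its restriction there, so maps are determined by their restriction to $G_{\on{rs}}$—is exactly the paper's strategy as well. Where you diverge is in how the comparison over $G_{\on{rs}}$ is actually carried out, and here the paper does substantially more than your sketch suggests.

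The paper first reduces to a simple reflection $s$ (induction on length) and then, crucially, to $G$ of semisimple rank one via parabolic induction to the minimal parabolic $P_s$; both the categorical and classical Springer actions are checked to commute with $\on{Ind}_L^G$ on character sheaves. This reduction is important because it collapses the combinatorics and makes the fiber at a single regular semisimple point $t\in T_{\on{rs}}$ two-dimensional, namely $\ell_t\oplus\ell_{s(t)}$. The paper then reinterprets the classical Springer action categorically via induction to the normalizer $T^+=N_G(T)$: the deck transformation becomes the map $\on{HH}_*(\dot s)$ induced by left convolution with $\delta_{\dot s}$ on $\on{ind}_T^{T^+}(D(T/T,\chi))$. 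The comparison is thus reduced to checking that two maps $\on{HH}_*(D(G/B,\chi),t_{*,\on{dR}})\to\on{HH}_*(D(G/B,s(\chi)),t_{*,\on{dR}})$ agree on the explicit basis vectors $[\chi_{w,y}]$ coming from the closed strata $wB/B$.

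The normalization issue you correctly flag as the main obstacle is resolved by Lemma~\ref{l:mainlemmactually}: the Chern-character-type maps $[\chi_{w,y}]$ and $[j_{wy,y}]$ into $\on{HH}_*(D(NwB/B,y(\chi)),t_{*,\on{dR}})$ coincide. The proof of this lemma uses an explicit isotropy-stack description of $\on{HH}_*(-,t_{*,\on{dR}})$ (Lemma~\ref{l:isoinput}) together with the fact that $j_{w,1}$ is lisse on $NwB$ and restricts to $\chi_{w,1}$ on the closed point. This is the precise bookkeeping that cancels the one-dimensional ambiguities you worried about; your plan to ``compute the restriction of $j_{1,w}$ over $G_{\on{rs}}$ via the cell stratification'' is in the right direction but does not by itself pin down the constant, and the paper's mechanism for doing so (pointwise Chern characters plus the rank-one reduction) is not what you outlined.
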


\subsubsection{} 
Before the proof of Proposition \ref{p:Springeract3}, we record some basic assertions regarding Hochschild homology of categories of twisted D-modules.

Fix a regular semisimple element $t\in B$. Denote by $t_{*,\on{dR}}: D(G)\to D(G)$ pushforward along the left action map by $t$. For an element $x\in T$, we write $\ell_x(\chi)\in\on{Vect}$ for the $!$-fiber of $\chi$ at $x$. For $w\in W$, we write $w(-): T\to T$ for the action of $w$ on $T$.

\subsubsection{}\label{s:6210}
For $w,y\in W$, let $\chi_{w,y}$ be the generator of the category $D(wB/B,y(\chi))\subset D(NwB/B,y(\chi))$. With our conventions, this generator is $(wy)_{*,\on{dR}}(\chi^{\vee})$. Note that we have a canonical isomorphism
\[
t_{*,\on{dR}}(\chi_{w,y})\simeq \chi_{w,y}\otimes \ell_{(wy)^{-1}(t)}(\chi).
\]

\noindent We may depict the above isomorphism by the commutative diagram
\[\begin{tikzcd}
	{\mathrm{Vect}} && {D(NwB/B,y(\chi))} \\
	\\
	{\mathrm{Vect}} && {D(NwB/B,y(\chi)).}
	\arrow["{\chi_{w,y}}", from=1-1, to=1-3]
	\arrow["{-\otimes \ell_{(wy)^{-1}(t)}(\chi)}"', from=1-1, to=3-1]
	\arrow["{t_{*,\mathrm{dR}}}", from=1-3, to=3-3]
	\arrow["{\chi_{w,y}}"', from=3-1, to=3-3]
\end{tikzcd}\]

\noindent By functoriality of traces (see §\ref{S:5.1}), we get a map of vector spaces:
\begin{equation}\label{eq:haGOTHEEM}
\ell_{(wy)^{-1}(t)}(\chi)\simeq \on{HH}_*(\on{Vect}, -\otimes \ell_{(wy)^{-1}(t)}(\chi))\xrightarrow{[\chi_{w,y}]} \on{HH}_*(D(NwB/B,y(\chi)),t_{*,\on{dR}}).
\end{equation}

\noindent Similarly, by considering $\chi_{w,y}$ as an object in $D(G/B,y(\chi))$, we get a map:
\begin{equation}\label{eq:wirsindnachItaloBrothers}
\ell_{(wy)^{-1}(t)}(\chi)\xrightarrow{[\chi_{w,y}]}  \on{HH}_*(D(G/B,y(\chi)),t_{*,\on{dR}}).
\end{equation}

\subsubsection{}\label{s:6211'} The sheaf $j_{wy,y}$ (see §\ref{s:noncanequiv3}) also satisfies that
\[
t_{*,\on{dR}}(j_{wy,y})\simeq j_{wy,y}\otimes\ell_{(wy)^{-1}(t)}(\chi),
\]

\noindent and so by the same construction, we get maps:
\begin{equation}\label{eq:jwyyclass}
\ell_{(wy)^{-1}(t)}(\chi)\xrightarrow{[j_{wy,y}]} \on{HH}_*(D(NwB/B,y(\chi)),t_{*,\on{dR}}),
\end{equation}
\begin{equation}\label{eq:987987987123123123}
\ell_{(wy)^{-1}(t)}(\chi)\xrightarrow{[j_{wy,y}]} \on{HH}_*(D(G/B,y(\chi)),t_{*,\on{dR}}).
\end{equation}

\noindent We want to show (see Lemma \ref{l:mainlemmactually} below) that the two maps (\ref{eq:haGOTHEEM}) and (\ref{eq:jwyyclass}) coincide. Before doing so, we will need to consider the duality datum associated to the category of twisted D-modules on a variety a bit closer.

\subsubsection{}\label{s:6212} Let $X$ be a variety equipped with a right action of an algebraic group $H$. Suppose $\chi$ is a character sheaf on $H$ and $\phi: X\rightarrow X$ an $H$-equivariant endomorphism.
Let us write $\on{HH}_*(D(X/H,\chi),\phi_{*,\on{dR}})$ in explicit terms. By definition, tensoring with this vector space recovers the composition
\[\begin{tikzcd}
	&&&& {D(X/H)} & {\mathrm{Vect.}} \\
	\\
	{\mathrm{Vect}} && {D(X/H)} && {D(X/H,\chi)\otimes D(X/H,\chi^{\vee})}
	\arrow["{C_{\mathrm{dR}}(X/H,-)}", from=1-5, to=1-6]
	\arrow["{\omega_{X/H}}", from=3-1, to=3-3]
	\arrow["{\Gamma_{\phi,*,\mathrm{dR}}}", from=3-3, to=3-5]
	\arrow["{\Delta^!}"', from=3-5, to=1-5]
\end{tikzcd}\]

\noindent Here, $\Gamma_{\phi,*,\on{dR}}$ denotes the functor given by the composition of:
\begin{itemize}
    \item $*$-pushforward along the map $X/H\to X\times X/H$ given by the graph of $\phi$, where the action of $H$ on $X\times X$ is the diagonal action.

    \item Relative averaging 
    \[
    \on{Av}_*^{\Delta H\to (H\times H,\chi\boxtimes \chi^{\vee})}: D(X\times X/H)\to D(X/H,\chi)\otimes D(X/H,\chi^{\vee}),
    \]

    \noindent where we consider $H$ as a subgroup of $H\times H$ via the diagonal embedding.
\end{itemize}

\noindent When $\phi=\on{id}$, we write $\Delta_{*,\on{dR}}:=\Gamma_{\on{id},*,\on{dR}}$.

The functor $\Delta^!: D(X/H,\chi)\otimes D(X/H,\chi^{\vee})\to D(X/H)$ is defined as the composition of the forgetful functor $\on{oblv}^{(H\times H,\chi\boxtimes \chi^{\vee})\to \Delta H}: D(X/H,\chi)\otimes D(X/H,\chi^{\vee})\to D(X\times X/H)$, followed by $!$-pullback along the diagonal map $X/H\to X\times X/H$.

\subsubsection{}\label{eq:isoesction}
Let
\[
Z=\lbrace (x,h)\in X\times H\;\vert\; \phi(x)=xh\rbrace\subset X\times H.
\]

\noindent This scheme carries a natural right $H$-action given by $(x,h_1)\cdot h_2=(xh_2,h_2^{-1}h_1h_2)$. Let
\[
\on{Iso}^{\phi}:=Z/H
\]

\noindent be the corresponding (twisted) isotropy stack. We have natural projection maps
\[
p: \on{Iso}^{\phi}\to H\overset{\on{ad}}{/}H, \;\; q: \on{Iso}^{\phi}\to X/H.
\]
\begin{lem}\label{l:isoinput}
We have a canonical isomorphism
\[
\on{HH}_*(D(X/H,\chi),\phi_{*,\on{dR}})\simeq C_{\on{dR}}(\on{Iso}^{\phi},p^!(\chi)).
\]
\end{lem}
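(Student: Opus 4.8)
The identity to be proven, $\on{HH}_*(D(X/H,\chi),\phi_{*,\on{dR}})\simeq C_{\on{dR}}(\on{Iso}^{\phi},p^!(\chi))$, is a twisted-equivariant version of the standard ``Hochschild homology of $D(X/H)$ is functions on the inertia stack'' computation, with an endomorphism $\phi$ twisting the loop and a character sheaf $\chi$ twisting the coefficients. The strategy is to unwind the duality datum of $D(X/H,\chi)$ as recalled in \S\ref{s:6212} and identify the resulting composite with a $!$-pushforward along the correspondence defining $\on{Iso}^\phi$.

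First I would recall that $D(X/H,\chi)$ is self-dual (it is dualizable since $X/H$ is a quotient of a variety by an algebraic group, and the self-duality is the twisted version of Verdier-type duality on $D(X/H)$), with unit $\omega_{X/H}\in D(X/H)$ fed through $\Delta_{*,\on{dR}}$ and counit $C_{\on{dR}}(X/H,-)\circ \Delta^!$, exactly as written in \S\ref{s:6212}. Then $\on{HH}_*(D(X/H,\chi),\phi_{*,\on{dR}})$ is computed by the composite displayed there: push $\omega_{X/H}$ forward along the graph $\Gamma_\phi$ (with diagonal $H$-action on $X\times X$), relatively average to the $(H\times H,\chi\boxtimes\chi^\vee)$-equivariant category, then $!$-restrict along the diagonal and take de Rham cochains. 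The key move is base change: the fiber product of $X/H \xrightarrow{\Gamma_\phi} X\times X/H \xleftarrow{\Delta} X/H$ is precisely the twisted isotropy stack $\on{Iso}^\phi = Z/H$ with $Z = \{(x,h) : \phi(x)=xh\}$, because a point of $X/H$ mapping into the diagonal via $\Gamma_\phi$ is the same datum as a pair $(x,h)$ with $xh = \phi(x)$, up to $H$-conjugacy. So composing $\Gamma_{\phi,*,\on{dR}}$ with $\Delta^!$ and tracking the character sheaf through the relative averaging produces $q_{*,\on{dR}}(q^! \omega_{X/H} \overset{!}{\otimes} (\text{pullback of }\chi))$; unwinding which pullback of $\chi$ appears — it is the one along $\on{Iso}^\phi \xrightarrow{p} H\overset{\on{ad}}{/}H$, since the ``$h$'' coordinate of $Z$ is what the diagonal-vs-graph discrepancy records — and composing with $C_{\on{dR}}(X/H,-)$ collapses the $q_{*,\on{dR}}$ to give $C_{\on{dR}}(\on{Iso}^\phi, p^!(\chi))$ after absorbing $\omega$'s into a shift/renormalization that is already built into the conventions (dualizing sheaves being units).

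The step I expect to be the main obstacle is the careful bookkeeping of the relative averaging functor $\on{Av}_*^{\Delta H \to (H\times H,\chi\boxtimes\chi^\vee)}$ and verifying that, after base change along $\Delta$, the character sheaf $\chi\boxtimes\chi^\vee$ on the two $H$-factors restricts along the diagonal to exactly $p^!(\chi)$ on $\on{Iso}^\phi$ (rather than to $\chi\otimes\chi^\vee = \omega$, which is what one might naively guess — the point is that on $Z$ the two copies of $H$ are related by the twist $h$, not trivially, so the cancellation is only partial and leaves a single copy of $\chi$ pulled back via the $h$-coordinate, i.e.\ via $p$). Concretely I would factor the averaging as a pushforward along $X\times X/H \to X\times X/(H\times H)$ twisted by $\chi\boxtimes\chi^\vee$, base-change the diagonal square, and chase the character sheaf; a clean way is to first treat $\phi=\on{id}$ and $\chi$ trivial (recovering $C_{\on{dR}}(\on{Iso}, \omega)$, the classical statement), then reinstate $\phi$ (purely a matter of replacing $\Delta$ by $\Gamma_\phi$ in one leg, which only changes the fiber product from the inertia stack to $\on{Iso}^\phi$), and finally reinstate $\chi$ (the twist analysis above). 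Everything else — commuting $C_{\on{dR}}$ with the relevant pushforwards, smoothness of the relevant projections, the identification of fiber products of stacks — is routine once the correspondence is set up.
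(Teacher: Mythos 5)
Your plan matches the paper's in its essential move: the Hochschild trace is computed as $C_{\on{dR}}(X/H,\,\Delta^!\circ\Gamma_{\phi,*,\on{dR}}(\omega_{X/H}))$, and one wants to show $\Delta^!\circ\Gamma_{\phi,*,\on{dR}}(\omega_{X/H})\simeq q_{*,\on{dR}}\circ p^!(\chi)$ by base change. You also correctly flag the genuine difficulty: both $\Gamma_{\phi,*,\on{dR}}$ and $\Delta^!$ pass through the relative averaging / forgetful pair for $\Delta H\subset (H\times H,\chi\boxtimes\chi^\vee)$, so a naive base change at the level of $X/H\rightarrow X\times X/H\leftarrow X/H$ does not directly apply; one has to commute the averaging past the graph pushforward and the diagonal pullback.

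Where your route and the paper's diverge is in how that obstacle gets resolved. You propose to factor the averaging as a twisted pushforward along $X\times X/H \to X\times X/(H\times H)$ and chase the character sheaf through the resulting larger square. The paper instead chooses a presentation $D(X/H)\simeq D(X\times H)^{(H\times H,\,\chi\boxtimes\chi^\vee)}$, with the $H\times H$-action $(x,h)\cdot(k,l)=(xl,l^{-1}hk)$, under which $\omega_{X/H}\mapsto\omega_X\boxtimes\chi$. This single identification converts both $\Gamma_{\phi,*,\on{dR}}$ and $\Delta^!$ into honest $*$-pushforward and $!$-pullback between $(H\times H,\chi\boxtimes\chi^\vee)$-equivariant categories of D-modules on varieties, and the whole computation reduces to one $H\times H$-equivariant Cartesian square of varieties with fiber product $\widetilde{Z}=\{(x,h,k):xh=\phi(x)k\}$. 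Note that in this formulation the leftover $\chi$ does not arise from a partial cancellation of $\chi\boxtimes\chi^\vee$ as you suggest, but from the translation of the unit $\omega_{X/H}$ into $\omega_X\boxtimes\chi$; this is the small but load-bearing bookkeeping fact that your sketch leaves implicit. Your factoring of the averaging would presumably also work, but the paper's presentation sidesteps the need to verify any base-change-commutes-with-averaging statement by never leaving the world of plain $f_{*,\dR}$ and $f^!$ for maps of varieties.
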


\begin{proof}
We know that
\[
\on{HH}_*(D(X/H,\chi),\phi_{*,\on{dR}})\simeq C_{\on{dR}}(X/H, \Delta^!\circ\Gamma_{\phi,*,\on{dR}}(\omega_{X/H})).
\]

\noindent Thus, it suffices to provide a canonical isomorphism
\begin{equation}\label{eq:turndownforwhaaat}
\Delta^!\circ\Gamma_{\phi,*,\on{dR}}(\omega_{X/H})\simeq q_{*,\on{dR}}\circ p^!(\chi).
\end{equation}

\noindent Let
\[
\widetilde{Z}=\lbrace (x,h,k)\in X\times H\times H\;\vert\; xh=\phi(x)k\rbrace\subset X\times H\times H.
\]

\noindent We have a Cartesian diagram
\[\begin{tikzcd}
	{\widetilde{Z}} && {X\times H} \\
	\\
	{X\times H} && {X\times X.}
	\arrow["{(x,h,k)\mapsto (x,h)}", from=1-1, to=1-3]
	\arrow["{(x,h,k)\mapsto (\phi(x),k)}"', from=1-1, to=3-1]
	\arrow["{(x,h)\mapsto (xh,\phi(x))}", from=1-3, to=3-3]
	\arrow["{(x,h)\mapsto (xh,x)}"', from=3-1, to=3-3]
\end{tikzcd}\]

\noindent This diagram is $H\times H$-equivariant, where:
\begin{itemize}
    \item $H\times H$ acts on $X\times X$ by $(x,y)\cdot (h,k)=(xh,yk)$;

    \item $H\times H$ acts on $X\times H$ by $(x,h)\cdot (k,l)=(xl,l^{-1}hk)$;

    \item $H\times H$ acts on $\widetilde{Z}$ by $(x,h,k)\cdot (l,m)=(xm,m^{-1}hl,m^{-1}kl)$.
\end{itemize}

\noindent Passing to $(B\times B,\chi\boxtimes \chi^{\vee})$-invariants, we identify:
\[
D(X/H)\simeq D(X\times H)^{(B\times B,\chi\boxtimes \chi^{\vee})},\;\; \omega_{X/H}\mapsto \omega_X\boxtimes \chi,
\]
\[
D(\on{Iso}^{\phi})\simeq D(\widetilde{Z})^{(B\times B,\chi\boxtimes \chi^{\vee})}.
\]

\noindent Moreover, under these identifications:
\begin{itemize}

\item The leftmost vertical map in the above diagram goes to $q$;

\item Pushforward (resp. pullback) along the rightmost vertical map (resp. bottom horizontal map) goes to $\Gamma_{\phi,*,\on{dR}}$ (resp. $\Delta^!$).

\end{itemize}

\noindent The lemma now follows by base change.

\end{proof}

\subsubsection{}\label{ss:628butformodn} Next, we examine the maps (\ref{eq:haGOTHEEM}) and (\ref{eq:jwyyclass}).

Fix $w,y\in W, t\in T^{\on{rs}}$. Write $\ell:=\ell_{(wy)^{-1}(t)}(\chi)$ and $\underline{\ell}:=\underline{k}_{NwB/B}\otimes \ell$, where $\underline{k}_{NwB/B}$ is the constant sheaf on $NwB/B$. For any compact $\cF\in D(NwB/B,y(\chi))$, we have maps
\begin{equation}\label{eq:havemaps1}
\underline{k}_{NwB/B}\xrightarrow{u_{\cF}} \cF\overset{!}{\otimes}\bD(\cF)\xrightarrow{\epsilon_{\cF}} \omega_{NwB/B}.
\end{equation}

\noindent Here, $u_{\cF}$ is given by the image of $\on{id}_{\cF}$ under the identification
\[
\on{Hom}_{D(NwB/B,y(\chi))}(\cF, \cF)\simeq \on{Hom}_{D(NwB/B)}(\underline{k}_{NwB/B}, \cF\overset{!}{\otimes}\bD(\cF)).
\]

\noindent The map $\epsilon_{\cF}$ is given by applying $\Delta^!$ to the canonical map
\begin{equation}\label{eq:maptorulethemall}
\cF\boxtimes \bD(\cF)\to \Delta_{*,\on{dR}}(\omega_{NwB/B})
\end{equation}

\noindent given by the image of $\on{id}_{\cF}$ under the identification
\[
\on{Hom}_{D(NwB/B,y(\chi))}(\cF, \cF)\simeq \on{Hom}_{D(NwB/B)}(\cF\overset{*}{\otimes}\bD(\cF),\omega_{NwB/B})
\]
\[
\simeq \on{Hom}_{D(G/B,\chi)\otimes D(G/B,\chi^{\vee})}(\cF\boxtimes\bD(\cF), \Delta_{*,\on{dR}}(\omega_{G/B})).
\]

\noindent Tensoring (\ref{eq:havemaps1}) with the line $\ell$, we obtain:
\begin{equation}\label{eq:havemaps2}
\underline{\ell}\to (\cF\otimes \ell)\overset{!}{\otimes} \bD(\cF)\to \omega_{NwB/B}\otimes \ell.
\end{equation}

\noindent If we further have the datum of an isomorphism
\[
t_{*,\on{dR}}(\cF)\simeq \cF\otimes \ell,
\]

\noindent we may consider the composition:
\[
\underline{\ell}\to (\cF\otimes \ell)\overset{!}{\otimes} \bD(\cF)\simeq t_{*,\on{dR}}(\cF)\overset{!}{\otimes} \bD(\cF)\simeq \Delta^!\circ (t_{*,\on{dR}}\otimes\on{id})(\cF\boxtimes\bD(\cF))\to 
\]
\begin{equation}\label{eq:havemaps3}
\xrightarrow{\Delta^!\circ (t_{*,\on{dR}}\otimes\on{id})(\epsilon_{\cF})}\Delta^!\circ (t_{*,\on{dR}}\otimes \on{id})\circ\Delta_{*,\on{dR}}(\omega_{NwB/B})\simeq \Delta^!\circ\Gamma_{t,*,\on{dR}}(\omega_{NwB/B}).
\end{equation}

\noindent By adjunction, we get a map
\begin{equation}\label{eq:havemaps4}
\ell\to C_{\on{dR}}(NwB/B,\Delta^!\circ \Gamma_{t,*,\on{dR}}(\omega_{NwB/B}))=\on{HH}_*(D(NwB/B,y(\chi)),t_{*,\on{dR}}).
\end{equation}

\noindent Taking $\cF = \chi_{w,y}, j_{wy,y}$, we exactly compute the maps (\ref{eq:haGOTHEEM}), (\ref{eq:jwyyclass}).\footnote{We remark that the above computation holds on very general grounds: we did not use the geometry of $NwB$.}

\begin{lem}\label{l:mainlemmactually}
The maps (\ref{eq:haGOTHEEM}) and (\ref{eq:jwyyclass}) coincide.
\end{lem}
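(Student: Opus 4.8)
The plan is to reduce both maps \eqref{eq:haGOTHEEM} and \eqref{eq:jwyyclass} to the common description \eqref{eq:havemaps4} and then observe that this description is insensitive to the choice of compact sheaf within a suitable class. Concretely, the final paragraphs of \S\ref{ss:628butformodn} show that for \emph{any} compact $\cF \in D(NwB/B, y(\chi))$ equipped with an isomorphism $t_{*,\on{dR}}(\cF) \simeq \cF \otimes \ell$, the induced map $\ell \to \on{HH}_*(D(NwB/B, y(\chi)), t_{*,\on{dR}})$ is computed by the explicit chain \eqref{eq:havemaps2}--\eqref{eq:havemaps4}, built out of the duality unit $u_\cF$, the counit $\epsilon_\cF$, and the given identification with $\cF \otimes \ell$. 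Taking $\cF = \chi_{w,y}$ recovers \eqref{eq:haGOTHEEM} and taking $\cF = j_{wy,y}$ recovers \eqref{eq:jwyyclass}, so the two maps agree \emph{provided} the formation of \eqref{eq:havemaps4} is functorial in $\cF$ along morphisms that intertwine the chosen isomorphisms with $\ell$.

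First I would make precise the relevant functoriality: if $\alpha\colon \cF \to \cF'$ is a map of compact objects in $D(NwB/B, y(\chi))$ compatible with the respective trivializations of $t_{*,\on{dR}}(-) \otimes \ell^{-1}$, then $\alpha$ induces a commuting diagram relating the two instances of \eqref{eq:havemaps4}; this is just naturality of the unit/counit of the duality datum together with naturality of $\Delta^!$ and $\Gamma_{t,*,\on{dR}}$, exactly as in \S\ref{S:5.1}. Next I would produce a morphism linking $\chi_{w,y}$ and $j_{wy,y}$: on the open cell $NwB/B$, the sheaf $j_{wy,y}$ restricts (by construction in \S\ref{s:noncanequiv3}) to the generator, i.e. there is a canonical identification $j_{wy,y}|_{NwB/B} \simeq \chi_{w,y}$ up to the ambiguity of a $!$-fibre of $y(\chi)$, which is precisely the line by which both \eqref{eq:noncanequiv} and the definition of $\chi_{w,y}$ are non-canonical. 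Since the Hochschild homology of $D(NwB/B, y(\chi))$ only sees the open cell — it is $C_{\on{dR}}(NwB/B, \Delta^!\Gamma_{t,*,\on{dR}}(\omega))$ — the two sheaves contribute the same class once we fix the trivializing line consistently, which we do on both sides by the same $!$-fibre.

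Concretely, the cleanest route is: (i) note that $D(wB/B, y(\chi)) \subset D(NwB/B, y(\chi))$ is the full subcategory on which $t_{*,\on{dR}}$ acts by $-\otimes \ell$ after restriction, and that the inclusion $j_w\colon wB/B \into NwB/B$ of the $N$-contracted locus induces on Hochschild homology an isomorphism (by the contraction principle for the $N$-action, $NwB/B$ being an affine bundle over $wB/B$); (ii) under this isomorphism both $[\chi_{w,y}]$ and $[j_{wy,y}]$ pull back to the \emph{same} class, namely the class of the generator of $D(wB/B, y(\chi)) \simeq \on{Vect}$ tensored with $\ell$, because $j_{wy,y}$ is by definition the $!$-extension of that generator and $\chi_{w,y}$ is that generator. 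I would then invoke Lemma~\ref{l:isoinput} to interpret everything on the isotropy side — $\on{HH}_*(D(NwB/B, y(\chi)), t_{*,\on{dR}}) \simeq C_{\on{dR}}(\on{Iso}^t, p^!(y(\chi)))$ with $\on{Iso}^t$ supported over the single $H$-fixed point corresponding to $t$ acting freely — which makes the coincidence of the two classes a one-point computation.

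The main obstacle I anticipate is bookkeeping the non-canonical lines: both $\chi_{w,y}$ and $j_{wy,y}$ are only defined up to tensoring with a $1$-dimensional $!$-fibre, and the maps \eqref{eq:haGOTHEEM}, \eqref{eq:jwyyclass} each secretly depend on a choice; the content of the lemma is that \emph{the same} choice works for both, i.e. the identification $j_{wy,y}|_{NwB/B} \simeq \chi_{w,y}$ is compatible with the chosen trivializations of $t_{*,\on{dR}}$. Verifying this compatibility requires tracking the $T$-equivariant structure through the $!$-pushforward along $NwB \into G$ (which is irrelevant for the computation, as remarked in \S\ref{ss:628butformodn}) and through the equivalence \eqref{eq:noncanequiv}; this is a diagram chase with the equivariance data but involves no new geometric input. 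Everything else — naturality of traces, the contraction principle, and Lemma~\ref{l:isoinput} — is either already in the excerpt or formal.
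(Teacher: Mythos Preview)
Your third paragraph is headed in the right direction, and is essentially what the paper does: Lemma~\ref{l:isoinput} plus regular semisimplicity of $t$ gives $\on{Iso}^t \simeq wB/B$, so $\Delta^!\circ\Gamma_{t,*,\on{dR}}(\omega_{NwB/B})$ is the skyscraper $\delta_w = i_{w,*,\on{dR}}(\ell)$, and the entire computation localizes at the point $wB/B$. But two things in your proposal are off, and one of them is the actual content of the lemma.

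First, a confusion of sheaves: you write ``$j_{wy,y}|_{NwB/B} \simeq \chi_{w,y}$,'' but this is false. By \S\ref{s:6210}, $\chi_{w,y} = (wy)_{*,\on{dR}}(\chi^\vee)$ is the skyscraper generating $D(wB/B, y(\chi))$, whereas $j_{wy,y}$ restricted to $NwB/B$ is $\phi_{wy,y}$, which is lisse on all of $NwB/B$. They are not isomorphic and there is no map between them that makes your functoriality scheme work. Relatedly, your second paragraph posits that a morphism $\alpha\colon \cF \to \cF'$ compatible with trivializations forces the two Chern classes to agree; this is not true --- duality is contravariant, so there is no induced map between $\cF \overset{!}{\otimes} \bD\cF$ and $\cF' \overset{!}{\otimes} \bD\cF'$, and twisted Chern characters are only additive in triangles, not invariant under non-invertible maps. (Also, the isomorphism on HH in your step (i) is not a contraction-principle statement for the $N$-action; $t$ does not commute with $N$. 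It is purely that $\on{Iso}^t$ is a single point because $t$ is regular semisimple.)

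What the paper does, and what is missing from your sketch, is the computation that replaces your functoriality claim. Since the target $\Delta^!\Gamma_{t,*,\on{dR}}(\omega_{NwB/B}) \simeq \delta_w$ is supported at $wB/B$, the map \eqref{eq:havemaps3} for $\cF = j_{w,1}$ factors through $i_{w,*,\on{dR}} i_w^{*,\on{dR}}$ by adjunction. One then checks directly that
\[
i_w^{*,\on{dR}}\bigl((j_{w,1}\otimes\ell)\overset{!}{\otimes}\bD(j_{w,1})\bigr) \simeq (\chi_{w,1}\otimes\ell)\overset{!}{\otimes}\bD(\chi_{w,1}),
\]
using that $j_{w,1}$ is lisse on $NwB$ and $i_w^{*,\on{dR}}(j_{w,1}) \simeq \chi_{w,1}[2\ell(w)]$ (the shifts for $j_{w,1}$ and $\bD(j_{w,1})$ cancel). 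This identifies the factored map for $j_{w,1}$ with the map \eqref{eq:havemaps3} for $\chi_{w,1}$. That lisse-restriction step is the substance; once you have it, the line-bookkeeping you flag as the main obstacle is automatic.
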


\begin{proof}
For ease of notation, assume that $y=1$. We continue to write $\ell=\ell_{w^{-1}(t)}(\chi)$ for our fixed $w\in W.$

With the notation of §\ref{eq:isoesction}, note that the map 
\[
q: \on{Iso}^t\to NwB/B
\]

\noindent induces an isomorphism
\[
\on{Iso}^t\simeq wB/B.
\]

\noindent Indeed, this follows from the fact that $\pi_{\on{rs}}:\widetilde{G}_{\on{rs}}\to G_{\on{rs}}$ is a regular $W$-covering, cf. §\ref{s:USA}. With this identification, the map
\[
\on{pt}=wB/B\into \on{Iso}^t\xrightarrow{p} B\overset{\on{ad}}{/}B
\]

\noindent corresponds to the point $w^{-1}(t)$. Let $i_{w}: wB/B\into NwB/B$ be the inclusion, and let $\delta_{w}:=i_{w,*,\on{dR}}(\ell)$. 
By (\ref{eq:turndownforwhaaat}), we have a canonical isomorphism
\[
\Delta^!\circ \Gamma_{t,*,\on{dR}}(\omega_{NwB/B})\simeq q_{*,\on{dR}}\circ p^!(\chi)\simeq \delta_{w}\in D(NwB/B).
\]

\noindent By adjunction, the map (\ref{eq:havemaps3}) factors as:
\[
\underline{\ell}\to i_{w,*,\on{dR}}\circ i_{w}^{*,\on{dR}}(\underline{\ell})\to i_{w,*,\on{dR}}\circ i_{w}^{*,\on{dR}}((j_{w,1}\otimes \ell)\overset{!}{\otimes}\bD(j_{w,1}))\to \delta_{w}.
\]

\noindent Note that
\[
i_w^{*,\on{dR}}((j_{w,1}\otimes \ell)\overset{!}{\otimes}\bD(j_{w,1}))\simeq (\chi_{w,1}\otimes\ell)\overset{!}{\otimes} \bD(\chi_{w,1}).
\]

\noindent This is immediate from the fact that $j_{w,1}$ is lisse on $NwB$ and that $i_w^{*,\on{dR}}(j_{w,1})\simeq \chi_{w,1}[2\ell(w)]$, where $\ell(w)$ denotes the length of $w$. Thus, we see that (\ref{eq:havemaps3}) canonically identifies with the map (\ref{eq:haGOTHEEM}), as desired.

\end{proof}

\subsubsection{} We are now in a position to prove Proposition \ref{p:Springeract3}.

\begin{proof}[Proof of Proposition \ref{p:Springeract3}] We will use the following notation. Given a map of algebraic groups $J \rightarrow K$, we write the associated induction and restriction of categorical representations by
\[\begin{tikzcd}
	{D(J)\textrm{\textbf{-mod}}} && {D(K)\textrm{\textbf{-mod}}.}
	\arrow["{\mathrm{ind}_J^K}", shift left, from=1-1, to=1-3]
	\arrow["{\mathrm{res}_J^K}", shift left, from=1-3, to=1-1]
\end{tikzcd}\]
That is, lower case first letters will denote these functors, whereas upper case first letters will, as before, denote parabolic induction and restriction.

In addition, since we will consider both character sheaves of general categorical representations $\chi_{\sC}$ and our fixed character sheaf on $B$, we denote the latter by $\theta$.

\step We first claim we may assume that our Weyl group element is a simple reflection $s$. Indeed, the case of general $w$ then follows from a straightforward induction on the length of $w$.

Let us next reduce to the case of $G$ being of semisimple rank one. To do so, write $B \hookrightarrow P \hookrightarrow G$ for the standard minimal parabolic of $G$ corresponding to $s$. In particular, we may consider its Levi quotient $L = P/U_P$, which is of semisimple rank one and comes equipped with its standard Borel $B_L = B/U_P$.

Note that $\theta$ descends uniquely to a character sheaf on $B_L$, which we again denote by $\theta$. Moreover, we have a tautological isomorphism 
$$\on{Ind}_L^G D(L/B_L, \theta) \simeq D(G/B, \theta).$$
As the constructions of the categorical and usual Springer action of $s$ are straightforwardly seen to commute with parabolic induction of categorical representations and character sheaves (see Lemma \ref{l:compwithpind}), we may reduce to the case of $G = L$. 

\step Choose a splitting $T \hookrightarrow B$, write $T_{rs} \hookrightarrow T$ for the regular semisimple locus, and consider the induced map 
$$i: T_{rs}\overset{\on{ad}}{/}T \rightarrow G\overset{\on{ad}}{/}G.$$
It suffices to see that the categorical and usual Springer actions agree after pullback along $i^!$, and indeed even upon further $!$-restricting to a closed point $t: \Spec k \rightarrow T_{rs}/T$.

Denote the normalizer  of $T$ in $G$  by $T^+ := N_G(T)$, and note that we have a canonical natural transformation 
$$\eta: \on{res}_{T}^{T^+} \circ \on{ind}_{T}^{T^+}(-)  \rightarrow \on{res}_{T}^{G} \circ \on{Ind}_{T}^G(-),$$
corresponding to the map $\iota: T^+ \rightarrow G/N$, and the induced pushforward $\iota_{*, \dR}: D(T^+) \rightarrow D(G/N)$.  As $\iota$ is a closed embedding, this is right adjointable, whence for any dualizable $D(T)$-module $\sC$ we obtain a canonical map on character sheaves:
$$\on{HH}_*(\eta): \chi_{\on{res}_{T}^{T^+} \circ \on{ind}_{T}^{T^+}(\sC)} \rightarrow \chi_{\on{res}_{T}^G \circ \on{Ind}_{T}^G(\sC)}.$$
We note that for $\sC \simeq D(T/T, \theta)$, upon restriction to the open subset $T^{rs}/T$, the map $\on{HH}_*(\eta)$ is an isomorphism. 

\step To proceed, let us now interpret the usual Springer action in terms of categorical induction, albeit to $T^+$ rather than $G$. Namely, if we consider the conjugation action $s: D(T) \simeq D(T)$, in particular we can pushforward categorical representations $\sC$ along it; denote this by $\sC \mapsto s_* \sC := \on{res}_{T}^{T} \sC$.  

Moreover, if we consider the induced representations 
$$\on{ind}_{T}^{T^+}(\sC) \quad \text{and} \quad \on{ind}_{T}^{T^+}(s_* \sC),$$
for a choice of coset representative $\dot{s}$ in $T^+$, we have an associated isomorphism 
\begin{equation}\label{eq:nokia}
\dot{s}: \on{ind}_{T}^{T^+}(\sC) \simeq \on{ind}_{T}^{T^+}(s_* \sC)
\end{equation}

\noindent of $T^+$-categories. Namely, it is induced by adjunction from the map $D(T) \rightarrow D(T^+)$ given by left convolution with the delta D-module $\delta_{\dot{s}}$. 

For $\sC = D(T/T, \theta)$, note that $s_*\sC\simeq D(T/T,s(\theta))$. In this case, the isomorphism (\ref{eq:nokia}) is independent of the choice of $\dot{s}$ up to tensoring with a $!$-fiber $\ell$ of $\theta$. In particular, the induced map of character sheaves 
\begin{equation} \label{dots}\on{HH}_*(\dot{s}): \chi_{ \on{ind}_T^{T^+}(\sC)} \rightarrow \chi_{\on{ind}_{T}^{T^+}(s_*\sC)}\end{equation}
is canonically independent of the choice of lift $\dot{s}$. 

Recall that induction and restriction of dualizable categorical representations yields $*$-pushforward and $!$-pullback of character sheaves, respectively. If we consider the adjoint quotient $T^+/T^+$, note that its components are given by preimages of conjugacy classes along the canonical projection $$\pi: T^+\overset{\on{ad}}{/}T^+ \rightarrow W\overset{\on{ad}}{/}W.$$As the tautological map $T\overset{\on{ad}}{/}T \rightarrow T^+\overset{\on{ad}}{/}T^+$ factors through the neutral component $(T^+\overset{\on{ad}}{/}T^+)^\circ \simeq T\overset{\on{ad}}{/}T^+$, the isomorphism \eqref{dots} is between two sheaves supported on $T\overset{\on{ad}}{/}T^+$. Recalling that the tautological map $T^{rs} \overset{\on{ad}}{/} T^+ \rightarrow G^{rs}\overset{\on{ad}}{/}G$ is an isomorphism, it follows from the definitions that $\on{HH}_*({\dot{s}})$ agrees with the usual Springer action. 

\step We continue to take $\sC := D(T/T, \theta)$ so that $\on{Ind}_T^G(\sC)\simeq D(G/B,\theta)$ and $\on{Ind}_T^G(s_*\sC)\simeq D(G/B,s(\theta))$. Consider the diagram:
\[
\begin{tikzcd}\label{d:diaagraaamz}
	\ind_T^{T^+}(\sC)  & \on{Ind}_T^G(\sC) \\ \ind_T^{T^+}(s_*\sC)  & \on{Ind}_T^G(s_*\sC).
	\arrow["\dot{s}",  from=1-1, to=2-1]
	\arrow["-\star j_{1,s}",  from=1-2, to=2-2]
     \arrow["\eta",  from=1-1, to=1-2 ]
      \arrow["\eta",  from=2-1, to=2-2 ]
\end{tikzcd}
\]

\noindent While this diagram of categories is lax commutative, it is not strictly commutative. Nonetheless, we must show that, for $t \in T_{rs}$, the induced diagram of vector spaces obtained after applying $\on{HH}(-,t_{*,\on{dR}})$ does commute.

Using that $\on{HH}_*(\eta)$ is an isomorphism over $T^{\on{rs}}/T$, we need to show that the two maps
\begin{equation}\label{d:twomapsoneaction}
\begin{tikzcd}
	\on{HH}_*(D(G/B,\theta),t_{*,\on{dR}}) & \on{HH}_*(D(G/B,s(\theta)),t_{*,\on{dR}})
	\arrow[shift left, from=1-1, to=1-2]
	\arrow[shift right, from=1-1, to=1-2]
\end{tikzcd}
\end{equation}

\noindent induced by the diagram coincide. Recall that $\on{HH}_*(D(G/B,\theta),t_{*,\on{dR}})$ identifies with the $!$-fiber of $\chi_{D(G/B,\theta)}\simeq \on{Spr}_{\theta}$ at $t$. Similarly, $\on{HH}_*(D(G/B,s(\theta)),t_{*,\on{dR}})$ identifies with the $!$-fiber of $\chi_{D(G/B,s(\theta))}\simeq \on{Spr}_{s(\theta)}$ at $t$. In particular, both identify with $\ell_t\oplus \ell_{s(t)}$. Concretely, the embeddings
\[
\ell_t\to \on{HH}_*(D(G/B,\theta),t_{*,\on{dR}}),\;\; \ell_t \to \on{HH}_*(D(G/B,s(\theta)),t_{*,\on{dR}})
\]

\noindent are induced by the maps $[\theta_{1,1}], [\theta_{s,s}]$, respectively, cf. (\ref{eq:wirsindnachItaloBrothers}). Similarly, the embeddings
\[
\ell_{s(t)}\to \on{HH}_*(D(G/B,\theta),t_{*,\on{dR}}),\;\; \ell_{s(t)} \to \on{HH}_*(D(G/B,s(\theta)),t_{*,\on{dR}})
\]

\noindent are induced by the maps $[\theta_{s,1}], [\theta_{1,s}]$. By construction, the compositions
\[
\ell_t\xrightarrow{[\theta_{1,1}]} \on{HH}_*(D(G/B,\theta),t_{*,\on{dR}})\xrightarrow{\on{HH}_*(\dot{s})} \on{HH}_*(D(G/B,s(\theta)),t_{*,\on{dR}}),
\]
\[
\ell_{s(t)}\xrightarrow{[\theta_{s,1}]} \on{HH}_*(D(G/B,\theta),t_{*,\on{dR}})\xrightarrow{\on{HH}_*(\dot{s})} \on{HH}_*(D(G/B,s(\theta)),t_{*,\on{dR}})
\]

\noindent coincide with $[\theta_{s,s}]$ and $[\theta_{1,s}]$, respectively. Thus, we must show that the same is true when replacing $\on{HH}_*(\dot{s})$ with $\on{HH}_*(-\star j_{1,s})$.

\step 

The two maps (\ref{d:twomapsoneaction}) are both induced by taking the fiber at $t$ of isomorphisms 
\[\begin{tikzcd}\label{eq:twomapsoneaction}
	\on{Spr}_{\theta} & \on{Spr}_{s(\theta).}
	\arrow[shift left, from=1-1, to=1-2]
	\arrow[shift right, from=1-1, to=1-2]
\end{tikzcd}\]

\noindent Note that $\on{Spr}_{\theta}$ is either irreducible or the direct sum of two non-isomorphic simple summands. In particular, on each simple summand, the two maps differ by a constant. As such, the composition 
\[
\ell_t\xrightarrow{[\theta_{1,1}]} \on{HH}_*(D(G/B,\theta),t_{*,\on{dR}})\xrightarrow{\on{HH}_*(-\star j_{1,s})} \on{HH}_*(D(G/B,s(\theta)),t_{*,\on{dR}})
\]

\noindent differs from $[\theta_{s,s}]$ by a constant $c\in k$. On the other hand, the composition is $[j_{s,s}]$, cf. (\ref{eq:987987987123123123}). To see that $c=1$, it suffices to check that $[\theta_{s,s}]$ and $[j_{s,s}]$ agree after composing with the map
\[
\on{HH}_*(D(G/B,\theta),t_{*,\on{dR}})\to \on{HH}_*(D(NsB/B,s(\theta)),t_{*,\on{dR}})
\]

\noindent induced by applying $\on{HH}_*(-)$ to pullback along $NsB\to G$. However, this is the content of Lemma \ref{l:mainlemmactually}.

It remains to check that the composition
\[
\ell_{s(t)}\xrightarrow{[\theta_{s,1}]} \on{HH}_*(D(G/B,\theta),t_{*,\on{dR}})\xrightarrow{\on{HH}_*(-\star j_{1,s})} \on{HH}_*(D(G/B,s(\theta)),t_{*,\on{dR}})
\]

\noindent coincides with $[\theta_{1,s}]$. We have (see \cite[Lemma 3.5]{lusztig2020endoscopy}):\footnote{To apply \emph{loc.cit}, we are using that in the Grothendieck group of $\sH_{G,\theta}^{\on{ren}}$, $j_{s,1}$ coincides with its analogue where we $*$-pushforward $\phi_{s,1}$, cf. §\ref{s:noncanequiv3}.}
\[
\on{HH}_*((-\star j_{1,s})\circ (-\star j_{s,1}))=\on{HH}_*(j_{1,1})=\on{id.}
\]

\noindent As such, it suffices to show that the composition
\[
\ell_{s(t)}\xrightarrow{[\theta_{1,s}]} \on{HH}_*(D(G/B,s(\theta)),t_{*,\on{dR}})\xrightarrow{\on{HH}_*(-\star j_{s,1})} \on{HH}_*(D(G/B,\theta),t_{*,\on{dR}})
\]

\noindent coincides with $[\theta_{s,1}]$. However, this follows from the same reasoning as above, noting that our argument was symmetric in $\theta$ and $s(\theta)$.

\end{proof}

\subsection{Action on cohomology of Springer fiber}\label{S:actiononspringerfibrelaks}

\subsubsection{} In this subsection, we identify the action (\ref{eq:10/09:1716}) of $k[W_{[\lambda]}]$ on $H^*(\cB_e)$ with the usual Springer action.\footnote{There are two standard Springer actions $H^*(\cB_e)\curvearrowleft k[W_{[\lambda]}]$, and they differ by tensoring with the sign representation. The version we consider here is the one normalized so that the trivial representation corresponds to $e=0$.}

Recall that the usual Springer action of $k[W_{[\lambda]}]$ on $H^*(\cB_e)$ is obtained as follows (see e.g. \cite{achar2014weyl}). The algebra $k[W_{[\lambda]}]$ acts on $\on{Spr}^{\fg}$ by the same construction as in §\ref{s:USA}, replacing $G$ by $\fg$. By functoriality, we obtain an action on the vector space:
\[
i_e^{*,\on{dR}}\circ \on{FT}_{\fg}(\on{Spr}^{\fg})\curvearrowleft W_{[\lambda]}.
\]

\noindent Here, $i_{e}: \on{Spec}(k)\to \cN/G$ denotes the $k$-point corresponding to $e$. The sheaf $\on{FT}_{\fg}(\on{Spr}^{\fg})$ is supported on $\cN/G$ and identifies with the $*$-pullback of $\on{Spr}^{\fg}$ along $\cN/G\to \fg/G$, up to a shift.\footnote{We remind that this is spelled out in the proof of Lemma \ref{l:HHiscoho}.} By base-change, the vector space $i_e^{*,\on{dR}}\circ \on{FT}_{\fg}(\on{Spr}^{\fg})$ identifies with $H^*(\cB_e)$ (up to a shift), thus providing the Springer representation:
\begin{equation}\label{eq:finalspringeractionvect}
H^*(\cB_e)\curvearrowleft W_{[\lambda]}.
\end{equation}

\subsubsection{} We want to compare the above action to the categorical action (\ref{eq:actHH_Ow-mod}). We remind that the latter was given by applying $\on{HH}_*(-)$ to the action
\[
\sW_{e,\lambda}\on{-mod}\overset{(\ref{eq:Skryabin+BB})}{\simeq} D(U^{-},\psi_e\backslash G/B,\chi)\curvearrowleft \sH_{G,\chi}^{\on{ren}},
\]

\noindent and then using the identification
\[
H^*(\cB_e)[2\on{dim}\cB_e]\overset{\on{Lem.} \ref{l:HHiscoho}}{\simeq}\on{HH}_*(\sW_{e,\lambda}\on{-mod})\curvearrowleft \on{HH}_*(\sH_{G,\chi}^{\on{ren}}).
\]

\noindent Since $k[W_{[\lambda]}]$ injects into $\on{HH}_*(\sH_{G,\chi}^{\on{ren}})$, cf. Lemma \ref{l:HHisgroupalg}, we get an action
\begin{equation}\label{eq:finalspringeryaaz}
H^*(\cB_e)\curvearrowleft W_{[\lambda]}.
\end{equation}
\begin{cor}\label{c:finalspringerboss}
The actions (\ref{eq:finalspringeractionvect}) and (\ref{eq:finalspringeryaaz}) coincide.
\end{cor}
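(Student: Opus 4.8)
The plan is to reduce Corollary \ref{c:finalspringerboss} to Proposition \ref{p:Springeract3} by tracking how the two realizations of the $W_{[\lambda]}$-action on $H^*(\cB_e)$ factor through the $W_{[\lambda]}$-action on the twisted Grothendieck--Springer sheaf $\on{Spr}_\chi$. First I would observe that the categorical action \eqref{eq:finalspringeryaaz} is, by construction, obtained by applying $\on{HH}_*(-)$ to the module structure $\cW_{e,\lambda}\on{-mod}\simeq D(U^-,\psi_e\backslash G/B,\chi)\curvearrowleft \sH_{G,\chi}^{\on{ren}}$. By Lemma \ref{l:HHiscoeff} (together with Example \ref{e:Springer}), applying $\on{HH}_*(-)$ to $D(G/B,\chi)$ viewed as a $(D(G),\sH_{G,\chi})$-bimodule yields exactly $\on{Spr}_\chi$ with its $\on{HH}_*(\sH_{G,\chi}^{\on{ren}})$-action from \S\ref{s:bimodulestructure}, and the Whittaker coefficient functor $\on{coeff}_{\bO}^G$ intertwines the bimodule structure on $\on{End}_{D(G)\textbf{-mod}}(D(G/B,\chi))$ with the $\on{coeff}_{\bO}^G$-pushforward. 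Hence the action \eqref{eq:finalspringeryaaz} is precisely the image under $\on{coeff}_{\bO}^G$ of the categorical Springer action \eqref{eq:springeract1} on $\on{Spr}_\chi$.

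Next I would run the analogous identification on the Lie algebra side. Using the chain of isomorphisms established in the proof of Lemma \ref{l:HHiscoho} — namely $\on{HH}_*(\cW_{e,\lambda}\on{-mod})\simeq \on{coeff}_{\bO}^G(\on{Spr}_\chi)\simeq \on{coeff}_{\bO}^{\fg}(\on{Spr}^{\fg})$, and then \eqref{eq:odetillisa}, $\on{coeff}_{\bO}^{\fg}(\on{Spr}^{\fg})\simeq i_e^{*,\on{dR}}\circ\on{FT}_{\fg}(\on{Spr}^{\fg})[-\on{dim}\bO]$ — I would check that all of these isomorphisms are equivariant for the relevant $W_{[\lambda]}$-actions. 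Concretely, the usual Springer action \eqref{eq:finalspringeractionvect} is by definition the action of $W_{[\lambda]}$ on $i_e^{*,\on{dR}}\circ\on{FT}_{\fg}(\on{Spr}^{\fg})$ induced by the $W_{[\lambda]}$-action on $\on{Spr}^{\fg}$, which is the Lie-algebra analogue of the ``usual'' action of \S\ref{s:USA} on $\on{Spr}_\chi$. So it suffices to show: (i) the isomorphism $\on{coeff}_{\bO}^G(\on{Spr}_\chi)\simeq \on{coeff}_{\bO}^{\fg}(\on{Spr}^{\fg})$ (coming from \eqref{eq:whitspringerequality}, i.e.\ from the fact that $\on{Spr}_\chi$ and $\on{Spr}^{\fg}$ agree near the unipotent/nilpotent locus via $\mathbf{exp}$) intertwines the image under $\on{coeff}_{\bO}^G$ of the categorical action \eqref{eq:springeract1} with the image under $\on{coeff}_{\bO}^{\fg}$ of the categorical action on $\on{Spr}^{\fg}$; and (ii) on $\on{Spr}^{\fg}$ the categorical $W_{[\lambda]}$-action agrees with the usual one. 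For (ii), I would invoke the exact Lie-algebra analogue of Proposition \ref{p:Springeract3} — its proof goes through verbatim replacing $G$ by $\fg$, $G\overset{\on{ad}}{/}G$ by $\fg/G$, $T^{\on{rs}}$ by $\ft^{\on{rs}}$, and the various Bruhat/$j_{w}$ constructions by their additive counterparts — and for (i) I would note that the comparison of categorical actions is compatible with the $\mathbf{exp}$-pullback because, as in Lemma \ref{l:exppull} and the proof of Theorem \ref{t:whitismicrostalkG}, the twist $\chi$ trivializes after pulling back along $\mathbf{exp}$ on a neighbourhood of the identity, and the wall-crossing sheaves $j_{1,w}$ on $G/B$ restrict to their analogues on $\fg$-side there.

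Putting this together: the categorical action \eqref{eq:springeract1} on $\on{Spr}_\chi$ equals the usual action \eqref{eq:springeract2} by Proposition \ref{p:Springeract3}; transporting along $\on{coeff}_{\bO}^G$ and the identifications above turns \eqref{eq:springeract1} into \eqref{eq:finalspringeryaaz} and \eqref{eq:springeract2} into \eqref{eq:finalspringeractionvect}; hence the two actions on $H^*(\cB_e)$ coincide.

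I expect the main obstacle to be the bookkeeping in step (i)–(ii): making precise that the various comparison isomorphisms ($\on{coeff}_{\bO}^G\simeq\on{coeff}_{\bO}^{\fg}\circ\mathbf{exp}^!$, the contraction-principle identification \eqref{eq:odetillisa}, and the passage through the Slodowy slice and Springer resolution in the proof of Lemma \ref{l:HHiscoho}) are all equivariant for the Weyl group actions, and that the ``usual'' Springer action as normalized via $\on{FT}_{\fg}$ matches the one built from $\pi_{\on{rs}}$ being a $W$-covering. Once one grants, via Proposition \ref{p:Springeract3} and its (routine) Lie-algebra variant, that both categorical and usual actions factor through the same action on $\on{Spr}_\chi$ respectively $\on{Spr}^{\fg}$, the corollary is essentially formal; the content is in checking naturality of the coefficient functor with respect to the Hecke/wall-crossing action, which was already the point of Lemma \ref{l:HHiscoeff} and Lemma \ref{l:mainlemmactually}.
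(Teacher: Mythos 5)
Your step 1 (identify \eqref{eq:finalspringeryaaz} with $\on{coeff}^G_{\bO}$ applied to the categorical Springer action \eqref{eq:springeract1} on $\on{Spr}_\chi$, via Lemma \ref{l:HHiscoeff}, and then invoke Proposition \ref{p:Springeract3}) is exactly the paper's first step. The gap is in your steps (i)–(ii): you want to compare the categorical action on $\on{Spr}_\chi$ with a ``categorical action on $\on{Spr}^{\fg}$'' and then invoke ``the exact Lie-algebra analogue of Proposition \ref{p:Springeract3},'' replacing Bruhat cells and the wall-crossing sheaves $j_{1,w}$ by ``additive counterparts.'' Neither of these exists. The categorical Springer action of \S\ref{s:bimodulestructure} is tied to the group: it comes from the monoidal Hecke category $\sH_{G,\chi}=D(B,\chi\backslash G/B,\chi)$ acting by convolution on the $G$-category $D(G/B,\chi)$, and the character sheaf functor $\sC\mapsto\chi_{\sC}$ lands in $D(G\overset{\on{ad}}{/}G)$, not $D(\fg/G)$. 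There is no convolution monoidal structure on $D(\fb\backslash\fg/\fb,\chi)$, no additive analogue of the sheaves $j_{1,w}$, and hence no Lie-algebra Proposition \ref{p:Springeract3} to invoke; your ``proof goes through verbatim'' claim is empty because the group structure of $G$ is precisely what the proof of Proposition \ref{p:Springeract3} uses.

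The correct way to finish, which is what the paper does, is to apply Proposition \ref{p:Springeract3} \emph{only once, on the group side}, thereby replacing the categorical action on $\on{Spr}_\chi$ by the usual Springer action \eqref{eq:springeract2}. One then compares the \emph{usual} action on $\on{Spr}_\chi$ with the \emph{usual} Springer action on $\on{Spr}^{\fg}$, by restricting both to the unipotent/nilpotent locus: under the canonical identification $'i_{\cN}^!(\on{Spr}_\chi)\simeq i_{\cN}^!(\on{Spr}^{\fg})$ (Riemann--Hilbert and the exponential near $1\in G$, then Lefschetz principle), the two restricted actions coincide. Finally Remark \ref{r:Springeridentification} identifies $\on{coeff}_{\bO}^{\fg}$ of the Springer action on $\on{Spr}^{\fg}$ with \eqref{eq:finalspringeractionvect}. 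So the outline of your argument is right, but the move to the Lie algebra must happen after Proposition \ref{p:Springeract3} has already converted categorical to usual, not in parallel with a phantom additive version of that proposition.
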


\begin{proof}
\step

By Lemma \ref{l:HHiscoeff}, we may rewrite the action (\ref{eq:finalspringeryaaz}) as follows. We have the categorical Springer action:
\[
\on{Spr}_{\chi}\overset{(\ref{eq:springeract1})}{\curvearrowleft} k[W_{[\lambda]}].
\]

\noindent Applying the functor $\on{coeff}_{\bO}^G$, we obtain an action
\begin{equation}\label{eq:lastoneploxx}
H^*(\cB_e)[2\on{dim}\cB_e]\simeq\overset{\on{Lem.}\ref{l:HHiscoeff}}{\simeq}\on{coeff}_{\bO}^G(\on{Spr}_{\chi})\curvearrowleft k[W_{[\lambda]}],
\end{equation}

\noindent which (up to a shift) coincides with the action (\ref{eq:finalspringeryaaz}). By Proposition \ref{p:Springeract3}, the action (\ref{eq:lastoneploxx}) coincides with applying $\on{coeff}_{\bO}^G$ to the usual Springer action:
\[
\on{Spr}_{\chi}\overset{(\ref{eq:springeract2})}{\curvearrowleft} k[W_{[\lambda]}].
\]

\step Let $i_{\cN}: \cN/G\into \fg/G$ be the inclusion. We denote by $'i_{\cN}$ the inclusion $\cN/G\into G\overset{\on{ad}}{/}G$ under the identification of the nilpotent locus of $\fg$ with the unipotent locus of $G$. The above gives an action
\begin{equation}\label{eq:iseminem'snewalbumanygood?}
'i_{\cN}^!(\on{Spr}_{\chi})\overset{'i_{\cN}^!(\ref{eq:springeract2})}{\curvearrowleft} k[W_{[\lambda]}].
\end{equation}

\noindent On the other hand, applying $i_{\cN}^!$ to
\[
\on{Spr}^{\fg}\curvearrowleft k[W_{[\lambda]}],
\]

\noindent we get an action:
\begin{equation}\label{eq:deltaairlinesdoesnthaveenoughgoodmusic}
i_{\cN}^!(\on{Spr}^{\fg})\curvearrowleft k[W_{[\lambda]}].
\end{equation}

\noindent By construction, under the canonical identification
\[
'i_{\cN}^!(\on{Spr}_{\chi})\simeq i_{\cN}^!(\on{Spr}^{\fg}),
\]

\noindent the two actions (\ref{eq:iseminem'snewalbumanygood?}) and (\ref{eq:deltaairlinesdoesnthaveenoughgoodmusic}) coincide.\footnote{If $k=\bC$, this follows from Riemann-Hilbert as in §\ref{ss:exp}: locally around $1\in G$, $\mathbf{exp}: \fg\to G$ is a biholomorphism and on this neighbourhood, pullback takes $\on{Spr}_{\chi}$ to $\on{Spr}^{\fg}$. In general, we can reduce to $k=\bC$ by Lefschetz principle.}

\step 

Combining the above two steps, we get that the action (\ref{eq:finalspringeryaaz}) is obtained by applying $\on{coeff}_{\bO}^{\fg}$ to the Springer action
\[
\on{Spr}^{\fg}\curvearrowleft k[W_{[\lambda]}].
\]

\noindent The proposition now follows from Remark \ref{r:Springeridentification}.

\end{proof}

\subsubsection{} We end this subsection by sketching the proof of the following strengthening of Lemma \ref{l:HHisgroupalg}:
\begin{lem}\label{l:strongversion}
We have a canonical isomorphism
\[
\on{HH}_*(\sH_{G,\chi}^{\on{ren}})=W_{[\chi]}\ltimes \on{Sym}(\ft^*[-1]\oplus \ft^*[-2])
\]

\noindent of dg-algebras, where $W_{[\chi]}$ acts on $\on{Sym}(\ft^*[-1]\oplus \ft^*[-2])$ via the diagonal action of $W_{[\chi]}$ on $\ft^*[-1]\oplus \ft^*[-2]$.
\end{lem}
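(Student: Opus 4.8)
The plan is to upgrade the vector-space identification of Lemma~\ref{l:HHisgroupalg} to a dg-algebra isomorphism by treating separately the ``torus directions'' $\Sym(\ft^*[-1]\oplus\ft^*[-2])$ and the ``Weyl directions'' $k[W_{[\chi]}]$, and then checking how they interact. Since $\sH_{G,\chi}^{\on{ren}}$ is rigid, $\on{HH}_*(\sH_{G,\chi}^{\on{ren}})$ is a dg-algebra, and by Lemma~\ref{l:HHisgroupalg} it is, as a graded vector space, $k[W_{[\chi]}]\otimes\Sym(\ft^*[-1]\oplus\ft^*[-2])$, with the Bruhat filtration by the $\sC_{\leq w}$, $w\in W_{[\chi]}$, inducing the decomposition into summands indexed by $w$.

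First I would identify the $w=e$ summand as a subalgebra. The full subcategory $\sC_{\leq e}$ generated by the monoidal unit $j_{e,\chi,!}$ is closed under convolution (because $B\cdot B=B$), hence is a monoidal subcategory, and $\sC_{\leq e}\simeq\on{End}_{\sH_{G,\chi}^{\on{ren}}}(j_{e,\chi,!})\on{-mod}\simeq H^*(\bB T)\on{-mod}$, using formality of $C^*(\bB T)$ exactly as in the proof of Lemma~\ref{l:HHisgroupalg}. Applying the Hochschild--Kostant--Rosenberg theorem to $H^*(\bB T)\simeq\Sym(\ft^*[-2])$ gives an isomorphism of graded algebras $\on{HH}_*(\sC_{\leq e})\simeq\Sym(\ft^*[-1]\oplus\ft^*[-2])$, and by functoriality this is compatible with the natural $W$-action on $H^*(\bB T)$. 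Since the inclusion $\sC_{\leq e}\into\sH_{G,\chi}^{\on{ren}}$ is monoidal and admits a continuous right adjoint, it induces an algebra map on $\on{HH}_*$, and matching with the computation in the proof of Lemma~\ref{l:HHisgroupalg} this realizes $\Sym(\ft^*[-1]\oplus\ft^*[-2])$ as the $w=e$ summand, now as a subalgebra.

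Next I would use that $\on{HH}_0(\sH_{G,\chi}^{\on{ren}})\simeq k[W_{[\chi]}]$, with $w\mapsto[j_{w,\chi,!}]$, is a subalgebra (here one uses that the Chern character $K_0\to\on{HH}_0$ is a ring map, §\ref{s:chern}, together with the ring isomorphism~(\ref{eq:simofalgs})); in particular each $[j_{w,\chi,!}]$ is a unit in the algebra $\on{HH}_*(\sH_{G,\chi}^{\on{ren}})$. Because convolution with $j_{w,\chi,!}$ carries $\sC_{\leq e}$ into $\sC_{\leq w}$, the Bruhat filtration forces the $w$-summand to be $[j_{w,\chi,!}]\cdot\Sym(\ft^*[-1]\oplus\ft^*[-2])$, so conjugation by the unit $[j_{w,\chi,!}]$ is a degree-preserving algebra automorphism of $\on{HH}_*(\sH_{G,\chi}^{\on{ren}})$ preserving the subalgebra $\Sym(\ft^*[-1]\oplus\ft^*[-2])$ of the previous step. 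I would then identify this automorphism with the diagonal action of $w$ by reducing, as in the proof of Proposition~\ref{p:Springeract3}, to the rank-one case and then to the torus model $\sC_{\leq e}\simeq H^*(\bB T)\on{-mod}$, where conjugation by $j_{w,\chi,!}$ becomes pushforward along $w\colon T\to T$ (legitimate since $w(\chi)=\chi$), i.e. the standard $W$-action on $H^*(\bB T)$, hence the diagonal action on $\on{HH}_*(\sC_{\leq e})$. Combined with the vector-space count of Lemma~\ref{l:HHisgroupalg}, this exhibits $\on{HH}_*(\sH_{G,\chi}^{\on{ren}})$ as $W_{[\chi]}\ltimes\Sym(\ft^*[-1]\oplus\ft^*[-2])$.

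The hard part will be exactly this last identification: verifying that conjugation by $[j_{w,\chi,!}]$ induces precisely the diagonal Weyl action on the torus subalgebra, with no spurious twist introduced by the renormalization or by the coherence data identifying conjugation by an invertible element with an automorphism. I expect this to be routine but bookkeeping-heavy: the grading makes it impossible for the automorphism of $\Sym(\ft^*[-2])$ to be anything other than one induced by an element of $W$, so only the identification of that element is at stake, and it is forced by naturality together with the compatibilities of the identifications $D(B,\chi\backslash BwB/B,\chi)^{\on{ren}}\simeq D(T,\chi\backslash T/T,\chi)^{\on{ren}}$ already used in the proofs of Lemma~\ref{l:HHisgroupalg} and Lemma~\ref{l:mainlemmactually}.
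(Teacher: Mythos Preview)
Your overall architecture matches the paper's: both start from the vector-space identification of Lemma~\ref{l:HHisgroupalg}, recognize that the algebra structure must be a semidirect product for \emph{some} action of $W_{[\chi]}$ on $\on{Sym}(\ft^*[-1]\oplus\ft^*[-2])$ given by conjugation by the classes $[j_{w,\chi,!}]$, and reduce (via passage to a minimal Levi, then splitting off the center) to determining how a simple reflection $s$ acts on $k[-1]\oplus k[-2]$ in semisimple rank one. The difference, and the gap in your proposal, lies in this last identification.

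Your fallback claim that ``the grading makes it impossible for the automorphism of $\Sym(\ft^*[-2])$ to be anything other than one induced by an element of $W$'' is incorrect: graded algebra automorphisms of $\Sym(V[-2])$ form all of $GL(V)$. After the rank-one reduction and splitting off the center, the relevant piece is $k[-2]$, and since $s^2=1$ the action is by $\pm 1$; the grading alone does not decide which. Your primary claim that conjugation by $[j_{s,\chi,!}]$ ``becomes pushforward along $s\colon T\to T$'' on the torus model is likewise not justified: at the categorical level $j_{s,\chi,!}\star(-)\star j_{s,\chi,!}$ does not even preserve $\sC_{\leq e}$ (the convolution lands in $\sC_{\leq s}$), so the identification only makes sense after passing to $\on{HH}_*$ and inverting, at which point ``naturality'' is not an argument. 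The identifications $D(B,\chi\backslash BwB/B,\chi)^{\on{ren}}\simeq D(T,\chi\backslash T/T,\chi)^{\on{ren}}$ you invoke are non-canonical, which is precisely what makes this subtle.

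The paper resolves the sign via two inputs absent from your sketch. For $k[-2]$: if $s$ acted trivially, $\on{Sym}(\ft^*[-2])$ would be central in $\on{HH}_*(\sH_{G,\chi}^{\on{ren}})$, and acting by the degree-two generator on the $!$-fiber of $\on{Spr}_\chi$ at $1\in G$ would give a $W$-equivariant isomorphism $H^0(\bP^1)\simeq H^2(\bP^1)$---contradicting Proposition~\ref{p:Springeract3}, which identifies these as the sign and trivial representations respectively. For $k[-1]$: the $S^1$-action on Hochschild homology is $W$-linear and carries $\ft^*[-2]$ isomorphically onto $\ft^*[-1]$ (one checks the Connes operator is nonzero in degree two on $\on{HH}_*(\Sym(k[-2]))$), so the action on $k[-1]$ is inherited from that on $k[-2]$. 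Your proposal would need a comparable mechanism; at present the key step is only asserted.
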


\begin{proof}[Proof sketch]
\step 
From Lemma \ref{l:HHisgroupalg}, we know that
\[
\on{HH}_*(\sH_{G,\chi}^{\on{ren}})=W_{[\chi]}\ltimes \on{Sym}(\ft^*[-1]\oplus \ft^*[-2])
\]

\noindent for some action
\begin{equation}\label{eq:1109:1123}
W_{[\lambda]}\curvearrowright \on{Sym}(\ft^*[-1]\oplus \ft^*[-2]).
\end{equation}

\noindent Concretely, the action is given as follows: for $w\in W$, consider the endofunctor:
\[
D(B,\chi\backslash B/B,\chi)^{\on{ren}}\xrightarrow{j_{w,1}\star - } D(B,w(\chi)\backslash G/B,\chi)^{\on{ren}}\xrightarrow{-\star j_{1,w}} D(B,w(\chi)\backslash B/B,w(\chi))^{\on{ren}}\simeq D(B,\chi\backslash B/B,\chi)^{\on{ren}}.
\]

\noindent Here, the last equivalence comes from the canonical identifications $D(B,\chi\backslash B/B,\chi)^{\on{ren}}\simeq C^*(\bB T)\on{-mod}\simeq D(B,w(\chi)\backslash B/B,w(\chi))^{\on{ren}}$. Passing to Hochschild homology, we obtain the endomorphism
\begin{equation}\label{eq:1109:1747}
w: \on{Sym}(\ft^*[-1]\oplus \ft^*[-2])\to \on{Sym}(\ft^*[-1]\oplus \ft^*[-2]).
\end{equation}

\noindent This provides an action of $W$ on $\on{Sym}(\ft^*[-1]\oplus \ft^*[-2])$, and restricting this action to $W_{[\lambda]}$ recovers (\ref{eq:1109:1123}). Note that the multiplication map
\[
\on{Sym}(\ft^*[-1]\oplus \ft^*[-2])\otimes \on{Sym}(\ft^*[-1]\oplus \ft^*[-2])\to \on{Sym}(\ft^*[-1]\oplus \ft^*[-2])
\]

\noindent is $k[W]$-linear for the diagonal action (\ref{eq:1109:1747}) on the left-hand side. Thus, it suffices to show that for every simple reflection $s\in W$, the corresponding endomorphism (\ref{eq:1109:1747}) acts on $\ft^*[-1]\oplus \ft^*[-2]$ in the usual way.

\step 

Let $P_s$ be the parabolic associated to $s$ and $G_s$ its Levi quotient. By applying the above construction, replacing $G$ by $G_s$, we similarly get an action
\[
s: \on{Sym}(\ft^*[-1]\oplus \ft^*[-2])\to \on{Sym}(\ft^*[-1]\oplus \ft^*[-2]),
\]

\noindent where we consider $\ft$ as a maximal torus in $\fg_s$. By construction (see e.g. §\ref{s:compindpres}), this action coincides with (\ref{eq:1109:1747}). Thus, we may assume that $G=G_s$ is of semisimple rank $1$. Let $Z=Z(G)$ be the center of $G$ with Lie algebra $\fz$. Consider the isogeny $Z\times \bG_m\to T$, where the map $\bG_m\to T$ is the coroot of $G$. Then
\begin{equation}\label{eq:1109:2010}
D(B,\chi\backslash B/B,\chi)\simeq D(T,\chi\backslash T/T,\chi)\simeq D(Z,\chi\backslash Z/Z,\chi)\otimes D(\bG_m,\chi\backslash \bG_m/\bG_m,\chi),
\end{equation}

\noindent and similarly for its renormalized versions. It follows from the definition that $s$ acts on 
\[
\on{Sym}(\ft^*[-1]\oplus \ft^*[-2])\simeq \on{Sym}(\fz^*[-1]\oplus \fz^*[-2])\otimes \on{Sym}(k[-1]\oplus k[-2])
\]

\noindent by acting trivially on the first factor. Thus, it suffices to show that $s$ acts on $k[-1]\oplus k[-2]$ by $-1$.

\step

Let us first show that $s$ acts by $-1$ on $k[-2]$. If it did not, $s$ would act trivially on $\on{Sym}(\ft^*[-2])$. We claim that this is impossible. Indeed, suppose not. Then $\on{Sym}(\ft^*[-2])$ is central in $\on{HH}_*(\sH_{G,\chi}^{\on{ren}})$. On the other hand, acting by $k[-2]$ provides an isomorphism
\[
H^0(\bP^1)\to H^2(\bP^1),
\]

\noindent of $W$-representations, where:
\begin{itemize}
    \item We identify $\bP^1=G/B$;

    \item We realize $H^*(G/B)$ as the $!$-fiber at $1$ of the Springer sheaf $\on{Spr}_{\chi}$ with its $\on{HH}_*(\sH_{G,\chi})$-module structure.
\end{itemize}

\noindent However, this is a contradiction: by Proposition \ref{p:Springeract3}, $H^0(\bP^1)$ identifies with the sign representation of $W$, while $H^2(\bP^1)$ identifies with the trivial representation.

\step

Finally, let us show that the action of $s$ on $k[-1]$ is given by $-1$. The $S^1$-action on $\on{Sym}(\ft^*[-1]\oplus \ft^*[-2])$ (cf. Section \ref{S: PCH} below) respects the decomposition (\ref{eq:1109:2010}) and is $W$-linear. Moreover, it maps $\ft^*[-2]$ isomorphically onto $\ft^*[-1]$.\footnote{This amounts to showing that the $S^1$-action on $\on{HH}_*(\on{Sym}(k[-2]))$ is non-trivial in degree two. This is easily verified interpreting the $S^1$-action as the Connes operator.} We conclude by Step 3.

\end{proof}

\subsection{Periodic cyclic homology}\label{S: PCH}

In this subsection, we compute periodic cyclic homology of certain categories of interest.

\subsubsection{} Let $\sC$ be a dualizable DG category. As explained in \cite[§3]{ben2012loop}, \cite[§4.3]{raskin2018dundas}, the Hochschild homology of $\sC$ comes equipped with an $S^1=\bB \bZ$-action:
\[
S^1\curvearrowright \on{HH}_*(\sC).
\]

\noindent This is equivalent to an action of the algebra (see \cite[Cor. 3.14]{ben2012loop})
\[
C_{\bullet}(S^1)=k[\epsilon]/(\epsilon^2),\;\; \vert\epsilon\vert=-1.
\]

\subsubsection{}\label{s:hp} Taking invariants, we obtain an action
\[
k[u]=\on{End}_{C_{\bullet}(S^1)\on{-mod}}(k)\curvearrowright  \on{HH}_*(\sC)^{S^1},\;\; \vert u\vert=2.
\]

\noindent Let
\[
\on{HP}_*(\sC):=\on{HH}_*(\sC)^{tS^1}=\on{HH}_*(\sC)^{S^1}\underset{k[u]}{\otimes}k[u,u^{-1}].
\]

\noindent We refer to $\on{HP}_*(\sC)$ as the periodic cyclic homology of $\sC$.

\subsubsection{} The construction of $\on{HP}_*$ is functorical under continuous functors that admit continuous right adjoints. In particular, if $c\in \sC$ is a compact object, the Chern character $[c]\in\on{HH_0}$ (see (\ref{eq:chern''})) naturally yields an element of $\on{HP}_0(\sC)$.

As such, if $\sC$ is compactly generated, we get a map of $k$-vector spaces:
\[
K_0(\sC)_k\to \on{HP}_0(\sC).
\]
\begin{example}
When $\sC=A\on{-mod}$ for a dga $A$, $\on{HP}_*(A\on{-mod})$ coincides with $\on{HP}_*(A)$, the usual periodic cyclic homology of $A$.
\end{example}
\begin{lem}\label{l:finhp}
Let $A\in\on{AssocAlg}(\on{Vect}^{\heartsuit})$ be a finite-dimensional associative algebra over $k$. Then we have a natural isomorphism
\[
\underset{V\;\on{simple}}{\bigoplus}k \simeq \on{HP}_0(A),
\]

\noindent where the direct sum is taken over isomorphism classes of simple left $A$-modules.

\end{lem}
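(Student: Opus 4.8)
The plan is to reduce the computation of $\on{HP}_0(A)$ to the semisimple quotient $A^{\on{red}} = A/\on{rad}(A)$ via a Goodwillie-type invariance statement, and then to compute $\on{HP}_0$ of a semisimple algebra directly. First I would recall that periodic cyclic homology is invariant under nilpotent (in fact, more generally square-zero, and then by dévissage arbitrary nilpotent) extensions: if $I \subset A$ is a nilpotent two-sided ideal, then the quotient map $A \to A/I$ induces an isomorphism $\on{HP}_*(A) \xrightarrow{\simeq} \on{HP}_*(A/I)$. This is the Goodwillie rigidity theorem; I would cite it from the standard references (e.g. Goodwillie, or the treatment in \cite{ben2012loop}, \cite{raskin2018dundas}) rather than reprove it. Applying this to $I = \on{rad}(A)$, which is nilpotent since $A$ is finite-dimensional, gives $\on{HP}_*(A) \simeq \on{HP}_*(A^{\on{red}})$.

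Next I would compute $\on{HP}_*$ of the semisimple algebra $A^{\on{red}}$. Over the algebraically closed field $k$, Artin--Wedderburn gives $A^{\on{red}} \simeq \prod_{V} \on{Mat}_{n_V}(k)$, the product running over isomorphism classes of simple $A$-modules $V$. Since $\on{HP}_*$ takes finite products of algebras to direct sums, and since $\on{HP}_*$ is Morita invariant (matrix algebras $\on{Mat}_n(k)$ are Morita equivalent to $k$), we get $\on{HP}_*(A^{\on{red}}) \simeq \bigoplus_V \on{HP}_*(k)$. Finally $\on{HH}_*(k) = k$ concentrated in degree $0$ with trivial $S^1$-action, so $\on{HP}_*(k) = k[u, u^{-1}]$ with $\on{HP}_0(k) = k$; hence $\on{HP}_0(A^{\on{red}}) \simeq \bigoplus_V k$, one summand per simple module. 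Chaining the two isomorphisms yields the desired $\bigoplus_{V \text{ simple}} k \simeq \on{HP}_0(A)$, and I would note that the identification is natural in the evident sense: a simple module $V$, viewed as a compact object of $A\on{-mod}$, has Chern character landing in the corresponding summand (this pins down the map and makes it canonical).

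The main obstacle, such as it is, is purely expository: making sure the cited Goodwillie invariance statement is available in the $\infty$-categorical / DG setting used in the paper (where $\on{HP}_*$ is defined as $\on{HH}_*(-)^{tS^1}$ as in \S\ref{s:hp}) rather than only in the classical setting of \cite{goodwillie1985cyclic}, and that Morita invariance of $\on{HP}_*$ is invoked in a form compatible with the definition via dualizable categories. Both are standard — Morita invariance is immediate from the fact that $\on{HH}_*$ only depends on the category $A\on{-mod}$, and $S^1$-equivariantly so — but since the lemma is stated for an algebra $A$ rather than for $A\on{-mod}$, I would phrase the reduction carefully: $\on{HP}_*(A) := \on{HP}_*(A\on{-mod})$ by the convention recorded just before the lemma, so Morita invariance is tautological once one knows $\on{Mat}_n(k)\on{-mod} \simeq \Vect$. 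No serious analytic or geometric input is needed; the content is entirely formal homological algebra together with the structure theory of finite-dimensional algebras.
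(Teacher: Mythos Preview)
Your proposal is correct and follows essentially the same argument as the paper: reduce to the semisimple quotient $A/\on{Jac}(A)$ via Goodwillie's nilpotent invariance theorem \cite[Thm.~II.5.1]{goodwillie1985cyclic}, then apply Wedderburn and Morita invariance to compute $\on{HP}_0$ of a product of matrix algebras. The paper's proof is terser and does not dwell on the compatibility of the classical Goodwillie statement with the categorical definition of $\on{HP}_*$, but the substance is identical.
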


\begin{proof}
Let $\on{Jac}(A)$ be the Jacobson radical of $A$; that is, the ideal of elements that annihilate every simple left $A$-module. Let $B:=A/\on{Jac}(A)$. A simple consequence of Nakayama's lemma is that $\on{Jac}(A)$ is nilpotent. By \cite[Thm. II.5.1]{goodwillie1985cyclic}, the map
\[
A\onto B
\]

\noindent induces an isomorphism
\[
\on{HP}_*(A)\to \on{HP}_*(B).
\]

\noindent Moreover, by definition of $\on{Jac}(A)$, the simple left $A$-modules are in bijection with those of $B$. Thus, it suffices to give a natural isomorphism
\[
\underset{V\;\on{simple}}{\bigoplus}k \simeq \on{HP}_0(B).
\]

\noindent By Wedderburn's theorem, the map
\[
B\to \underset{V \on{simple}}{\bigoplus} \on{End}_k(V)
\]

\noindent is an isomorphism. Since $\on{End}_k(V)$ is Morita equivalent to $k$, the lemma follows.
\end{proof}

\subsubsection{} We now turn our study to the category $\sW_{e,\lambda}\on{-mod}^{\on{fin}}$. Recall the equivalence of Proposition \ref{p:keytoeverythingrs}:
\begin{equation}\label{eq:ssdescrip'''}
\sW_{e,\lambda}\on{-mod}^{\on{fin}}\simeq \sW_{e,\lambda}\on{-mod}\underset{\sH_{G,\chi}^{\on{ren}}}{\otimes} \sH_{G,\chi,\overline{\bO}}^{\on{ren}}.
\end{equation}

\noindent It follows from Proposition \ref{l:stable} that the inclusion
\[
i: \sW_{e,\lambda}\on{-mod}^{\on{fin}}\into \sW_{e,\lambda}\on{-mod}
\]

\noindent admits a left adjoint $i^L$. Let 
\[
\sW_{e,\lambda}^{\on{fin}}:=i^L(\sW_{e,\lambda})\in \sW_{e,\lambda}\on{-mod}^{\on{fin}}.
\]

\noindent Then $\sW_{e,\lambda}^{\on{fin}}$ is a compact generator of $\sW_{e,\lambda}\on{-mod}^{\on{fin}}$ and lives in non-positive cohomological degrees. Moreover, the functor
\begin{equation}\label{eq:trulyfinal}
\on{Hom}_{\sW_{e,\lambda}\on{-mod}^{\on{fin}}}(\sW_{e,\lambda}^{\on{fin}},-): \sW_{e,\lambda}\on{-mod}^{\on{fin}}\to \on{Vect}
\end{equation}

\noindent is t-exact.

\subsubsection{} To avoid heavy notation, we write
\[
A:=\on{End}_{\sW_{e,\lambda}\on{-mod}^{\on{fin}}}(\sW_{e,\lambda}^{\on{fin}})
\]

\noindent for the (derived) endomorphisms of $\sW_{e,\lambda}^{\on{fin}}$. Note that $A$ is concentrated in non-positive cohomological degrees. Let
\[
A^0:=H^0(A).
\]

\noindent Since $\sW_{e,\lambda}^{\on{fin}}$ is compact, it follows that the map
\[
\sW_{e,\lambda}^{\on{fin}}\to H^0(\sW_{e,\lambda}^{\on{fin}})
\]

\noindent factors through a finite-dimensional subspace. Thus, $H^0(\sW_{e,\lambda}^{\on{fin}})$ must be finite-dimensional. In particular, $A^0$ is finite-dimensional too by exactness of the functor (\ref{eq:trulyfinal}).

\subsubsection{}
Since $\sW_{e,\lambda}^{\on{fin}}$ is a compact generator of $\sW_{e,\lambda}\on{-mod}^{\on{fin}}$, it follows that we have a t-exact equivalence
\begin{equation}
\sW_{e,\lambda}\on{-mod}^{\on{fin}}\simeq A\on{-mod}.
\end{equation}

\noindent In particular, 
\begin{equation}\label{eq:txasdasd}
\sW_{e,\lambda}\on{-mod}^{\on{fin},\heartsuit}\simeq A\on{-mod}^{\heartsuit}.
\end{equation}
\begin{lem}\label{l:kzerotohp}
The Chern character map
\[
K_0(\sW_{e,\lambda}\on{-mod}^{\on{fin}, \heartsuit})_k\to \on{HP}_0(\sW_{e,\lambda}\on{-mod}^{\on{fin}})
\]

\noindent is an isomorphism.
\end{lem}

\begin{proof}
Since $A$ is connective, \cite[Thm. III.5.1]{goodwillie1985cyclic} says that the map
\[
A\to A^0
\]

\noindent induces an isomorphism
\[
\on{HP}_*(A)\to \on{HP}_*(A^0).
\]

\noindent Similarly, note that
\begin{equation}\label{eq:heartequivs}
\sW_{e,\lambda}\on{-mod}^{\on{fin},\heartsuit}\overset{(\ref{eq:txasdasd})}{\simeq} A\on{-mod}^{\heartsuit}\simeq A^0\on{-mod}^{\heartsuit}.
\end{equation}

\noindent By regularity of $\lambda$, a module $M\in \sW_{e,\lambda}\on{-mod}^{\on{fin},\heartsuit}$ is compact as an object of $\sW_{e,\lambda}\on{-mod}^{\on{fin}}$ if and only if it is finite-dimensional. Thus, we obtain a Chern character map
\[
K_0(\sW_{e,\lambda}\on{-mod}^{\on{fin},\heartsuit})_k\to \on{HP}_0(\sW_{e,\lambda}\on{-mod}^{\on{fin}})\simeq \on{HP}_0(A)\simeq \on{HP}_0(A^0),
\]

\noindent and we need to show that it is an isomorphism. However, this follows from (\ref{eq:heartequivs}) and (the construction of) Lemma \ref{l:finhp} using that $A^0$ is finite-dimensional.
\end{proof}

\subsubsection{} Let
\[
D(B,\chi\backslash G/(B,\chi)\on{-mon})
\]

\noindent be the full subcategory of $D(B,\chi\backslash G/N)$ generated under colimits by the essential image of the forgetful functor
\[
\sH_{G,\chi}\to D(B,\chi\backslash G/N).
\]

\noindent Note that $D(B,\chi\backslash G/(B,\chi)\on{-mon})$ carries a natural t-structure making the inclusion
\[
D(B,\chi\backslash G/(B,\chi)\on{-mon})\into D(B,\chi\backslash G/N)
\]

\noindent t-exact.

\subsubsection{} Similarly, define the category
\[
D_{\overline{\bO}}(B,\chi\backslash G/(B,\chi)\on{-mon})
\]

\noindent to be the full subcategory of $D(B,\chi\backslash G/N)$ generated under colimits by the essential image of the forgetful functor
\[
\sH_{G,\chi,\overline{\bO}}\to D(B,\chi\backslash G/N).
\]

\noindent We need an analogue of Lemma \ref{l:kzerotohp} for this category. Let
\[
K_0(D(B,\chi\backslash G/(B,\chi)\on{-mon}))_{\overline{\bO},k}
\]

\noindent be the Grothendieck group of the abelian category of coherent D-modules $\cF\in D_{\overline{\bO}}(B,\chi\backslash G/(B,\chi)\on{-mon})^{\heartsuit}$, base-changed to $k$. Note that each such object $\cF$ is compact in $D(B,\chi\backslash G/N)$.

We have an evident isomorphism:
\[
K_0(\sH_{G,\chi}^{\on{ren}})_{\overline{\bO},k}\overset{\simeq}{\to} K_0(D(B,\chi\backslash G/(B,\chi)\on{-mon}))_{\overline{\bO},k}.
\]

\begin{lem}\label{l:hpdescrip}
We have a canonical isomorphism
\[
\on{HP}_*(D_{\overline{\bO}}(B,\chi\backslash G/(B,\chi)\on{-mon}))\simeq K_0(D(B,\chi\backslash G/(B,\chi)\on{-mon}))_{\overline{\bO},k}\otimes k[u,u^{-1}],\;\; \vert u \vert=2
\]

\noindent such that the Chern character map
\[
K_0(\sH_{G,\chi}^{\on{ren}})_{\overline{\bO},k}\simeq K_0(D(B,\chi\backslash G/(B,\chi)\on{-mon}))_{\overline{\bO},k}\to \on{HP}_0(D_{\overline{\bO}}(B,\chi\backslash G/(B,\chi)\on{-mon}))
\]

\noindent is an isomorphism.
\end{lem}

\begin{proof}
The proof is similar to that of Lemma \ref{l:kzerotohp}. First, the category
\[
D(B,\chi\backslash G/N)\simeq U(\fg)_{\lambda}\on{-mod}^N
\]

\noindent is the derived category of its heart (see \cite{beilinson2003tilting} or \cite[Prop. A.2]{gannon2023universal}). Let $P\in D(B,\chi\backslash G/(B,\chi)\on{-mon})^{\heartsuit}$ be a compact projective generator of $D(B,\chi\backslash G/(B,\chi)\on{-mon})$.\footnote{E.g., one could take the direct sum over the projective covers of the simples in $D(B,\chi\backslash G/(B,\chi)\on{-mon})^{\heartsuit}$.} By Proposition \ref{l:stable}, the inclusion
\[
D_{\overline{\bO}}(B,\chi\backslash G/(B,\chi)\on{-mon})\simeq \sH_{G,\chi,\overline{\bO}}\underset{\sH_{G,\chi}}{\otimes} D(B,\chi\backslash G/(B,\chi)\on{-mon})\into D(B,\chi\backslash G/(B,\chi)\on{-mon})
\]

\noindent admits a left adjoint. Let us write $P^L$ for the image of $P$ under this left adjoint. Then we have a t-exact equivalence
\[
D_{\overline{\bO}}(B,\chi\backslash G/(B,\chi)\on{-mon})\simeq B\on{-mod},
\]

\noindent where $B=\on{End}_{D(B,\chi\backslash G/(B,\chi)\on{-mon})}(P^L)$. The same argument as in the proof of Lemma \ref{l:kzerotohp} now applies: the algebra $B$ is connective and $B^0:=H^0(B)$ is finite-dimensional. Thus, we get an equivalence of abelian categories
\begin{equation}\label{eq:equivhearts2}
D_{\overline{\bO}}(B,\chi\backslash G/(B,\chi)\on{-mon})^{\heartsuit}\simeq B^0\on{-mod}^{\heartsuit}.
\end{equation}

\noindent It suffices to show that the map
\[
K_0(D(B,\chi\backslash G/(B,\chi)\on{-mon}))_{\overline{\bO},k}\to \on{HP}_0(D_{\overline{\bO}}(B,\chi\backslash G/(B,\chi)\on{-mon}))\simeq \on{HP}_0(B)\simeq \on{HP}_0(B^0)
\]

\noindent is an isomorphism (here, the last equality is again \cite[Thm. III.5.1]{goodwillie1985cyclic}). However, this follows from (\ref{eq:equivhearts2}) and Lemma \ref{l:finhp}.
\end{proof}

\subsection{Proof of classification theorem - preliminaries}

\subsubsection{} Let $\on{Spr}_{\chi}\in D(G\overset{\on{ad}}{/}G)$ be the ($\chi$-twisted) Grothendieck Springer sheaf. Recall that it is semisimple. We let
\[
\on{Spr}_{\chi,\overline{\bO}}\subset \on{Spr}_{\chi}
\]

\noindent be the direct sum of all summands whose singular support is contained in $\overline{\bO}$. We define $\on{Spr}_{\chi,\partial\bO}$ similarly. Let 
\[
\on{Spr}_{\chi,\bO}:=\on{Spr}_{\chi,\overline{\bO}}/\on{Spr}_{\chi,\partial\bO}.
\]

\subsubsection{}\label{ss:factorthru} We remind that by Lemma \ref{l:HHiscoho} and \ref{l:HHiscoeff}, we have identifications
\[
H^*(\cB_e)[2\on{dim}\cB_e]\simeq \on{HH}_*(\sW_{e,\lambda}\on{-mod})\simeq \on{coeff}_{\bO}(\on{Spr}_{\chi}),
\]

\noindent and these carry an action of $W_{[\lambda]}$ by means of (\ref{eq:10/09:1716}). Moreover, by Theorem \ref{t:whitismicrostalkG}, we get an injection
\begin{equation}\label{eq:injofbo}
\on{coeff}_{\bO}(\on{Spr}_{\chi,\bO})\into H^{\on{top}}(\cB_e).
\end{equation}

\noindent Here, we consider $H^{\on{top}}(\cB_e)$ as sitting in cohomological degree zero.
\begin{prop}\label{p:finreductionstep}
The maps
\[
H^*(\cB_e)\underset{k[W_{[\lambda]}]}{\otimes}K_0(\sH_{G,\chi}^{\on{ren}})_{\overline{\bO},k}[2\on{dim}\cB_e] \to H^{\on{top}}(\cB_e)\underset{k[W_{[\lambda]}]}{\otimes} K_0(\sH_{G,\chi}^{\on{ren}})_{\overline{\bO},k},
\]
\[
H^*(\cB_e)\underset{k[W_{[\lambda]}]}{\otimes} K_0(\sH_{G,\chi}^{\on{ren}})_{\bO,k}\to H^*(\cB_e)\underset{k[W_{[\lambda]}]}{\otimes} K_0(\sH_{G,\chi}^{\on{ren}})_{\overline{\bO},k}
\]

\noindent are isomorphisms.
\end{prop}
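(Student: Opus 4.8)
The key point is that the twisted Springer sheaf $\on{Spr}_\chi$ is semisimple, so the filtration by singular support splits. Concretely, I will argue that both displayed maps are, up to the identifications already established, induced by split inclusions of semisimple objects, hence are isomorphisms onto their images, and then check that these images are all of the target. Throughout I use the identification $\on{HH}_*(\cW_{e,\lambda}\on{-mod})\simeq \on{coeff}_{\bO}(\on{Spr}_\chi)$ of §\ref{ss:factorthru} together with Lemma \ref{l:HHiscoeff} and the $W_{[\lambda]}$-equivariance coming from (\ref{eq:10/09:1716}).

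\emph{Step 1: the second map.} Since $\on{Spr}_\chi$ is semisimple, the summand $\on{Spr}_{\chi,\overline{\bO}}$ is a genuine direct summand of $\on{Spr}_\chi$, and $\on{Spr}_{\chi,\partial\bO}$ is a genuine direct summand of $\on{Spr}_{\chi,\overline{\bO}}$; so $\on{Spr}_{\chi,\bO}$ is also (non-canonically) a direct summand. Applying Lemma \ref{l:charsheaf} with $\sC=D(G/B,\chi)$ and the observation (Remark \ref{r:compgen}) that $\on{HH}_*$ of the relevant subcategories $\sH_{G,\chi,\overline{\bO}}^{\on{ren}}$ (resp. $\sH_{G,\chi,\partial\bO}^{\on{ren}}$) recover $K_0(\sH_{G,\chi}^{\on{ren}})_{\overline{\bO},k}$ (resp. $K_0(\sH_{G,\chi}^{\on{ren}})_{\partial\bO,k}$) in degree zero, one sees that $H^*(\cB_e)\otimes_{k[W_{[\lambda]}]}K_0(\sH_{G,\chi}^{\on{ren}})_{\bullet,k}$ is computed as $\on{coeff}_{\bO}$ applied to the corresponding direct summand of $\on{Spr}_\chi$. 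Because $\on{coeff}_{\bO}$ is a functor (in particular preserves direct sums), and because passing from $\sH_{G,\chi,\overline{\bO}}^{\on{ren}}$ to $\sH_{G,\chi,\bO}^{\on{ren}}$ amounts to killing a direct summand, the second map is the map induced on $\on{coeff}_{\bO}$ by a split inclusion; hence it is (split) injective. Surjectivity: the complement $\on{Spr}_{\chi,\partial\bO}$ has singular support in $\partial\bO=\overline{\bO}\setminus\bO$, which does not meet $\bO$; by the vanishing direction of Theorem \ref{t:whitcoefflie} (equivalently the first part of Theorem \ref{t:main}, applied to $\fg/G$ via §\ref{s:charliealg}), $\on{coeff}_{\bO}(\on{Spr}_{\chi,\partial\bO})=0$, so the inclusion $\on{coeff}_{\bO}(\on{Spr}_{\chi,\overline{\bO}})\hookleftarrow \on{coeff}_{\bO}(\on{Spr}_{\chi,\bO})$ is in fact an isomorphism. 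This gives the second displayed isomorphism.

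\emph{Step 2: the first map.} The first map is the statement that the natural ``truncation to top degree'' map $H^*(\cB_e)\otimes_{k[W_{[\lambda]}]}K_0(\sH_{G,\chi}^{\on{ren}})_{\overline{\bO},k}[2\on{dim}\cB_e]\to H^{\on{top}}(\cB_e)\otimes_{k[W_{[\lambda]}]}K_0(\sH_{G,\chi}^{\on{ren}})_{\overline{\bO},k}$ is an isomorphism. Equivalently, I must show $\on{coeff}_{\bO}(\on{Spr}_{\chi,\overline{\bO}})$ is concentrated in a single cohomological degree. By the t-exactness part of Theorem \ref{t:whitcoefflie}/Theorem \ref{t:whitismicrostalkG}, $\on{coeff}_{\bO}$ is t-exact on $\on{Princ}_{\chi,\overline{\bO}}$; since $\on{Spr}_{\chi,\overline{\bO}}$ is perverse (a semisimple perverse sheaf), $\on{coeff}_{\bO}(\on{Spr}_{\chi,\overline{\bO}})$ lives in degree zero. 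Under the shift normalization of Lemma \ref{l:HHiscoho} (where $\on{HH}_*(\cW_{e,\lambda}\on{-mod})=H^*(\cB_e)[2\on{dim}\cB_e]$), ``degree zero for $\on{coeff}_{\bO}$'' corresponds precisely to $H^{\on{top}}(\cB_e)$, and more generally the tensor factor $K_0(\sH_{G,\chi}^{\on{ren}})_{\overline{\bO},k}$ sits in degree zero, so the $S^1$-module / graded structure matches and the truncation map is an isomorphism. The $W_{[\lambda]}$-equivariance is automatic since all maps in sight are induced by $\on{coeff}_{\bO}$ applied to $W_{[\lambda]}$-equivariant maps of character sheaves (using Proposition \ref{p:Springeract3} to identify the two Springer actions, as in Corollary \ref{c:finalspringerboss}).

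\emph{Main obstacle.} The genuinely delicate point is bookkeeping the cohomological shifts so that ``perversity of $\on{Spr}_{\chi,\overline{\bO}}$ plus t-exactness of $\on{coeff}_{\bO}$'' really does land exactly in the top-degree piece $H^{\on{top}}(\cB_e)$ rather than some other single degree, and simultaneously that the $K_0(\sH_{G,\chi}^{\on{ren}})_{\overline{\bO},k}$-factor and the $W_{[\lambda]}$-action are carried along correctly; this requires carefully threading the shift $[2\on{dim}\cB_e]$ through Lemma \ref{l:HHiscoho}, equation (\ref{eq:coeffO}) and the Fourier-transform normalization of Theorem \ref{t:whitcoefflie}. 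A secondary subtlety is making precise that $\on{Spr}_{\chi,\bO}$ is a direct summand compatibly with all module structures (so that the tensor-product descriptions $H^*(\cB_e)\otimes_{k[W_{[\lambda]}]}(-)$ are genuinely functorial in $(-)$ along the split inclusions), which is where semisimplicity of $\on{Spr}_\chi$ and the rigidity/semi-rigidity of $\sH_{G,\chi}$ are used.
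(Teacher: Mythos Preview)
Your argument for the second map is close to the paper's: both ultimately use that anything with singular support in $\partial\bO$ is killed by $\on{coeff}_{\bO}$. However, there is a genuine gap in both of your steps, namely the identification
\[
H^*(\cB_e)\underset{k[W_{[\lambda]}]}{\otimes}K_0(\sH_{G,\chi}^{\on{ren}})_{Z,k}[2\dim\cB_e]\;\simeq\;\on{coeff}_{\bO}(\on{Spr}_{\chi,Z}),\qquad Z\in\{\overline{\bO},\partial\bO\}.
\]
What is immediate from Lemma~\ref{l:HHiscoeff} and the $W_{[\lambda]}$-action is that the left-hand side equals $\on{coeff}_{\bO}\bigl(\on{Spr}_{\chi}\otimes_{k[W_{[\lambda]}]}K_0(\sH_{G,\chi}^{\on{ren}})_{Z,k}\bigr)$. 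Your reformulation in Step~2 (``Equivalently, I must show $\on{coeff}_{\bO}(\on{Spr}_{\chi,\overline{\bO}})$ is concentrated in a single degree'') therefore presupposes that $\on{Spr}_{\chi}\otimes_{k[W_{[\lambda]}]}K_0(\sH_{G,\chi}^{\on{ren}})_{\overline{\bO},k}=\on{Spr}_{\chi,\overline{\bO}}$. This amounts to matching the two-sided cell filtration on $k[W_{[\lambda]}]$ with the singular-support filtration on the summands of $\on{Spr}_{\chi}$, which is a nontrivial statement (essentially Lusztig's compatibility of cells with the Springer correspondence) and is neither proven nor cited here. Neither Lemma~\ref{l:charsheaf} nor Remark~\ref{r:compgen} gives this; Lemma~\ref{l:charsheaf} would at best identify $\chi_{D_{\overline{\bO}}(G/B,\chi)}$ with a summand of $\on{Spr}_{\chi,\overline{\bO}}$, not with all of it.

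The paper's proof sidesteps this by never asserting that identification. Instead it works directly with the sheaf $\on{Spr}_{\chi}\otimes_{k[W_{[\lambda]}]}K_0(\sH_{G,\chi}^{\on{ren}})_{\overline{\bO},k}$: the image of the action map $\on{Spr}_{\chi}\otimes K_0(\sH_{G,\chi}^{\on{ren}})_{\overline{\bO},k}\to\on{Spr}_{\chi}$ has singular support in $\overline{\bO}$ (by the definition of the action via convolution and the usual SS estimates), and from this one deduces that $\on{Spr}_{\chi,\neq\bO}\otimes_{k[W_{[\lambda]}]}K_0(\sH_{G,\chi}^{\on{ren}})_{\overline{\bO},k}$ has singular support in $\partial\bO$, whence $\on{coeff}_{\bO}$ kills it by Theorem~\ref{t:whitismicrostalkG}. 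Since $H^{<\on{top}}(\cB_e)[2\dim\cB_e]$ is a summand of $\on{coeff}_{\bO}(\on{Spr}_{\chi,\neq\bO})$ by~(\ref{eq:injofbo}), the first map follows. The second map is handled analogously with $K_0(\sH_{G,\chi}^{\on{ren}})_{\partial\bO,k}$ in place of $K_0(\sH_{G,\chi}^{\on{ren}})_{\overline{\bO},k}$. So your t-exactness observation about $\on{coeff}_{\bO}(\on{Spr}_{\chi,\overline{\bO}})$ is correct but orthogonal to what is needed; the missing idea is the singular-support estimate on the \emph{tensor product} itself.
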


\begin{proof}

\step 
Let us prove the first assertion. Let
\[
H^{<\on{top}}(\cB_e)=\tau^{<\on{top}}(H^*(\cB_e))
\]

\noindent be the truncation complex. We need to show that
\[
H^{<\on{top}}(\cB_e)\underset{k[W_{[\lambda]}]}{\otimes} K_0(\sH_{G,\chi}^{\on{ren}})_{\overline{\bO},k}=0.
\]

\noindent Let $\on{Spr}_{\chi,\neq \bO}=\on{Spr}_{\chi}/\on{Spr}_{\chi,\bO}$. By (\ref{eq:injofbo}), $H^{<\on{top}}(\cB_e)[2\on{dim}\cB_e]$ is a direct summand of $\on{coeff}_{\bO}(\on{Spr}_{\chi,\neq \bO})$. Thus, it suffices to show that
\begin{equation}\label{eq:1109:0034}
\on{coeff}_{\bO}(\on{Spr}_{\chi,\neq\bO}\underset{k[W_{[\lambda]}]}{\otimes} K_0(\sH_{G,\chi}^{\on{ren}})_{\overline{\bO},k})=0.
\end{equation}

\noindent Note that the image of the action map
\[
\on{Spr}_{\chi}\otimes K_0(\sH_{G,\chi}^{\on{ren}})_{\overline{\bO},k}\to \on{Spr}_{\chi}
\]

\noindent has singular support contained in $\overline{\bO}$. As such, the singular support of the sheaf
\[
\on{Spr}_{\chi,\neq\bO}\underset{k[W_{[\lambda]}]}{\otimes} K_0(\sH_{G,\chi}^{\on{ren}})_{\overline{\bO},k}
\]

\noindent is contained in $\partial\bO$. Thus, the vector space (\ref{eq:1109:0034}) vanishes by Theorem \ref{t:whitismicrostalkG}.

\step 

For the second assertion, we need to show that
\[
H^*(\cB_e)\underset{k[W_{[\lambda]}]}{\otimes} K_0(\sH_{G,\chi}^{\on{ren}})_{\partial\bO,k}=\on{coeff}_{\bO}(\on{Spr}_{\chi}\underset{k[W_{[\lambda]}]}{\otimes} K_0(\sH_{G,\chi}^{\on{ren}})_{\partial\bO,k})[-2\on{dim}\cB_e]
\]

\noindent vanishes. The singular support of the sheaf $\on{Spr}_{\chi}\underset{k[W_{[\lambda]}]}{\otimes} K_0(\sH_{G,\chi}^{\on{ren}})_{\partial\bO,k}$ is contained in $\partial\bO$ and so the above vector space similarly vanishes by Theorem \ref{t:whitismicrostalkG}.

\end{proof}

\subsubsection{} The action
\[
D(B,\chi\backslash G/(B,\chi)\on{-mon})\curvearrowleft T
\]

\noindent stabilizes $D_{\overline{\bO}}(B,\chi\backslash G/(B,\chi)\on{-mon})$, and we have
\[
\sH_{G,\chi,\overline{\bO}}\simeq D_{\overline{\bO}}(B,\chi\backslash G/(B,\chi)\on{-mon})^{T,\chi}.
\]

\noindent Note that the category
\[
D(T,\chi\backslash T/T,\chi)^{\on{ren}}
\]

\noindent acts on $\sH_{G,\chi,\overline{\bO}}^{\on{ren}}$ on the right. As a particular case of Proposition \ref{p:genfull} (with $G=T$), we get:
\[
D_{\overline{\bO}}(B,\chi\backslash G/(B,\chi)\on{-mon})\simeq \sH_{G,\chi,\overline{\bO}}^{\on{ren}}\underset{D(T,\chi\backslash T/T,\chi)^{\on{ren}}}{\otimes} \on{Vect.}
\]

\noindent Here, we identify $\on{Vect}\simeq D(T,\chi\backslash T)$. Thus:
\begin{lem}\label{l:finalmanip}
We have an isomorphism
\[
\on{HH}_*(D_{\overline{\bO}}(B,\chi\backslash G/(B,\chi)\on{-mon}))\simeq \on{HH}_*(\sH_{G,\chi,\overline{\bO}}^{\on{ren}})\underset{\on{Sym}(\ft^*[-1]\oplus \ft^*[-2])}{\otimes} k.
\]
\end{lem}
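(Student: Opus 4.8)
The plan is to deduce Lemma \ref{l:finalmanip} from the factorization
\[
D_{\overline{\bO}}(B,\chi\backslash G/(B,\chi)\on{-mon})\simeq \sH_{G,\chi,\overline{\bO}}^{\on{ren}}\underset{D(T,\chi\backslash T/T,\chi)^{\on{ren}}}{\otimes} \on{Vect}
\]
recorded just above the statement, together with the general behaviour of Hochschild homology under relative tensor products. Concretely, I would first observe that $\sH_{G,\chi,\overline{\bO}}^{\on{ren}}$ is a module over the monoidal category $\sA:=D(T,\chi\backslash T/T,\chi)^{\on{ren}}$ for which the action map $\sA\otimes \sH_{G,\chi,\overline{\bO}}^{\on{ren}}\to \sH_{G,\chi,\overline{\bO}}^{\on{ren}}$ admits a continuous right adjoint, and that the $\sA$-module $\on{Vect}\simeq D(T,\chi\backslash T)$ is dualizable with the action map likewise right-adjointable (this is the $G=T$ case of Proposition \ref{p:genfull} and its standard consequences, exactly as used in §\ref{s:quaterlyrigid}). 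Under these hypotheses, Hochschild homology takes the relative tensor product of module categories to the relative tensor product (over $\on{HH}_*(\sA)$) of their Hochschild homologies: one realizes the tensor product via its two-sided bar resolution, notes every structure map is right-adjointable, and applies the functor $\on{HH}_*(-)$, which preserves sifted (in particular geometric-realization) colimits and is symmetric monoidal in the appropriate sense, just as in the proof of Lemma \ref{l:charsheaf}. This gives
\[
\on{HH}_*(D_{\overline{\bO}}(B,\chi\backslash G/(B,\chi)\on{-mon}))\simeq \on{HH}_*(\sH_{G,\chi,\overline{\bO}}^{\on{ren}})\underset{\on{HH}_*(\sA)}{\otimes}\on{HH}_*(\on{Vect}).
\]

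It then remains to identify the two inputs $\on{HH}_*(\sA)$ and $\on{HH}_*(\on{Vect})$ with what appears in the statement. By Step 1 of the proof of Lemma \ref{l:HHisgroupalg} we have $\sA\simeq H^*(\bB T)\on{-mod}\simeq \on{Sym}(\ft^*[-2])\on{-mod}$, so $\on{HH}_*(\sA)\simeq \on{HH}_*(\on{Sym}(\ft^*[-2]))\simeq \on{Sym}(\ft^*[-1]\oplus\ft^*[-2])$ as an algebra; this is the base ring appearing in the claim. For the module $\on{Vect}\simeq D(T,\chi\backslash T)$: as an $\sA$-module this corresponds, under $\sA\simeq \on{Sym}(\ft^*[-2])\on{-mod}$, to the augmentation module $k$ (the fiber functor), and $\on{HH}_*$ of the regular bimodule over a commutative ring is that ring, while the induced $\on{HH}_*(\sA)$-module structure on $\on{HH}_*(\on{Vect})\simeq k$ is precisely the augmentation $\on{Sym}(\ft^*[-1]\oplus\ft^*[-2])\to k$ killing $\ft^*[-1]\oplus\ft^*[-2]$. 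Assembling these identifications yields
\[
\on{HH}_*(D_{\overline{\bO}}(B,\chi\backslash G/(B,\chi)\on{-mon}))\simeq \on{HH}_*(\sH_{G,\chi,\overline{\bO}}^{\on{ren}})\underset{\on{Sym}(\ft^*[-1]\oplus\ft^*[-2])}{\otimes} k,
\]
which is exactly the assertion.

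I expect the main obstacle to be the bookkeeping around right-adjointability: to apply the ``$\on{HH}_*$ of a relative tensor product is a relative tensor product of $\on{HH}_*$'' principle one needs that all face maps in the relevant bar complex — i.e. the action maps for $\sA$ on itself, on $\sH_{G,\chi,\overline{\bO}}^{\on{ren}}$, and on $\on{Vect}$ — admit continuous right adjoints, so that $\on{HH}_*$ is functorial along them. The monoidal category $\sA$ here is \emph{not} rigid (its unit is not compact), so one cannot simply invoke rigidity as in §\ref{s:quaterlyrigid}; instead one argues as in the proof of Lemma \ref{l:HHisgroupalg} that $\sA\simeq H^*(\bB T)\on{-mod}$ with $H^*(\bB T)$ a (graded-commutative, finite-type, connective-dual) algebra, for which the relevant multiplication and action functors are right-adjointable by inspection. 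The remaining verifications — that the augmentation module $k$ is the one appearing, and that the induced module structure on $k$ over $\on{HH}_*(\sA)$ is the augmentation — are formal once one unwinds the equivalence $\on{Vect}\simeq D(T,\chi\backslash T)$ of $\sA$-modules and uses the computation of the $S^1$-action / Connes operator already invoked in Step 4 of the proof of Lemma \ref{l:strongversion}.
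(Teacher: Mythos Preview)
Your proposal is correct and follows exactly the approach the paper takes: the paper presents the lemma as an immediate consequence (``Thus:'') of the tensor product decomposition recorded just above it, implicitly invoking the same bar-resolution argument as in Lemma \ref{l:charsheaf}, together with the identification $\on{HH}_*(D(T,\chi\backslash T/T,\chi)^{\on{ren}})\simeq \on{Sym}(\ft^*[-1]\oplus\ft^*[-2])$ from Step 1 of Lemma \ref{l:HHisgroupalg}. One small inaccuracy in your commentary: in the \emph{renormalized} category $\sA=D(T,\chi\backslash T/T,\chi)^{\on{ren}}\simeq H^*(\bB T)\on{-mod}$ the unit \emph{is} compact (this is the point of renormalization, cf.\ the example after the definition of $\sH_{G,\chi}^{\on{ren}}$), so your concern about failure of rigidity is somewhat misplaced, though it does not affect the argument.
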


\begin{proof}

We need to show that whenever $A$ is a commutative dg-algebra and $\sM,\sN$ are dualizable module categories for $A\on{-mod}$, then we have
\[
\on{HH}_*(\sM\underset{A\on{-mod}}{\otimes} \sN)\simeq \on{HH}_*(\sM)\underset{\on{HH}_*(A)}{\otimes}\on{HH}_*(\sN).
\]

\noindent This is clearly sufficient since $D(T,\chi\backslash T/T,\chi)$ is compactly generated by its unit, and so is of form $A\on{-mod}$, where $A=C^*(\bB T)$. We thank Nick Rozenblyum for sketching the following argument:

It is explained in \cite{gaitsgory2022toy}[\S 3.8] that for any rigid symmetric monoidal category $\sR$ and dualizable module categories $\sM,\sN$, we may attach objects
\[
\on{HH}_*^{\on{enh}}(\sM),\on{HH}_*^{\on{enh}}(\sN)\in \textbf{HH}_*^{\on{enh}}(\sR).
\]

\noindent Here, $\textbf{HH}_*^{\on{enh}}(\sR)=\sR\underset{\sR\otimes\sR}{\otimes}\sR $ is defined to be the categorical Hochschild homology of $\sR$, which we consider a symmetric monoidal category with unit $\mathbf{1}$. Moreover, the above attachment is monoidal. By Theorem 3.8.5 of \emph{loc.cit}, the functor
\[
\on{Hom}_{\textbf{HH}_*^{\on{enh}}(\sR)}(\mathbf{1},-): \textbf{HH}_*^{\on{enh}}(\sR)\to \on{Vect}
\]

\noindent satisfies that $\on{Hom}_{\textbf{HH}_*^{\on{enh}}(\sR)}(\mathbf{1},\mathbf{1})\simeq \on{HH}_*(\sR)$ as algebras, and that $\on{Hom}_{\textbf{HH}_*^{\on{enh}}(\sR)}(\mathbf{1},\on{HH}_*^{\on{enh}}(\sM))=\on{HH}_*(\sM)$. As such that above functor upgrades to a functor
\begin{equation}\label{eq:obstohh}
\on{Hom}_{\textbf{HH}_*^{\on{enh}}(\sR)}(\mathbf{1},-): \textbf{HH}_*^{\on{enh}}(\sR)\to \on{HH}_*(\sR)\on{-mod}.
\end{equation}

\noindent In general, the above functor is not symmetric monoidal and this is exactly the obstruction to the present lemma holding for an arbitrary rigid symmetric monoidal category. However, when $\sR=A\on{-mod}$, we have $\textbf{HH}_*^{\on{enh}}(\sR)\simeq \on{HH}_*(A)\on{-mod}$, and the functor (\ref{eq:obstohh}) is a monoidal equivalence.

\end{proof}

\subsubsection{}\label{s:1009} The fully faithful embedding
\[
D(B,\chi\backslash B/B,\chi)^{\on{ren}}\into \sH_{G,\chi}^{\on{ren}}
\]

\noindent induces the map
\[
\on{Sym}(\ft^*[-1]\oplus \ft^*[-2])\simeq \on{HH}_*(D(B,\chi\backslash B/B,\chi)^{\on{ren}})\to \on{HH}_*(\sH_{G,\chi}^{\on{ren}}).
\]

\noindent By (the proof of) Lemma \ref{l:HHisgroupalg}, the map
\begin{equation}\label{eq:1009:1916}
\on{HH}_*(\sH_{G,\chi}^{\on{ren}})\to k\underset{\on{Sym}(\ft^*[-1]\oplus \ft^*[-2])}{\otimes}\on{HH}_*(\sH_{G,\chi}^{\on{ren}})\simeq k[W_{[\lambda]}]
\end{equation}

\noindent provides a splitting of the Chern character map $k[W_{[\lambda]}]\simeq K_0(\sH_{G,\chi})_k\to \on{HH}_*(\sH_{G,\chi}^{\on{ren}})$. Moreover, the map (\ref{eq:1009:1916}) has a natural structure of an algebra map coming from the formality of $\on{HH}_*(\sH_{G,\chi}^{\on{ren}})$, cf. \cite{li2023derived}[Thm. 2.10.11 + Thm. 3.1.1].\footnote{Another way to see that (\ref{eq:1009:1916} has the structure of an algebra map without using formality is the following. By passing to Hochschild homology, the action functor $\sH_{G,\chi}^{\on{ren}}\otimes D(B,\chi\backslash G/(B,\chi)\on{-mon})\to D(B,\chi\backslash G/(B,\chi)\on{-mon})$ gives an algebra map $\on{HH}_*(\sH_{G,\chi}^{\on{ren}})\to \on{End}_k(\on{HH}_*( D(B,\chi\backslash G/(B,\chi)\on{-mon})))\simeq \on{End}_k(k[W_{[\lambda]}])$. This action is compatible with the natural action of $k[W_{[\lambda]}]$ on the right, coming from identifying $k[W_{[\lambda]}]$ with $K_0(-)_k$ of (a suitably renormaliezd version of) $D((B,\chi)\on{-mon}\backslash G/(B,\chi)\on{-mon})$. Thus, we get a map $\on{HH}_*(\sH_{G,\chi}^{\on{ren}})\to \on{End}_{k[W_{[\lambda]}]}(k[W_{[\lambda]}])\simeq k[W_{[\lambda]}]$.}

\subsubsection{} Suppose $V$ is a finite-dimensional representation of $W_{[\lambda]}$ concentrated in cohomological degree zero. Consider $V$ as a module for $\on{HH}_*(\sH_{G,\chi}^{\on{ren}})$ via the map (\ref{eq:1009:1916}). The $S^1$-actions\footnote{Equivalently, $C_{\bullet}(S^1)$-action.} on $\on{HH}_*(\sH_{G,\chi}^{\on{ren}})$ and $\on{HH}_*(\sH_{G,\chi,\overline{\bO}}^{\on{ren}})$ induce an action on the vector space
\[
V\underset{\on{HH}_*(\sH_{G,\chi}^{\on{ren}})}{\otimes}\on{HH}_*(\sH_{G,\chi,\overline{\bO}}^{\on{ren}}).
\]

\noindent by acting trivially on $V$.

Recall that for $W\in C_{\bullet}(S^1)\on{-mod}$, we write 
\[
W^{tS^1}:=\on{Hom}_{C_{\bullet}(S^1)\on{-mod}}(k,W)\underset{k[u]}{\otimes}k[u,u^{-1}],
\]

\noindent cf. §\ref{s:hp}.

\begin{lem}\label{l:1009:2134}
We have a canonical isomorphism
\[
\big(V\underset{\on{HH}_*(\sH_{G,\chi}^{\on{ren}})}{\otimes}\on{HH}_*(\sH_{G,\chi,\overline{\bO}}^{\on{ren}})\big)^{tS^1}\simeq (V\underset{k[W_{[\lambda]}]}{\otimes}K_0(\sH_{G,\chi}^{\on{ren}})_{\overline{\bO},k})\otimes k[u,u^{-1}].
\]
\end{lem}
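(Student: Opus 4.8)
The strategy is to reduce the left-hand side to a computation over the graded ring $R := \on{Sym}(\ft^*[-1]\oplus \ft^*[-2])$, using Lemma \ref{l:finalmanip} and the explicit structure of $\on{HH}_*(\sH_{G,\chi}^{\on{ren}})$ from Lemma \ref{l:strongversion}, and then take $tS^1$-invariants. First I would record that, by Lemma \ref{l:strongversion}, $\on{HH}_*(\sH_{G,\chi}^{\on{ren}})\simeq R\rtimes k[W_{[\lambda]}]$, so that the map $(\ref{eq:1009:1916})$ identifies $V$ as the $\on{HH}_*(\sH_{G,\chi}^{\on{ren}})$-module $k\underset{R}{\otimes}(R\rtimes k[W_{[\lambda]}])\underset{k[W_{[\lambda]}]}{\otimes}V$; i.e. $V$ is obtained from the $k[W_{[\lambda]}]$-module by first inflating along $R\rtimes k[W_{[\lambda]}]\to k[W_{[\lambda]}]$. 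Then, writing $\sH^{\on{ren}}_{G,\chi,\overline{\bO}}$'s Hochschild homology as a module over $R\rtimes k[W_{[\lambda]}]$ (in particular over $R$), I would use associativity of relative tensor product to get
\[
V\underset{\on{HH}_*(\sH_{G,\chi}^{\on{ren}})}{\otimes}\on{HH}_*(\sH_{G,\chi,\overline{\bO}}^{\on{ren}})\simeq V\underset{k[W_{[\lambda]}]}{\otimes}\Big(k\underset{R}{\otimes}\on{HH}_*(\sH_{G,\chi,\overline{\bO}}^{\on{ren}})\Big)\simeq V\underset{k[W_{[\lambda]}]}{\otimes}\on{HH}_*\big(D_{\overline{\bO}}(B,\chi\backslash G/(B,\chi)\on{-mon})\big),
\]
where the last step is exactly Lemma \ref{l:finalmanip}. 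This reduces the claim to understanding the $S^1$-action on the right-hand side and commuting $tS^1$-invariants past the (finite-dimensional, hence dualizable) tensor factor $V$.

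The next step is to identify $\on{HH}_*(D_{\overline{\bO}}(B,\chi\backslash G/(B,\chi)\on{-mon}))$ together with its $S^1$-action. Here I would invoke Lemma \ref{l:hpdescrip}, which already gives $\on{HP}_*$ of this category as $K_0(\sH_{G,\chi}^{\on{ren}})_{\overline{\bO},k}\otimes k[u,u^{-1}]$; what remains is the compatibility of the $S^1$-action with the $W_{[\lambda]}$-module structure, namely that $V$ sits in degree zero and $S^1$ acts trivially on it by construction in §\ref{s:1009}. Since taking $(-)^{tS^1}$ is a functor and $V$ is a fixed finite-dimensional vector space with trivial $S^1$-action, we have
\[
\Big(V\underset{k[W_{[\lambda]}]}{\otimes}\on{HH}_*\big(D_{\overline{\bO}}(B,\chi\backslash G/(B,\chi)\on{-mon})\big)\Big)^{tS^1}\simeq V\underset{k[W_{[\lambda]}]}{\otimes}\on{HP}_*\big(D_{\overline{\bO}}(B,\chi\backslash G/(B,\chi)\on{-mon})\big),
\]
provided the $k[W_{[\lambda]}]$-action is by $S^1$-equivariant maps — which it is, since it comes from the monoidal (hence $S^1$-compatible) structure. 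Plugging in Lemma \ref{l:hpdescrip} and the identification $K_0(\sH_{G,\chi}^{\on{ren}})_{\overline{\bO},k}\simeq K_0(D(B,\chi\backslash G/(B,\chi)\on{-mon}))_{\overline{\bO},k}$ gives exactly the claimed isomorphism.

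\textbf{Main obstacle.} The delicate point is justifying the interchange of $(-)^{tS^1}$ with $V\underset{k[W_{[\lambda]}]}{\otimes}(-)$. This is not purely formal: $(-)^{tS^1}$ involves an inverse limit (Tate construction), so it does not commute with arbitrary colimits or tensor products. What saves us is that $V$ is finite-dimensional and concentrated in a single degree with trivial $C_\bullet(S^1)$-action, so $V\underset{k[W_{[\lambda]}]}{\otimes}(-)$ is a finite limit/colimit of shifts of the identity on the relevant module category, and Tate constructions commute with finite (co)limits. I would spell this out by choosing a finite free resolution of $V$ as a $k[W_{[\lambda]}]$-module — or, more simply, noting that $k[W_{[\lambda]}]$ is finite-dimensional semisimple in characteristic zero, so $V$ is a direct summand of $k[W_{[\lambda]}]^{\oplus n}$ and the tensor product is just a direct summand of $n$ copies, over which $(-)^{tS^1}$ manifestly commutes. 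A secondary, more bookkeeping-level point is to check that the $R$-module structure used to form $k\underset{R}{\otimes}\on{HH}_*(\sH_{G,\chi,\overline{\bO}}^{\on{ren}})$ in the reduction and the one implicit in Lemma \ref{l:finalmanip} genuinely agree, including the $S^1$-equivariance; this follows from tracing through that both arise from the embedding $D(B,\chi\backslash B/B,\chi)^{\on{ren}}\hookrightarrow \sH_{G,\chi}^{\on{ren}}$ of §\ref{s:1009}, but it is worth stating explicitly.
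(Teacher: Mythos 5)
Your proof follows the same route as the paper: base-change to $V\underset{k[W_{[\lambda]}]}{\otimes}\big(k\underset{R}{\otimes}\on{HH}_*(\sH_{G,\chi,\overline{\bO}}^{\on{ren}})\big)$ with $R=\on{Sym}(\ft^*[-1]\oplus\ft^*[-2])$, commute $(-)^{tS^1}$ past $V\underset{k[W_{[\lambda]}]}{\otimes}(-)$, and then conclude by Lemmas \ref{l:finalmanip} and \ref{l:hpdescrip}. Where the paper simply asserts those two steps, you supply correct justifications for them — the semidirect-product structure of $\on{HH}_*(\sH_{G,\chi}^{\on{ren}})$ from Lemma \ref{l:strongversion} for the base change, and semisimplicity of $k[W_{[\lambda]}]$ (so $V$ is a summand of a finite free module) to commute the Tate construction past $V\underset{k[W_{[\lambda]}]}{\otimes}(-)$.
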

\begin{proof}
Note that
\[
V\underset{\on{HH}_*(\sH_{G,\chi}^{\on{ren}})}{\otimes}\on{HH}_*(\sH_{G,\chi,\overline{\bO}}^{\on{ren}})\simeq V\underset{k[W_{[\lambda]}]}{\otimes}\big(k\underset{\on{Sym}(\ft^*[-1]\oplus \ft^*[-2])}{\otimes}\on{HH}_*(\sH_{G,\chi,\overline{\bO}}^{\on{ren}})\big).
\]

\noindent Consequently: 
\[
\big(V\underset{\on{HH}_*(\sH_{G,\chi}^{\on{ren}})}{\otimes}\on{HH}_*(\sH_{G,\chi,\overline{\bO}}^{\on{ren}})\big)^{tS^1}\simeq V\underset{k[W_{[\lambda]}]}{\otimes}\big(k\underset{\on{Sym}(\ft^*[-1]\oplus \ft^*[-2])}{\otimes}\on{HH}_*(\sH_{G,\chi,\overline{\bO}}^{\on{ren}})\big)^{tS^1}.
\]

\noindent Thus, it suffices to show that
\[
\big(k\underset{\on{Sym}(\ft^*[-1]\oplus \ft^*[-2])}{\otimes}\on{HH}_*(\sH_{G,\chi,\overline{\bO}}^{\on{ren}})\big)^{tS^1}\simeq K_0(\sH_{G,\chi}^{\on{ren}})_{\overline{\bO},k}\otimes k[u,u^{-1}]
\]

\noindent as $W_{[\lambda]}$-representations. However, this follows by combining Lemma \ref{l:finalmanip} and Lemma \ref{l:hpdescrip}.
\end{proof}

\begin{lem}\label{l:trivcircleaction}
The $S^1$-action on $H^*(\cB_e)\simeq \on{HH}_*(\sW_{e,\lambda}\on{-mod})$ is trivial; that is, the action factors through the augmentation of $C_{\bullet}(S^1)$.
\end{lem}

\begin{proof}
We claim this follows more generally for any $C_{\bullet}(S^1)$-module whose underlying vector space has cohomology concentrated in even degrees.

Let $V$ be such a module. Assume that $H^*(V)$ is concentrated in two cohomological degrees; the general case follows by induction. We have a triangle:
\[
V_1\to V\to V_2,
\]

\noindent where $V_i$ is a $C_{\bullet}(S^1)$ concentrated in a single cohomological degree. As such, $V_i$ is a direct sum of the augmentation module $k$. The above triangle defines elements in $\on{Ext}^n_{C_{\bullet}(S^1)}(k,k)\simeq H^n(\on{Sym}(k[-2]))$ for some odd integer $n$ and hence vanishes.
\end{proof}

\begin{cor}\label{c:6.5}
We have a canonical isomorphism
\[
\big(H^*(\cB_e)\underset{\on{HH}_*(\sH_{G,\chi}^{\on{ren}})}{\otimes} \on{HH}_*(\sH_{G,\chi,\overline{\bO}}^{\on{ren}})\big)^{tS^1}\simeq (H^{\on{top}}(\cB_e)\underset{k[W_{[\lambda]}]}{\otimes}K_0(\sH_{G,\chi}^{\on{ren}})_{\bO,k})\otimes k[u,u^{-1}].
\]
\end{cor}

\begin{proof}
We claim that we have a map
\begin{equation}\label{eq:1009:2106}
H^{\on{top}}(\cB_e)\to H^*(\cB_e)[2\on{dim}\cB_e]
\end{equation}

\noindent of $\on{HH}_*(\sH_{G,\chi}^{\on{ren}})$-modules inducing an isomorphism after taking $H^0$. Here, we consider $H^{\on{top}}(\cB_e)$ as a $\on{HH}_*(\sH_{G,\chi}^{\on{ren}})$-module via its augmentation to $k[W_{[\lambda]}]$ as in §\ref{s:1009}. Indeed, the existence of (\ref{eq:1009:2106}) follows from \cite[Prop. 4.4.6]{dhillonchen}\footnote{See also \cite[§3]{dwyer2006duality}.} using that $\on{HH}_*(\sH_{G,\chi}^{\on{ren}})$ is a coconnective dg-algebra with $\on{HH}_0(\sH_{G,\chi}^{\on{ren}})$ semisimple, cf. Lemma \ref{l:HHisgroupalg}. 

Applying the uniqueness assertion of \cite[Prop.4.4.6(2)]{dhillonchen}, we see that the action of $\on{HH}_*(\sH_{G,\chi}^{\on{ren}})$ on a vector space concentrated in a single cohomological degree necessarily factors through $\on{HH}_*(\sH_{G,\chi}^{\on{ren}})\to k[W_{[\lambda]}]$.

Iterating, we obtain a filtration on $H^*(\cB_e)$ of $\on{HH}_*(\sH_{G,\chi}^{\on{ren}})$-modules with the associated graded pieces being concentrated in a single cohomological degree. Since the $S^1$-action on $H^*(\cB_e)$ is trivial by Lemma \ref{l:trivcircleaction}, the induced filtration on
\begin{equation}\label{eq:1009:2131}
\big(H^*(\cB_e)\underset{\on{HH}_*(\sH_{G,\chi}^{\on{ren}})}{\otimes} \on{HH}_*(\sH_{G,\chi,\overline{\bO}}^{\on{ren}})\big)^{tS^1}
\end{equation}

\noindent has associated graded pieces of the form
\begin{equation}\label{eq:1009:2130}
\big(V\underset{\on{HH}_*(\sH_{G,\chi}^{\on{ren}})}{\otimes} \on{HH}_*(\sH_{G,\chi,\overline{\bO}}^{\on{ren}})\big)^{tS^1},
\end{equation}

\noindent with $V$ concentrated in a single cohomological degree. Since $H^*(\cB_e)$ is concentrated in even degrees, by Lemma \ref{l:1009:2134}, so are the vector spaces (\ref{eq:1009:2130}). In particular, the boundary maps in the long exact sequence associated to the filtration on (\ref{eq:1009:2131}) vanish, and so we get an isomorphism of right $W_{[\lambda]}$-representations:
\[
\big(H^*(\cB_e)\underset{\on{HH}_*(\sH_{G,\chi}^{\on{ren}})}{\otimes} \on{HH}_*(\sH_{G,\chi,\overline{\bO}}^{\on{ren}})\big)^{tS^1}\simeq \underset{V}{\bigoplus} \big(V\underset{\on{HH}_*(\sH_{G,\chi}^{\on{ren}})}{\otimes} \on{HH}_*(\sH_{G,\chi,\overline{\bO}}^{\on{ren}})\big)^{tS^1}
\]
\[
\overset{\on{Lem.}\;\ref{l:1009:2134}}{\simeq} \underset{V}{\bigoplus} (V\underset{k[W_{[\lambda]}]}{\otimes}K_0(\sH_{G,\chi}^{\on{ren}})_{\overline{\bO},k})\otimes k[u,u^{-1}]\simeq (H^*(\cB_e)\underset{k[W_{[\lambda]}]}{\otimes}K_0(\sH_{G,\chi}^{\on{ren}})_{\overline{\bO},k})\otimes k[u,u^{-1}]
\]
\[
\overset{\on{Prop.}\;\ref{p:finreductionstep}}{\simeq} (H^{\on{top}}(\cB_e)\underset{k[W_{[\lambda]}]}{\otimes}K_0(\sH_{G,\chi}^{\on{ren}})_{\bO,k})\otimes k[u,u^{-1}].
\]

\end{proof}

\subsection{Proof of classification theorem}

Before proving Theorem \ref{t:classification}, we need versions of the results in the previous subsection where we replace $\sH_{G,\chi,\overline{\bO}}^{\on{ren}}$ by a slightly smaller category, which we denote $\sH_{G,\chi,\overline{\bO}}^{\on{ren, access}}$, following \cite{arinkin2020stack}. The main property of the latter is that it comes with a canonical embedding
\[
\sH_{G,\chi,\overline{\bO}}^{\on{ren,access}}\to \sH_{G,\chi,\overline{\bO}}^{\on{ren}},
\]

\noindent and the map
\[
\sH_{G,\chi,\overline{\bO}}^{\on{ren,access}}\to \sH_{G,\chi}^{\on{ren}}
\]

\noindent admits a continuous right adjoint. As such, we get an induced map on Hochschild homology:
\[
\on{HH}_*(\sH_{G,\chi,\overline{\bO}}^{\on{ren,access}})\to \on{HH}_*(\sH_{G,\chi}^{\on{ren}}).
\]

\noindent We emphasize that such a map does not exist if we replace $\sH_{G,\chi,\overline{\bO}}^{\on{ren,access}}$ with $\sH_{G,\chi,\overline{\bO}}^{\on{ren}}$.

\subsubsection{} Let 
\[
\sH_{G,\chi,\overline{\bO}}^{\on{ren,coh}}\subset \sH_{G,\chi}^{\on{ren}}
\]

\noindent be the small category consisting of bounded complexes, each of whose cohomologies are coherent D-modules whose singular support is contained in $\overline{\bO}$. Alternatively, this is the small subcategory of $\sH_{G,\chi,\overline{\bO}}^{\on{ren}}$ consisting of objects that are compact in $\sH_{G,\chi}^{\on{ren}}$. We let
\[
\sH_{G,\chi,\overline{\bO}}^{\on{ren,access}}:=\on{Ind}(\sH_{G,\chi,\overline{\bO}}^{\on{ren,coh}})\subset \sH_{G,\chi}^{\on{ren}}
\]

\noindent denote the corresponding Ind-completion. By construction, the embedding $\sH_{G,\chi,\overline{\bO}}^{\on{ren,access}}\into \sH_{G,\chi}^{\on{ren}}$ preserves compact objects and is a two-sided ideal. Moreover, we naturally have $\sH_{G,\chi,\overline{\bO}}^{\on{ren,access}}\subset \sH_{G,\chi,\overline{\bO}}^{\on{ren}}$.

\subsubsection{} Let $\sH_{G,\chi,\overline{\bO}}^{\on{ren,inaccess}}\subset \sH_{G,\chi,\overline{\bO}}^{\on{ren}}$ denote the right orthogonal complement to $\sH_{G,\chi,\overline{\bO}}^{\on{ren,access}}$. That is, we have a recollement of categories:
\[
\sH_{G,\chi,\overline{\bO}}^{\on{ren,access}}\rightleftarrows \sH_{G,\chi,\overline{\bO}}^{\on{ren}}\rightleftarrows \sH_{G,\chi,\overline{\bO}}^{\on{ren,inaccess}}.
\]

\noindent Similarly, for a dualizable $\sH_{G,\chi}^{\on{ren}}$-module category $\sM$, we define:
\[
\sM_{\overline{\bO}}:=\sM\underset{\sH_{G,\chi}^{\on{ren}}}{\otimes}\sH_{G,\chi,\overline{\bO}}^{\on{ren}},\;\;\sM_{\overline{\bO}}^{\on{access}}:=\sM\underset{\sH_{G,\chi}^{\on{ren}}}{\otimes}\sH_{G,\chi,\overline{\bO}}^{\on{ren,access}},\;\; \sM_{\overline{\bO}}^{\on{inaccess}}:=\sM\underset{\sH_{G,\chi}^{\on{ren}}}{\otimes}\sH_{G,\chi,\overline{\bO}}^{\on{ren,inaccess}}.
\]

\noindent As above, we have a recollement:
\[
\sM_{\overline{\bO}}^{\on{access}}\rightleftarrows \sM_{\overline{\bO}}\rightleftarrows \sM_{\overline{\bO}}^{\on{inaccess}}.
\]

\begin{lem}\label{l:inaccessdies}
The induced map 
\[
\on{HP}_*(\sM_{\overline{\bO}}^{\on{access}})\underset{k[W_{[\lambda]}]}{\otimes} K_0(\sH_{G,\chi}^{\on{ren}})_{\overline{\bO},k}\to\on{HP}_*(\sM_{\overline{\bO}})\underset{k[W_{[\lambda]}]}{\otimes} K_0(\sH_{G,\chi}^{\on{ren}})_{\overline{\bO},k}
\]

\noindent is an isomorphism. Equivalently:
\[
\on{HP}_*(\sM_{\overline{\bO}}^{\on{inaccess}})\underset{k[W_{[\lambda]}]}{\otimes} K_0(\sH_{G,\chi}^{\on{ren}})_{\overline{\bO},k}=0.
\]
\end{lem}

\begin{rem}
We expect the stronger statement that
\[
\on{HP}_*(\sM_{\overline{\bO}}^{\on{inaccess}})=0
\]

\noindent is true.
\end{rem}

\begin{proof}
For each $[\cF]\in K_0(\sH_{G,\chi}^{\on{ren}})_{\overline{\bO},k}$, we get an endomorphism:
\[
\on{HP}_*(\sM_{\overline{\bO}}^{\on{inaccess}})\to \on{HP}_*(\sM_{\overline{\bO}}^{\on{inaccess}})
\]

\noindent induced by taking $\on{HP}_*$ of the endofunctor
\[
\sM_{\overline{\bO}}^{\on{inaccess}}\xrightarrow{-\star\cF}\sM_{\overline{\bO}}^{\on{inaccess}},
\]

\noindent where $\cF\in \sH_{G,\chi,\overline{\bO}}^{\on{ren},\heartsuit}$ represents the class $[\cF]$. It suffices to show that this functor is identically zero. However, this follows as $\cF$ is an object of $\sH_{G,\chi,\overline{\bO}}^{\on{ren,access}}$ and that both $\sH_{G,\chi,\overline{\bO}}^{\on{ren,access}}$ and $\sH_{G,\chi,\overline{\bO}}^{\on{ren,inaccess}}$ are $\sH_{G,\chi}^{\on{ren}}$-stable.
\end{proof}

\begin{rem}\label{r:JSRIP}
By exactly the same argument (replacing $\sH_{G,\chi,\overline{\bO}}^{\on{ren,inaccess}}$ with the right orthogonal complement to $\sH_{G,\chi,\overline{\bO}}^{\on{ren,access}}$ in all of $\sH_{G,\chi}^{\on{ren}}$), one obtains that the map
\[
\on{HP}_*(\sM_{\overline{\bO}}^{\on{access}})\underset{k[W_{[\lambda]}]}{\otimes} K_0(\sH_{G,\chi}^{\on{ren}})_{\overline{\bO},k}\to\on{HP}_*(\sM)\underset{k[W_{[\lambda]}]}{\otimes} K_0(\sH_{G,\chi}^{\on{ren}})_{\overline{\bO},k}
\]

\noindent is an isomorphism.
\end{rem}

\subsubsection{} When $\sM=\cW_{e,\lambda}\on{-mod}$, recall that by Proposition \ref{p:keytoeverythingrs}, we have:
\[
\sW_{e,\lambda}\on{-mod}^{\on{fin}}\simeq \sW_{e,\lambda}\on{-mod}\underset{\sH_{G,\chi}^{\on{ren}}}{\otimes} \sH_{G,\chi,\overline{\bO}}^{\on{ren}}.
\]

\noindent In this case, we write:
\[
\sW_{e,\lambda}\on{-mod}^{\on{fin,access}}:=\sW_{e,\lambda}\on{-mod}\underset{\sH_{G,\chi}^{\on{ren}}}{\otimes}\sH_{G,\chi,\overline{\bO}}^{\on{ren,access}}, \;\; \sW_{e,\lambda}\on{-mod}^{\on{fin,inaccess}}:=\sW_{e,\lambda}\on{-mod}\underset{\sH_{G,\chi}^{\on{ren}}}{\otimes}\sH_{G,\chi,\overline{\bO}}^{\on{ren,inaccess}}.
\]

\noindent By construction, the inclusion $\sW_{e,\lambda}\on{-mod}^{\on{fin,access}}\into \sW_{e,\lambda}\on{-mod}^{\on{fin}}$ induces an equivalence on objects cohomologically bounded from below.

\subsubsection{Classification theorem} Recall the statement of the classification theorem we are trying to prove: we need to show that the Chern character map
\[
K_0(\sW_{e,\lambda}\on{-mod}^{\on{fin},\heartsuit})_k\to \on{HH}_0(\sW_{e,\lambda}\on{-mod})=H^{\on{top}}(\cB_e)
\]

\noindent maps isomorphically onto 
\[
H^{\on{top}}(\cB_e)\underset{k[W_{[\lambda]}]}{\otimes} K_0(\sH_{G,\chi}^{\on{ren}})_{\bO,k}.
\]

\noindent We first prove:

\begin{prop}\label{p:thepdoddy}
The Chern character map factors through $H^{\on{top}}(\cB_e)\underset{k[W_{[\lambda]}]}{\otimes} K_0(\sH_{G,\chi}^{\on{ren}})_{\bO,k}\subset H^{\on{top}}(\cB_e)$. Moreover, the resulting map
\[
K_0(\sW_{e,\lambda}\on{-mod}^{\on{fin},\heartsuit})_k\to H^{\on{top}}(\cB_e)\underset{k[W_{[\lambda]}]}{\otimes} K_0(\sH_{G,\chi}^{\on{ren}})_{\bO,k}
\]

\noindent is injective.
\end{prop}

\begin{proof}

Recall that the inclusion $\sW_{e,\lambda}\on{-mod}^{\on{fin}}\into \sW_{e,\lambda}\on{-mod}$ admits a left adjoint: $\sW_{e,\lambda}\on{-mod}\to \sW_{e,\lambda}\on{-mod}^{\on{fin}}$. Note that the resulting map:
\[
H^*(\cB_e)[2\on{dim} \cB_e]\simeq \on{HH}_*(\cW_{e,\lambda}\on{-mod})\to \on{HH}_*(\cW_{e,\lambda}\on{-mod}^{\on{fin}})=\on{HH}_*(\cW_{e,\lambda}\on{-mod}\underset{\sH_{G,\chi}^{\on{ren}}}{\otimes}\sH_{G,\chi,\overline{\bO}}^{\on{ren}})
\]

\noindent factors as:
\[
H^*(\cB_e)[2\on{dim} \cB_e]\to \on{HH}_*(\cW_{e,\lambda}\on{-mod})\underset{\on{HH}_*(\sH_{G,\chi}^{\on{ren}})}{\otimes}\on{HH}_*(\sH_{G,\chi,\overline{\bO}}^{\on{ren}})\to \on{HH}_*(\cW_{e,\lambda}\on{-mod}^{\on{fin}}).
\]

\noindent Applying $(-)^{tS^1}$ and taking $H^0$, we get the map:
\[
H^{\on{top}}(\cB_e)\to H^0\bigg(\on{HH}_*(\cW_{e,\lambda}\on{-mod})\underset{\on{HH}_*(\sH_{G,\chi}^{\on{ren}})}{\otimes}\on{HH}_*(\sH_{G,\chi,\overline{\bO}}^{\on{ren}})\bigg)^{tS^1}\to 
\]
\[
\to \on{HP}_0(\cW_{e,\lambda}\on{-mod}^{\on{fin}}).
\]

By construction, precomposing the Chern character map yields the isomorphism of Lemma \ref{l:kzerotohp}. By Corollary \ref{c:6.5}, we have thus provided a surjective map 
\[
H^{\on{top}}(\cB_e)\underset{k[W_{[\lambda]}]}{\otimes}K_0(\sH_{G,\chi}^{\on{ren}})_{\bO,k}\onto K_0(\sW_{e,\lambda}\on{-mod}^{\on{fin},\heartsuit})_k
\]

\noindent along with a splitting. To prove that the Chern character map factors through
\[
H^{\on{top}}(\cB_e)\underset{k[W_{[\lambda]}]}{\otimes}K_0(\sH_{G,\chi}^{\on{ren}})_{\bO,k}\subset H^{\on{top}}(\cB_e),
\]

\noindent it suffices to show that if an irreducible $W_{[\lambda]}$-representation $V$ occurs in $K_0(\sH_{G,\chi}^{\on{ren}})_{\bO,k}$, then all copies of $V$ in $k[W_{[\lambda]}]$ appear in $K_0(\sH_{G,\chi}^{\on{ren}})_{\bO,k}$. This is the content of Lemma \ref{l:allcopies} below.
\end{proof}

\begin{lem}\label{l:allcopies}
Let $V\subset K_0(\sH_{G,\chi}^{\on{ren}})_{\bO,k}$ be an irreducible $W_{[\lambda]}$-representation. Then the map
\[
\on{Hom}_{W_{[\lambda]}}(V,K_0(\sH_{G,\chi}^{\on{ren}})_{\bO,k})\to \on{Hom}_{W_{[\lambda]}}(V,k[W_{[\lambda]}])
\]

\noindent is an isomorphism.
\end{lem}

\begin{proof}
We have a decomposition 

\[
k[W_{[\lambda]}]\simeq \underset{\bO}{\bigoplus}\; K_0(\sH_{G,\chi}^{\on{ren}})_{\bO,k}
\]

\noindent as $W_{[\lambda]}$-representations. We need to show that for two distinct orbits $\bO_1,\bO_2$, we have:
\[
\on{Hom}_{W_{[\lambda]}}(K_0(\sH_{G,\chi}^{\on{ren}})_{\bO_1,k},K_0(\sH_{G,\chi}^{\on{ren}})_{\bO_2,k})=0.
\]

\noindent Let $\phi\in \on{Hom}_{W_{[\lambda]}}(K_0(\sH_{G,\chi}^{\on{ren}})_{\bO_1,k},K_0(\sH_{G,\chi}^{\on{ren}})_{\bO_2,k})$. It suffices to show that for all orbits $\bO$, the map
\[
\on{id}\otimes \phi: K_0(\sH_{G,\chi}^{\on{ren}})_{\bO,k}\underset{k[W_{[\lambda]}]}{\otimes} K_0(\sH_{G,\chi}^{\on{ren}})_{\bO_1,k}\to K_0(\sH_{G,\chi}^{\on{ren}})_{\bO,k}\underset{k[W_{[\lambda]}]}{\otimes} K_0(\sH_{G,\chi}^{\on{ren}})_{\bO_2,k}
\]

\noindent is zero. In turn, it suffices to prove that for distinct orbits $\bO',\bO$, we have
\begin{equation}\label{eq:17151715}
K_0(\sH_{G,\chi}^{\on{ren}})_{\bO,k}\underset{k[W_{[\lambda]}]}{\otimes} K_0(\sH_{G,\chi}^{\on{ren}})_{\bO',k}=0.
\end{equation}

\noindent This follows from the fact that any object in the image of the convolution functor
\[
\sH_{G,\chi,\overline{\bO}}\otimes \sH_{G,\chi,\overline{\bO'}}\to \sH_{G,\chi}
\]

\noindent has singular support contained in either $\partial \bO$ or $\partial \bO'$.

\end{proof}

\subsubsection{} We may now finish the proof of the classification theorem:

\begin{proof}[Proof of Theorem \ref{t:classification}]

By Proposition \ref{p:thepdoddy}, it suffices to construct an injective map:
\[
H^{\on{top}}(\cB_e)\underset{k[W_{[\lambda]}]}{\otimes} K_0(\sH_{G,\chi}^{\on{ren}})_{\bO,k}\to K_0(\sW_{e,\lambda}\on{-mod}^{\on{fin},\heartsuit})_k.
\]

We have:
\[
H^{\on{top}}(\cB_e)\underset{k[W_{[\lambda]}]}{\otimes} K_0(\sH_{G,\chi}^{\on{ren}})_{\bO,k}\simeq\on{HP}_0(\cW_{e,\lambda}\on{-mod})\underset{k[W_{[\lambda]}]}{\otimes} K_0(\sH_{G,\chi}^{\on{ren}})_{\bO,k}
\]
\[
\overset{\on{Rem.} \ref{r:JSRIP}}{\simeq} \on{HP}_0(\cW_{e,\lambda}\on{-mod}^{\on{fin,access}})\underset{k[W_{[\lambda]}]}{\otimes} K_0(\sH_{G,\chi}^{\on{ren}})_{\bO,k}
\]
\[
\overset{\on{Lem.} \ref{l:inaccessdies}}{\simeq} \on{HP}_0(\cW_{e,\lambda}\on{-mod}^{\on{fin}})\underset{k[W_{[\lambda]}]}{\otimes} K_0(\sH_{G,\chi}^{\on{ren}})_{\bO,k}\subset\on{HP}_0(\cW_{e,\lambda}\on{-mod}^{\on{fin}})\overset{\on{Lem.}\ref{l:kzerotohp}}{\simeq} K_0(\sW_{e,\lambda}\on{-mod}^{\on{fin}})_k,
\]

\noindent as required.

\end{proof}

\newpage

\bibliographystyle{alpha}
\bibliography{Bip}

\end{document}